\documentclass[10pt]{amsart}
\usepackage{amssymb,amsmath,amsfonts,amsthm,latexsym,ifthen,ushort,stmaryrd,xspace,
enumitem}
\usepackage[english]{babel}
\usepackage{setspace}
\usepackage{bbm}

\DeclareMathAlphabet{\mathpzc}{OT1}{pzc}{m}{it}
\usepackage[latin1]{inputenc}
\usepackage{afterpage}
%
%
\newcommand{\kla}[1]{ {\langle #1 \rangle} }
\newcommand{\st}{\;|\;}

\newcommand{\ran}{ {\rm ran} }

\newcommand{\crit}{ {\rm crit} }
\newcommand{\sub}{\subseteq}
\newcommand{\eins}{ {1{\rm\hspace{-0.5ex}l}} }
\newcommand{\rest}{{\restriction}}
\newcommand{\cf}{ {\rm cf} }
\newcommand{\On}{ {\rm On} }

\newcommand{\verl}{{{}^\frown}}
\newcommand{\leer}{\emptyset}
\newcommand{\ohne}{\setminus}

\newenvironment{ea*}{\begin{eqnarray*}}{\end{eqnarray*}}
\newcommand{\claim}[2]{
     \begin{enumerate}
       \item[{#1}] {\em #2}
     \end{enumerate}}
\newcommand{\To}{\longrightarrow}

\newcommand{\lh}{{\rm lh}}

\newcommand{\ba}{{\bar{a}}}
\newcommand{\barb}{{\bar{b}}}

\newcommand{\bs}{{\bar{s}}}
\newcommand{\bu}{{\bar{u}}}
\newcommand{\bA}{{\bar{A}}}
\newcommand{\bB}{{\bar{B}}}

\newcommand{\bG}{{\bar{G}}}
\newcommand{\bH}{{\bar{H}}}
\newcommand{\bI}{{\bar{I}}}

\newcommand{\bP}{{\bar{\P}}}

\newcommand{\bS}{{\bar{S}}}
\newcommand{\bT}{{\bar{T}}}

\newcommand{\bX}{{\bar{X}}}

\newcommand{\bd}{{\bar{d}}}
\newcommand{\bj}{{\bar{j}}}
\newcommand{\bk}{{\bar{k}}}

\newcommand{\balpha}{{\bar{\alpha}}}
\newcommand{\bbeta}{{\bar{\beta}}}
\newcommand{\bgamma}{{\bar{\gamma}}}
\newcommand{\bdelta}{{\bar{\delta}}}

\newcommand{\bnu}{{\bar{\nu}}}

\newcommand{\blambda}{{\bar{\lambda}}}

\newcommand{\btheta}{{\bar{\theta}}}
\newcommand{\bpi}{{\bar{\pi}}}

\newcommand{\bsigma}{{\bar{\sigma}}}

\newcommand{\bard}{{\bar{d}}}

\newcommand{\barf}{{\bar{f}}}

\newcommand{\bp}{{\bar{p}}}

\newcommand{\bN}{{\bar{N}}}

\newcommand{\tdelta}{{\tilde{\delta}}}

\newcommand{\tsigma}{{\tilde{\sigma}}}

\newcommand{\va}{{\vec{a}}}
\newcommand{\vb}{{\vec{b}}}
\newcommand{\vc}{{\vec{c}}}

\newcommand{\vp}{{\vec{p}}}

\newcommand{\vBB}{{\vec{\B}}}

\newcommand{\vr}{{\vec{r}}}

\newcommand{\seq}[2]{{\langle#1\;|\;}\linebreak[0]{#2\rangle}}
\renewcommand{\phi}{\varphi}
\newcommand{\card}[1]{\overline{\overline{#1}}}
\newcommand{\ZFC}{\ensuremath{\mathsf{ZFC}}}
\newcommand{\ZFCm}{\ensuremath{{\ZFC}^-}}
\newcommand{\V}{\ensuremath{\mathrm{V}}}
\newcommand{\forces}{\Vdash}
\newcommand{\B}{{\mathord{\mathbb{B}}}}
\renewcommand{\P}{{\mathord{\mathbb P}}}
\newcommand{\Q}{{\mathord{\mathbb Q}}}


\newcommand{\MA}{\ensuremath{\mathsf{MA}}}
\newcommand{\MM}{\ensuremath{\mathsf{MM}}}

\newcommand{\SCFA}{\ensuremath{\mathsf{SCFA}}\xspace}

\newcommand{\CH}{\ensuremath{\mathsf{CH}}\xspace}

\newcommand{\isomorphism}{\stackrel{\sim}{\longleftrightarrow}}

\newcommand{\BSCFA}{\ensuremath{\mathsf{BSCFA}}\xspace}


\newcommand{\Prikry}{P\v{r}\'{\i}kr\'{y}}

%
%
\newtheorem{thm}{Theorem}[section]
\newtheorem{cor}[thm]{Corollary}
\newtheorem{lem}[thm]{Lemma}
\newtheorem{obs}[thm]{Observation}

\newtheorem{fact}[thm]{Fact}

\newtheorem{proposition}[thm]{Proposition}
\theoremstyle{definition}
\newtheorem{defn}[thm]{Definition}
\newtheorem{question}[thm]{Question}

\theoremstyle{remark}
\newtheorem{remark}[thm]{Remark}

%
%

\newcommand{\bbA}{\mathbb{A}}
\newcommand{\bbB}{\bar{\mathbb{B}}}

\newcommand{\BV}[1]{\llbracket#1\rrbracket}

\newcommand{\eps}{\varepsilon}

\renewcommand{\card}[1]{\mathrm{card}(#1)}

\newcommand{\oo}{{}^\omega\omega}
\newcommand{\interpretation}{\mathsf{intp}}

\def\hook{\upharpoonright}

\def\barG{\overline{G}}

\newcommand{\hooks}{\mathrel{\angle}}
\newcommand{\suc}{\mathop{\mathrm{suc}}}
\newcommand{\Root}{\mathrm{root}}

\hyphenation{sub-com-plete}

\begin{document}
\title{Iteration theorems for subversions of forcing classes}
\author[Fuchs]{Gunter Fuchs}
\address[G.~Fuchs]{Mathematics,
          The Graduate Center of The City University of New York,
          365 Fifth Avenue, New York, NY 10016
          \&
          Mathematics,
          College of Staten Island of CUNY,
          Staten Island, NY 10314}
\email{Gunter.Fuchs@csi.cuny.edu}
\urladdr{http://www.math.csi.cuny/edu/\textasciitilde{}fuchs}
\thanks{The first author's research for this paper was supported in part by PSC CUNY research grant 61567-00 49.}

\author[Switzer]{Corey Bacal Switzer}
\address[C.B.~Switzer]{Mathematics, The Graduate Center of The City University of New York, 365 Fifth Avenue, New York, NY 10016}
\email{cswitzer@gradcenter.cuny.edu}

\date{\today}

\begin{abstract}
We prove various iteration theorems for forcing classes related to subproper and subcomplete forcing, introduced by Jensen. In the first part, we use revised countable support iterations, and show that
1) the class of subproper, $\oo$-bounding forcing notions, 2) the class of subproper, $T$-preserving forcing notions (where $T$ is a fixed Souslin tree) and 3) the class of subproper, $[T]$-preserving forcing notions (where $T$ is an $\omega_1$-tree) are iterable with revised countable support. In the second part, we adopt Miyamoto's theory of nice iterations, rather than revised countable support. We show that this approach allows us to drop a technical condition in the definitions of subcompleteness and subproperness, still resulting in forcing classes that are iterable in this way, preserve $\omega_1$, and, in the case of subcompleteness, don't add reals. Further, we show that the analogs of the iteration theorems proved in the first part for RCS iterations hold for nice iterations as well.
\end{abstract}

\subjclass[2010]{03E50 03E55 03E57 03E35 03E17 03E05 03E40}
\keywords{Iterated forcing, revised countable support, subcomplete forcing}

\maketitle

\section{Introduction}

This article brings together two important threads in forcing iteration theory: variations on revised countable support and Jensen's subversions of forcing classes. In the first half we pursue a Boolean algebraic approach to iterating subproper and subcomplete forcing notions while preserving certain properties of the forcing notions being iterated. This is based on what has been done in \cite{Jensen:IterationTheorems}. In the second half we contrast this approach with one involving nice iterations in the sense of Miyamoto \cite{Miyamoto:IteratingSemiproperPreorders}. Here, we use partial preorders and obtain the same iteration and preservation theorems. In this setting, there is a further upshot that we can remove one of the more technical conditions in the definition of subproperness, thus getting an iteration theorem for a (seemingly) more general class of forcing notions. We can similarly omit that condition from the definition of subcomplete forcing notions while maintaining the crucial properties of the forcing class, namely not adding reals, preserving Souslin trees and diamond sequences, and being iterable via nice iterations.

In the final section we provide applications of the theorems from the first two sections, constructing multiple new models of the axiom $\SCFA$.

The definitions of the classes of subproper and subcomplete forcing (defined in the next section) result from modifying the definitions of proper and $\sigma$-closed forcing in a way we call ``subversion'', resulting in properly larger forcing classes which include forcing notions that badly fail to be proper, such as Namba forcing (under $\CH$), {\Prikry} forcing and the forcing to shoot clubs through stationary subsets of $\kappa\cap\mathrm{cof}(\omega)$ for regular $\kappa > 2^{\aleph_0}$, see \cite{Jensen2014:SubcompleteAndLForcingSingapore}.

Both classes come with associated forcing axioms and the subcomplete forcing axiom, $\SCFA$, which is Martin's axiom on $\aleph_1$ for subcomplete forcing is particularly striking as it has much of the strength of $\MM$, for instance implying failure of various square principles, see Jensen \cite{Jensen:FAandCH} and Fuchs \cite{Fuchs:HierarchiesOfForcingAxioms}, \cite{Fuchs:DiagonalReflection}, but is consistent with $\diamondsuit$. Models of $\SCFA +\neg \CH$ came up during the investigation of the first author and Minden in \cite{FuchsMinden:SCforcingTreesGenAbs}, and the present paper grew out of discussions relating to this.

The basic issue is as follows. In trying to uncover consequences of $\SCFA$ one often falls into two situations. Either some consequence of $\MM$ can be shown to follow already from $\SCFA$, usually by simply showing that the forcing used for the $\MM$ application is actually subcomplete, or else this consequence is incompatible with $\diamondsuit$, and hence it cannot follow from $\SCFA$. An example for the first type of situation, Jensen's aforementioned observation that the forcing to shoot a club in ordertype $\omega_1$ through a stationary subset of $\kappa \cap\mathrm{cof}(\omega)$, for regular $\kappa> 2^\omega$, is subcomplete, allows one to conclude that $\mathsf{SCH}$ follows from $\SCFA$, \cite[Corollary 7.4]{Jensen2014:SubcompleteAndLForcingSingapore}, using the known arguments based on Martin's Maximum. As an example for the second type of situation, $\SCFA$ does not imply Souslin's Hypothesis, since Souslin trees exist in any model with a diamond sequence. Another example is that \SCFA does not imply that the nonstationary ideal on $\omega_1$ is saturated (as Martin's Maximum does), again since \SCFA is compatible with $\diamondsuit$, which, in turn, implies the failure of saturation.

The basic question is whether such non-implications are only obstructed by diamond (or $\CH$), i.e. is it enough to add the failure of $\CH$, say, to $\SCFA$ to resurrect the consequences of $\MM$ that are not already consequences of $\SCFA$? In short, does $\SCFA+\neg\CH$ imply $\MM$?

Of course, we did not expect the answer to this question to be affirmative, but the fact that this was a question illustrates how little we knew about models of $\SCFA+\neg\CH$.

In the final section we will apply our iteration theorems to produce various models of $\SCFA+\neg\CH$ with different constellations of cardinal characteristics of the continuum inconsistent with $\MM$, thus answering the provocative question above in the negative, as expected. This is also interesting on its own right, since it shows that there are strong forcing axioms compatible with various constellations of cardinal characteristics of the continuum often studied in set theory of the reals. A sampling of results along this line is given below.

\begin{thm}
Assuming the consistency of a supercompact cardinal the following are consistent with $\SCFA + \neg \CH$.
\begin{enumerate}
\item
Souslin's Hypothesis fails.
\item
$\mathfrak{d} < \mathfrak{c}$.
\item
$\MA_{\aleph_1} (\sigma {\rm -linked})$ holds while $\MA_{\aleph_1}$ fails.
\end{enumerate}
\end{thm}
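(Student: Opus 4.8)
The plan is to obtain each of the three items \emph{separately}, by a single forcing iteration of length a supercompact cardinal $\kappa$, following Jensen's proof that $\SCFA$ is consistent relative to a supercompact (the subcomplete analogue of Baumgartner's iteration for $\PFA$), but carried out \emph{inside} one of the hybrid forcing classes shown to be iterable in the previous sections, so that reals are added cofinally along the way and $\neg\CH$ holds in the extension. Passing to a forcing extension if necessary, I would first fix a ground model of $\mathsf{GCH}$ containing a supercompact cardinal $\kappa$ together with a Laver function $\ell\colon\kappa\to V_\kappa$. For items~(1) and~(3) I would, in a preliminary step, add a Souslin tree $T$ by the $\sigma$-closed (hence subcomplete) forcing of size $\aleph_1$; this preserves $\mathsf{GCH}$ and, being small relative to $\kappa$, preserves the supercompactness of $\kappa$ and leaves $\ell$ a Laver function. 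For item~(3) I would moreover take $T$ to be indestructible under $\sigma$-linked ccc forcing, which can be arranged under $\diamondsuit$.

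Next I would run an RCS (equivalently, by the results of the second part, nice) iteration $\langle\P_\alpha,\dot\Q_\alpha\mid\alpha<\kappa\rangle$ in the class relevant to the case: the class of subproper, $\oo$-bounding forcings for item~(2), and the class of subproper, $T$-preserving forcings for items~(1) and~(3). At stage $\alpha$ I would let $\dot\Q_\alpha$ be the two-step iteration of (a) the forcing coded by $\ell(\alpha)$, used when $\ell(\alpha)$ is a $\P_\alpha$-name for a subcomplete forcing, and, in item~(3), additionally carrying out one step of an auxiliary bookkeeping enumerating all $\sigma$-linked ccc posets (for the $\MA_{\aleph_1}(\sigma\text{-linked})$ conclusion), followed by (b) a small real-adding forcing: random forcing in item~(2), and Cohen forcing in items~(1) and~(3) (in item~(3) already subsumed under the $\sigma$-linked step). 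The point is that subcomplete forcing adds no reals and preserves Souslin trees, so every subcomplete iterand automatically lies in the ambient class; and ccc forcing is subproper (just as ccc forcing is proper: maximal antichains are countable, hence reflected through the relevant embeddings), while random forcing is $\oo$-bounding and Cohen forcing and the $\sigma$-linked posets are $T$-preserving, the latter precisely because $T$ was chosen $\sigma$-linked-indestructible. Hence each $\dot\Q_\alpha$ is a two-step iteration of forcings in the ambient class and so lies in that class, and the iteration theorems of the previous sections apply: $\P_\kappa$ preserves $\omega_1$, is subproper, and is $\oo$-bounding (respectively $T$-preserving).

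Then I would read off the conclusions in $V[G_\kappa]$. The standard reflection argument using the supercompactness of $\kappa$ and the Laver function shows that $\SCFA$ holds and that $\kappa=\omega_2$, the random or Cohen reals appended at each stage being irrelevant to that argument; in item~(3), the auxiliary bookkeeping yields $\MA_{\aleph_1}(\sigma\text{-linked})$ in the same way. Since a new real is added at each of the $\kappa$ many stages, $2^{\aleph_0}\geq\kappa>\aleph_1$, so $\neg\CH$. For item~(2): since $\P_\kappa$ is $\oo$-bounding, no real of $V[G_\kappa]$ is unbounded over $V$, so the reals of $V$, which form a dominating family of size $\aleph_1$ as $V\models\mathsf{GCH}$, remain dominating, whence $\mathfrak{d}=\aleph_1<\mathfrak{c}$. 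For item~(1): since $\P_\kappa$ is $T$-preserving, $T$ is still a Souslin tree, so $\SH$ fails. For item~(3): $T$ is likewise still a Souslin tree, and a Souslin tree, viewed as a ccc partial order together with the $\aleph_1$ dense sets of nodes of height at least $\alpha$ for $\alpha<\omega_1$, witnesses $\neg\MA_{\aleph_1}$.

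I expect the main obstacle to be checking that the hybrid iteration genuinely remains in the class to which the iteration theorem applies while meeting every demand on the bookkeeping at once: one must verify that Cohen, random and $\sigma$-linked ccc forcings are subproper and carry the relevant preservation property, that composing them with the Laver-guided subcomplete iterands does not disrupt the RCS (or nice) iteration machinery, and that a single iteration can simultaneously anticipate all subcomplete forcings (for $\SCFA$), anticipate all $\sigma$-linked ccc forcings (for item~(3)), and add a real at every stage (for $\neg\CH$). A further nontrivial ingredient, specific to item~(3), is the ground-model construction of a Souslin tree that remains Souslin after every $\sigma$-linked ccc forcing, so that the $\MA_{\aleph_1}(\sigma\text{-linked})$ bookkeeping never conflicts with preserving the witness to $\neg\MA_{\aleph_1}$.
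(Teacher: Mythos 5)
Your proposal is correct and follows essentially the same route as the paper: a Laver-guided Baumgartner-style iteration of subcomplete forcings interleaved with Cohen (resp.\ random, resp.\ $\sigma$-linked) iterands, carried out inside the class of subproper Souslinness-of-$T$-preserving (resp.\ $\oo$-bounding) forcings so that the iteration theorems of the earlier sections apply, with $\SCFA$ read off by the usual reflection argument and $\neg\CH$ from the cofinally added reals. The one place you over-engineer is item (3): no $\diamondsuit$-constructed indestructible Souslin tree is needed, since \emph{every} Souslin tree survives \emph{every} $\sigma$-linked forcing (writing $\P=\bigcup_{n<\omega}\P_n$ with each $\P_n$ linked, the nodes forced into a name for a maximal antichain by some condition in a fixed $\P_n$ already form a ground-model antichain, hence are countable), which is exactly the elementary observation the paper records before its applications.
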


The key to proving these results is the proof of iteration theorems for these classes. Specifically we show that certain iterations of subproper forcing notions preserving a fixed Souslin tree $S$ preserve $S$, that certain iterations of $\oo$-bounding subproper forcing notions are $\oo$-bounding and that certain iterations of subproper forcing notions not adding branches through a fixed $\omega_1$-tree $T$ do not add branches through $T$. We can then, starting in a model with a supercompact cardinal, run the usual argument, based on Baumgartner's construction of a model of the proper forcing axiom, to produce a model of the forcing axiom for the relevant forcing class. For example, if we do this for the class of subproper forcing notions that preserve a particular Souslin tree $S$, then in the resulting model, there obviously is a Souslin tree, and $\CH$ fails, because Cohen forcing is in the class, and \SCFA holds, since every subcomplete forcing preserves Souslin trees. So this will be a model of $\SCFA+\neg\CH$ in which Souslin's Hypothesis fails.

We prove these iteration theorems in two different ways. First using RCS iterations, generalizing Jensen's techniques, and second with nice iterations in the sense of Miyamoto, \cite{Miyamoto:IteratingSemiproperPreorders}, who carried out these arguments in the context of semiproper forcing. In the latter case we drop a technical condition on the definition of subproper and subcomplete forcing. We dub these classes $\infty$-subproper and $\infty$-subcomplete forcing notions. We feel both proofs of the iteration theorems give information and perspective the other does not. It also sheds light on the difference between different styles of RCS iterations in this novel context. Since the precise relationship between RCS iterations and nice iterations is still not completely understood this may be of independent interest.

As such this article is broken into two parts. In Section \ref{sec:RCSiterations}, we treat RCS iterations and develop more fully the theory of nicely subproper iterations. This includes the abovementioned iteration theorems. In Section \ref{sec:NiceIterations} we reconsider these theorems, this time using nice iterations. We introduce $\infty$-subproper and $\infty$-subcomplete forcing notions, study their general properties, and we review the machinery of nice iterations needed to prove the iteration theorems in this context. Note that while the two approaches use the same generic word ``nice", it  means something very different in Sections \ref{sec:RCSiterations} and \ref{sec:NiceIterations}. It is simply an unfortunate coincidence that the established terminology in the literature conflicts in this way.

In Section \ref{sec:Applications}, we give the aforementioned applications to the study of forcing axioms, and we conclude with some remarks and open questions in Section \ref{sec:Questions}.

We would like to express our sincerest gratitude to the referee for reading an earlier manuscript of this article very carefully and for bringing many errors to our attention. Their work helped improve this article vastly.

\section{RCS iterations}
\label{sec:RCSiterations}

In this section, we will prove preservation theorems for iterations of subproper forcing notions with revised countable support, and variations thereof. We use a definition of subproperness that uses a slightly different Hull Property, following Jensen \cite[\S 4]{Jensen:IterationTheorems}. It also incorporates a variation mentioned in \cite[\S 4]{Jensen:IterationTheorems} and is somewhat close to what Jensen would call ``very subproper''. Namely, in place of the cardinality of a poset, we use its density, defined as follows.

\begin{defn}
\label{def:DensityOfAPoset}
Given a poset $\P$, $\delta(\P)$ is the smallest cardinal $\kappa$ such that there is a dense subset of $\P$ that has cardinality $\kappa$.
\end{defn}

There are other, maybe more natural, measures of the size of a poset, introduced in \cite{Fuchs:ParametricSubcompleteness}. We could use those as well, and work with the resulting variations of subproperness, but since we don't have any applications of these variations thus far, we chose not to do so. The density of a partial order is related to its chain condition:

\begin{obs}
\label{obs:P_is_delta(P)+cc}
For any poset $\P$, $\P$ is $\delta(\P)^+$-c.c.
\end{obs}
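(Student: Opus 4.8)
The plan is to use the defining dense set directly. Fix a dense $D \subseteq \P$ with $|D| = \delta(\P)$; it then suffices to show that every antichain $A \subseteq \P$ has cardinality at most $\delta(\P)$, since this means no antichain has size $\delta(\P)^+$, which is precisely the assertion that $\P$ is $\delta(\P)^+$-c.c.

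So let $A \subseteq \P$ be an antichain. For each $a \in A$, using the density of $D$, choose some $d_a \in D$ with $d_a \le a$. The key (and only) step is to observe that the assignment $a \mapsto d_a$ is injective: if $a, b \in A$ and $d_a = d_b =: d$, then $d \le a$ and $d \le b$, so $a$ and $b$ have the common extension $d$ and are therefore compatible; since $A$ is an antichain, this forces $a = b$. Hence $|A| \le |D| = \delta(\P)$, as required.

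There is no real obstacle here: the argument uses nothing about $\P$ beyond the definition of density and of antichain (in particular, no separativity or existence of a maximum is needed), and it works uniformly even in degenerate cases where $\delta(\P)$ is finite. The only thing to keep straight is the bookkeeping convention that ``$\delta(\P)^+$-c.c.'' means all antichains have size strictly below $\delta(\P)^+$, i.e. of size $\le \delta(\P)$ — which is exactly what the injection into $D$ delivers.
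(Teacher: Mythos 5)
Your proof is correct and is essentially identical to the paper's: both choose a dense set $D$ of cardinality $\delta(\P)$, map each element of an antichain to an extension in $D$, and observe that this map is injective because distinct antichain elements are incompatible. Nothing further is needed.
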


\begin{proof}
Let $A\sub\P$ be an antichain, and let $D\sub\P$ be a dense set of cardinality $\delta(\P)$. Define $f:A\To D$ by choosing, for each $a\in A$, an $f(a)\le a$ with $f(a)\in D$. Then $f$ is injective, and hence, $\card{A}\le\card{D}<\delta(\P)^+$.
\end{proof}

The following definition is due to Jensen. The theory \ZFCm results from \ZFC\ by dropping the Power Set Axiom and replacing the Replacement Scheme with the Collection Scheme, consisting of all sentences of the form  $\forall\vec{z}\forall u\exists v\forall x\in u\ (\exists y\  \phi(x,y,\vec{z}) \To \exists y\in v\ \phi(x,y,\vec{z}))$, for any formula $\phi(x,y,\vec{z})$ (with the free variables listed) in the language of set theory, where $u,v$ are variable that don't occur in $\phi$.

\begin{defn}
\label{def:fullness}
A transitive model $N$ of $\ZFCm$ is \emph{full} if there is an ordinal $\gamma>0$ such that $L_\gamma(N)$ satisfies $\ZFCm$ and $N$ is regular in $L_\gamma(N)$, meaning that if $a\in N$ and $f\in L_\gamma(N)$ is a function $f:a\To N$, then $\ran(f)\in N$.	
\end{defn}

We are now ready to state Jensen's definition of subproperness. Here, when $\B$ is a complete Boolean algebra, we use the notation $\B^+$ for $\B\ohne\{0\}$. Also, here and in the following, we denote by $L_\tau^A$ the structure $\kla{L_\tau[A],\in\rest L_\tau[A],A\cap L_\tau[A]}$.

\begin{defn}
\label{def:SubpropernessWithSupremumCondition}
A complete Boolean algebra $\B$ is \emph{subproper} if every sufficiently large cardinal $\theta$ verifies the subproperness of $\B$, meaning that the following holds: $\B\in H_\theta$, and if $\tau>\theta$ is such that $H_\theta\sub N=L_\tau^A\models\ZFC^-$, and $\sigma:\bN\prec N$, where $\bN$ is countable, transitive and full, and $\bS=\kla{\btheta,\bbB,\ba,\bs,\blambda_1,\ldots,\blambda_n}\in\bN$,
$S=\kla{\theta,\B,a,s,\lambda_1,\ldots,\lambda_n}=\sigma(\bS)$, where $a\in\B^+$ and $\lambda_i>\delta(\B)$ is regular, for $1\le i\le n$, then there is a $c\in\B^+$ such that $c\le a$ and such that whenever $G\sub\B$ is generic with $c\in G$, then there is a $\sigma'\in\V[G]$ such that
\begin{enumerate}[label=(\alph*)]
  \item $\sigma':\bN\prec N$.
  \item $\sigma'(\bS)=\sigma(\bS)$.
  \item $(\sigma')^{-1}``G$ is $\bbB$-generic over $\bN$.
    \item
  \label{item:SupremaCondition}
  Letting $\blambda_0=\On\cap\bN$, for all $i\le n$, we have that $\sup(\sigma')``\blambda_i=\sup\sigma``\blambda_i$.
\end{enumerate}
\end{defn}

Our definition differs slightly from that used in \cite{Jensen:IterationTheorems}, in that we don't require $\tau$ (in the notation of the definition) to be regular. This ensures that the resulting definition is locally based, in the sense of \cite[\S2, p.~6]{Jensen:FAandCH}, and is in line with the definition of subcompleteness employed by Jensen in \cite[p.~3]{Jensen:FAandCH}. There are several ways of defining subproperness directly for posets rather than complete Boolean algebras, but since we are going to work with complete Boolean algebras here, the definition given will do. We will return to the poset definition in Section \ref{sec:NiceIterations}. Jensen has sometimes employed a slight strengthening of condition \ref{item:SupremaCondition} above. We will also return to this later.

\subsection{The subproperness extension lemma}
\label{subsec:SubpropernessExtensionLemma}

The iteration theorems in this section are based on one main lemma, which we will prove here. It is based on what Jensen calls the One Step Lemma, but slightly more abstract. We call it the Subproperness Extension Lemma, in analogy to the context of proper forcing. In order to formulate it, we need some terminology regarding iterated forcing using the Boolean algebraic approach.

If $\B$ is a complete Boolean algebra, then we write $\bbA\sub\B$ to express that $\bbA$ is a complete subalgebra of $\B$, meaning that $\bbA$ is a complete Boolean algebra, and furthermore, that for $X\sub\bbA$, $\bigvee^\bbA X=\bigvee^\B X$ and $\bigwedge^\bbA X=\bigwedge^\B X$.
In this situation, the \emph{retraction} $h_{\B,\bbA}:\B\To\bbA$ is defined by
\[h_{\B,\bbA}(b)=\bigwedge\{a\in\bbA\st b\le_\B a\}.\]
Further, if $G_0\sub\bbA$ is a generic filter, then $G_0$ generates the filter $G=\{b\in\B\st\exists a\in G_0\ a\le b\}$ on $\B$, and writing $b\Rightarrow c$ for $\neg b\lor c$, $G$ induces an equivalence relation on $\B$ defined by identifying $b$ and $b'$ iff $(b\Rightarrow b')\land(b'\Rightarrow b)\in G$. We write $b/G_0$ for the equivalence class of $b$ under that equivalence relation, and we write $\B/G_0$ for the factor algebra. The ordering on $\B/G_0$ is given by $b/G_0\le_{\B/G_0}c/G_0$ iff $(b\Rightarrow c)\in G$. We write $\dot{G}_{\bbA}$ for the canonical name for the $\bbA$-generic ultrafilter. Here and in the following, when stating lemmas and theorems with multiple assumptions and conclusions, we will try to stick to the convention of numbering the assumptions $(A1)-(Am)$ and the conclusions $(C1)-(Cn)$.

\begin{lem}[Subproperness Extension Lemma]
\label{lem:SubpropernessExtensionLemma}
Let $\B$ be a complete Boolean algebra, and let $\bbA\sub\B$ be a complete subalgebra of $\B$. Let $h=h_\bbA:\B\To\bbA$ be the retraction. Let $\delta=\delta(\B)$. Suppose that $\forces_\bbA$`` $\check{\B}/\dot{G}_\bbA$ is subproper, as verified by $\check{\theta}$,'' where $\B\in H_\theta$. Let $N=L_\tau^A$ be a \ZFCm{} model with $H_\theta\sub N$ and $\theta<\tau$, and let $\bN$ be countable, transitive and full. Let $\bS=\kla{\btheta,\bar{\bbA},\bar{\B},\bs,\blambda_1,\ldots,\blambda_n}\in\bN$, and $S=\kla{\theta,\bbA,\B,s,\lambda_1,\ldots,\lambda_n}\in N$, where $\delta<\lambda_i$ is regular, for $1\le i\le n$. Let $\dot{\sigma}_0$, $\dot{t}$, $\dot{\bar{b}}$ and $\dot{b}$ be $\bbA$-names, and let $a\in\bbA^+$ be a condition that forces with respect to $\bbA$:
\begin{enumerate}[label=\textnormal{(A\arabic*)}]
\item $\dot{\sigma}_0:\check{\bN}\prec\check{N}$,
\item $\dot{\sigma}_0(\check{\bS})=\check{S}$ (so $\bar{\bbA}$ and $\bar{\B}$ are complete Boolean algebras in $\bN$),
\item $\dot{t}\in\check{\bN}$, $\dot{\barb}\in\check{\bar{\B}}$, $\dot{b}\in\check{\B}$, and $\check{h}(\dot{b})\in\dot{G}_{\bbA}$,
\item $\dot{\sigma}_0^{-1}``\dot{G}_\bbA$ is $\check{\bN}$-generic for $\bar{\bbA}$,
\item $\dot{\sigma}_0(\dot{\bar{b}})=\dot{b}$.
\end{enumerate}
Then there are a condition $c\in\B^+$ such that $h(c)=a$ and a $\B$-name $\dot{\sigma}_1$ such that whenever $I$ is $\B$-generic with $c\in I$, letting $\sigma_1=\dot{\sigma}_1^I$, $G=I\cap\bbA$ and $\sigma_0=\dot{\sigma}_0^G$, the following conditions hold:
\begin{enumerate}[label=\textnormal{(C\arabic*)}]
\item $\sigma_1:\bN\prec N$,
\item $\sigma_1(\bS)=S$,
\item $\sigma_1(\dot{t}^G)=\sigma_0(\dot{t}^G)$,
\item $\sigma_1(\dot{\bar{b}}^G)=\sigma_0(\dot{\bar{b}}^G)=\dot{b}^G$,
\item $\dot{b}^G\in I$,
\item ${\sigma_1}^{-1}``I$ is $\bar{\B}$-generic over $\bar{N}$,
\item Letting $\blambda_0=\On\cap\bN$, for all $i\le n$, $\sup\sigma_0``\blambda_i=\sup\sigma_1``\blambda_i$.
\end{enumerate}
\end{lem}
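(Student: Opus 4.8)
The plan is to prove a more abstract form of what Jensen calls the One Step Lemma. The idea is: pass to the extension $V[G]$ by $\bbA$, lift $\sigma_0$ to an elementary map between a generic extension of $\bN$ and one of $N$, invoke the assumed subproperness of the quotient $\B/G$ there, and then pull the resulting condition and embedding back to $V$ along the factorization $\B\cong\bbA*(\B/\dot G_\bbA)$.

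In more detail, I would fix an $\bbA$-generic $G$ with $a\in G$ and work in $V[G]$. Setting $\sigma_0=\dot\sigma_0^G$ and $\bar G=\sigma_0^{-1}``G$, hypotheses (1)--(5) give $\sigma_0\colon\bN\prec N$ with $\sigma_0(\bS)=S$ (hence $\sigma_0(\bar{\bbA})=\bbA$ and $\sigma_0(\bar{\B})=\B$), $\bar G$ is $\bar{\bbA}$-generic over $\bN$, $\dot t^G\in\bN$, $\dot{\barb}^G\in\bar{\B}$, $h(\dot b^G)\in G$, and $\sigma_0(\dot{\barb}^G)=\dot b^G$. Being elementary with $\sigma_0(\bar{\bbA})=\bbA$, $\sigma_0$ lifts canonically to $\sigma_0^*\colon\bN[\bar G]\prec N[G]$ with $\sigma_0^*\rest\bN=\sigma_0$ and $\sigma_0^*(\bar G)=G$; here $N[G]=L_\tau^{A'}\models\ZFCm$ for a suitable $A'$, $H_\theta^{V[G]}\sub N[G]$, and $\bN[\bar G]$ is again countable, transitive and full --- this last point being the standard fact that fullness is preserved under forcing with a poset from the model (which rests on $\bN$ being countable, so that by regularity of $\bN$ in a fullness-witness $L_\gamma(\bN)$, every maximal antichain of $\bar{\bbA}$ lying in $L_\gamma(\bN)$ already lies in $\bN$, whence $\bar G$ is even generic over $L_\gamma(\bN)$). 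Inside $\bN[\bar G]$ I would form the structure $\bar S'=\kla{\btheta,\bar{\B}/\bar G,\dot{\barb}^G/\bar G,\bar{s}',\blambda_1,\dots,\blambda_n}$, where $\bar{s}'$ bundles the auxiliary data that must be tracked: $\bar{\bbA}$, $\bar{\B}$, $\bar G$, $\bS$, $\dot{\barb}^G$ and $\dot t^G$. Applying $\sigma_0^{-1}$ to $h(\dot b^G)\in G$, and using that $\sigma_0$ preserves the retraction (being elementary and sending $\bar{\bbA},\bar{\B}$ to $\bbA,\B$), yields $h_{\bar{\B},\bar{\bbA}}(\dot{\barb}^G)\in\bar G$, so $\dot{\barb}^G/\bar G$ is a nonzero element of $\bar{\B}/\bar G$; and since $\delta(\B/G)\le\delta(\B)<\lambda_i$ (a dense subset of $\B$ of size $\delta(\B)$ projects, below any positive condition, to a dense subset of the quotient) and $\bbA$ is $\delta(\B)^+$-c.c.\ by Observation~\ref{obs:P_is_delta(P)+cc}, each $\lambda_i$ is still regular and above $\delta(\B/G)$ in $V[G]$. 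Now I would apply the subproperness of $\B/G$ ``as verified by $\theta$'' to the embedding $\sigma_0^*$ and the structure $\bar S'$: this produces a condition $c'\le\sigma_0^*(\dot{\barb}^G/\bar G)=\dot b^G/G$ in $(\B/G)^+$ and a $(\B/G)$-name $\dot\sigma_1'$ such that, forcing below $c'$, one obtains $\sigma_1'\colon\bN[\bar G]\prec N[G]$ with $\sigma_1'(\bar S')=\sigma_0^*(\bar S')$, with $(\sigma_1')^{-1}``(I/G)$ being $(\bar{\B}/\bar G)$-generic over $\bN[\bar G]$, and with $\sup\sigma_1'``\blambda_i=\sup\sigma_0^*``\blambda_i$ for all $i$ (where $\blambda_0=\On\cap\bN[\bar G]$).

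Next I would reassemble this over $V$. By the maximality principle, the name $\dot{c'}$ for $c'$ and a name for $\dot\sigma_1'$ can be taken as $\bbA$-names forced by $a$, and under the isomorphism $\B\cong\bbA*(\B/\dot G_\bbA)$ the pair $(a,\dot{c'})$ corresponds to a condition $c\in\B^+$ with $h(c)=a$ (using $a\forces\dot{c'}\ne 0$ and that the $\bbA$-retraction of a two-step condition is its first coordinate). I would let $\dot\sigma_1$ be a $\B$-name for $\sigma_1'\rest\bN$. Then if $I$ is $\B$-generic with $c\in I$, we get $G=I\cap\bbA\ni a$, $I/G$ is $(\B/G)$-generic over $V[G]$ with $c'\in I/G$, and $\sigma_1=\dot\sigma_1^I=\sigma_1'\rest\bN$. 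Conclusions (3), (4), (5) and (7) then fall out of $\sigma_1'(\bar S')=\sigma_0^*(\bar S')$ applied to the coordinates of $\bar{s}'$ --- recalling $\sigma_0^*\rest\bN=\sigma_0$ and that forcing adds no ordinals, so $\On\cap\bN[\bar G]=\On\cap\bN$ --- together with $c'\le\dot b^G/G\in I/G$. For (1), (2) and (6), I would argue that $\sigma_1=\sigma_1'\rest\bN$ actually maps into $N$ and is elementary $\bN\prec N$: $\bN$ is the $\bar{\bbA}$-ground model of $\bN[\bar G]$ and $N$ the $\bbA$-ground model of $N[G]$ (with $\bbA=\sigma_1'(\bar{\bbA})$ since $\bar{\bbA}\in\bar{s}'$), both definable in the respective extensions from parameters in $\bar{s}'$ by ground-model definability, so elementarity of $\sigma_1'$ transfers to the ground models; and $\bS,\dot{\barb}^G\in\bar{s}'$ give $\sigma_1(\bS)=\sigma_0(\bS)=S$ and $\sigma_1(\dot{\barb}^G)=\dot b^G$. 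Finally, from $\sigma_1'(\bar G)=G$ one checks $\sigma_1^{-1}``G=\bar G$ and $\sigma_1'(\bar b/\bar G)=\sigma_1(\bar b)/G$ for $\bar b\in\bar{\B}$, so $\sigma_1^{-1}``I=\{\bar b\in\bar{\B}\st \bar b/\bar G\in(\sigma_1')^{-1}``(I/G)\}$; the standard ``reverse two-step'' fact --- the quotient-map preimage of a $(\bar{\B}/\bar G)$-generic over $\bN[\bar G]$ is $\bar{\B}$-generic over $\bN$, since any dense $\bar D\in\bN$ projects below any positive condition to a dense subset of $\bar{\B}/\bar G$ --- then gives conclusion (6).

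I expect the main difficulty to lie in this last reassembly step: choosing $\bar{s}'$ rich enough that restricting the subproperness output to $\bN$ recovers conclusions (1)--(6) verbatim --- in particular verifying that $\sigma_1'\rest\bN$ lands in $N$ rather than merely in $N[G]$, which leans on ground-model definability in $\ZFCm$-models --- and lining up the factorization $\B\cong\bbA*(\B/\dot G_\bbA)$ so that the reassembled $c$ satisfies $h(c)=a$ exactly. The preservation of fullness under forcing is also essential, but is a known lemma that I would simply cite.
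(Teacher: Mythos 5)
Your proposal is correct and follows essentially the same route as the paper's proof: pass to $\V[G]$, lift $\sigma_0$ to $\sigma_0^*:\bN[\bG]\prec N[G]$, use fullness of $\bN[\bG]$ and $\barb/\bG\neq 0$ (via $h(b)\in G$) to apply the subproperness of $\B/G$ with the relevant parameters tracked, restrict the resulting embedding to $\bN$, and reassemble the condition via the factorization $\B\cong\bbA*(\check\B/\dot G_\bbA)$ so that $h(c)=a$ (the paper does this last step by mixing $\dot d$ with a name for $0$ and citing Jensen's facts on $\check c/\dot G_\bbA$, which is your two-step-isomorphism argument in different clothing). Your explicit checks that $\delta(\B/G)\le\delta(\B)$ and that the $\lambda_i$ stay regular, and your explicit appeal to definability of the ground models $L^A_\tau$ inside $L^{A'}_\tau$, are points the paper leaves implicit but do not change the argument.
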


\begin{proof}
We follow the proof of \cite[\S 2, Lemma 2]{Jensen:SPSCF}.
Let $G$ be any $\bbA$-generic filter with $a\in G$. Then in $\V[G]$, $\B/G$ is subproper, as verified by $\theta$. Let $t=\dot{t}^{G}$, $\bar{b}=\dot{\bar{b}}^G$, $b=\dot{b}^G$ and $\sigma_0=\dot{\sigma}_0^{G}$. Then $\sigma_0:\bN\prec N$, $\sigma_0(\bS)=S$, $h(b)\in G$, $t\in\bN$ and $\bG=\sigma_0^{-1}``G$ is $\bN$-generic for $\bbA$.
Let \[\sigma_0^*:\bN[\bG]\prec N[G]\]
be the unique embedding extending $\sigma_0$ such that $\sigma_0^*(\bG)=G$.

We have that $H_\theta^{V[G]}=H_\theta[G]\sub N[G]$, and $\bN[\bG]$ is full in $\V[G]$. We also have that $b/G\neq 0$. To see this, note that since $b\in\V$, it makes sense to write $b=\check{b}^G$. By \cite[p.~91, Fact 3]{Jensen:SPSCF}, we know that $h(b)=\BV{\check{b}/\dot{G}_\bbA\neq 0}$. So, since $h(b)\in G$, this means that $\BV{\check{b}/\dot{G}_\bbA\neq 0}\in G$, which means precisely that $b/G\neq 0$.

So, since $\B/G$ is subproper in $\V[G]$, there is a condition $d\in\B/G$ with $d\le b/G$ such that whenever $H$ is generic for $\B/G$ over $\V[G]$ with $d\in H$, then in $\V[G][H]$, there is an elementary embedding $\sigma':\bN[\bG]\prec N[G]$ with $\sigma'(\bS)=S$, $\bH=(\sigma')^{-1}``H$ is $\bar{\B}/\bG$-generic over $\bN[\bG]$ and for all $i\le n$, $\sup\sigma^*_0``\blambda_i=\sup\sigma'``\blambda_i$. We may moreover insist that $\sigma'$ maps any finite list of members of $\bN[\bG]$ the same way $\sigma_0^*$ does. Thus, we require that $\sigma'(t)=\sigma^*_0(t)$, $\sigma'(\bar{b})=\sigma_0^*(\bar{b})=b$ and $\sigma'(\bG)=\sigma_0^*(\bG)=G$.

Let us temporarily fix such an $H$, and let $I=G*H$. Let $\sigma_1=\sigma'\rest\bN$. It follows that $\sigma'(\tau^\bG)=(\sigma_1(\tau))^G$, for $\tau\in\bN^{\bar{\bbA}}$, since $\sigma'(\bG)=G$.

Then $\sigma'$, $G$ and $I$ clearly satisfy conditions (1), (2), (3), (4), (5) and (7) above.

It follows also that (6) is satisfied, that is, $\bI=\sigma_1^{-1}``I$ is $\bar{\B}$-generic over $\bN$: since $\bG$ is $\bar{\bbA}$-generic over $\bN$ and $\bH$ is $\bar{\bbB}/\bG$-generic over $\bN[\bG]$, it follows that $\bar{G}*\bar{H}$ is $\bar{\B}$-generic over $\bN$. But, for $\bar{c}\in\bar{\B}$, we have that $\bar{c}\in\bar{G}*\bar{H}$ iff $\bar{c}/\bar{G}\in\bar{H}$ iff $\sigma'(\bar{c}/\bar{G})=\sigma_1(\bar{c})/G\in H$ iff $\sigma_1(\bar{c})\in G*H=I$. Thus, $\bar{G}*\bar{H}=\sigma_1^{-1}``I=\bI$ is $\bar{\B}$-generic over $\bN$, as claimed.

So there are a name $\pi$ in $\V[G]^{\B/G}$ with $\sigma_1=\pi^H$ and a condition $d\in(\B/G)^+$ that forces over $\V[G]$ with respect to $\B/G$ that $\pi$ has the properties listed.

Now, all of this is true in $\V[G]$ whenever $G$ is $\bbA$-generic over $\V$, with $a\in G$, and so, there are names $\dot{d},\dot{\pi}\in\V^\bbA$ such that $d=\dot{d}^{G}$ and $\pi=\dot{\pi}^G$, and $a$ forces the situation described. Let $\dot{\sigma}_1$ be a $\B$-name such that $\dot{\sigma}_1^{G*H}=(\dot{\pi}^G)^H=\sigma_1$.

The only thing that's missing is the condition $c\in\B$ with $h(c)=a$ such that whenever $c\in I$, $I$ is $\B$-generic over $\V$, $G=I\cap\bbA$ and $\sigma_1=\dot{\sigma}_1^I$, and $\sigma_0=\dot{\sigma}_0^G$, it follows that (1)-(7) hold. To find the desired condition, first note that we may choose the name $\dot{d}$ in such a way that $\forces_\bbA\dot{d}\in\check{\B}/\dot{G}_\bbA$ and $a=\BV{\dot{d}\neq 0}_\bbA$. Namely, given the original $\dot{d}$ such that $a$ forces that $\dot{d}\in(\check{\B}/\dot{G}_\bbA)^+$ and all the other statements listed above, there are two cases: if $a=\eins_\bbA$, then since $a\le\BV{\dot{d}\neq 0}$, it already follows that $a=\BV{\dot{d}\neq 0}$ and $\forces_\bbA\dot{d}\in\check{\B}/\dot{G}_\bbA$. If $a<\eins_\bbA$, then let $\dot{e}\in\V^\bbA$ be a name such that $\forces_\bbA\dot{e}=0_{\check{\B}/\dot{G}_\bbA}$, and mix the names $\dot{d}$ and $\dot{e}$ to get a name $\dot{d}'$ such that $a\forces_\bbA\dot{d}'=\dot{d}$ and $\neg a\forces_\bbA\dot{d}'=\dot{e}$. Then $\dot{d}'$ is as desired. Clearly, $\forces_\bbA\dot{d}'\in\check{\B}/\dot{G}_\bbA$. Since $a\forces_\bbA\dot{d}'=\dot{d}$, it follows that $a\le\BV{\dot{d}'\neq 0}$, and since $\neg a\forces_\bbA\dot{d'}=\dot{e}$, it follows that $\neg a\le\BV{\dot{d}'=0}=\neg\BV{\dot{d}'\neq 0}$, so $\BV{\dot{d}'\neq 0}\le a$. So we could replace $\dot{d}$ with $\dot{d}'$.

Then, by \cite[\S0, Fact 4]{Jensen2014:SubcompleteAndLForcingSingapore},%
\footnote{There is a slightly confusing misprint in the statement of that fact. It should read: ``Let $\bbA\sub\B$, and let $\forces_\bbA\dot{b}\in\check{\B}/\dot{G}_\bbA$, where $\dot{b}\in\V^\bbA$. There is a unique $b\in\B$ such that $\forces_\bbA\dot{b}=\check{b}/\dot{G}_\bbA$.'' That's what the proof given there shows.} %
there is a unique $c\in\B$ such that $\forces_\bbA\check{c}/\dot{G}_\bbA=\dot{d}$, and it follows by \cite[\S0, Fact 3]{Jensen2014:SubcompleteAndLForcingSingapore} that
\[h(c)=\BV{\check{c}/\dot{G}_\bbA\neq 0}_\bbA=\BV{\dot{d}\neq 0}_\bbA=a\]
as wished.
\end{proof}

\subsection{RCS and nicely subproper iterations}
\label{subsec:RCSandNicelySubproperIterations}

We adopt Jensen's approach to RCS iterations.
Thus, an \emph{iteration of length $\alpha$} is a sequence $\seq{\B_i}{i<\alpha}$ of complete Boolean algebras such that for $i\le j<\alpha$, $\B_i\sub\B_j$, and such that if $\lambda<\alpha$ is a limit ordinal, then $\B_\lambda$ is generated by $\bigcup_{i<\lambda}\B_i$, meaning that $\B_\lambda$ is the completion of the collection of all infima and suprema of subsets of $\bigcup_{i<\lambda}\B_i$. In this setting, $\vec{b}=\seq{b_i}{i<\lambda}$ is a \emph{thread} in $\vec{\B}\rest\lambda$ if for every $i\le j<\lambda$, $b_i=h_{\B_j,\B_i}(b_j)$ and $b_j\neq 0$. 
$\B_\lambda$ is an \emph{inverse limit} of $\vec{\B}\rest\lambda$ if for every thread $\vec{b}$ in $\vec{\B}\rest\lambda$, $b^*:=\bigwedge_{i<\lambda}^{\B_\lambda}b_i\neq 0$, and if the set of such $b^*$ is dense in $\B_\lambda$. This characterizes $\B_\lambda$ up to isomorphism. If $\seq{\xi_i}{i<\blambda}$ is monotone and cofinal in $\lambda$ and $\vec{b}=\seq{b_i}{i<\blambda}$ is such that for every $i<\blambda$, $b_i\in\B_{\xi_i}$ and for every $i\le j<\blambda$, $h_{\B_{\xi_j},\B_{\xi_i}}(b_j)=b_i$, then we will consider $\vec{b}$ to be a thread in $\vec{B}\rest\lambda$ as well, since it gives rise to a thread $\vec{c}=\seq{c_i}{i<\lambda}$ in the original sense via the definition $c_i=h_{\B_{\xi_j},\B_i}(b_j)$ where $j$ is such that $\xi_j\ge i$, and vice versa, the restriction of a thread in the original sense to a cofinal index set determines the entire thread, so that these two notions are equivalent.
If $\vBB=\seq{\B_i}{i<\alpha}$ is an iteration as above, then $\alpha$ is the \emph{length} of $\vBB$. 

The direct limit takes $\B_\lambda$ as the minimal completion of $\bigcup_{i<\lambda}\B_i$ and is characterized by the property that $\bigcup_{i<\lambda}\B_i\ohne\{0\}$ is dense in $\B_\lambda$. Another way of looking at it is that it is generated by the eventually constant threads.

The \emph{RCS limit} is defined as the inverse limit, except that only RCS threads $\vec{b}$ are used: $\vec{b}=\seq{b_i}{i<\lambda}$ is an \emph{RCS thread} in $\vec{\B}\rest\lambda$ if it is a thread in $\vec{\B}\rest\lambda$ and there is an $i<\lambda$ such that either, for all $j<\lambda$ with $i\le j$, $b_i=b_j$, or $b_i\forces_{\B_i}\cf(\check{\lambda})=\check{\omega}$.

\begin{defn}
Let $\vec{\B}$ be an iteration of length $\alpha$.

Then $\vBB$ is \emph{direct} if for every $i+1<\alpha$, $\B_i\neq\B_{i+1}$.

It is \emph{standard} if it is direct and for every $i+1<\alpha$, letting $\delta_i=\delta(\B_i)$, $\forces_{\B_{i+1}}\card{\check{\delta}_i}\le\omega_1$.

It is an \emph{RCS iteration} if for every limit $\lambda$, $\B_\lambda$ is the RCS limit of $\vec{\B}\rest\lambda$.

Let $\Gamma=\{\B\st\phi_\Gamma(\B,p)\}$ be a class of complete Boolean algebras (defined by some formula $\phi_\Gamma$ in some parameter $p$).

Then an iteration $\vBB$ is an \emph{iteration of forcings in $\Gamma$} if for every $i+1<\alpha$,
$\forces_{\B_i}$``$\check{\B}_{i+1}/\dot{G}_i\in\Gamma$,'' (i.e., $\forces_{\B_i}\phi_\Gamma(\B_{i+1}/\dot{G}_i,\check{p})$).

$\Gamma$ is \emph{standard RCS iterable} if whenever $\vBB$ is a standard RCS iteration of forcings in $\Gamma$, then for every $h\le i<\alpha$, if $G_h$ is generic for $\B_h$, then in $\V[G_h]$, $\B_i/G_h\in\Gamma$ (i.e., $\phi_\Gamma(\B_i/G_h,p)$ holds in $\V[G_h]$).
\end{defn}

In the context of a given iteration $\vec{\B}$ as above, if $i<\alpha$ and $b\in\B_j$, for some $j<\alpha$, we'll just write $h_i(b)$ for $h_{\B_j,\B_i}(b)$. We'll write $\lh(\vec{\B})=\alpha$, the length of the iteration. The following fact summarizes the basic properties of RCS iterations.

\begin{fact}[{\cite[p.~142]{Jensen2014:SubcompleteAndLForcingSingapore}}]
\label{fact:BasicsOnRCSiterations}
Let $\vec{\B}=\seq{\B_i}{i<\alpha}$ be an RCS iteration.
\begin{enumerate}[label=(\arabic*)]
  \item
  \label{RCSFact.item:InverseLimitAtCountableCofinalities}
  If $\lambda<\alpha$ and $\cf(\lambda)=\omega$, then $\B_\lambda$ is the inverse limit of $\vec{\B}\rest\lambda$.
  \item
  \label{RCSFact.item:DirectLimitAtStronglyUncountableCofinalities}
  If $\lambda<\alpha$ and for every $i<\lambda$, $\forces_{\B_i}\cf(\check{\lambda})>\omega$, then $\bigcup_{i<\lambda}\B_i$ is dense in $\B_\lambda$ (that is, $\B_\lambda$ is formed using only eventually constant threads, making it the direct limit).
  \item
  \label{RCSFact.item:PropertiesAreHereditary}
  If $i<\lambda$ and $G$ is $\B_i$-generic, then the above are true in $\V[G]$ about the iteration $\seq{\B_{i+j}/G}{j<\alpha-i}$.
\end{enumerate}
\end{fact}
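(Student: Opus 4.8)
The plan is to read (1) and (2) off directly from the definitions of the RCS, inverse, and direct limits, by determining under each cofinality hypothesis which threads of $\vec{\B}\rest\lambda$ qualify as RCS threads, and then to obtain (3) by checking that the tail $\seq{\B_{i+j}/G}{j<\alpha-i}$ is again an RCS iteration over $\V[G]$ and applying (1) and (2) inside $\V[G]$. The one general fact I would lean on throughout is that ``$\cf(\lambda)=\omega$'' is upward absolute: a map $\omega\To\lambda$ that is cofinal in some model stays cofinal in every outer model, so if $\cf^\V(\lambda)=\omega$ then $\forces_{\B_i}\cf(\check\lambda)=\check\omega$ for every $i<\lambda$.

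For (1), when $\cf^\V(\lambda)=\omega$ I would observe that \emph{every} thread $\vec{b}=\seq{b_i}{i<\lambda}$ in $\vec{\B}\rest\lambda$ is already an RCS thread: threads have nonzero entries, so $b_0\neq 0$, and $b_0\forces_{\B_0}\cf(\check\lambda)=\check\omega$, which is precisely the second clause in the definition of an RCS thread, with witness $i=0$. Hence the dense set of infima $\bigwedge^{\B_\lambda}_{i<\lambda}b_i$ of RCS threads $\vec{b}$ that defines the RCS limit coincides with the dense set of infima of arbitrary threads that defines the inverse limit, and since each of these characterizes $\B_\lambda$ up to isomorphism, $\B_\lambda$ is the inverse limit.

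For (2), the dual observation is that when $\forces_{\B_i}\cf(\check\lambda)>\omega$ for every $i<\lambda$, no nonzero $b\in\B_i$ can force $\cf(\check\lambda)=\check\omega$, so in the definition of an RCS thread only the first (eventually constant) clause is ever available. Thus any RCS thread $\vec{b}$ has some $i_0<\lambda$ with $b_j=b_{i_0}\in\B_{i_0}$ for all $j\geq i_0$; since the retraction satisfies $b_{i_0}\le h_{\B_{i_0},\B_i}(b_{i_0})=b_i$ for $i<i_0$, the element $b_{i_0}$ is the greatest lower bound of $\vec{b}$ in $\B_\lambda$. Hence the dense set of infima of RCS threads witnessing the RCS limit is contained in $\bigcup_{i<\lambda}\B_i$, so $\bigcup_{i<\lambda}\B_i$ is dense in $\B_\lambda$, which is exactly the assertion that $\B_\lambda$ is the direct limit.

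For (3), I would fix $i<\alpha$ and a $\B_i$-generic $G$; the real work is to check that $\seq{\B_{i+j}/G}{j<\alpha-i}$ is an RCS iteration over $\V[G]$, after which the conclusion follows by running (1) and (2) inside $\V[G]$. Using the standard factoring facts for complete Boolean algebras --- $\B_{i+j}/G\sub\B_{i+k}/G$ for $j\le k$, and forcing with $\B_{i+j}/G$ over $\V[G]$ reproducing the extensions $\V[G_{i+j}]$ obtained by forcing with $\B_{i+j}$ over $\V$ below $G$ --- one verifies the generation condition at limit stages and then matches up RCS threads: for a limit $\mu$ with $i+\mu$ a limit ordinal, an RCS thread of $\seq{\B_{i+j}/G}{j<\mu}$ is exactly (the image modulo $G$ of) a thread $\vec{b}$ of $\vec{\B}$ on $[i,i+\mu)$ with $b_i\in G$ that is eventually constant or has some $b_{i+j}$ forcing $\cf(i+\mu)=\omega$ over $\B_{i+j}$; here one uses $\cf(i+\mu)=\cf(\mu)$ together with the fact that the two routes to $\V[G_{i+j}]$ agree, so that the forcing assertion downstairs and the corresponding one over $\B_{i+j}/G$ say the same thing. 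This shows $\B_{i+\mu}/G$ is the RCS limit of the tail. I expect this last point --- that quotienting by $G$ commutes with forming RCS limits at limit stages --- to be the main obstacle; together with the underlying factoring lemmas for complete Boolean algebras it is what gives the statement genuine (if routine) content, whereas (1) and (2) are essentially unwindings of the definitions.
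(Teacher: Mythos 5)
The paper does not prove this statement at all --- it is quoted as a Fact from Jensen's notes --- so there is no in-paper argument to compare against; I can only assess your proof on its own terms, and it is essentially correct. For (1), the observation that $\cf(\lambda)=\omega$ is upward absolute, so that $b_0\forces_{\B_0}\cf(\check\lambda)=\check\omega$ and hence \emph{every} thread is an RCS thread (witness $i=0$), is exactly right and immediately identifies the RCS limit with the inverse limit. For (2), the dual point --- that no nonzero condition can force $\cf(\check\lambda)=\check\omega$, so every RCS thread is eventually constant with infimum $b_{i_0}\in\bigcup_{i<\lambda}\B_i$, whence the density clause in the definition of the RCS limit forces $\bigcup_{i<\lambda}\B_i$ to be dense --- is also correct and complete. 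Part (3) is, as you say, where the only genuine content lies: the verification that the quotient sequence is again an RCS iteration over $\V[G]$ rests on the retraction commuting with quotients ($h_{\B_k/G,\B_j/G}(b/G)=h_{\B_k,\B_j}(b)/G$), on $\cf(i+\mu)=\cf(\mu)$, and on two-step factoring of generics; your sketch names all of these. The one spot you should spell out if writing this in full is the correspondence of threads: the thread condition in the quotient only gives $h_{\B_{i+k},\B_{i+j}}(b_k)/G=b_j/G$, i.e.\ agreement of representatives modulo $G$ rather than on the nose, so lifting a quotient thread to an actual thread of $\vec{\B}$ requires choosing representatives coherently (or arguing only about density of the relevant sets of infima). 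This is routine but is precisely the fiddly part that Jensen's cited pages carry out.
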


The following fact gives us some information about the chain conditions satisfied by direct limits in an iteration.

\begin{fact}[Baumgartner, see {\cite[Theorem 3.13]{VialeEtAl:BooleanApproachToSPiterations}}]
\label{fact:lambdacc}
Let $\seq{\B_i}{i<\lambda}$ be an iteration such that for every $\alpha<\lambda$, $\B_\alpha$ is ${<}\lambda$-c.c., and such that the set of $\alpha<\lambda$ such that $\B_\alpha$ is the direct limit of $\vec{\B}\rest\alpha$ is stationary. Then the direct limit of $\vec{\B}$ is ${<}\lambda$-c.c.
\end{fact}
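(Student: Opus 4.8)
This is Baumgartner's classical theorem about direct limits of iterations, and I take it for granted that $\lambda$ is regular and uncountable, as is implicit in the statement (so that ``stationary'' is meaningful). The plan is to argue by contradiction, reflecting a putative antichain of size $\lambda$ down to a direct-limit stage and applying the chain condition there. So suppose $A\sub\B_\lambda\ohne\{0\}$ is an antichain with $\card{A}=\lambda$, where $\B_\lambda$ denotes the direct limit of $\vec\B$. Since $\bigcup_{i<\lambda}\B_i\ohne\{0\}$ is dense in $\B_\lambda$, after replacing each member of $A$ by a refinement and re-enumerating we may assume $A=\{a_\xi\st\xi<\lambda\}$ with the $a_\xi$ pairwise distinct and $a_\xi\in\B_{f(\xi)}$ for some $f\colon\lambda\To\lambda$; this is still an antichain of size $\lambda$. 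Let $C=\{\delta<\lambda\st\delta\text{ is a limit and }f``\delta\sub\delta\}$; as $\lambda$ is regular, $C$ is a club, and as $S$ is stationary I would fix $\delta\in S\cap C$. (If $f$ happened to be bounded below $\lambda$ one could take $\delta$ large enough that $A\sub\B_\delta$, directly contradicting that $\B_\delta$ is ${<}\lambda$-c.c.; in general one only gets $a_\xi\in\B_\delta$ for $\xi<\delta$.)

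Because $\delta\in S$, $\B_\delta$ is the direct limit of $\vec\B\rest\delta$, so $\bigcup_{\gamma<\delta}\B_\gamma\ohne\{0\}$ is dense in $\B_\delta$. For $\xi<\lambda$ set $b_\xi:=h_\delta(a_\xi)$, the retraction of $a_\xi$ to $\B_\delta$; it is nonzero since $b_\xi\ge a_\xi$, so by density there are $\gamma(\xi)<\delta$ and a nonzero $c_\xi\in\B_{\gamma(\xi)}$ with $c_\xi\le b_\xi$. By the basic property of the retraction recalled in the proof of Lemma~\ref{lem:SubpropernessExtensionLemma} — for $p\in\B_\delta$ and $q\in\B_\lambda$ one has $p\wedge q=0$ iff $p\wedge h_\delta(q)=0$ — it follows that $c_\xi$ is compatible with $a_\xi$ in $\B_\lambda$. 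Since $\lambda$ is regular and $\card{\delta}<\lambda$, there are $\gamma^*<\delta$ and $X\in[\lambda]^\lambda$ with $c_\xi\in\B_{\gamma^*}$ for every $\xi\in X$. Now $\B_{\gamma^*}$ is ${<}\lambda$-c.c.\ and $\card{X}=\lambda$, so the $c_\xi$ ($\xi\in X$) cannot be pairwise incompatible; hence there are $\xi\neq\eta$ in $X$ with $e:=c_\xi\wedge c_\eta\neq 0$. Since $e\in\B_{\gamma^*}\sub\B_\delta$, the retraction fact again gives $e\wedge a_\xi\neq 0$ and $e\wedge a_\eta\neq 0$.

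It remains to improve this to $a_\xi\wedge a_\eta\neq 0$, which contradicts that $A$ is an antichain and finishes the proof. Here $c_\xi\le h_\delta(a_\xi)$ is a condition concentrated below $\delta$, so $e$ is a common refinement, below $\delta$, of $h_\delta(a_\xi)$ and $h_\delta(a_\eta)$; one wants to amalgamate $e$ with the parts of $a_\xi$ and $a_\eta$ that live strictly above $\delta$ (i.e.\ their images in the relevant factor algebras). For this one needs those upper parts to be compatible — for instance that the supports of $a_\xi$ and $a_\eta$ above $\delta$ be disjoint — and I expect securing this, by thinning $X$ once more (a $\Delta$-system thinning of the supports above $\delta$, or a second reflection in which $\delta$ is taken of the form $M\cap\lambda$ for a suitable elementary submodel $M$), to be the genuine obstacle: it is precisely at this point that stationarity of $S$, rather than mere unboundedness, is used, and it is the combinatorial core of the argument. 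The two reductions in the first paragraph and all of the retraction bookkeeping are routine.
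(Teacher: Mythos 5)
The paper does not prove this Fact --- it is quoted from Viale--Audrito--Steila --- so your argument has to stand on its own, and it breaks exactly where you say it does. Everything through the production of $e=c_\xi\wedge c_\eta$ is correct, but the step from ``$e\wedge a_\xi\neq 0$ and $e\wedge a_\eta\neq 0$'' to ``$a_\xi\wedge a_\eta\neq 0$'' is not bookkeeping; it is false in general. A nonzero $e\in\B_\delta$ lying below both retractions $h_\delta(a_\xi)$ and $h_\delta(a_\eta)$ only says that each of $a_\xi,a_\eta$ is separately compatible with $e$; their ``parts above $\delta$'' (their images in $\B_\lambda/G_\delta$ for a generic containing $e$) can perfectly well be incompatible, as already happens in a product of two algebras. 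Nor do the suggested repairs save this route: a $\Delta$-system on supports above $\delta$ is exactly what is \emph{not} available here, since the Fact is applied to iterations with revised countable support, where the relevant $\Delta$-system lemma for countable sets needs hypotheses ($\lambda$ inaccessible, or CH-type assumptions) that the statement does not make; and taking $\delta=M\cap\lambda$ does not change the fact that $\lambda$-many of your $a_\xi$ live essentially above $\delta$.

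The genuine use of stationarity is Fodor's lemma, not merely meeting the club $C$ once. After refining the antichain into $\bigcup_{i<\lambda}\B_i$, one selects by a routine recursion (using the club of closure points of $f$) pairwise distinct antichain members $b_\alpha$ for $\alpha$ ranging over a stationary set $S'\sub S$ of direct-limit stages, with $b_\alpha\in\B_\alpha$. Because $\B_\alpha$ is the direct limit of $\vec{\B}\rest\alpha$, each such $b_\alpha$ already lies in $\B_{g(\alpha)}$ for some $g(\alpha)<\alpha$; Fodor then gives a stationary $S''\sub S'$ and a single $\beta_0$ with $b_\alpha\in\B_{\beta_0}$ for all $\alpha\in S''$, and $\{b_\alpha\st\alpha\in S''\}$ is a $\lambda$-sized antichain in $\B_{\beta_0}$, contradicting its ${<}\lambda$-c.c. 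No retractions are needed at all: the point is to choose comparands that live \emph{entirely} below a direct-limit stage, rather than to project arbitrary members of the antichain down to a single one.
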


A variation of the RCS iteration theorem for subproper forcing \cite[\S 4, pp.~2, Thm.~5]{Jensen:IterationTheorems} says:

\begin{thm}[Jensen]
\label{thm:RCSiterationOfSubPforcing}
The class of complete subproper Boolean algebras is standard RCS iterable.
\end{thm}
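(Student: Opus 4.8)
The proof is an induction on the length $\alpha$ of a standard iteration $\vBB = \seq{\B_i}{i<\alpha}$ of subproper forcings. We must show that for all $h \le i < \alpha$, after forcing with $\B_h$ the quotient $\B_i/G_h$ is subproper in $\V[G_h]$. By Fact \ref{fact:BasicsOnRCSiterations}\ref{RCSFact.item:PropertiesAreHereditary}, passing to $\V[G_h]$ and the tail iteration, it suffices to prove that $\B_\alpha$ itself is subproper (equivalently, $\B_\alpha/G_h$ for $h=0$); the induction hypothesis then supplies subproperness of every $\B_i/G_h$ for $i < \alpha$. The proof splits into the usual three cases according to the shape of $\alpha$.

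\emph{Successor case $\alpha = \beta+1$.} Here $\B_\alpha = \B_{\beta+1}$, and by hypothesis $\forces_{\B_\beta}$ ``$\B_{\beta+1}/\dot G_\beta$ is subproper''. Inductively $\B_\beta$ is subproper. This is precisely the setup for a two-step composition, and the Subproperness Extension Lemma (Lemma \ref{lem:SubpropernessExtensionLemma}), applied with $\bbA = \B_\beta$ and $\B = \B_{\beta+1}$, is exactly designed to yield subproperness of $\B_{\beta+1}$: given a condensing $\sigma : \bN \prec N$ with the relevant objects in its range (including names for $\bbA$, $\B$, and the cardinal parameters $\lambda_1,\ldots,\lambda_n$ all $>\delta(\B)$), we first apply subproperness of $\bbA$ to obtain a first-coordinate embedding and generic, then feed the resulting data into the Lemma to lift it to $\B$, obtaining a condition $c$ with $h(c) = a$ (so $c \le a$) and a name $\dot\sigma_1$ with all the required properties (a)–(d) of Definition \ref{def:SubpropernessWithSupremumCondition}, the supremum condition being clause (7) of the Lemma). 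One must be slightly careful that the regular cardinals $\lambda_i$ appearing in the condensation are $> \delta(\B_{\beta+1})$; since $\delta(\B_\beta) \le \delta(\B_{\beta+1})$ this also makes them large enough for the application of subproperness of $\B_\beta$, and one bootstraps the countable full $\bN$ through both steps as in the Lemma's proof.

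\emph{Limit case $\cf(\alpha) = \omega$.} By Fact \ref{fact:BasicsOnRCSiterations}\ref{RCSFact.item:InverseLimitAtCountableCofinalities}, $\B_\alpha$ is the inverse limit of $\vBB \rest \alpha$. Fix a cofinal sequence $\seq{\alpha_k}{k<\omega}$ and, inside a countable full $\bN$ elementary in a suitable $N = L_\tau^A$, a corresponding cofinal $\seq{\balpha_k}{k<\omega}$ in $\On\cap\bN$ (after intersecting with $\bN$). The strategy, following Jensen's one-step-lemma argument, is to build a descending chain of conditions $\seq{c_k}{k<\omega}$ with $c_k \in \B_{\alpha_k}$, together with names $\seq{\dot\sigma_k}{k<\omega}$, by iterating the Subproperness Extension Lemma at each successive block $[\alpha_k, \alpha_{k+1}]$: at stage $k$ we have an embedding handling $\B_{\alpha_k}$, and we extend it to handle $\B_{\alpha_{k+1}}$, keeping $h_{\B_{\alpha_{k+1}}, \B_{\alpha_k}}(c_{k+1}) = c_k$ so that $\vec c = \seq{c_k}{k<\omega}$ is a thread. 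Since $\bN$ is countable we can arrange that the $\dot\sigma_k$ cohere and that every dense subset of each $\bar\B_{\alpha_k}$ lying in $\bN$ is met, so the induced filter is $\bar\B_\alpha$-generic over $\bN$; the key point making $\bigwedge_k c_k \neq 0$ is that the RCS limit forces $\cf(\check\alpha) = \check\omega$ below the relevant condition, or — more to the point here — that we are at the inverse limit so threads have nonzero infima and the set of such infima is dense, which is why we can start below an arbitrary $a$. The condition $c^* = \bigwedge_k^{\B_\alpha} c_k$ and a name $\dot\sigma_1$ amalgamating the $\dot\sigma_k$ then witness subproperness; the supremum condition (clause (iv) of Definition \ref{def:SubpropernessWithSupremumCondition}) is maintained at each finite stage and passes to the limit since $\sup \sigma_k \rest \blambda_i$ stabilizes. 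Crucially, the parameters $\lambda_i$ are regular and $>\delta(\B_\alpha) \ge \delta(\B_{\alpha_k})$ for all $k$, so each invocation of the Lemma is legitimate, and the chain-condition facts (Observation \ref{obs:P_is_delta(P)+cc}) ensure the $\delta(\B_{\alpha_k})$ stay $\le \omega_1$ by standardness.

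\emph{Limit case $\cf(\alpha) > \omega$.} We argue by cases on whether $\B_\alpha$ is a direct limit below a given condition. If for every $i < \alpha$, $\forces_{\B_i} \cf(\check\alpha) > \omega$, then by Fact \ref{fact:BasicsOnRCSiterations}\ref{RCSFact.item:DirectLimitAtStronglyUncountableCofinalities} $\B_\alpha$ is the direct limit, so $\bigcup_{i<\alpha} \B_i$ is dense; given a condensation with parameters as in the definition, the condition $a \in \B_\alpha^+$ to be extended can be refined below some $b \in \B_i$ for $i < \alpha$ in the range of $\sigma$ (using elementarity and that $\alpha \in \ran\sigma$, or rather working with $\sup \sigma \rest \On\cap\bN$), reducing to the already-handled case of $\B_i$; one then checks the genericity over $\bN$ for the full $\bar\B_\alpha$ follows because the direct-limit structure means a generic for the union is generic for the whole, and the supremum condition for $\blambda_0 = \On\cap\bN$ and the $\blambda_i$ is inherited. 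In the mixed situation one decomposes $a$ into a piece forcing $\cf(\check\alpha) = \omega$ (handled by the countable-cofinality argument relativized) and a piece forcing $\cf(\check\alpha) > \omega$ (handled as a direct limit), using that the RCS limit is built exactly to allow this decomposition; here Fact \ref{fact:lambdacc} (Baumgartner) guarantees the relevant chain condition of $\B_\alpha$ so that $\delta(\B_\alpha)$ is controlled and the $\lambda_i$ remain above it.

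\emph{Main obstacle.} The delicate point — and the heart of Jensen's argument — is the countable-cofinality limit case: one must run the Subproperness Extension Lemma $\omega$ many times along a single countable elementary submodel $\bN$, threading the conditions $c_k$ so their infimum is nonzero in the RCS limit \emph{and} simultaneously ensuring the amalgamated preimage filter is fully $\bar\B_\alpha$-generic over $\bN$ (not merely generic for each initial segment) \emph{and} preserving the $n+1$ supremum conditions at every stage. Getting all three of these to survive the limit — in particular verifying that the limit embedding $\sigma_1$ is still fully elementary $\bN \prec N$ and that $\sigma_1^{-1}{}``I$ is generic for the limit algebra, rather than just for a cofinal union of subalgebras — requires the bookkeeping to interleave dense-set enumeration with the block-by-block extension, and is where the hypothesis that $\bN$ is full (so that $\bN[\bG]$ stays full, enabling the next application of subproperness) is used essentially. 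The successor and strongly-uncountable-cofinality cases are comparatively routine once the Extension Lemma and the RCS structure facts are in hand.
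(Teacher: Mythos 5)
First, be aware that the paper gives no proof of this theorem: it is quoted from Jensen's notes, and the paper only writes out full details for the preservation variants (Theorems \ref{thm:RCSIteratingSubproperSouslinTreePreservingForcing}, \ref{thm:RCSIteratingSubproperForcingNotAddingBranchesToOmega1Trees}, \ref{thm:RCSIterationOfSubproperOO-boundingForcing}), which in turn \emph{cite} Theorem \ref{thm:NicelySubproperIterations} for the subproperness of limit stages. Measured against that template and against Jensen's argument, your skeleton is the right one: induction on the length, the Subproperness Extension Lemma at successors, and at limits a block-by-block application of that lemma along an $\omega$-chain $\gamma_0<\gamma_1<\cdots$ cofinal in $\sup(\ran(\sigma)\cap\delta)$, with $h_{\gamma_n}(c_{n+1})=c_n$ so that $\seq{c_n}{n<\omega}$ is a thread, fullness of $\bN$ (hence of $\bN[\bG]$) feeding each successive application of subproperness of the quotient.

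There are, however, two places where what you wrote would not go through as stated. The substantive one is the genericity of $(\sigma')^{-1}{}``G$ over $\bN$ for the \emph{limit} algebra. You assert that meeting ``every dense subset of each $\bar{\B}_{\alpha_k}$ lying in $\bN$'' makes the induced filter $\bar{\B}_\alpha$-generic over $\bN$; it does not, since in $\bN$ the algebra $\bar{\B}_{\bar\alpha}$ is the RCS limit of the $\bar{\B}_{\bar\alpha_k}$, and its dense subsets (sets of infima of threads, say) are not dense subsets of any initial segment --- your arrangement yields only genericity for $\bigcup_k\bar{\B}_{\bar\alpha_k}$. You correctly flag this as the main obstacle at the end, but the mechanism has to be supplied: enumerate the dense subsets of $\bar{\B}_{\bar\alpha}$ lying in $\bN$ as $\seq{\bar{D}_n}{n<\omega}$ and, at stage $n$, choose $\bar{b}_n\le\bar{b}_{n-1}$ below a member of $\bar{D}_n$ with $h_{\gamma_n}(b_n)$ forced into the generic; this is exactly the role of the auxiliary conditions $\dot{b}_n\in\check{X}$ in the paper's template, where $X$ is the dense set of thread-infima (inverse limit) or $\bigcup_{i<\delta}\B_i$ (direct limit), and claim (C) there is what shows all the $b_n$ end up in $G$. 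The second issue is the case division: splitting on $\cf(\alpha)$ as computed in $\V$ is not adequate, because later iterands can collapse $\cf(\delta)$. The split the argument needs is ``$\cf(\delta)\le\delta(\B_i)$ for some $i<\delta$'' (whence, by standardness, some $\B_j$ forces $\cf(\check\delta)\le\omega_1$ and one runs the countable-or-$\omega_1$-cofinality argument in $\V[G_j]$) versus ``$\cf(\delta)>\delta(\B_i)$ for all $i$'' (whence $\delta$ is regular, $\B_\delta$ is the ${<}\delta$-c.c.\ direct limit, and the thread's infimum is nonzero because it lives in $\B_{\tilde\delta}$ for $\tilde\delta=\sup(\ran(\sigma)\cap\delta)$ of countable cofinality). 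Your proposal to decompose $a$ into a piece forcing $\cf(\check\alpha)=\omega$ and a piece forcing the opposite does not by itself help, because the countable-cofinality argument has to be run \emph{after} the collapse has occurred, in the appropriate $\V[G_j]$ --- which is precisely what the $\delta(\B_i)$-based split arranges.
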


In detail, Jensen proved the version of this theorem for subcomplete forcing in \cite[\S 3, Theorem 2]{Jensen:IterationTheorems}, and states that the version for subproper forcing can be reproven easily (see \cite[\S 4, p.~19]{Jensen:IterationTheorems}).

Jensen \cite{Jensen:ExtendedNamba} uses a more flexible notion of iteration of subcomplete forcing notions, and an elegant proof of a generalization of the main iteration theorem of that work is given in \cite{Jensen:IterationTheorems}. We follow the latter presentation here, albeit in the context of subproper forcing. The following is a version of \cite[\S 3, p.~9]{Jensen:IterationTheorems} translated from the subcomplete to the subproper context. We also work with $\delta(\B_i)$ rather than $\card{\B_i}$.

\begin{defn}[after Jensen]
\label{def:NicelyGammaIteration}
Let $\Gamma$ be a class of complete Boolean algebras.
A standard iteration $\vec{\B}=\seq{\B_i}{i<\alpha}$ is \emph{nicely $\Gamma$} if, letting $\delta_i=\delta(\B_i)$, for $i<\alpha$, the following hold:
\begin{enumerate}[label=(\arabic*)]
  \item
  \label{item:GammaIterands}
    Suppose $i+1<\alpha$. Then
    $\forces_{\B_i}\check{\B}_{i+1}/\dot{G}_i\in\Gamma$.
  \item
  \label{item:ConditionsAtCountableCofinality}
  Suppose $\lambda<\alpha$ is a limit ordinal of countable cofinality.
  \begin{enumerate}[label=(\alph*)]
    \item
    \label{item:ThreadsAreNonzero}
    If $\vec{b}$ is a thread in $\vec{\B}\rest\lambda$, then $\bigwedge_{i<\lambda}b_i\neq 0$ in $\B_\lambda$.
    \item
    \label{item:SubpropernessPropagates}
    If for every $i<\lambda$, $\B_i\in\Gamma$, then $\B_\lambda\in\Gamma$.
  \end{enumerate}
  \item
  \label{item:DirectLimitAtUncountableCofinality}
  Suppose that $\lambda<\alpha$ is a limit ordinal such that for every $i<\lambda$, $\forces_{\B_i}\cf(\check{\lambda})>\omega$. Then $\bigcup_{i<\lambda}\B_i$ is dense in $\B_\lambda$, that is, $\B_\lambda$ is the direct limit of $\vec{\B}\rest\lambda$.
  \item
  \label{item:AllOfThisHoldsInV[G_i]}
  Let $i<\alpha$. Then, if $G_i$ is $\B_i$-generic, \ref{item:GammaIterands}-\ref{item:DirectLimitAtUncountableCofinality} hold in $\V[G_i]$ for $\seq{\B_{i+j}/G_i}{j<\alpha-i}$.
\end{enumerate}
Finally, we say that $\Gamma$ is \emph{nicely iterable} if whenever $\vBB=\seq{\B_i}{i<\alpha}$ is a nicely $\Gamma$ iteration, then for every $h\le \delta<\alpha$, if $G_h$ is $\B_h$-generic, then in $\V[G_h]$, $\B_\delta/G_h\in\Gamma$.
\end{defn}

That is, in a nicely $\Gamma$ iteration, we already know that belonging to $\Gamma$ propagates to limit stages of countable cofinality, but we have almost no restrictions as to how those limit stages are formed. The following observation shows that iterations of subproper forcing as in Theorem \ref{thm:RCSiterationOfSubPforcing} are nicely subproper.

\begin{obs}
\label{obs:StandardRCSIterationsOfSubPAreNicelySubP}
Every standard RCS iteration of subproper forcings is nicely subproper.
\end{obs}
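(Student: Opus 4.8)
The plan is to verify, clause by clause, that a standard RCS iteration $\vec{\B}=\seq{\B_i}{i<\alpha}$ of subproper forcings meets the four conditions in Definition~\ref{def:NicelyGammaIteration} with $\Gamma$ the class of complete subproper Boolean algebras. (Such an iteration is in particular standard, so ``nicely subproper'' is meaningful for it.) The point of the observation is that almost all of this is bookkeeping: the conditions constraining limit stages are immediate consequences of Fact~\ref{fact:BasicsOnRCSiterations}, and the single condition carrying real mathematical content --- propagation of subproperness to limits of countable cofinality --- is already part of Jensen's iteration theorem, Theorem~\ref{thm:RCSiterationOfSubPforcing}, which we take as given.

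First I would dispose of the first three clauses. Condition~(1) is built into the hypothesis: by definition of an iteration of forcings in $\Gamma$, $\forces_{\B_i}\check{\B}_{i+1}/\dot{G}_i\in\Gamma$ for all $i+1<\alpha$. For condition~(2)(a), fix a limit $\lambda<\alpha$ with $\cf(\lambda)=\omega$; by Fact~\ref{fact:BasicsOnRCSiterations}~\ref{RCSFact.item:InverseLimitAtCountableCofinalities} the RCS limit $\B_\lambda$ is in fact the inverse limit of $\vec{\B}\rest\lambda$, and the inverse limit by definition has $\bigwedge_{i<\lambda}^{\B_\lambda}b_i\neq 0$ for every thread $\vec{b}$ in $\vec{\B}\rest\lambda$. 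For condition~(2)(b) I would simply note that, since $\vec{\B}$ is a standard RCS iteration of forcings in $\Gamma$ and $\lambda<\alpha$, Theorem~\ref{thm:RCSiterationOfSubPforcing} already gives that $\B_\lambda\in\Gamma$, so the implication holds with its conclusion true. Condition~(3) is word for word Fact~\ref{fact:BasicsOnRCSiterations}~\ref{RCSFact.item:DirectLimitAtStronglyUncountableCofinalities}.

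The remaining task is condition~(4): for $i<\alpha$ and $G_i$ a $\B_i$-generic, conditions~(1)--(3) must hold in $\V[G_i]$ of the tail iteration $\seq{\B_{i+j}/G_i}{j<\alpha-i}$. For the two structural conditions this is painless: by Fact~\ref{fact:BasicsOnRCSiterations}~\ref{RCSFact.item:PropertiesAreHereditary} the conclusions of \ref{RCSFact.item:InverseLimitAtCountableCofinalities} and \ref{RCSFact.item:DirectLimitAtStronglyUncountableCofinalities} hold in $\V[G_i]$ about the tail iteration, so the arguments from the previous paragraph for~(2)(a) and~(3) run verbatim there; and condition~(1) in $\V[G_i]$ reduces to condition~(1) for $\vec{\B}$ via the factoring isomorphism $(\B_{i+k+1}/G_i)/\dot{H}\cong\B_{i+k+1}/\dot{G}_{i+k}$. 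The one place where something beyond definition-chasing enters is condition~(2)(b) relativized to $\V[G_i]$, and here I would avoid analyzing the tail iteration directly and instead invoke Theorem~\ref{thm:RCSiterationOfSubPforcing} for $\vec{\B}$ itself: the statement that the subproper Boolean algebras are \emph{standard RCS iterable} is, by definition, precisely that $\B_j/G_h\in\Gamma$ holds in $\V[G_h]$ for all $h\le j<\alpha$, which in particular yields $\B_\lambda/G_i\in\Gamma$ in $\V[G_i]$ for any limit $\lambda$ with $i\le\lambda<\alpha$ --- exactly what condition~(2)(b) for the tail asks for. So there is no genuine obstacle: the entire weight of the observation rests on Theorem~\ref{thm:RCSiterationOfSubPforcing} (cited, not reproved) together with Fact~\ref{fact:BasicsOnRCSiterations}, and the proof is just a matter of lining these up with the clauses of Definition~\ref{def:NicelyGammaIteration}.
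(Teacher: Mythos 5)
Your proposal is correct and follows essentially the same route as the paper: both reduce all structural clauses of Definition~\ref{def:NicelyGammaIteration} (including their relativizations to $\V[G_i]$ via Fact~\ref{fact:BasicsOnRCSiterations}~\ref{RCSFact.item:PropertiesAreHereditary}) to Fact~\ref{fact:BasicsOnRCSiterations}, and obtain clause~(2)(b) --- both in $\V$ and in $\V[G_i]$ --- by citing Theorem~\ref{thm:RCSiterationOfSubPforcing} rather than reproving anything. The only detail the paper mentions that you pass over lightly is that the tail $\seq{\B_{i+j}/G_i}{j<\alpha-i}$ is itself a \emph{standard} iteration in $\V[G_i]$, but this is the same routine factoring observation you use for clause~(1).
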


\begin{proof}
Let $i<\alpha$, and let $G_i$ be $\B_i$-generic.
We have to verify that conditions \ref{item:GammaIterands}-\ref{item:DirectLimitAtUncountableCofinality} of Definition \ref{def:NicelyGammaIteration} hold of $\seq{\B_{i+j}/G_i}{j<\alpha-i}$ in $\V[G_i]$.

Condition \ref{item:GammaIterands} is trivial: let $j+1<\alpha-i$. Since $\forces_{\B_{i+j}}\check{\B}_{i+j+1}/\dot{G}_{i+j}$ is subproper,
it clearly has to be that $\forces_{\B_{i+j}/G_i}(\check{\B}_{i+j+1}/\check{G}_i)/\dot{G}_{\B_{i+j}/\check{G}_i}$ is subproper in $\V[G_i]$. It can be shown similarly that $\seq{\B_{i+j}/G_i}{j<\alpha-i}$ is a standard iteration in $\V[G_i]$.

For condition \ref{item:ConditionsAtCountableCofinality}, let $\lambda<\alpha-i$ be a limit ordinal that has countable cofinality in $\V[G_i]$. By Fact \ref{fact:BasicsOnRCSiterations}, part \ref{RCSFact.item:PropertiesAreHereditary}, the first two parts of that fact apply to $\seq{\B_{i+j}/G_i}{j<\alpha-i}$ in $\V[G_i]$. Thus, $\B_\lambda/G_i$ is the inverse limit of $\seq{\B_{i+j}/G_i}{j<\lambda}$. This implies condition \ref{item:ConditionsAtCountableCofinality}\ref{item:ThreadsAreNonzero}.
And by Theorem \ref{thm:RCSiterationOfSubPforcing}, $\B_\lambda/G_i$ is subproper in $\V[G_i]$, so condition  \ref{item:ConditionsAtCountableCofinality}\ref{item:SubpropernessPropagates} holds.

Finally, condition \ref{item:DirectLimitAtUncountableCofinality} hold because part \ref{RCSFact.item:DirectLimitAtStronglyUncountableCofinalities} of Fact \ref{fact:BasicsOnRCSiterations} applies to $\seq{\B_{i+j}/G_i}{j<\alpha-i}$ in $\V[G_i]$, by part \ref{RCSFact.item:PropertiesAreHereditary} of that fact.
%
%
\end{proof}

The following is a version of a theorem that Jensen proved for subcomplete forcing in \cite[\S3, pp.~9-11]{Jensen:IterationTheorems}. By Observation \ref{obs:StandardRCSIterationsOfSubPAreNicelySubP}, it generalizes Theorem \ref{thm:RCSiterationOfSubPforcing}.

\begin{thm}[Jensen]
\label{thm:NicelySubproperIterations}
The class of subproper Boolean algebras is nicely iterable.
\end{thm}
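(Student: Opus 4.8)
The plan is to prove Theorem \ref{thm:NicelySubproperIterations} by induction on the length $\alpha$ of a nicely subproper iteration $\vBB=\seq{\B_i}{i<\alpha}$, showing that for all $h\le\delta<\alpha$ and all $\B_h$-generic $G_h$, the quotient $\B_\delta/G_h$ is subproper in $\V[G_h]$. By condition \ref{item:AllOfThisHoldsInV[G_i]} of Definition \ref{def:NicelyGammaIteration}, passing to $\V[G_h]$ we reduce to the case $h=0$, so it suffices to show that each $\B_\delta$ (for $\delta<\alpha$) is subproper in $\V$, assuming inductively that all proper initial segments behave as claimed. The successor step $\delta=\gamma+1$ uses the Subproperness Extension Lemma (Lemma \ref{lem:SubpropernessExtensionLemma}): we have $\B_\gamma\sub\B_{\gamma+1}$, and by \ref{item:GammaIterands} the quotient $\check\B_{\gamma+1}/\dot G_\gamma$ is subproper as forced by $\B_\gamma$; combined with the inductive subproperness of $\B_\gamma$, a routine two-step argument (of the kind standard in this subject — first pull back into $\bN$ via the subproperness of $\B_\gamma$, then apply the extension lemma to lift the embedding through the quotient) yields subproperness of $\B_{\gamma+1}$. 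The limit step with $\cf(\delta)>\omega$ (computed in $\V$) is handled by \ref{item:DirectLimitAtUncountableCofinality}: $\B_\delta$ is the direct limit, and using Observation \ref{obs:P_is_delta(P)+cc} together with Fact \ref{fact:lambdacc} one shows $\B_\delta$ has a strong chain condition, so any relevant countable elementary submodel already sees a cofinal piece of the iteration below some $\B_i$ with $i<\delta$, reducing to the inductive hypothesis.

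The heart of the matter, and the main obstacle, is the limit step when $\cf(\delta)=\omega$. Here one fixes $\theta$ large, $N=L_\tau^A\models\ZFC^-$ with $H_\theta\sub N$, a countable full $\bN$, and $\sigma:\bN\prec N$ with $\sigma(\bS)=S$ where $S$ codes $\theta$, $\B_\delta$, $a\in\B_\delta^+$ and regular $\lambda_1,\dots,\lambda_n>\delta(\B_\delta)$. One arranges inside $\bN$ a corresponding picture $\bvBB=\seq{\bB_j}{j<\bdelta}$ with $\sigma(\bvBB)=\vBB\rest\delta$ (more precisely $\sigma$ maps the $\bN$-version appropriately), picks a cofinal sequence $\seq{\xi_k}{k<\omega}$ in $\delta$ lying in $\ran(\sigma)$, say $\xi_k=\sigma(\bxi_k)$ with $\seq{\bxi_k}{k<\omega}$ cofinal in $\bdelta$, and then builds, by recursion on $k$, an increasing (in the sense of extending more and more of the data) sequence of conditions $c_k\in\B_{\xi_{k}}$ and names $\dot\sigma_k$ for embeddings, applying the Subproperness Extension Lemma at each step with $\bbA=\B_{\xi_k}$, $\B=\B_{\xi_{k+1}}$, using inductively (over the length of the iteration) that $\forces_{\B_{\xi_k}}\check\B_{\xi_{k+1}}/\dot G_{\xi_k}$ is subproper, which follows from the successor and shorter-limit cases already established. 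One must also feed $\dot t$, $\dot{\barb}$, $\dot b$ through the lemma so that successive embeddings cohere and so that a designated generic for $\B_\delta$ gets threaded correctly. The retraction-commutativity clause $h(c_{k+1})=c_k$ in the lemma is exactly what makes $\vec c=\seq{c_k}{k<\omega}$ a thread in $\vBB\rest\delta$; by condition \ref{item:ConditionsAtCountableCofinality}\ref{item:ThreadsAreNonzero}, $c:=\bigwedge_k c_k\neq 0$ in $\B_\delta$, and (refining by the density of threads in the inverse limit, Fact \ref{fact:BasicsOnRCSiterations}\ref{RCSFact.item:InverseLimitAtCountableCofinalities}) $c\le a$.

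The remaining work is to verify that $c$ witnesses subproperness. Given $I$ generic for $\B_\delta$ with $c\in I$, set $G_k=I\cap\B_{\xi_k}$ and $\sigma_k=\dot\sigma_k^{G_{k+1}}$; the coherence built into the construction gives that the $\sigma_k$ stabilize on each fixed element of $\bN$, so $\sigma'=\bigcup_k\sigma_k$ (suitably interpreted as the common value) is a well-defined elementary embedding $\sigma':\bN\prec N$ with $\sigma'(\bS)=S$. Clause (6) of the Extension Lemma at each stage gives that $(\sigma_k)^{-1}``G_k$ is $\bB_{\bxi_k}$-generic over $\bN$; since $\bB_{\bdelta}$ is the appropriate limit in $\bN$ of the $\bB_{\bxi_k}$ and $\seq{\bxi_k}{k<\omega}$ is cofinal in $\bdelta$, these cohere to show $(\sigma')^{-1}``I$ is $\bB_{\bdelta}$-generic over $\bN$, giving (c). For the suprema condition \ref{item:SupremaCondition}, note each $\lambda_i=\sigma(\blambda_i)$ for some $\blambda_i\in\bN$, and $\sup\sigma_k``\blambda_i=\sup\sigma``\blambda_i$ holds at the base (stage $0$ is just $\sigma$) and is preserved at each step by clause (7) of the lemma; passing to the limit, $\sup\sigma'``\blambda_i=\sup\sigma``\blambda_i$, and likewise for $\blambda_0=\On\cap\bN$. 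Finally, \ref{item:ConditionsAtCountableCofinality}\ref{item:SubpropernessPropagates} is precisely the statement that $\B_\delta\in\Gamma$ at such limits, which together with the cases above closes the induction; the bookkeeping to keep the names $\dot\sigma_k$, $\dot t$, $\dot b$ compatible across infinitely many applications of the Extension Lemma — essentially a fusion argument — is the part demanding the most care.
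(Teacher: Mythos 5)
Your proposal misplaces the content of the theorem. The case you call ``the heart of the matter'' --- limits $\delta$ with $\cf(\delta)=\omega$ --- is precisely the case that is \emph{vacuous} here: condition \ref{item:ConditionsAtCountableCofinality}\ref{item:SubpropernessPropagates} of Definition \ref{def:NicelyGammaIteration} (together with its relativization \ref{item:AllOfThisHoldsInV[G_i]}) simply \emph{assumes} that subproperness propagates to limit stages of countable cofinality once all earlier stages are subproper. That is the whole point of the ``nicely $\Gamma$'' formulation, and it is why the paper's proof reduces to Jensen's argument for \cite[\S 4, Thm.~5]{Jensen:IterationTheorems} ``except that it is somewhat simpler, because the limit of countable cofinality case is vacuous now.'' Worse, your fusion argument for that case would not go through as written: you appeal to ``the density of threads in the inverse limit, Fact \ref{fact:BasicsOnRCSiterations}\ref{RCSFact.item:InverseLimitAtCountableCofinalities}'' to get $c\le a$, but that fact is about RCS iterations. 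A nicely subproper iteration is under almost no constraint as to how $\B_\delta$ is formed at countable cofinality; condition \ref{item:ConditionsAtCountableCofinality}\ref{item:ThreadsAreNonzero} only says thread-infima are nonzero, not that they are dense, so there is in general no way to thread your construction below an arbitrary $a\in\B_\delta^+$.

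Conversely, the cases you dispatch with a chain-condition remark are where the real work lies. If some $i<\delta$ has $\cf(\delta)\le\delta(\B_i)$ (in particular whenever $\cf(\delta)=\omega_1$), then after passing to an intermediate extension $\cf(\delta)\le\omega_1$ and $\B_\delta$ is a direct limit with no useful chain condition; a countable $\bN$ with $\bdelta\in\bN$ does \emph{not} ``see a cofinal piece below some $\B_i$'' in any way that reduces to the inductive hypothesis for a single stage. One genuinely needs the fusion construction here --- exactly the one in the proof of Theorem \ref{thm:RCSIteratingSubproperSouslinTreePreservingForcing}, Case 1 --- building a thread $c_n\in\B_{\gamma_n}$ along $\gamma_n\nearrow\tdelta=\sup(\ran(\sigma)\cap\delta)<\delta$, where $\tdelta$ has cofinality $\omega$ so that \ref{item:ConditionsAtCountableCofinality}\ref{item:ThreadsAreNonzero} applies at $\tdelta$, and then verifying genericity of $(\sigma')^{-1}{}``I$ for the direct limit $\bB_{\bdelta}$ over $\bN$. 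In the remaining case ($\cf(\delta)>\delta(\B_i)$ for all $i<\delta$, so $\delta$ regular and $\B_\delta$ is ${<}\delta$-c.c.), the chain condition alone does not produce the embedding either: to ensure that the pullback of $I$ is generic for all of $\bB_{\bdelta}$ rather than for a proper initial segment, one must keep $\ran(\sigma')\cap\delta$ cofinal in $\sup(\ran(\sigma)\cap\delta)$, and that is exactly what the suprema condition \ref{item:SupremaCondition} of Definition \ref{def:SubpropernessWithSupremumCondition} is for (Jensen: ``we needed (d) to handle certain regular limit points in the iteration''). Your sketch invokes the suprema condition only inside the unnecessary $\cf(\delta)=\omega$ fusion and never where it is actually load-bearing.
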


\begin{proof}
The proof is a virtual repetition of the argument of Jensen's proof of \cite[\S 4, Theorem 5, pp.~3-12]{Jensen:IterationTheorems}, incorporating the changes necessitated by working with $\delta(\B_i)$ (as in \cite[\S 3, p. 2, Theorem 2]{Jensen:IterationTheorems}),
except that it is somewhat simpler, because the limit of countable cofinality case is vacuous now. We have checked that the proof goes through, and so has Jensen (see
\cite[\S 4, last three lines on p.~19]{Jensen:IterationTheorems}).
\end{proof}

%
%
%
%
%
%

\subsection{Iterating subproper Souslin tree preserving forcing}
\label{subsec:IteratingSubproperSouslinTreePreservingForcing}

The main idea for this section stems from Miyamoto \cite[Lemma 5.0]{Miyamoto:IteratingSemiproperPreorders}, even though we do not employ his ``nice iterations'' here. In the original setting, a Souslin tree $T$ is fixed, and it is shown that nice limits of nice iterations of semi-proper forcing notions that preserve $T$ also preserve $T$. The corresponding theorem holds for subproper forcing as well, and one can work with RCS iterations rather than nice iterations too, as we shall show. However, we will first prove a different version of this preservation fact, because we want to establish a proof template that we will reuse in different situations later. The proof of that fact will be slightly more complicated and hence more suitable for these later variations.

\begin{defn}
\label{def:SouslinTreePreservingForcing}
A forcing notion $\P$ preserves Souslin trees if for every Souslin tree $T$, $\forces_\P$``$\check{T}$ is Souslin.''
\end{defn}

The main difference between this concept and the preservation of a fixed Souslin tree, when forming iterations of such forcing notions, is that iterands in an iteration of Souslin tree preserving forcing notions are required to preserve the Souslin trees that may have been added by earlier stages of the iteration, not only one fixed $T$, or some collection of Souslin trees in the ground model. We will give the proof of the following theorem in considerable detail, in order to establish a point of reference for later variations of the argument.

\begin{thm}
\label{thm:RCSIteratingSubproperSouslinTreePreservingForcing}
The class of subproper Boolean algebras that are Souslin tree preserving is standard RCS iterable.
%
\end{thm}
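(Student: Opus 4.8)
The plan is to combine the Subproperness Extension Lemma (Lemma~\ref{lem:SubpropernessExtensionLemma}) with the nice iterability of subproper forcing (Theorem~\ref{thm:NicelySubproperIterations}), strengthening the former to also track preservation of a Souslin tree. By Observation~\ref{obs:StandardRCSIterationsOfSubPAreNicelySubP} it suffices to show that the class $\Gamma$ of subproper, Souslin-tree-preserving complete Boolean algebras is nicely iterable; so let $\vec{\B}=\seq{\B_i}{i<\alpha}$ be a nicely $\Gamma$ iteration, and, passing to $\V[G_h]$ and reindexing, it is enough to show $\B_\alpha$ (the last algebra, when $\alpha=\delta+1$, or more precisely $\B_\delta/G_h$) is Souslin-tree preserving, given that it is subproper by Theorem~\ref{thm:NicelySubproperIterations}. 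Fix in $\V$ a Souslin tree $T$; I must show $\forces_{\B_\alpha}$``$\check{T}$ is Souslin''. Suppose not: there are $b\in\B_\alpha^+$ and a $\B_\alpha$-name $\dot{A}$ such that $b$ forces $\dot A$ is a maximal antichain in $T$ of size $\omega_1$ (equivalently, an uncountable antichain). The strategy is the usual one: find a countable elementary submodel capturing everything, use the iteration hypothesis stage-by-stage to build a master condition below $b$ forcing that $\dot A\cap(T\rest\delta_0)$, where $\delta_0$ is the height of the $\omega_1$-tree corresponding to the model, is already a maximal antichain — contradicting uncountability.

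The key step is to prove a ``Souslin tree preserving'' version of the Subproperness Extension Lemma: in the situation of Lemma~\ref{lem:SubpropernessExtensionLemma}, add to the hypotheses that $T\in H_\theta$, that $\bN$ additionally knows $T$ (via a name/parameter so that $\sigma_0^{-1}(T)=\bT$ is a Souslin tree in $\bN$), and that $\forces_\bbA$``$\check\B/\dot G_\bbA$ is Souslin-tree preserving, as verified by $\check\theta$''; conclude in addition that the condition $c$ can be chosen so that it forces: $\sigma_1``(\bT\rest\bdelta)\cap\dot A$ is predense in $T\rest\bdelta$ below any given condition, or more directly, that $c$ is a ``$(\bN,T)$-preserving'' condition. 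Concretely, running the proof of Lemma~\ref{lem:SubpropernessExtensionLemma}, after passing to $\V[G]$ we have $\B/G$ Souslin-tree preserving; since $T$ is still Souslin in $\V[G]$ (as $\bbA$, being an initial segment of the iteration, preserves Souslin-ness of $T$ by the induction on the length of the iteration — this is where the earlier-stage hypotheses get used) we may, when applying subproperness of $\B/G$ in $\V[G]$, demand the generic extension $\V[G][H]$ keep $T$ Souslin, and then use the standard argument (Miyamoto~\cite[Lemma 5.0]{Miyamoto:IteratingSemiproperPreorders}, transplanted) that the embedding $\sigma'$ reflects maximal antichains of $T$ below the model's height. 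I would organize the final proof as: (1) reduce via Observation~\ref{obs:StandardRCSIterationsOfSubPAreNicelySubP} to nice iterability; (2) state and prove the augmented extension lemma; (3) set up the countable full $\bN\prec N=L_\tau^A$ with $T,\dot A,\vec\B,b\in N$ and all the relevant parameters, choosing $\theta$ large enough to verify subproperness and Souslin-tree preservation of all iterands and of $\B_\alpha$; (4) perform the recursion along the iteration $\vec\B\rest\balpha$ (where $\balpha=\sigma_0^{-1}(\alpha)$), building at each step names $\dot\sigma_i$ and conditions, invoking the augmented extension lemma at successors, part~\ref{item:ThreadsAreNonzero} of nice-$\Gamma$ (thread nonzeroness) at countable cofinality limits, and part~\ref{item:DirectLimitAtUncountableCofinality} (direct limit) at uncountable cofinality limits — exactly as in the proof of Theorem~\ref{thm:NicelySubproperIterations}; (5) at the top, obtain $c\le b$ and $\sigma=\sigma_\alpha$ such that $c$ forces $\sigma^{-1}``I$ is $\bar\B$-generic over $\bN$, $\sigma(\bar{\dot A})=\dot A$, and (the new ingredient) $c$ forces every element of $\dot A$ to be compatible with some element of $\sigma``(\bT)$ that lies below the model's height $\bdelta_0:=\On\cap\bN$; conclude $c\forces \dot A\subseteq T\rest\bdelta_0$ up to compatibility, hence $\dot A$ is countable, a contradiction.

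The main obstacle is the iterative preservation argument at limit stages, specifically making sure that ``$T$ is still Souslin'' holds in every intermediate extension $\V[G_i]$ along the recursion — this is what lets one keep applying the augmented extension lemma — and that this is exactly guaranteed by the induction hypothesis of nice iterability applied to proper initial segments $\vec\B\rest\beta$. One has to be careful that the recursion producing the names $\dot\sigma_i$ respects the Boolean-algebraic bookkeeping of threads (so that the conditions $c_i$ cohere into a genuine thread, nonzero at countable-cofinality limits by \ref{item:ThreadsAreNonzero}, and eventually landing in the direct limit at uncountable-cofinality limits by \ref{item:DirectLimitAtUncountableCofinality}), and that the commitments ``$\dot A$ is captured below $\bdelta_0$'' are threaded through correctly; this is the part that mirrors, and must be carefully transcribed from, the proof of Theorem~\ref{thm:NicelySubproperIterations} and Miyamoto's Lemma~5.0. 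A secondary point requiring care: the augmented extension lemma needs $\bN$ to be full and to correctly see $\bT$ as a Souslin tree and $\bar{\dot A}$ as a name for a maximal antichain, so the choice of parameters $\bs$ (or the $\blambda_i$'s) must encode $T$, $\dot A$, and $b$; once the setup is right, the verification at successor steps is a routine combination of the (augmented) One-Step argument with the standard reflection of antichains through $\sigma'$.
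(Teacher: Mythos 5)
Your overall shape is right (countable full $\bN\prec N$, the level $\Omega=\omega_1^{\bN}$ of $T$, an $\omega$-recursion along a cofinal sequence $\seq{\gamma_n}{n<\omega}$ through $\sup(M_0\cap\delta)$ applying the Subproperness Extension Lemma at each step, and a thread $c=\bigwedge_n c_n$ forcing $\dot A\sub T|\Omega$), but there are two concrete gaps. First, the opening reduction is circular: Observation~\ref{obs:StandardRCSIterationsOfSubPAreNicelySubP} only tells you that a standard RCS iteration of subproper forcings is \emph{nicely subproper}; to invoke nice iterability of the class $\Gamma$ of subproper, Souslin-tree-preserving algebras you would need the given RCS iteration to be a \emph{nicely $\Gamma$} iteration, and clause \ref{item:ConditionsAtCountableCofinality}\ref{item:SubpropernessPropagates} of Definition~\ref{def:NicelyGammaIteration} for this $\Gamma$ asserts precisely that countable-cofinality RCS limits preserve Souslin trees --- which is the hard case of the theorem you are trying to prove. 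The paper goes the other way: it proves the RCS theorem directly, using the thread structure of the inverse limit at countable-cofinality stages, and only afterwards deduces the nice-iterability version (Theorem~\ref{thm:NicelyIteratingSubproperT/SouslinnessPreserving}), where the countable-cofinality case has become vacuous. Second, you omit the case of limit $\delta$ with $\cf(\delta)>\delta(\B_i)$ for all $i<\delta$. There the countable-submodel/fusion argument cannot run (no countable model is cofinal in $\delta$, and there is no intermediate extension in which $\cf(\delta)\le\omega_1$); the paper handles it by showing $\delta$ is regular $\ge\omega_2$, that $\B_\delta$ is the direct limit and ${<}\delta$-c.c.\ via Fact~\ref{fact:lambdacc}, so an uncountable antichain of $T$ would already appear at an earlier stage, contradicting the induction hypothesis. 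This case needs to be stated and argued separately.

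A smaller point of comparison: the paper does not augment the extension lemma. It applies Lemma~\ref{lem:SubpropernessExtensionLemma} verbatim to $\B_{\gamma_m}\sub\B_{\gamma_n}$ and then does the tree work inside the recursion: in $M_n[I]$ one forms the dense set $D$ of nodes $t$ having a member of $\dot A$ below them decided by a suitable condition, takes a maximal antichain $A\sub D$ in $M_n[I]$, and uses that $\B_{\gamma_n}$ preserves $T$ to conclude $A$ is countable, hence $A\sub T|\Omega$ and some $t\in A$ lies below $s_{n-1}$; this supplies $\dot b_n$. Your ``augmented extension lemma'' is left vague at exactly this point --- what the condition $c$ is supposed to force about $\dot A$ and how one node of $T(\Omega)$ is handled per step --- and this is where the actual antichain-bounding mechanism lives. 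Packaging it into a strengthened one-step lemma could be made to work, but as written the key conclusion of that lemma is not pinned down.
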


\begin{proof}
Let $\vec{\B}=\seq{\B_i}{i<\alpha}$ be a standard RCS iteration of forcings that are subproper and Souslin-tree preserving.
We prove by induction on $\delta$: whenever $h\le\delta<\alpha$ and $G_h$ is $\B_h$-generic, then in $\V[G_h]$, $\B_\delta/G_h$ is subproper and Souslin tree preserving.

It suffices to focus on the Souslin tree preservation, since by Theorem \ref{thm:NicelySubproperIterations}, $\B_\delta/G_h$ is subproper in $\V[G_h]$ whenever $h\le\delta<\alpha$ and $G_h$ is $\B_h$-generic.

The successor case is trivial, as is the case $h=\delta$. So let $\delta$ be a limit ordinal, and let $h<\delta$.

Note that by Observation \ref{obs:StandardRCSIterationsOfSubPAreNicelySubP}, $\vBB$ is nicely subproper. We will use this, rather than that $\vBB$ is an RCS iteration, whenever possible. Doing so will make it easier to slightly generalize the theorem later.

\noindent{\bf Case 1:} there is an $i<\delta$ such that $\cf(\delta)\le\delta(\B_i)$.

Fix such an $i$. Then whenever $i<j<\delta$, $\forces_j\cf(\check{\delta})\le\omega_1$. It suffices to prove
\begin{enumerate}
\item[(A)] if $i<j<\delta$ and $G_j$ is $\B_j$-generic, then in $\V[G_j]$, $\B_\delta/G_j$ is Souslin tree preserving.
\end{enumerate}
For if we have done so, then the full claim follows: let $h\le i$, and let $G_h\sub\B_h$ be generic. By Theorem \ref{thm:NicelySubproperIterations}, we know that $\B_\delta/G_h$ is subproper in $\V[G_h]$. Thus, it suffices to prove that if $T$ is some Souslin tree in $\V[G_h]$ and $H$ is $\B_\delta/G_h$-generic over $\V[G_h]$, then $T$ is a Souslin tree in $\V[G_h][H]$. But letting $G_{i+1}=(G_h*H)\cap\B_{i+1}$, we know inductively that $T$ is Souslin in $\V[G_{i+1}]=\V[G_h][G_{i+1}/G_h]$, and so, by $(A)$, $T$ is Souslin in $\V[G_{i+1}*((G_h*H)/G_{i+1})]=\V[G_h][H]$.

To prove $(A)$, we would now have to fix a $j<\delta$, a $G_j\sub\B_j$ that's generic, and a $T$ such that in $\V[G_j]$, $T$ is Souslin. We'd have to prove in $\V[G_j]$ that $\B_\delta/G_j$ preserves $T$ as a Souslin tree.
But the iteration $\seq{\B_{j+i}/G_j}{i<\delta-j}$ is RCS (see Fact \ref{fact:BasicsOnRCSiterations}) in $\V[G_j]$, and it satisfies everything in $\V[G_j]$ that we assumed about $\vBB$ in $\V$, with the addition of the fact that in $\V[G_j]$, $\cf(\delta)\le\omega_1$.

Thus, it suffices to show:
\begin{enumerate}
\item[(B)] if $\cf(\delta)\le\omega_1$ then $\B_\delta$ is Souslin tree preserving.
\end{enumerate}
For the argument, carried out in $\V[G_j]$ would prove $(A)$.

Note that by arguing in $\V[G_j]$, but pretending $\V[G_j]$ is $\V$, we effectively absorbed $T$ into $\V$. It is this step that's not necessary if one only wants to preserve one fixed ground model Souslin tree.

To prove (B), let us fix a Souslin tree $T$.

Note that if $\cf(\delta)=\omega_1$, then for every $i<\delta$, $\forces_{\B_i}\cf(\check{\delta})=\check{\omega}_1$, as $\B_i$ preserves $\omega_1$. Thus, $\B_\delta$ is the direct limit of $\vec{\B}\rest\delta$, by part \ref{item:DirectLimitAtUncountableCofinality} of Definition \ref{def:NicelyGammaIteration}, that is, $\bigcup_{i<\delta}\B_i$ is dense in $\B_\delta$ in this case. Let us denote this dense set by $X$.

If, on the other hand, $\cf(\delta)=\omega$, then since $\vBB$ is an RCS iteration, then we know by Fact \ref{fact:BasicsOnRCSiterations} that the set $\{\bigwedge_{i<\delta}t_i\st\seq{t_i}{i<\delta}\ \text{is a thread in}\ \vBB\rest\delta\}$ is dense in $\B_\delta$. In case $\cf(\delta)=\omega$, let $X$ be that dense subset of $\B_\delta$.

Let $\pi:\omega_1\To\delta$ be cofinal, with $\pi(0)=0$.

Let $\dot{A}$ be a $\B_\delta$-name for a maximal antichain in $T$, and let $a_0\in X$ be a condition. We will find a countable $\zeta$ and a condition extending $a_0$ that forces that $\dot{A}\sub\check{T}|\check{\zeta}$. Here, $T|\zeta$ is the union of the levels of $T$ below $\zeta$.

Let $N$ be a model of the form $L_\tau^{\hat{A}}$, with $H_\theta\sub N$, such that $\theta$ verifies the subproperness of each $\B_i$, for $i\le\delta$ and $H_\theta$ is large enough to contain the parameters we need. Specifically, letting $S=\kla{\theta,\delta,\vec{\B},X,\dot{A},T,\pi,a_0}$, we want to fix
$M_0\prec N$ with $S\in M_0$, $M_0$ countable and such that, letting $\sigma_0:\bN\To M_0$ be the inverse of the Mostowski collapse (so that $\bN$ is transitive), $\bN$ is full -- it is easy to see that this situation can be arranged; see the discussion following \cite[Def.~2.21]{Fuchs:CanonicalFragmentsOfSRP}. Let $\tdelta=\sup(M_0\cap\delta)$, and let $\Omega=M_0\cap\omega_1=\crit(\sigma_0)=\omega_1^\bN$. Fix an enumeration $\seq{s_n}{n<\omega}$ of $T(\Omega)$, the $\Omega$-th level of $T$.

Let $\sigma_0^{-1}(S)=\bS=\kla{\btheta,\bdelta,\vec{\bar{\B}},\bX,\dot{\bA},\bT,\bpi,\bar{a}_0}$.
Let $\seq{\nu_i}{i<\omega}$ be a sequence of ordinals $\nu_i<\omega_1^\bN$ such that if we let $\bgamma_i=\bpi(\nu_i)$, it follows that $\seq{\bgamma_i}{i<\omega}$ is cofinal in $\bdelta$, and such that $\nu_0=0$, so that $\bgamma_0=0$. Hence, letting $\gamma_i=\sigma_0(\bgamma_i)$, we have that $\sup_{i<\omega}\gamma_i=\sup(M_0\cap\delta)=\tdelta$. Moreover, whenever $\sigma':\bN\prec N$ is such that $\sigma'(\bpi)=\pi$, it follows that for every $i<\omega$, $\sigma'(\bgamma_i)=\gamma_i=\pi(\nu_i)$, since $\sigma'(\bgamma_i)=\sigma'(\bpi(\nu_i))=\sigma'(\bpi)(\nu_i)=\pi(\nu_i)$.

By induction on $n<\omega$, construct sequences $\seq{\dot{\sigma}_n}{n<\omega}$, $\seq{c_n}{n<\omega}$, $\seq{\dot{b}_n}{n<\omega}$ and $\seq{\dot{\bar{b}}_n}{n<\omega}$ with $c_n\in\B_{\gamma_n}$, $\dot{\sigma}_n,\dot{b}_n,\dot{\barb}_n\in\V^{\B_{\gamma_n}}$, such that for every $n<\omega$, $c_n$ forces the following statements with respect to $\B_{\gamma_n}$:
\begin{enumerate}
\item $\dot{\sigma}_n:\check{\bN}\prec\check{N}$,
\item $\dot{\sigma}_n(\check{\bS})=\check{S}$, and for all $k<n$, $\dot{\sigma}_n(\dot{\bar{b}}_k)=\dot{\sigma}_k(\dot{\bar{b}}_k)$,
\item $\dot{\sigma}_n(\dot{\bar{b}}_n)=\dot{b}_n$ and $\dot{\bar{b}}_n\in\check{\bX}$ (and so, $\dot{b}_n\in\check{X}$),
\item $\check{h}_{\gamma_n}(\dot{b}_n)\in\dot{G}_{\B_{\gamma_{n}}}$,
\item $\dot{\sigma}_n^{-1}``\dot{G}_{\B_{\gamma_n}}$ is $\check{\bN}$-generic for $\check{\bar{\B}}_{\gamma_n}$,
\item $\dot{b}_n/\dot{G}_{\gamma_n}$ forces wrt.~$\check{\B_\delta}/\dot{G}_{\gamma_n}$ that there is a node $t<s_{n-1}$ with $t\in\dot{A}$ (for $n>0$),
\item $c_{n-1}=h_{\gamma_{n-1}}(c_{n})$ (for $n>0$),
\item $\dot{b}_n\le_{\check{\B}_\delta}\dot{b}_{n-1}$ (for $n>0$),
\end{enumerate}
To start off, in the case $n=0$, we set $c_0=\eins$, $\dot{\sigma}_0=\check{\sigma}_0$ and $\dot{b}_0=\check{a}_0$ and $\dot{\barb}_0=\sigma_0^{-1}(\dot{b}_0)$. Clearly then, (1)-(6) are satisfied for $n=0$ (and (7)-(8), as well as the second part of (2), are vacuous for $n=0$).

Now suppose $m=n-1$, and $\dot{\sigma}_l$, $c_l$, $\dot{b}_l$ and $\dot{\barb}_l$ have been defined so that (1)-(8) are satisfied
for $l\le m$.

An application of Lemma \ref{lem:SubpropernessExtensionLemma} (to $\B_{\gamma_m}\sub\B_{\gamma_n}$) yields a condition $c_n\in\B_{\gamma_n}$ and a $\B_{\gamma_n}$-name $\dot{\sigma}_n$ such that $h_{\gamma_m}(c_n)=c_m$ and whenever $I$ is $\B_{\gamma_n}$-generic over $\V$ with $c_n\in I$, $G=I\cap\B_{\gamma_m}$ and $\sigma_n=\dot{\sigma}_n^I$ it follows that $\sigma_n:\bN\prec N$, $\sigma_n^{-1}``I$ is $\bar{\B}_{\bgamma_n}$-generic over $\bN$,
$\sigma_n(\bS)=\sigma_m(\bS)$, $\sigma_n(\barb_k)=\sigma_m(\barb_k)$ for $k\le m$, where $\sigma_m=\dot{\sigma}_m^G$ and for $k\le m$, $\barb_k=\dot{\barb}_k^{I\cap\B_{\gamma_k}}$ as well as $b_k=\dot{b}_k^{I\cap\B_{\gamma_k}}$.
Moreover, we can arrange that $h_{\gamma_n}(\dot{b}_m^G)\in I$. For this last property, let $\dot{b}$ from the statement of Lemma \ref{lem:SubpropernessExtensionLemma} be a $\B_{\gamma_m}$-name for $h_{\gamma_n}(\dot{b}_m)$ and $\dot{\barb}$ a name for the preimage of $b_m$ under $\sigma_n$. Since inductively, $c_m$ forces that $\check{h}_{\gamma_m}(\dot{b}_m)\in\dot{G}_{\gamma_m}$, assumption (3) of the lemma is satisfied, and we get that $h_{\gamma_n}(\dot{b}_m^G)=\dot{b}^G\in I$, as wished.

With these definitions, all the conditions (1)-(8) are satisfied, as long as they don't concern $\dot{b}_n$ and $\dot{\barb}_n$.
To define $\dot{b}_n$ and $\dot{\barb}_n$, let $I$ and $G$ as described in the previous paragraph. Let $\sigma_n=\dot{\sigma}_n^I$, $b_m=\dot{b}_m^I$, $M_n=\ran(\sigma_n)$, $\bI=\sigma_n^{-1}``I$, $\sigma_n^*:\bN[\bI]\isomorphism M_n[I]\prec N[I]$ with $\sigma_n\sub\sigma_n^*$ and $\sigma_n^*(\bI)=I$.

Working in $\V[I]$, we know that $b_m/I$ forces wrt.~$\B_\delta/I$ that $\dot{A}/I$ is a maximal antichain in $T$ (since $b_m\le\dot{b}_0^I=a_0$). Thus, the set
\[D=\{t\in T\st\exists y\in X\exists\bar{t}\le_T t\quad y\le_\delta b_m\land h_{\gamma_n}(y)\in I\
\text{and}\ y/I\forces_{\B_\delta/I}\check{\bar{t}}\in\dot{A}/I\}\]
is dense in $T$. Note that $D\in M_n[I]$.
Working in $M_n[I]$, let $A\sub D$ be a maximal antichain, so
$A\in M_n[I]$.
Since $\B_{\gamma_n}$ preserves $T$ as a Souslin tree, it follows that $A$ is countable in $M_n[I]$, and so, $A\sub M_n[I]$. Since $D$ is dense in $T$, $A$ is (in $\V[I]$) a maximal antichain in $T$, and $A\sub T|\Omega$ (note that $\crit(\sigma_n)=\omega_1^\bN=\Omega$). Hence, there is a $t\in A$ with $t<_T s_m$. Since $A\sub D$, it follows that $t\in D$. Working in $M_n[I]$ again, let $b_n\in X$, $\bar{t}\le_T t$ witness that $t\in D$, i.e., $b_n\le_{\delta}b_m$, $h_{\gamma_n}(b_n)\in I$ and $b_n/I$ forces wrt.~$\B_\delta/I$ that $\check{\bar{t}}\in\dot{A}/I$. Let $\barb_n=(\sigma^*_m)^{-1}(b_n)$.

Since all of this holds in $\V[I]$ whenever $I$ is $\B_{\gamma_n}$-generic over $\V$ and $c_n\in I$, there are $\B_{\gamma_m}$-names $\dot{\bar{t}}$ for $\bar{t}$ and $\dot{b}_n$ for $b_n$ such that $c_n$ forces all of this wrt.~$\B_{\gamma_n}$. In particular, $c_n$ forces that $\dot{\bar{t}}\in\dot{A}$ and $\dot{\bar{t}}<_Ts_{m}$. So (1)-(8) are satisfied. This finishes the recursive construction.

By (7), the sequence $\seq{c_n}{c<\omega}$ is a thread, and so, $c=\bigwedge_{n<\omega}c_n\in\B_\delta^+$:
if $\cf(\delta)=\omega$, then this follows from part \ref{item:ConditionsAtCountableCofinality}\ref{item:ThreadsAreNonzero} of Definition \ref{def:NicelyGammaIteration}. And if $\cf(\delta)=\omega_1$, then $\gamma:=\sup_{n<\omega}\gamma_n<\delta$ and $\cf(\gamma)=\omega$, so again, $c\in\B_\gamma^+\sub\B_\delta^+$, for the same reason.

We claim that $c$ forces that $\dot{A}$ is bounded in $\check{T}$. To see this, let $G$ be $\B_\delta$-generic over $\V$, with $c\in G$. For $n<\omega$, let $b_n=\dot{b}_n^G$.

\claim{(C)}{For all $n<\omega$, $b_n\in G$.}

\begin{proof}[Proof of (C)]
Let $n<\omega$. We have that $h_{\gamma_n}(b_n)\in G\cap\B_{\gamma_n}$, by (4). For every $l\ge n$, $b_l\le b_n$, by (8), so $h_{\gamma_l}(b_l)\le h_{\gamma_l}(b_n)$. Since  $h_{\gamma_l}(b_l)\in G\cap\B_{\gamma_l}$, this implies that for $l\ge n$,
\[h_{\gamma_l}(b_n)\in G\cap\B_{\gamma_l}.\]
This holds for $l<n$ as well, because in that case, $h_{\gamma_l}(b_n)\ge h_{\gamma_n}(b_n)\in G$.

Recall that $\tdelta=\sup_{l<\omega}\gamma_l$. Let $M_n=\ran(\sigma_n)$. It follows that $\sup(\delta\cap M_n)=\tdelta$ (since $\sigma_n(\bpi)=\pi$).

Now, if $\cf(\delta)=\omega_1$, then $\tdelta<\delta$, and we know that $b_n\in X=\bigcup_{i<\delta}\B_i$, so there is an $i<\delta$ such that $b_n\in\B_i$. But since $b_n\in M_n$, the same is true in $M_n$, and this means that there is an $i<\tdelta$ such that $b_n\in\B_i$. But then, letting $l>n$ be such that $\gamma_l>i$, it follows that $b_n=h_{\gamma_l}(b_n)\in G\cap\B_{\gamma_l}$, so $b_n\in G$, as claimed.

If, on the other hand, $\cf(\delta)=\omega$, then since $b_n\in X$, there is a thread $\seq{t_i}{i<\delta}$ in $\vBB\rest\delta$ such that $b_n=\bigwedge_{i<\delta}t_i$. But then, for every $l<\omega$, $t_{\gamma_l}=h_{\gamma_l}(b_n)\in G\cap\B_{\gamma_l}\sub G$. By genericity (and thus $\V$-completeness) of $G$, this implies that $b_n=\bigwedge_{i<\delta}t_i=\bigwedge_{l<\omega}t_{\gamma_l}\in G$.
Note that we used that $\vBB$ is an RCS iteration here.
\end{proof}

Note in particular that $b_0=a_0\in G$. Since this is true whenever $c\in G$, $c\forces_{\B_\delta}\check{a}_0\in\dot{G}$, which implies that $c\le a_0$.
Moreover, $c$ forces that $\dot{A}$ is bounded in $\check{T}$:
working in $\V[G]$ again, where $G\ni c$ is $\B_\delta$-generic, we have that for every $n<\omega$, there is a $t_n\in A=\dot{A}^G$ with $t_n<_T s_n$, by (6). So $A$ cannot contain a node $a$ at a level greater than $\Omega$, because the predecessor of such an $a$ at level $\Omega$ would have to be of the form $s_m$, for some $m$, and $s_m>_Tt_m$. So $a,t_m\in A$ would be comparable.
Thus, $c$ forces wrt.~$\B_\delta$ that $\dot{A}\sub\check{T}|\check{\Omega}$.

\noindent{\bf Case 2:} for all $i<\delta$, $\cf(\delta)>\delta(\B_i)$.

We may also assume that $\cf(\delta)>\omega_1$, for otherwise, $\cf(\delta)\le\omega_1$ and the argument of case 1 goes through (recall that we proved (B)).

It follows as in \cite[p.~143, claim (2)]{Jensen2014:SubcompleteAndLForcingSingapore} that for $i<\delta$, $\card{i}\le\delta(\B_i)$. But then, it follows that $\delta$ is regular, for otherwise, if $i=\cf(\delta)<\delta$, it would follow that $\cf(\delta)=i\le\delta(\B_i)<\cf(\delta)$.

Thus, $\delta$ is a regular cardinal, and $\delta\ge\omega_2$. Hence, $S^\delta_{\omega_1}$, the set of ordinals less than $\delta$ with cofinality $\omega_1$, is stationary in $\delta$. For $\gamma\in S^\delta_{\omega_1}$, since $\B_\gamma$, being subproper, preserves $\omega_1$, it follows that for every $i<\gamma$, $\forces_{\B_i}\cf(\check{\gamma})>\omega$. Thus, since $\vBB$ is nicely subproper, it follows by part \ref{item:DirectLimitAtUncountableCofinality} of Definition \ref{def:NicelyGammaIteration} that $\B_\gamma$ is the direct limit of $\vBB\rest\gamma$.
Moreover, since for $i<\delta$, $\delta(\B_i)<\delta=\cf(\delta)$, it follows by Observation \ref{obs:P_is_delta(P)+cc} that $\B_i$ is ${<}\delta(\B_i)^+$-c.c., and hence ${<}\delta$-c.c.

It follows by Fact \ref{fact:lambdacc} that the direct limit of $\vec{\B}\rest\delta$ is ${<}\delta$-c.c.

Again, since for all $i<\delta$, $\B_i$ is ${<}\delta$-c.c., it follows that $\B_i$ forces that the cofinality of $\delta$ is uncountable. So since $\vBB$ is nicely subproper, it follows that $\B_\delta$ is the direct limit of $\vBB\rest\delta$, and hence that $\B_\delta$ is ${<}\delta$-c.c.

Now let $h<\delta$, let $G_h\sub\B_h$ be generic, and let $T\in\V[G_h]$ be a Souslin tree. We have to show that $T$ is still Souslin in $\V[G_h][H]$, whenever $H\sub\B_\delta/G_h$ is generic over $\V[G_h]$. But if there were an uncountable antichain of $T$ in $\V[G_h][H]=\V[G_h*H]$, then since $\B_\delta$ is $\delta$-c.c.~and $\delta>\omega_1$, it would follow that this antichain exists already in $\V[(G_h*H)\cap\B_i]$, for some $i\in[h,\delta)$. Thus, $\B_i/G_h$ would fail to be Souslin tree preserving in $\V[G_h]$, contradicting our inductive assumption.
\end{proof}

\begin{defn}
\label{def:PreservingSouslinnessOfT}
Let $T$ be a Souslin tree and $\P$ a notion of forcing. $\P$ \emph{preserves the Souslinness of $T$} if $\forces_\P$``$\check{T}$ is Souslin.''
\end{defn}

\begin{thm}
\label{thm:RCSIteratingSubproperTPreservingForcing}
Let $T$ be a Souslin tree. Then the class of subproper Boolean algebras that preserve the Souslinness of $T$ is standard RCS iterable.
\end{thm}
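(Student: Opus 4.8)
The plan is to follow the proof of Theorem~\ref{thm:RCSIteratingSubproperSouslinTreePreservingForcing} essentially line for line, observing that the argument is in fact slightly simpler in the present setting: since $T$ is a fixed Souslin tree of the ground model (and, inductively, remains Souslin in every relevant intermediate extension), there is no need for the step in which an arbitrary, possibly newly added, Souslin tree is ``absorbed'' into the ground model. So let $\vec\B=\seq{\B_i}{i<\alpha}$ be a standard RCS iteration of forcings that are subproper and preserve the Souslinness of $T$, and prove by induction on $\delta$ that whenever $h\le\delta<\alpha$ and $G_h$ is $\B_h$-generic, then in $\V[G_h]$ the algebra $\B_\delta/G_h$ is subproper and preserves the Souslinness of $T$. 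Subproperness is automatic: the iteration is in particular a standard RCS iteration of subproper forcings, hence nicely subproper by Observation~\ref{obs:StandardRCSIterationsOfSubPAreNicelySubP}, hence $\B_\delta/G_h$ is subproper by Theorem~\ref{thm:NicelySubproperIterations}. Thus we may focus on $T$-preservation; and note that, inductively, $T$ is still Souslin in $\V[G_h]$ for every $h<\delta$, so the hypothesis on the iterands (that $\B_{i+1}/G_i$ preserves the Souslinness of $T$ over $\V[G_i]$) is nonvacuous at every stage, and the inductive hypothesis applies at each $\gamma<\delta$. The successor case and the case $h=\delta$ are trivial, so fix a limit $\delta$ and $h<\delta$, and recall $\vec\B$ is nicely subproper.

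Case~2, where $\cf(\delta)>\delta(\B_i)$ for all $i<\delta$, goes through verbatim as in the proof of Theorem~\ref{thm:RCSIteratingSubproperSouslinTreePreservingForcing}: one shows $\delta$ is a regular cardinal $\ge\omega_2$, so that $S^\delta_{\omega_1}$ is stationary in $\delta$; using that each $\B_i$ is ${<}\delta$-c.c.\ (Observation~\ref{obs:P_is_delta(P)+cc}), that $\vec\B$ is nicely subproper, and Fact~\ref{fact:lambdacc}, one concludes that $\B_\delta$ is the direct limit of $\vec\B\rest\delta$ and is ${<}\delta$-c.c.; and then, given a $\B_\delta/G_h$-generic $H$ over $\V[G_h]$, any uncountable antichain of $T$ in $\V[G_h*H]$ would, by the ${<}\delta$-c.c.\ and $\delta>\omega_1$, already lie in $\V[(G_h*H)\cap\B_i]$ for some $i\in[h,\delta)$, so that $\B_i/G_h$ would fail to preserve the Souslinness of $T$ over $\V[G_h]$, contradicting the inductive hypothesis.

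Case~1 is where $\cf(\delta)\le\delta(\B_i)$ for some fixed $i<\delta$, so that $\forces_{\B_j}\cf(\check\delta)\le\check\omega_1$ whenever $i<j<\delta$. As in the template, it suffices to prove the analogue of statement (B): \emph{if $\cf(\delta)\le\omega_1$ and $T$ is (still) a Souslin tree, then $\B_\delta$ preserves the Souslinness of $T$.} Indeed, for $h$ with $i<h<\delta$ this applies directly over $\V[G_h]$ (where $\cf(\delta)\le\omega_1$, $T$ is Souslin by induction, and the tail $\seq{\B_{h+j}/G_h}{j<\delta-h}$ is again a standard RCS, nicely subproper iteration of forcings preserving the Souslinness of $T$); and for $h\le i$, one factors a $\B_\delta/G_h$-generic $H$ through $G_{i+1}=(G_h*H)\cap\B_{i+1}$: inductively $T$ is Souslin in $\V[G_{i+1}]$, where $\cf(\delta)\le\omega_1$, so (B) over $\V[G_{i+1}]$ gives that $T$ remains Souslin in $\V[G_{i+1}][(G_h*H)/G_{i+1}]=\V[G_h][H]$. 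To prove (B) one runs the exact recursive construction of Theorem~\ref{thm:RCSIteratingSubproperSouslinTreePreservingForcing}: pick $N=L_\tau^A\supseteq H_\theta$ with $\theta$ verifying the subproperness of every $\B_i$ ($i\le\delta$), a countable $M_0\prec N$ containing the relevant parameters with transitive collapse $\bN$ full, cofinal maps into $\delta$ and an enumeration of the level $T(\Omega)$ ($\Omega=\crit(\sigma_0)=\omega_1^{\bN}$), and then build $\seq{\dot\sigma_n}{n<\omega}$, $\seq{c_n}{n<\omega}$, $\seq{\dot b_n}{n<\omega}$, $\seq{\dot{\bar b}_n}{n<\omega}$ by iterated application of the Subproperness Extension Lemma (Lemma~\ref{lem:SubpropernessExtensionLemma}) to the inclusions $\B_{\gamma_m}\sub\B_{\gamma_n}$, satisfying the conditions (1)--(8) listed there; the $c_n$ then form a thread whose infimum $c$ is nonzero (using Definition~\ref{def:NicelyGammaIteration}\ref{item:ConditionsAtCountableCofinality}\ref{item:ThreadsAreNonzero} when $\cf(\delta)=\omega$, and that $\gamma:=\sup_n\gamma_n<\delta$ has countable cofinality when $\cf(\delta)=\omega_1$), and, via the claim that each $\dot b_n^G\in G$ (whose proof uses that $\vec\B$ is an RCS iteration), $c$ forces $\dot A\sub\check T|\check\Omega$ for any $\B_\delta$-name $\dot A$ for a maximal antichain in $T$.

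I do not expect a genuine obstacle here; the one point requiring care is the single invocation of $T$-preservation inside the construction of (B) — where, working in $M_n[I]$, a maximal antichain $A\subseteq D$ of $T$ is argued to be countable because $\B_{\gamma_n}$ preserves $T$ as a Souslin tree — and the only thing to verify is that this is exactly the inductive hypothesis applied at stage $\gamma_n<\delta$ (for the fixed $T$), together with the routine bookkeeping in the Case~1 reduction: namely that the factoring through $\B_{i+1}$ is legitimate and that $T$ is still Souslin wherever (B) is applied, both of which are immediate from the inductive hypothesis.
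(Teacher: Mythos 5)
Your proposal is correct and is exactly the paper's approach: the paper's proof of this theorem consists precisely of the remark that the argument of Theorem~\ref{thm:RCSIteratingSubproperSouslinTreePreservingForcing} goes through almost unchanged, and is in fact slightly simpler because $T$ lives in the ground model, which is what you have spelled out. Your identification of the one place where $T$-preservation is invoked (the countability of the maximal antichain $A\subseteq D$ in $M_n[I]$, via the inductive hypothesis at stage $\gamma_n$) is the right point to flag, and the rest of your bookkeeping matches the template.
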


\begin{proof}
Letting $\vec{\B}=\seq{\B_i}{i<\alpha}$ be a standard RCS iteration of forcings that are subproper and preserve $T$ as a Souslin tree, we prove by induction on $\delta$: whenever $h\le\delta<\alpha$ and $G_h$ is $\B_h$-generic, then in $\V[G_h]$, $\B_\delta/G_h$ is subproper and preserves $T$ as a Souslin tree.

For this, the proof of Theorem \ref{thm:RCSIteratingSubproperSouslinTreePreservingForcing} goes through almost without change. Some steps of the argument are somewhat simpler in the present context, because the Souslin tree $T$ is in the ground model.
\end{proof}

Looking over the proof of Theorem \ref{thm:RCSIteratingSubproperSouslinTreePreservingForcing}, one sees that the assumption that the iteration in question uses revised countable support was only used at stages of the iteration that acquire countable cofinality. Thus, we obtain nice iterability results for the corresponding forcing classes.

As before, the previous RCS iteration theorems imply:

\begin{obs}
\label{obs:StandardRCSIterationsOfSubPTpresAreNicelySubPTpres}
Every standard RCS iteration of subproper and Souslin tree preserving forcings is nicely subproper and Souslin tree preserving. Similarly, if $T$ is a fixed Souslin tree then every standard RCS iteration of subproper and Souslinness of $T$ preserving forcings is nicely subproper and Souslinness of $T$ preserving.
%
\end{obs}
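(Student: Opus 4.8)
The plan is to follow the template already established, using the key observation stated right before this observation: in the proofs of Theorems \ref{thm:RCSIteratingSubproperSouslinTreePreservingForcing} and \ref{thm:RCSIteratingSubproperTPreservingForcing}, the revised countable support was only invoked at limit stages of the iteration that acquire countable cofinality. Concretely, looking back at the proof of Theorem \ref{thm:RCSIteratingSubproperSouslinTreePreservingForcing}, the RCS hypothesis was used in exactly two places in Case 1: first, to guarantee via Fact \ref{fact:BasicsOnRCSiterations} that when $\cf(\delta) = \omega$, the set $\{\bigwedge_{i<\delta} t_i \st \seq{t_i}{i<\delta}\text{ a thread}\}$ is dense in $\B_\delta$ (so that a suitable condition $a_0$ and the infimum $c = \bigwedge_n c_n$ live in the right place), and second, in the proof of Claim (C) in the case $\cf(\delta) = \omega$, where genericity and $\V$-completeness of $G$ together with the thread structure forced $b_n \in G$. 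But both of these conclusions are guaranteed by the weaker hypothesis that the iteration is \emph{nicely subproper} (and nicely Souslin-tree preserving): part \ref{item:ConditionsAtCountableCofinality}\ref{item:ThreadsAreNonzero} of Definition \ref{def:NicelyGammaIteration} gives that threads have nonzero infimum, and in fact in Case 1 one passes to $\V[G_j]$ and proves statement (B), i.e.\ only the case $\cf(\delta) \le \omega_1$ is ever needed, where for $\cf(\delta) = \omega_1$ one uses the direct-limit clause \ref{item:DirectLimitAtUncountableCofinality} and for $\cf(\delta) = \omega$ one uses clause \ref{item:ConditionsAtCountableCofinality}. Case 2 of that proof already used only nice subproperness (via Observation \ref{obs:P_is_delta(P)+cc}, Fact \ref{fact:lambdacc}, and clause \ref{item:DirectLimitAtUncountableCofinality}) together with the inductive hypothesis and the $\delta$-c.c.\ of $\B_\delta$; no RCS was needed there.

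So the structure of the argument is: first, combine Observation \ref{obs:StandardRCSIterationsOfSubPAreNicelySubP} with Theorems \ref{thm:RCSIteratingSubproperSouslinTreePreservingForcing} and \ref{thm:RCSIteratingSubproperTPreservingForcing} in the obvious way. Given a standard RCS iteration $\vBB = \seq{\B_i}{i<\alpha}$ of subproper, Souslin-tree preserving forcings, Observation \ref{obs:StandardRCSIterationsOfSubPAreNicelySubP} already tells us $\vBB$ is nicely subproper, so it remains to check that the ``nicely $\Gamma$'' conditions of Definition \ref{def:NicelyGammaIteration} hold for $\Gamma$ = the class of subproper Souslin-tree preserving complete Boolean algebras. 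Condition \ref{item:GammaIterands} is immediate from the assumption on the iterands; conditions \ref{item:DirectLimitAtUncountableCofinality} and \ref{item:AllOfThisHoldsInV[G_i]} are about the shape of limits and hereditarity and are exactly the ones verified in Observation \ref{obs:StandardRCSIterationsOfSubPAreNicelySubP} (they don't involve $\Gamma$ beyond $\omega_1$-preservation, which subproperness supplies); and condition \ref{item:ConditionsAtCountableCofinality}\ref{item:ThreadsAreNonzero} is again as in that observation. The only genuinely new point is condition \ref{item:ConditionsAtCountableCofinality}\ref{item:SubpropernessPropagates}: if $\lambda < \alpha$ has countable cofinality and each $\B_i \in \Gamma$ for $i < \lambda$, then $\B_\lambda \in \Gamma$, i.e.\ $\B_\lambda$ is subproper and Souslin-tree preserving. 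That $\B_\lambda$ is subproper is Theorem \ref{thm:RCSiterationOfSubPforcing} (or Theorem \ref{thm:NicelySubproperIterations}); that $\B_\lambda$ preserves Souslin trees is precisely the content of Theorem \ref{thm:RCSIteratingSubproperSouslinTreePreservingForcing} applied to $\vBB \rest (\lambda+1)$ (with $h = 0$, $\delta = \lambda$). The hereditary clause \ref{item:AllOfThisHoldsInV[G_i]} follows because passing to $\V[G_i]$ turns $\vBB$ into an RCS iteration of the same kind there, by Fact \ref{fact:BasicsOnRCSiterations}\ref{RCSFact.item:PropertiesAreHereditary}, and the clauses just verified apply in $\V[G_i]$ as well. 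The second sentence of the observation, about a fixed Souslin tree $T$, is handled identically, replacing Theorem \ref{thm:RCSIteratingSubproperSouslinTreePreservingForcing} by Theorem \ref{thm:RCSIteratingSubproperTPreservingForcing} throughout.

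I do not expect a serious obstacle here; the work has effectively been done in the preceding theorems and in Observation \ref{obs:StandardRCSIterationsOfSubPAreNicelySubP}, and this observation is a bookkeeping consequence. The one point requiring a moment's care is making sure that ``nicely $\Gamma$'' with $\Gamma$ the compound class behaves well, i.e.\ that propagation of Souslin-tree preservation to countable-cofinality limits genuinely follows from Theorem \ref{thm:RCSIteratingSubproperSouslinTreePreservingForcing} — but since that theorem establishes, by induction on $\delta$, that $\B_\delta/G_h$ is subproper \emph{and} Souslin-tree preserving for every $h \le \delta < \alpha$, taking $h = 0$ and $\delta = \lambda$ is exactly what is needed. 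Thus the proof is short:

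\begin{proof}
Let $\vBB = \seq{\B_i}{i<\alpha}$ be a standard RCS iteration of subproper, Souslin tree preserving forcings. By Observation \ref{obs:StandardRCSIterationsOfSubPAreNicelySubP}, $\vBB$ is nicely subproper, so conditions \ref{item:GammaIterands}, \ref{item:ConditionsAtCountableCofinality}\ref{item:ThreadsAreNonzero}, \ref{item:DirectLimitAtUncountableCofinality} and \ref{item:AllOfThisHoldsInV[G_i]} of Definition \ref{def:NicelyGammaIteration} hold (the latter since, by Fact \ref{fact:BasicsOnRCSiterations}\ref{RCSFact.item:PropertiesAreHereditary}, $\seq{\B_{i+j}/G_i}{j<\alpha-i}$ is again a standard RCS iteration of subproper Souslin tree preserving forcings in $\V[G_i]$). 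Condition \ref{item:GammaIterands} for $\Gamma$ the class of subproper, Souslin tree preserving complete Boolean algebras holds by hypothesis on the iterands. For condition \ref{item:ConditionsAtCountableCofinality}\ref{item:SubpropernessPropagates}, suppose $\lambda<\alpha$ has countable cofinality and $\B_i\in\Gamma$ for all $i<\lambda$. Then $\B_\lambda$ is subproper by Theorem \ref{thm:RCSiterationOfSubPforcing}, and applying Theorem \ref{thm:RCSIteratingSubproperSouslinTreePreservingForcing} to $\vBB\rest(\lambda+1)$ with $h=0$, $\delta=\lambda$ yields that $\B_\lambda$ is Souslin tree preserving, so $\B_\lambda\in\Gamma$. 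Hence $\vBB$ is nicely $\Gamma$. The second statement is proved identically, using Theorem \ref{thm:RCSIteratingSubproperTPreservingForcing} in place of Theorem \ref{thm:RCSIteratingSubproperSouslinTreePreservingForcing}.
\end{proof}
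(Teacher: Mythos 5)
Your proposal is correct and is essentially the argument the paper intends: the paper states this observation without proof as an immediate consequence of Theorems \ref{thm:RCSIteratingSubproperSouslinTreePreservingForcing} and \ref{thm:RCSIteratingSubproperTPreservingForcing}, in exact analogy with Observation \ref{obs:StandardRCSIterationsOfSubPAreNicelySubP}, and your write-up just spells out that bookkeeping — verifying the non-$\Gamma$-specific clauses of Definition \ref{def:NicelyGammaIteration} as before and obtaining clause \ref{item:ConditionsAtCountableCofinality}\ref{item:SubpropernessPropagates} from the iteration theorems applied to $\vBB\rest(\lambda+1)$ with $h=0$.
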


Thus, we can generalize Theorems \ref{thm:RCSIteratingSubproperSouslinTreePreservingForcing} and
\ref{thm:RCSIteratingSubproperTPreservingForcing}
as follows.

\begin{thm}
\label{thm:NicelyIteratingSubproperT/SouslinnessPreserving}
The class of subproper and Souslin tree preserving Boolean algebras is nicely iterable, and so is the class of subproper Boolean algebras that preserve some fixed Souslin tree $T$.
\end{thm}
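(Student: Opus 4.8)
The plan is to show that the proof of Theorem \ref{thm:RCSIteratingSubproperSouslinTreePreservingForcing} already establishes the stronger statement, once one keeps careful track of where the RCS hypothesis was actually used. As noted in the remark preceding Observation \ref{obs:StandardRCSIterationsOfSubPTpresAreNicelySubPTpres}, RCS entered the argument only at limit stages of countable cofinality: specifically in Case 1, in the verification that the set of infima of threads is dense in $\B_\delta$ when $\cf(\delta)=\omega$, and in the proof of claim (C) in the subcase $\cf(\delta)=\omega$, where we used that $b_n\in X$ arises as $\bigwedge_{i<\delta}t_i$ for a thread $\seq{t_i}{i<\delta}$ and concluded $b_n\in G$ by $\V$-completeness of $G$. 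Both of these are now supplied directly by the definition of a nicely $\Gamma$ iteration: part \ref{item:ConditionsAtCountableCofinality}\ref{item:ThreadsAreNonzero} gives that threads have nonzero infimum in $\B_\lambda$, and part \ref{item:ConditionsAtCountableCofinality}\ref{item:SubpropernessPropagates} gives that subproperness propagates; moreover, by \ref{item:AllOfThisHoldsInV[G_i]} all of this survives passage to $\V[G_i]$. So the structural facts used in the proof are exactly the axioms of a nicely $\Gamma$ iteration, not anything peculiar to RCS.

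Concretely, I would let $\Gamma$ be the class of subproper Souslin-tree-preserving complete Boolean algebras (respectively, subproper algebras preserving a fixed Souslin tree $T$), let $\vec\B=\seq{\B_i}{i<\alpha}$ be a nicely $\Gamma$ iteration, and prove by induction on $\delta$ that whenever $h\le\delta<\alpha$ and $G_h$ is $\B_h$-generic, then $\B_\delta/G_h\in\Gamma$ in $\V[G_h]$. Subproperness is handled by Theorem \ref{thm:NicelySubproperIterations} (the class of subproper algebras is nicely iterable), so only the tree preservation needs attention, and the successor and $h=\delta$ cases are trivial, exactly as before. For limit $\delta$ I would split into the same two cases. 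In Case 1 ($\cf(\delta)\le\delta(\B_i)$ for some $i<\delta$), the reduction to the statement ``(B): if $\cf(\delta)\le\omega_1$ then $\B_\delta$ is Souslin-tree preserving'' goes through verbatim, since that reduction only used that $\seq{\B_{j+i}/G_j}{i<\delta-j}$ is again a nicely $\Gamma$ iteration in $\V[G_j]$, which is \ref{item:AllOfThisHoldsInV[G_i]}. The recursive construction of $\seq{\dot\sigma_n,c_n,\dot b_n,\dot{\barb}_n}{n<\omega}$ via the Subproperness Extension Lemma is unchanged, and the two appeals to RCS are replaced by \ref{item:ConditionsAtCountableCofinality}\ref{item:ThreadsAreNonzero} as just described. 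Case 2 used only that $\B_\gamma$ is a direct limit for $\gamma\in S^\delta_{\omega_1}$ and a chain-condition argument; the direct-limit facts come from part \ref{item:DirectLimitAtUncountableCofinality} of Definition \ref{def:NicelyGammaIteration} together with preservation of $\omega_1$, exactly as in the original proof, so Case 2 is literally the same.

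The only point that requires a word of care — and the place I would expect a careful referee to look — is claim (C) in the subcase $\cf(\delta)=\omega$. In the RCS proof the conclusion $b_n\in G$ followed because $b_n\in X$ was literally $\bigwedge_{i<\delta}t_i$ for an RCS thread, and $G$, being $\V$-generic hence $\V$-complete, contains this infimum once it contains all the $t_{\gamma_l}$. In a general nicely $\Gamma$ iteration we do not know that $\bigcup_{i<\delta}\B_i$ is dense in $\B_\delta$ at countable cofinalities, so one must instead choose the dense set $X$ used in the construction to be $\{\bigwedge_{i<\delta}t_i : \seq{t_i}{i<\delta}\text{ a thread in }\vec\B\rest\delta\}$; that this is dense is part of what it means for $\B_\delta$ to be formed from threads with nonzero infima, which is guaranteed in the relevant cases by \ref{item:ConditionsAtCountableCofinality}\ref{item:ThreadsAreNonzero} together with the density clause implicit in being an inverse-limit-style limit — and in any nicely $\Gamma$ iteration this is exactly the hypothesis we are allowed to use. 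With $X$ chosen this way, each $b_n\in X$ is again an infimum of a thread, and the completeness argument for (C) goes through word for word. Everything else — the use of Souslin-tree preservation of $\B_{\gamma_n}$ to make the antichain $A$ countable and hence an element of $M_n[I]$, the final argument that $c$ forces $\dot A\subseteq\check T|\check\Omega$ — is untouched. Hence the theorem follows.
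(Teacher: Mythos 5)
Your overall strategy — rerun the proof of Theorem \ref{thm:RCSIteratingSubproperSouslinTreePreservingForcing} and check that the RCS hypothesis was only invoked at limit stages of countable cofinality — is the right one, and your treatment of the $\cf(\delta)=\omega_1$ subcase and of Case 2 is correct. But the step you yourself flag as delicate, the subcase $\cf(\delta)=\omega$ of claim (C), does not go through as written. You take $X=\{\bigwedge_{i<\delta}t_i : \vec{t}\text{ a thread in }\vec{\B}\rest\delta\}$ and assert that this set is dense in $\B_\delta$ because of clause \ref{item:ConditionsAtCountableCofinality}\ref{item:ThreadsAreNonzero} of Definition \ref{def:NicelyGammaIteration} ``together with the density clause implicit in being an inverse-limit-style limit.'' There is no such density clause: a nicely $\Gamma$ iteration imposes almost no restriction on how countable-cofinality limits are formed (this is stated explicitly right after the definition), and clause \ref{item:ConditionsAtCountableCofinality}\ref{item:ThreadsAreNonzero} only guarantees that thread infima are \emph{nonzero}, not that they form a dense set. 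Density of the thread infima is precisely what distinguishes the inverse limit, and you are not entitled to assume $\B_\delta$ is the inverse limit here. So your replacement for the two RCS appeals fails.

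The repair is to notice that the $\cf(\delta)=\omega$ case requires no argument at all. Here $\Gamma$ is the class of subproper \emph{and Souslin tree preserving} algebras, so clause \ref{item:ConditionsAtCountableCofinality}\ref{item:SubpropernessPropagates} of Definition \ref{def:NicelyGammaIteration} — which you cite but appear to read as being only about subproperness — already postulates that membership in $\Gamma$, hence Souslin tree preservation, propagates to limits of countable cofinality; by clause \ref{item:AllOfThisHoldsInV[G_i]} the same holds for the quotient iteration in $\V[G_j]$, which together with the inductive hypothesis settles statement (B) outright when $\cf(\delta)=\omega$. This is exactly what the paper means by saying these stages are ``trivial'' in a nicely subproper $T$-preserving iteration: the content of the theorem lies entirely in the stages where you did not need RCS in the first place.
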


\begin{proof}
Recall that in the proof of Theorem \ref{thm:RCSIteratingSubproperSouslinTreePreservingForcing}, we used the fact that the iteration was just nicely subproper, rather than an RCS iteration,  wherever possible. The only places in the argument that used that we were dealing with an RCS iteration occurred at stages of the iteration that acquired countable cofinality. But these stages are trivial in a nicely subproper $T$-preserving iteration.
\end{proof}

\subsection{Nicely subproper iterations of $[T]$-preserving forcing}
\label{subsec:NicelySubPiterationsOf[T]preservingForcing}

\begin{defn}
\label{def:[T]-preservationAndNotAddingBranches}
Let $T$ be an $\omega_1$-tree. Then $[T]$ denotes the set of cofinal branches of $T$, that is, the set of branches of $T$ that have order type $\omega_1$. We say that a forcing notion $\P$ is \emph{$[T]$-preserving} to express that $\P$ cannot add new cofinal branches through $T$, that is, that $\forces_\P[\check{T}]\sub\check{\V}$. As with the preservation of a fixed Souslin tree, there is a more general version of this preservation property: $\P$ \emph{is branch preserving} if for every $\omega_1$-tree $S$, $\P$ is $[S]$-preserving.
\end{defn}

\begin{thm}
\label{thm:RCSIteratingSubproperForcingNotAddingBranchesToOmega1Trees}
The class of subproper and branch preserving forcing notions is standard RCS iterable.
\end{thm}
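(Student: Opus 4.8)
The plan is to mirror the proof of Theorem~\ref{thm:RCSIteratingSubproperSouslinTreePreservingForcing}, replacing "Souslin tree" throughout by "$\omega_1$-tree" and "Souslin tree preserving" by "branch preserving" (i.e.\ $[T]$-preserving for every $\omega_1$-tree $T$). Fix a standard RCS iteration $\vec{\B}=\seq{\B_i}{i<\alpha}$ of subproper, branch-preserving forcings and prove by induction on $\delta$ that $\B_\delta/G_h$ is subproper and branch preserving in $\V[G_h]$ whenever $h\le\delta<\alpha$ and $G_h$ is $\B_h$-generic. Subproperness is handled by Theorem~\ref{thm:NicelySubproperIterations}, so only branch preservation needs work; the successor case and the case $h=\delta$ are immediate from the hypothesis, so let $\delta$ be a limit and $h<\delta$. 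As in the Souslin-tree argument, split into Case~1 ($\cf(\delta)\le\delta(\B_i)$ for some $i<\delta$) and Case~2 (otherwise).

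For Case~1, the same reduction as before absorbs the tree $T$ and the relevant names into the ground model by passing to $\V[G_j]$ for a suitable $j$, reducing us to showing: if $\cf(\delta)\le\omega_1$ then $\B_\delta$ is $[T]$-preserving, where now $T$ is a fixed $\omega_1$-tree. The heart of the argument is again a countable recursion using the Subproperness Extension Lemma~\ref{lem:SubpropernessExtensionLemma} along a cofinal sequence $\seq{\gamma_n}{n<\omega}$ in $M_0\cap\delta$, where $M_0\prec N=L_\tau^A$ is countable with $\bN$ full. The combinatorial modification is in what we extract at each stage. Instead of a name $\dot A$ for a maximal antichain of a Souslin tree, fix a $\B_\delta$-name $\dot{b}$ for a purported new cofinal branch of $T$ and a condition $a_0\in X$; we want to find a countable $\zeta$ and $c\le a_0$ in $\B_\delta^+$ forcing that $\dot b\rest\zeta$ already decides $\dot b$, i.e.\ that $\dot b\in\V$. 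Concretely, run the recursion so that at stage $n$ the condition $b_n\le b_{n-1}$ (with $h_{\gamma_n}(b_n)$ in the generic) decides the node $\dot b(\beta_{n-1})$ for an increasing sequence of levels $\beta_n$ cofinal in $\Omega=M_0\cap\omega_1$; this is arranged by working in $M_n[I]$ and using that $\B_{\gamma_n}$ is $[T]$-preserving, so that the relevant name for "the $\beta$-th node of $\dot b$", viewed inside $M_n[I]$, is decided by a condition below $b_{n-1}$ in $X$. Taking $c=\bigwedge_n c_n$, which is nonzero by part~\ref{item:ConditionsAtCountableCofinality}\ref{item:ThreadsAreNonzero} of Definition~\ref{def:NicelyGammaIteration} (when $\cf(\delta)=\omega$) or because it lies in some $\B_\gamma^+$ with $\cf(\gamma)=\omega$ (when $\cf(\delta)=\omega_1$), the exact analog of Claim~(C) shows each $b_n\in G$ for any generic $G\ni c$; hence $c$ forces that $\dot b\rest\Omega$ determines all of $\dot b$, so $\dot b\in\V[G_{\gamma}]$ for $\gamma=\sup_n\gamma_n$ — but $\gamma<\delta$ (or $\B_\gamma$ is a subalgebra to which we can apply induction), and $\B_\delta/G_\gamma$ does not add the branch, so $\dot b^G\in\V$. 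One must be slightly careful that a cofinal branch of $T$ is determined by its values on a cofinal set of levels, which gives exactly the leverage needed; this replaces the maximal-antichain bounding argument used for Souslin trees.

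Case~2 is essentially identical to the Souslin-tree case and is in fact slightly easier: one shows as in \cite[p.~143, claim~(2)]{Jensen2014:SubcompleteAndLForcingSingapore} that $\card{i}\le\delta(\B_i)$ for $i<\delta$, whence $\delta$ is a regular cardinal $\ge\omega_2$; each $\B_i$ is ${<}\delta$-c.c.\ by Observation~\ref{obs:P_is_delta(P)+cc}, the stationarily many $\gamma\in S^\delta_{\omega_1}$ yield direct limits by part~\ref{item:DirectLimitAtUncountableCofinality} of Definition~\ref{def:NicelyGammaIteration} together with $\omega_1$-preservation, Fact~\ref{fact:lambdacc} gives that the direct limit of $\vec{\B}\rest\delta$ is ${<}\delta$-c.c., and nice subproperness forces $\B_\delta$ itself to be that direct limit. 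Then a new cofinal branch of $T$ in $\V[G_h*H]$ would be a subset of $T$ of size $\aleph_1<\delta$, hence already present in $\V[(G_h*H)\cap\B_i]$ for some $i\in[h,\delta)$ by the ${<}\delta$-c.c., contradicting that $\B_i/G_h$ is branch preserving in $\V[G_h]$ by the inductive hypothesis. The main obstacle is really the bookkeeping in Case~1: ensuring that the node-deciding conditions can always be chosen inside the image model $M_n[I]$ as genuine conditions of $X$ below the previous $b_{n-1}$, and verifying that $[T]$-preservation of the initial segments $\B_{\gamma_n}$ (rather than preservation of $T$'s Souslinness) is exactly what licenses this; once that is set up, the threading argument and Claim~(C) transfer verbatim.
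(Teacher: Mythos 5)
Your skeleton (the reduction to statement (B), the recursive use of Lemma~\ref{lem:SubpropernessExtensionLemma} along $\seq{\gamma_n}{n<\omega}$, the threading of the $c_n$ and Claim~(C), and all of Case~2) matches the paper's proof, but the core combinatorial step of Case~1 has a genuine gap. You propose to have $b_n$ decide $\dot b(\beta_{n-1})$ for levels $\beta_n$ cofinal in $\Omega=M_0\cap\omega_1$ and then conclude that $c=\bigwedge_n c_n$ forces that ``$\dot b\rest\Omega$ determines all of $\dot b$,'' hence $\dot b\in\V[G_\gamma]$ for $\gamma=\sup_n\gamma_n$. This does not follow: a cofinal branch of an $\omega_1$-tree is determined by its values on a set of levels cofinal in $\omega_1$, whereas your levels are only cofinal in the countable ordinal $\Omega$. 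Knowing $\dot b\rest\Omega$ says nothing about which of the (possibly many) continuations above level $\Omega$ the branch takes, so no membership in $\V[G_\gamma]$, let alone in $\V$, can be extracted from a countable fusion of this kind. Relatedly, deciding $\dot b(\beta)$ for a fixed $\beta<\Omega$ is a plain density fact that does not use $[T]$-preservation of $\B_{\gamma_n}$ at all; the inductive hypothesis is really needed earlier, to reduce to the case where some $a_0$ forces $\dot B\notin\V^{\B_\gamma}$ for every $\gamma<\delta$.

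What the recursion actually needs at stage $n$ is a diagonalization against the $\Omega$-th level of $T$: fix an enumeration $\seq{s_n}{n<\omega}$ of $T(\Omega)$ and arrange that $b_n$ forces $\check s_{n-1}\notin\dot B$. This is where the assumption that $\dot B$ is forced to be \emph{new} over $\V[I]$ (for $I$ generic for $\B_{\gamma_n}$) enters: the set of $z\in\B_{\gamma_n}$ for which there exist incomparable $t_1,t_2\in T$ and $y_1,y_2\le_\delta b_m$ with $z=h_{\gamma_n}(y_1)=h_{\gamma_n}(y_2)$, $y_1\forces\check t_1\in\dot B$ and $y_2\forces\check t_2\in\dot B$ is dense below $h_{\gamma_n}(b_m)$ (otherwise $\dot B$ would be computable from $I$). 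Choosing the witnesses inside $M_n[I]$ forces $t_1,t_2\in T|\Omega$; since they are incomparable, at most one lies below $s_m$, and taking $b_n$ to be the $y_l$ whose node $t_l$ is not below $s_m$ forces $s_m\notin\dot B$ (two elements of a branch are comparable). After the fusion, $c$ forces that $\dot B$ meets no node of $T(\Omega)$ and hence is bounded, contradicting cofinality. With this replacement for your ``node-deciding'' step, the rest of your argument goes through as written.
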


\begin{proof}
We follow along the lines of the proof of Theorem \ref{thm:RCSIteratingSubproperSouslinTreePreservingForcing}.
So let $\vec{\B}=\seq{\B_i}{i<\alpha}$ be a standard RCS iteration of forcings that are subproper and branch preserving.
We prove by induction on $\delta$: whenever $h\le\delta<\alpha$ and $G_h$ is $\B_h$-generic, then in $\V[G_h]$, $\B_\delta/G_h$ is subproper and branch preserving.

\noindent{\bf Case 1:} $\delta$ is a limit ordinal and there is an $i<\delta$ such that $\cf(\delta)\le\delta(\B_i)$.

As before, it suffices to prove
\begin{enumerate}
\item[(B)] if $\cf(\delta)\le\omega_1$ then $\B_\delta$ adds no cofinal branch to an $\omega_1$-tree.
\end{enumerate}

So let us fix an $\omega_1$-tree $T$.
Depending on whether the cofinality of $\delta$ is $\omega$ or $\omega_1$, let $X=\{\bigwedge_{i<\delta}t_i\st\vec{t}\ \text{is a thread in}\ \vec{\B}\rest\delta\}$ or $X=\bigcup_{i<\delta}\B_i$.

Towards a contradiction, let $\dot{B}$ be a $\B_\delta$-name such that some condition $a_0\in X$ forces that $\dot{B}$ is a new cofinal branch, that is, that $\dot{B}\notin\V^{\B_\gamma}$ for all $\gamma<\delta$.

Let $\pi:\omega_1\To\delta$ be cofinal, with $\pi(0)=0$.
Let $N=L_\tau^A$, with $H_\theta\sub N$, such that $\theta$ verifies the subproperness of each $\B_i$, for $i\le\delta$.
Let $S=\kla{\theta,\delta,\vec{\B},X,\dot{B},T,\pi,a_0}$.
Let $M_0\prec N$ with $S\in M_0$, $M_0$ countable and such that, letting $\sigma_0:\bN\To M_0$ be the inverse of the Mostowski collapse, where $\bN$ is full. Let $\tdelta=\sup(M_0\cap\delta)$, and let $\Omega=M_0\cap\omega_1=\crit(\sigma_0)=\omega_1^\bN$. Fix an enumeration $\seq{s_n}{n<\omega}$ of $T(\Omega)$.

Let $\sigma_0^{-1}(S)=\bS=\kla{\btheta,\bdelta,\vec{\bar{\B}},\bX,\dot{\bB},\bT,\bpi,\bar{a}_0}$.
Let $\seq{\nu_i}{i<\omega}$ be a sequence of ordinals $\nu_i<\omega_1^\bN$ such that if we let $\bgamma_i=\bpi(\nu_i)$, it follows that $\seq{\bgamma_i}{i<\omega}$ is cofinal in $\bN$, and such that $\nu_0=0$, so that $\bgamma_0=0$. Let $\gamma_i=\sigma_0(\bgamma_i)$.

By induction on $n<\omega$, construct sequences $\seq{\dot{\sigma}_n}{n<\omega}$, $\seq{c_n}{n<\omega}$, $\seq{\dot{b}_n}{n<\omega}$ and $\seq{\dot{\bar{b}}_n}{n<\omega}$ with $c_n\in\B_{\gamma_n}$, $\dot{\sigma}_n,\dot{b}_n,\dot{\barb}_n\in\V^{\B_{\gamma_n}}$, such that for every $n<\omega$, $c_n$ forces the following statements:
\begin{enumerate}
\item $\dot{\sigma}_n:\check{\bN}\prec\check{N}$,
\item $\dot{\sigma}_n(\check{\bS})=\check{S}$, and for all $k<n$, $\dot{\sigma}_n(\dot{\bar{b}}_k)=\dot{\sigma}_k(\dot{\bar{b}}_k)$,
\item $\dot{\sigma}_n(\dot{\bar{b}}_n)=\dot{b}_n$ and $\dot{\bar{b}}_n\in\check{\bar{X}}$ (and so, $\dot{b}_n\in\check{X}$),
\item $\check{h}_{\gamma_n}(\dot{b}_n)\in\dot{G}_{\B_{\gamma_{n}}}$,
\item $\dot{\sigma}_n^{-1}``\dot{G}_{\B_{\gamma_n}}$ is $\check{\bN}$-generic for $\check{\bar{\B}}_{\gamma_n}$,
\item for some $t\in T|\Omega$ with $t\not <s_{n-1}$, $\dot{b}_n$ forces wrt.~$\B_\delta$ that $\check{t}\in\dot{B}$, and in particular, $\dot{b}_n$ forces that $\check{s}_{n-1}\notin\dot{B}$ (for $n>0$),
\item $c_{n-1}=h_{\gamma_{n-1}}(c_{n})$ (for $n>0$),
\item $\dot{b}_n\le_{\check{\B}_\delta}\dot{b}_{n-1}$ (for $n>0$),
\end{enumerate}
To start off, in the case $n=0$, we set $c_0=\eins$, $\dot{\sigma}_0=\check{\sigma}_0$ and $\dot{b}_0=\check{a}_0$ and $\dot{\barb}_0=\sigma_0^{-1}(\dot{b}_0)$.

Now suppose $m=n-1$, and $\dot{\sigma}_l$, $c_l$, $\dot{b}_l$ and $\dot{\barb}_l$ have been defined, so that (1)-(8) are satisfied
for $l\le m$. An application of Lemma \ref{lem:SubpropernessExtensionLemma} (to $\B_{\gamma_m}\sub\B_{\gamma_n}$) yields $c_n\in\B_{\gamma_n}$ and a $\B_{\gamma_n}$-name $\dot{\sigma}_n$ such that (1)-(8) are satisfied at level $n$, as long as they don't refer to $\dot{b}_n$ and $\dot{\bar{b}}_n$, as before. As before, we can also arrange that $c_n$ forces that $\check{h}_{\gamma_n}(\dot{b}_m)\in\dot{G}_{\gamma_n}$.

To define $\dot{b}_n$ and $\dot{\barb}_n$, let $I$ be $\B_{\gamma_n}$-generic with $c_n\in I$, and let $G=I\cap\B_{\gamma_m}$. Let $\sigma_n=\dot{\sigma}_n^I$, $b_m=\dot{b}_m^G$, $M_n=\ran(\sigma_n)$, $\bI=\sigma_n^{-1}``I$, $\sigma_n^*:\bN[\bI]\prec M_n[I]$ with $\sigma_n\sub\sigma_n^*$ and $\sigma_n^*(\bI)=I$. We have that $h_{\gamma_n}(b_m)\in I$.

Working in $\V[I]$, we know that $b_m/I$ forces wrt.~$\B_\delta/I$ that $\dot{B}/I$ is a new cofinal branch in $\check{T}$ (that is, a branch that did not exist in $\V[I]$). Consider the set of $z\in\B_{\gamma_n}$ such that there exist $t_1,t_2\in T$ and $y_1,y_2\le_\delta b_m$ such that
\begin{itemize}
  \item $t_1,t_2$ are incomparable in $T$,
  \item $z=h_{\gamma_n}(y_1)=h_{\gamma_n}(y_2)$,
  \item $y_1\forces_{\B_\delta}\check{t}_1\in\dot{B}$,
  \item $y_2\forces_{\B_\delta}\check{t}_2\in\dot{B}$.
\end{itemize}

Then $D\in M_n$, and $D$ is dense in $\B_{\gamma_n}$ below $h_{\gamma_n}(b_m)$. So there is a $z\in I\cap D$, and this is witnessed by some $t_1,t_2,y_1,y_2$. Since $M_n[I]\prec N[I]$, these objects may be chosen in $M_n[I]$. Thus, $t_1,t_2\in T|\Omega$. Since $t_1$ and $t_2$ are incomparable in $T$, at most one of them can be below $s_m$. Say $i<2$ is such that $t_i$ is not below $s_m$. Then we can set $b_n=y_i$.

Since all of this holds in $\V[I]$ whenever $I$ is $\B_{\gamma_n}$-generic over $\V$ and $c_n\in I$, there is a $\B_{\gamma_n}$-name $\dot{b}_n$ for $b_n$ such that $c_n$ forces all of this wrt.~$\B_{\gamma_n}$. Then (1)-(8) are satisfied. This finishes the recursive construction.

As before, the sequence $\seq{c_n}{n<\omega}$ is a thread, and so, $c=\bigwedge_{n<\omega}c_n\in\B_\delta^+$. It follows that $c\forces\dot{b}_n\in\dot{G}_\delta$, for all $n<\omega$. In particular, $c\le a_0$. We claim that $c$ forces that $\dot{B}$ is bounded in $\check{T}$. To see this, let $G$ be $\B_\delta$-generic over $\V$, with $c\in G$, and let $B=\dot{B}^G$. Let $n<\omega$. Since $b_{n+1}:=\dot{b}_{n+1}^G\in G$, we have by (6) that $s_n\notin B$. Since this holds for all $n<\omega$, $B$ contains no node at level $\Omega$ of $T$. So $B$ is bounded in $T$. This is a contradiction.

\noindent{\bf Case 2:} for all $i<\delta$, $\cf(\delta)>\delta(\B_i)$.

Exactly as in the proof of Theorem \ref{thm:RCSIteratingSubproperSouslinTreePreservingForcing}, we may reduce to the case that $\delta>\omega_1$ is a regular cardinal. It follows that $\B_\delta$ is ${<}\delta$-c.c. Thus, if $G_h$ is $\B_h$-generic and $\B_\delta/G_h$ added a new cofinal branch to some $\omega_1$-tree $T\in\V[G_h]$, then already some earlier $\B_\gamma/G_h$ would add this branch, contradicting our inductive hypothesis.
\end{proof}

\begin{thm}
\label{thm:IteratingSubproper[T]-preservingForcing}
Let $T$ be an $\omega_1$-tree. Then the class of subproper forcing notions that are $[T]$-preserving is standard RCS iterable.
\end{thm}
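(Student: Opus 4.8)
The theorem for a fixed $\omega_1$-tree $T$ is the ``one-tree'' specialization of Theorem~\ref{thm:RCSIteratingSubproperForcingNotAddingBranchesToOmega1Trees}, exactly as Theorem~\ref{thm:RCSIteratingSubproperTPreservingForcing} specializes Theorem~\ref{thm:RCSIteratingSubproperSouslinTreePreservingForcing}. So the plan is to run the proof of Theorem~\ref{thm:RCSIteratingSubproperForcingNotAddingBranchesToOmega1Trees} essentially verbatim, noting only the simplifications that arise from $T$ being available in the ground model. Let $\vec{\B}=\seq{\B_i}{i<\alpha}$ be a standard RCS iteration of subproper, $[T]$-preserving forcings; I would prove by induction on $\delta$ that whenever $h\le\delta<\alpha$ and $G_h$ is $\B_h$-generic, then $\B_\delta/G_h$ is subproper and $[T]$-preserving in $\V[G_h]$. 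Subproperness is handled by Theorem~\ref{thm:NicelySubproperIterations} together with Observation~\ref{obs:StandardRCSIterationsOfSubPAreNicelySubP}, so only the $[T]$-preservation needs attention, and the successor and $h=\delta$ cases are trivial.

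\textbf{Case 1:} $\delta$ is a limit with some $i<\delta$ such that $\cf(\delta)\le\delta(\B_i)$. As in the branch-preserving proof, a passage to $\V[G_j]$ for $j$ large enough reduces this to showing: if $\cf(\delta)\le\omega_1$, then $\B_\delta$ adds no new cofinal branch to $T$. Here is the one place where the fixed-tree version is genuinely simpler: because $T\in\V$, one does \emph{not} need to first absorb $T$ into the ground model by passing to $\V[G_j]$ and ``pretending'' — so the preliminary reductions are lighter, though keeping them does no harm. One then sets up $X$ as the appropriate dense set of threads (for $\cf(\delta)=\omega$) or $\bigcup_{i<\delta}\B_i$ (for $\cf(\delta)=\omega_1$), picks $\dot{B}$ and $a_0\in X$ forcing $\dot{B}$ to be a new cofinal branch, takes $N=L^A_\tau$ with $\theta$ verifying subproperness of each $\B_i$, $i\le\delta$, forms a countable full $\bN$ with $\sigma_0:\bN\prec M_0\prec N$ carrying $S=\kla{\theta,\delta,\vec{\B},X,\dot{B},T,\pi,a_0}$, and runs the recursion of the branch-preserving proof building $\dot{\sigma}_n$, $c_n$, $\dot{b}_n$, $\dot{\barb}_n$ satisfying conditions (1)--(8) there. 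The crucial step (6) — using that a forcing extension cannot witness, below a condition already forcing $\dot{B}$ new, only nodes along a single branch — works identically: the density argument producing incomparable $t_1,t_2\in T|\Omega$ with $y_1,y_2\le_\delta b_m$ forcing $\check t_i\in\dot B$ needs only that $\dot B/I$ is forced to be a branch not in $\V[I]$, and is insensitive to whether $T$ lives in $\V$ or was added earlier. Threading $\seq{c_n}{n<\omega}$ through the nicely-subproper (RCS) limit gives $c=\bigwedge_n c_n\neq 0$, and $c$ forces every $s_n\notin\dot B$, so $\dot B$ is bounded in $T$ — contradiction.

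\textbf{Case 2:} for all $i<\delta$, $\cf(\delta)>\delta(\B_i)$. Exactly as in Theorem~\ref{thm:RCSIteratingSubproperSouslinTreePreservingForcing}'s Case~2, one reduces to $\delta>\omega_1$ regular, gets that each $\B_i$ is ${<}\delta$-c.c., that $\B_\gamma$ is a direct limit for $\gamma\in S^\delta_{\omega_1}$, and hence by Fact~\ref{fact:lambdacc} that $\B_\delta$ is a ${<}\delta$-c.c.\ direct limit. Then any new cofinal branch of a $T\in\V[G_h]$ added by $\B_\delta/G_h$ would, by the ${<}\delta$-c.c.\ and $\delta>\omega_1$, appear at some intermediate $\B_\gamma/G_h$, contradicting the induction hypothesis. (Note that $[T]$-preservation of a single fixed tree suffices here, since the contradiction is obtained at an intermediate stage against that same $T$.)

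\textbf{Main obstacle.} There is no real obstacle beyond faithful bookkeeping: every ingredient — the Subproperness Extension Lemma~\ref{lem:SubpropernessExtensionLemma}, the nicely-subproper iterability of Theorem~\ref{thm:NicelySubproperIterations}, the chain-condition Fact~\ref{fact:lambdacc} — is already available, and the combinatorial heart (step (6): a branch-adding forcing cannot be ``funneled'' along one branch of $T$) is exactly the argument of Theorem~\ref{thm:RCSIteratingSubproperForcingNotAddingBranchesToOmega1Trees}. The only subtlety to double-check is that, since $T$ is now a ground-model object rather than one appearing partway through, the recursion's density set $D$ and the objects $t_1,t_2,y_1,y_2$ still lie in (respectively are chosen from) $M_n$, $M_n[I]$ as needed; this holds because $T$ and $\dot B$ are coded into $S$ and hence in the range of $\sigma_n$. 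One then remarks, as in Theorem~\ref{thm:NicelyIteratingSubproperT/SouslinnessPreserving}, that RCS was used only at stages of countable cofinality, so the analogous nice-iterability statement follows as well.
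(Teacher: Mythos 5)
Your proposal is correct and takes exactly the route the paper does: the paper's proof of Theorem \ref{thm:IteratingSubproper[T]-preservingForcing} consists of the single remark that one simplifies the argument of Theorem \ref{thm:RCSIteratingSubproperForcingNotAddingBranchesToOmega1Trees}, which is precisely the specialization you carry out (with the correct observation that the only simplification is that $T$ need not be absorbed into the ground model). Your elaboration of Cases 1 and 2 and the closing remark about nice iterability match the paper's intended argument.
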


\begin{proof}
We can simplify the argument of the proof of Theorem \ref{thm:RCSIteratingSubproperForcingNotAddingBranchesToOmega1Trees}, as before.
\end{proof}

Again, we can easily modify the arguments of the proofs of the previous two theorems to obtain results about nice iterability. As before, the previous theorems show that every standard iteration of subproper and branch preserving/$[T]$-preserving forcing notions is nicely subproper and branch preserving/$[T]$-preserving, so that the following theorem is a generalization of these theorems.

\begin{thm}
\label{thm:NiceIterabilityOfSubPNotAddingBranches/[T]-presForcing}
The class of subproper and branch preserving forcing notions is nicely iterable, and so is the class of subproper forcing notions that are $[T]$-preserving, for some fixed $\omega_1$-tree $T$.
\end{thm}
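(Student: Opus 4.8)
The plan is to reduce Theorem~\ref{thm:NiceIterabilityOfSubPNotAddingBranches/[T]-presForcing} to the two RCS iteration theorems already proved, Theorems~\ref{thm:RCSIteratingSubproperForcingNotAddingBranchesToOmega1Trees} and~\ref{thm:IteratingSubproper[T]-preservingForcing}, exactly as Theorem~\ref{thm:NicelyIteratingSubproperT/SouslinnessPreserving} was deduced from Theorems~\ref{thm:RCSIteratingSubproperSouslinTreePreservingForcing} and~\ref{thm:RCSIteratingSubproperTPreservingForcing}. First I would record, in analogy with Observation~\ref{obs:StandardRCSIterationsOfSubPTpresAreNicelySubPTpres}, that every standard RCS iteration of subproper and branch preserving (respectively $[T]$-preserving) forcings is nicely subproper and branch preserving (respectively $[T]$-preserving); this is immediate from Observation~\ref{obs:StandardRCSIterationsOfSubPAreNicelySubP} together with the two RCS iteration theorems just cited, since those theorems supply exactly the propagation of branch preservation to the limit stages that the ``nicely $\Gamma$'' framework of Definition~\ref{def:NicelyGammaIteration} requires at stages of countable cofinality.

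The heart of the matter is then to run the proof of Theorem~\ref{thm:RCSIteratingSubproperForcingNotAddingBranchesToOmega1Trees} (and its simplification, Theorem~\ref{thm:IteratingSubproper[T]-preservingForcing}) under the weaker hypothesis that $\vBB$ is merely nicely subproper and branch preserving, rather than an RCS iteration. Inspecting that proof, the only use of the RCS structure occurs in Case~1, at limit stages $\delta$ of countable cofinality, in two places: first, that the set $X$ of $\bigwedge_{i<\delta}t_i$ over threads $\vec t$ is dense in $\B_\delta$, which is used to choose $a_0$ and the $b_n$; and second, in the argument establishing $c=\bigwedge_{n<\omega}c_n\neq 0$ and that each $b_n\in G$, where one invokes Fact~\ref{fact:BasicsOnRCSiterations} and $\V$-completeness of $G$ on threads. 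But in a nicely subproper branch preserving iteration these stages are handled directly: by clause~\ref{item:ConditionsAtCountableCofinality}\ref{item:SubpropernessPropagates} (applied to the class of subproper branch preserving algebras, using the preliminary observation above) $\B_\lambda$ already belongs to the class whenever all $\B_i$, $i<\lambda$, do, so a separate limit argument at countable cofinality is simply not needed. Thus the induction on $\delta$ in the proof of Theorem~\ref{thm:RCSIteratingSubproperForcingNotAddingBranchesToOmega1Trees} collapses: the successor case and the $h=\delta$ case are trivial; the countable-cofinality limit case is absorbed into clause~\ref{item:ConditionsAtCountableCofinality}; Case~1 with $\cf(\delta)=\omega_1$ and Case~2 never invoked the RCS structure in the first place (they used only that $\vBB$ is nicely subproper, Fact~\ref{fact:lambdacc}, and Observation~\ref{obs:P_is_delta(P)+cc}), so they go through verbatim.

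Concretely, the proof would be a short paragraph: invoke the preliminary observation to see that ``subproper and branch preserving'' (resp. ``subproper and $[T]$-preserving'') propagates to limits of countable cofinality inside any nicely $\Gamma$ iteration, then point out that in the proof of Theorem~\ref{thm:RCSIteratingSubproperForcingNotAddingBranchesToOmega1Trees} every appeal to the iteration being RCS occurred at a limit stage of countable cofinality, a stage which is now trivial, and that all remaining cases used only nice subproperness together with the chain condition facts (Fact~\ref{fact:lambdacc}, Observation~\ref{obs:P_is_delta(P)+cc}). Hence the same induction yields that $\B_\delta/G_h$ is subproper and branch preserving (resp. $[T]$-preserving) for every $h\le\delta<\alpha$, which is precisely nice iterability of the class. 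I expect the main obstacle to be purely expository rather than mathematical: one must be careful that the preliminary observation legitimately provides clause~\ref{item:ConditionsAtCountableCofinality}\ref{item:SubpropernessPropagates} for the \emph{combined} class (subproper \emph{and} branch preserving), not just for subproperness alone — this is exactly what Theorem~\ref{thm:RCSIteratingSubproperForcingNotAddingBranchesToOmega1Trees} gives, applied to the restriction of $\vBB$ to a countable-cofinality initial segment — and that the hereditary clause~\ref{item:AllOfThisHoldsInV[G_i]} of nice $\Gamma$-ness is preserved under the relevant quotients, which follows as in Observation~\ref{obs:StandardRCSIterationsOfSubPAreNicelySubP}.

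\begin{proof}
As in Observation~\ref{obs:StandardRCSIterationsOfSubPTpresAreNicelySubPTpres}, Theorems~\ref{thm:RCSIteratingSubproperForcingNotAddingBranchesToOmega1Trees} and~\ref{thm:IteratingSubproper[T]-preservingForcing}, together with Observation~\ref{obs:StandardRCSIterationsOfSubPAreNicelySubP}, show that every standard RCS iteration of subproper and branch preserving (respectively $[T]$-preserving) forcings is nicely subproper and branch preserving (respectively $[T]$-preserving); in particular, belonging to the relevant class propagates to limit stages of countable cofinality.

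Now let $\vBB=\seq{\B_i}{i<\alpha}$ be a nicely subproper and branch preserving iteration, and prove by induction on $\delta$ that whenever $h\le\delta<\alpha$ and $G_h$ is $\B_h$-generic, then in $\V[G_h]$, $\B_\delta/G_h$ is subproper and branch preserving. The successor case and the case $h=\delta$ are trivial, and subproperness of $\B_\delta/G_h$ is handled by Theorem~\ref{thm:NicelySubproperIterations}, so we need only address branch preservation at limit $\delta$. If $\delta$ has countable cofinality in $\V[G_h]$, then by clause~\ref{item:AllOfThisHoldsInV[G_i]} of Definition~\ref{def:NicelyGammaIteration} applied in $\V[G_h]$, the inductive hypothesis, and clause~\ref{item:ConditionsAtCountableCofinality}\ref{item:SubpropernessPropagates} for the class of subproper and branch preserving algebras, $\B_\delta/G_h$ is branch preserving in $\V[G_h]$. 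If $\delta$ has uncountable cofinality, we run the argument of the proof of Theorem~\ref{thm:RCSIteratingSubproperForcingNotAddingBranchesToOmega1Trees}. Inspecting that proof, the only places where the RCS structure of the iteration was used occurred at limit stages of countable cofinality; all other cases --- Case~1 with $\cf(\delta)=\omega_1$ and Case~2 --- used only that the iteration is nicely subproper, together with Observation~\ref{obs:P_is_delta(P)+cc} and Fact~\ref{fact:lambdacc}. Since the countable-cofinality stages have just been disposed of directly, the remaining cases go through verbatim, yielding that $\B_\delta/G_h$ is branch preserving in $\V[G_h]$. This completes the induction, and hence shows that the class of subproper and branch preserving forcing notions is nicely iterable. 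The argument for a fixed $\omega_1$-tree $T$ is identical, using Theorem~\ref{thm:IteratingSubproper[T]-preservingForcing} in place of Theorem~\ref{thm:RCSIteratingSubproperForcingNotAddingBranchesToOmega1Trees}.
\end{proof}
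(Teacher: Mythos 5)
Your proposal is correct and follows essentially the same route as the paper: the paper likewise notes that the RCS iteration theorems make every standard RCS iteration of these classes nicely subproper and branch/$[T]$-preserving, and then observes (as in the proof of Theorem~\ref{thm:NicelyIteratingSubproperT/SouslinnessPreserving}) that the only uses of the RCS structure in the proof of Theorem~\ref{thm:RCSIteratingSubproperForcingNotAddingBranchesToOmega1Trees} occur at limit stages of countable cofinality, which are trivial under clause~\ref{item:ConditionsAtCountableCofinality}\ref{item:SubpropernessPropagates} of Definition~\ref{def:NicelyGammaIteration}. Your write-up is simply a more explicit version of the paper's argument.
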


\subsection{RCS and nicely subproper iterations of $\oo$-bounding forcing}
\label{subsec:NicelySubPiterationsOfOOboundingForcing}

We will now follow Abraham's handbook article \cite{Abraham:ProperForcing}, where this is carried out for countable support iterations of proper forcing notions, in showing that the stages of certain RCS iterations of $\oo$-bounding and subproper forcing notions are also subproper and $\oo$-bounding.

For $f,g\in\oo$ and $n\in\omega$, we write $f\le_n g$ if for all $k\ge n$, $f(k)\le g(k)$, and we write $f\le^*g$ if there is an $n\in\omega$ such that $f\le_ng$ (and we express this by saying that $g$ eventually dominates $f$). A forcing notion $\P$ is \emph{$\oo$-bounding} if whenever $G$ is $\P$-generic over $\V$ and $f\in(\oo)^{\V[G]}$, then there is a $g\in\V$ such that $f\le^*g$, and in fact, in this case, there is then a $g\in\V$ such that $f\le_0g$. If $\dot{f}$ is a $\P$-name for a real, then a weakly decreasing sequence $\vp=\seq{p_i}{i<\omega}$ of conditions in $\P$ \emph{interprets} $\dot{f}$ if there is an $f^*\in\oo$ such that for every $n<\omega$, $p_n\forces_\P\dot{f}\rest\check{n}=\check{f}^*\rest\check{n}$. In this case, we say that $\vp$ interprets $\dot{f}$ as $f^*$, and we write $f^*=\interpretation(\dot{f},\vp)$ to express this. If $g\in\oo$ and $\dot{f}$ is a $\P$-name for a real, then we say that a weakly decreasing sequence $\vp$ in $\P$ \emph{interprets $\dot{f}$ and respects $g$} if $\interpretation(\dot{f},\vp)\le_0g$.

\begin{thm}[{\cite[Theorem 3.2]{Abraham:ProperForcing}}]
Let $\P$ be an $\oo$-bounding forcing notion. Let $\dot{f}$ be a $\P$-name for a real, let $\kappa$ be a sufficiently large cardinal, and let $H_\kappa\sub N\models\ZFCm$, where $N$ is transitive.
Let $M\prec N$ be countable, with $\P,\dot{f}\in M$. Suppose that $g\in\oo$ eventually dominates all reals in $M$ and $\vp\in M$ interprets $\dot{f}$ and respects $g$. Then there are a condition $p\in\P\cap M$ and a real $h\in\oo\cap M$ such that $h\le_0g$ and $p\forces_\P\dot{f}\le_0\check{h}$. In particular, $p\forces_\P\dot{f}\le_0\check{g}$. Moreover, for any $n<\omega$, there is such a condition $p$ with $p\le p_n$.
\end{thm}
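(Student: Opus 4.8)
The plan is to work inside $M$, to reduce everything to one dense set coming from $\oo$-boundedness, and to use the interpreting sequence $\vp$ to control an initial segment. First I would note that, by $\oo$-boundedness together with its ``$\le_0$'' strengthening, the set
\[E=\{q\in\P\st\exists a\in\oo\ \ q\forces_\P\dot f\le_0\check a\}\]
is dense in $\P$: given $r\in\P$, choose a generic $G\ni r$, fix $a\in\V$ with $\dot f^G\le_0 a$, and take $q\in G$, $q\le r$, forcing $\dot f\le_0\check a$. Since $E$ is definable from $\P$ and $\dot f$, we have $E\in M$; working in $M$ I also fix a function $q\mapsto a_q$ on $E$ with $q\forces\dot f\le_0\check{a_q}$. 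The problem is then reduced to finding $q\in E\cap M$ with $q\le p_n$ whose associated bound can be taken to satisfy $a_q\le_0 g$ --- the subtlety being that a priori any $a\in\oo\cap M$ is only $\le^* g$, so finitely many of its values lie above $g$ and must be repaired.

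Next I would run a recursion in $M$ producing a descending sequence $q_0\ge q_1\ge\cdots$ with $q_0=p_n$, alternating two moves: descending along (the appropriate tail of) $\vp$, which forces $\dot f$ to agree with $f^*$ on a longer and longer initial segment and hence keeps those values below $g$; and descending into $E$, which secures a ground-model bound on the remainder of $\dot f$. The interleaving pays off as follows: if a bound $a_{q_j}$ is obtained at a stage $j$ at which $q_j\le p_m$, then one may replace $a_{q_j}$ below $m$ by $f^*\rest m$ without spoiling $q_j\forces\dot f\le_0\check{a_{q_j}}$, since $q_j$ already forces $\dot f\rest m=f^*\rest m$; and if $m$ has by then been driven past the (finitely many) places where $a_{q_j}$ exceeds $g$, the repaired bound $h$ lies in $M$ and satisfies $h\le_0 g$. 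One then sets $p=q_j$ for that stage; since the construction begins below $p_n$ --- or, for the last clause, below any $p_m$ with $m\ge n$ --- it is automatic that $p\le p_n$, and ``in particular $p\forces\dot f\le_0\check g$'' follows at once from $h\le_0 g$.

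The step I expect to be the main obstacle is arranging the recursion so that the descent along $\vp$ genuinely outpaces the ``error'' of the bounds found in $E$ --- that is, certifying that there is a stage at which a bound appears whose error point (relative to $g$) has already been absorbed into the segment on which $\dot f$ has been pinned to $f^*$. This is exactly where the hypothesis that $\vp$ \emph{respects} $g$ (i.e.\ $\interpretation(\dot f,\vp)\le_0 g$, not merely $\le^* g$) is essential, and where $\oo$-boundedness of $\P$ is used in an essential way, since it is what makes bounds available at all; the bookkeeping --- tracking, for each $q_j$, both the largest $m$ with $q_j\le p_m$ and the current candidate bound, and ensuring compatibility of $q_j$ with the tail of $\vp$ when descending --- is the technical core, and getting the interaction between these two quantities right is the delicate point of the argument.
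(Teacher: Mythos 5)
Your setup is sound: the dense set $E$, the choice function $q\mapsto a_q$ taken inside $M$, and the observation that a bound obtained below $p_m$ can be repaired on $[0,m)$ by $f^*\rest m$ are all correct, and you have correctly located the crux of the argument. But you have not closed it: the paragraph beginning ``The step I expect to be the main obstacle'' names the difficulty and then stops, so the proof is incomplete exactly where the theorem's content lies. Moreover, the interleaved recursion cannot be repaired in the order you describe. Once you descend from $p_m$ into $E$ and obtain $q\le p_m$ with bound $a_q$, you cannot in general ``descend further along the tail of $\vp$'': $q$ need not be compatible with $p_{m'}$ for $m'>m$ (both lie below $p_m$, but that is all you know). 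So if the error point $k$ of $a_q$ (the least $k$ with $a_q\le_k g$) exceeds $m$, you must discard $q$ and restart below $p_k$, obtaining a fresh bound with a possibly larger error point, and nothing guarantees termination. Note also that $k$ is computed from $g\notin M$, so no recursion carried out ``in $M$'' can react to it.

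The missing idea is a uniformization followed by a single diagonalization, which replaces the recursion entirely. Working in $M$, choose simultaneously for every $m<\omega$ a condition $q_m\in E$ with $q_m\le p_m$ and its bound $a_m=a_{q_m}$; the sequence $\seq{a_m}{m<\omega}$ lies in $M$, so the single real $b$ defined by $b(i)=\max\{a_m(i)\st m\le i\}$ lies in $M$, and hence $b\le_k g$ for some $k<\omega$. Then for every $m\ge k$ and every $i\ge m$ we get $a_m(i)\le b(i)\le g(i)$, i.e.\ $a_m\le_m g$ for all $m\ge k$. Now set $m=\max(n,k)$, $p=q_m\le p_m\le p_n$, and $h=f^*\rest m\verl a_m\rest[m,\omega)$. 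Then $h\in\oo\cap M$; $h\le_0 g$ because $f^*\le_0 g$ handles $i<m$ and $a_m\le_m g$ handles $i\ge m$; and $p\forces\dot f\le_0\check h$ because $p\le p_m$ pins $\dot f\rest m$ to $f^*\rest m$ while $p\le q_m$ forces $\dot f\le_0\check a_m$. This one step is what your proposal lacks; everything else you wrote is essentially right. (For reference, the paper does not reprove this theorem but cites Abraham's handbook chapter, where the argument is the one just sketched.)
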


We will use the concept of a \emph{derived sequence}. Suppose that $\bbA\sub\B$ are complete Boolean algebras, and $h=h_{\B,\bbA}$. In this situation, let $\dot{f}$ be a $\B$-name for a real, and let $\vb$ be a sequence in $\B^+$ that interprets $\dot{f}$, i.e., such that $\interpretation(\dot{f},\vb)$ exists. Let us fix a well-order of $\B$, and let $G$ be $\bbA$-generic over $\V$. Recall that $b/G\neq 0$ iff $h(b)\in G$. Define the derived sequence $\va$ recursively, as follows: if $h(b_i)\in G$, then $a_i=b_i$. Note that in this case, $h(b_j)\in G$ for all $j<i$, so $a_j=b_j$ for all $j\le i$. If not, then let $a_i$ be the least element of $\B^+$ (with respect to the fixed well-order) such that $h(a_i)\in G$, $a_i\le a_{i-1}$ and $a_i$ decides $\dot{f}\rest\check{i}$. We write $\delta_G(\vb,\dot{f})$ for the derived sequence, and $\delta_G(\vb,\dot{f})/G$ for the sequence $\seq{a_i/G}{i<\omega}$. Note that by construction, $\delta_G(\vb,\dot{f})/G$ is a weakly decreasing sequence in $(\B/G)^+$. The following lemma on derived sequences is completely general and has nothing to do with subproperness vs.~properness.

\begin{lem}[{\cite[Lemma 3.3]{Abraham:ProperForcing}}]
\label{lem:DerivedSequences}
Let $\bbA\sub\B$ be complete Boolean algebras, where $\bbA$ is $\oo$-bounding, and let $h=h_{\B,\bbA}$. Let $\dot{f}\in\V^\B$ be a name for a real, $b\in\B^+$, and suppose that:
\begin{enumerate}
  \item $\vb$ is a weakly decreasing sequence below $b$ in $\B^+$ that interprets $\dot{f}$,
  \item $M\prec N$ is countable, where for some sufficiently large cardinal $\kappa$, $H_\kappa\sub N\models\ZFCm$, $N$ transitive, with $\bbA,\B,\dot{f},\vb,b\in M$,
  \item $g\in\oo$ bounds the reals of $M$ and $\interpretation(\dot{f},\vb)\le_0g$.
\end{enumerate}
Then there is a condition $0\neq a\le h(b)$ with $a\in M$ such that $a$ forces with respect to $\bbA$ that if $\va=\delta_{\dot{G}_\bbA}(\check{\vb},\check{\dot{f}})$, then $\va$ is below $\check{b}$, interprets $\check{\dot{f}}$ and respects $\check{g}$.
\end{lem}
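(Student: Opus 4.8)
The plan is to reduce the lemma to the one-step theorem for $\oo$-bounding forcing, \cite[Theorem~3.2]{Abraham:ProperForcing} above, applied to the complete Boolean algebra $\bbA$ (which is $\oo$-bounding by hypothesis) in place of $\B$. Write $h=h_{\B,\bbA}$; since $h$ is monotone and $b'\le_\B h(b')$ for all $b'\in\B$, the sequence $\vp:=\seq{h(b_i)}{i<\omega}$ is weakly decreasing in $\bbA^+$, and since $\vb\in M$ and $h$ is definable from $\bbA,\B\in M$, we have $\vp\in M$. I will also assume, harmlessly and automatically in the intended applications, that the fixed well-order of $\B$ used in the definition of derived sequences lies in $M$; then the $\bbA$-name $\dot u$ for the real $\interpretation(\dot f,\delta_{\dot G_\bbA}(\check{\vb},\check{\dot f}))$ lies in $M$, and directly from the recursive definition of the derived sequence, $\forces_\bbA$ ``$\delta_{\dot G_\bbA}(\check{\vb},\check{\dot f})$ is a weakly decreasing sequence in $(\check{\B}/\dot G_\bbA)^+$, below $\check b/\dot G_\bbA$, that interprets $\check{\dot f}$ with interpretation $\dot u$''.

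The key step is to verify that $\vp$ interprets $\dot u$ and that $\interpretation(\dot u,\vp)=f^*$, where $f^*:=\interpretation(\dot f,\vb)$. Fix $n<\omega$ and let $G$ be $\bbA$-generic with $h(b_n)\in G$. Since $h$ is monotone and $\vb$ is weakly decreasing, $h(b_i)\in G$ for all $i\le n$, so by the definition of the derived sequence its first $n+1$ terms equal those of $\vb$. As each $b_i$ with $i\le n$ forces (in $\B$) that $\dot f\rest\check i=\check{f}^*\rest\check i$, the first $n$ values of $\interpretation(\dot f,\delta_G(\vb,\dot f))$ coincide with $f^*\rest n$; hence $h(b_n)\forces_\bbA\dot u\rest\check n=\check{f}^*\rest\check n$. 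By hypothesis~(3), $f^*=\interpretation(\dot f,\vb)\le_0 g$, so $\vp$ interprets $\dot u$ and respects $g$.

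Now I would apply \cite[Theorem~3.2]{Abraham:ProperForcing} with the $\oo$-bounding poset $\bbA$, the $\bbA$-name $\dot u$, the pair $M\prec N\models\ZFCm$ (with $H_\kappa\sub N$ for the $\kappa$ of hypothesis~(2), and $\bbA,\dot u\in M$), the real $g$ (which dominates every real of $M$ by hypothesis~(3)), and the interpreting sequence $\vp\in M$. This yields a condition $p\in\bbA\cap M$ with $p\forces_\bbA\dot u\le_0\check g$, and, by the last clause of that theorem applied with $n=0$, one may take $p\le h(b_0)$, the $0$th term of $\vp$. Since $\vb$ lies below $b$ we have $b_0\le_\B b$, whence $h(b_0)\le_\bbA h(b)$ and so $p\le h(b)$. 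Put $a:=p$; then $0\neq a\le h(b)$ and $a\in M$. Finally, $a\le h(b_0)$ forces $h(b_0)\in\dot G_\bbA$, hence that the derived sequence $\va=\delta_{\dot G_\bbA}(\check{\vb},\check{\dot f})$ has $a_0=b_0$; since $\va$ is weakly decreasing, $a$ forces $a_i\le_\B a_0=b_0\le_\B\check b$ for all $i$, i.e.\ that $\va$ lies below $\check b$, and $a$ forces that $\va$ interprets $\check{\dot f}$ with $\interpretation(\check{\dot f},\va)=\dot u\le_0\check g$. This is exactly the conclusion of the lemma.

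The hard part is the key step: producing, inside $M$, an honest interpreting sequence for the $\bbA$-name $\dot u$ whose interpretation is the original $\interpretation(\dot f,\vb)$, so that \cite[Theorem~3.2]{Abraham:ProperForcing} can be quoted verbatim. The point that makes it work is that below $h(b_n)$ the derived sequence has not yet had a chance to deviate from $\vb$. Everything after that is routine bookkeeping with the factoring $\B\cong\bbA*(\B/\dot G_\bbA)$ and with the definition of the derived sequence. A secondary, purely technical nuisance is ensuring $\dot u\in M$, which is why one wants the well-order of $\B$ used to define derived sequences to be reflected into $M$.
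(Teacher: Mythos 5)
The paper itself gives no proof of this lemma --- it is quoted from Abraham's Handbook chapter --- so there is no internal argument to compare against; your proof is correct and is the standard one: pass to the projected sequence $\seq{h(b_i)}{i<\omega}$ in $\bbA$, verify that it interprets the $\bbA$-name $\dot u$ for $\interpretation(\check{\dot f},\delta_{\dot G_\bbA}(\check{\vb}))$ as $\interpretation(\dot f,\vb)$ (the key point being that below $h(b_n)$ the derived sequence agrees with $\vb$ up to $n$), and apply the one-step theorem for the $\oo$-bounding algebra $\bbA$ with $n=0$ so as to land below $h(b_0)\le h(b)$. One small imprecision: in your first paragraph you assert that $1_\bbA$ already forces the derived sequence to lie below $\check b$, which can fail when $h(b_0)\notin\dot G_\bbA$ (the recursion then restarts at an essentially arbitrary condition deciding the empty restriction); this costs nothing, since you correctly re-derive ``below $\check b$'' from $a\le h(b_0)$ in your final paragraph, but the blanket claim should be dropped.
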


The following lemma is a version of \cite[Lemma 1.1]{Abraham:ProperForcing} for complete Boolean algebras.

\begin{lem}
\label{lem:DecresingSequences}
Let $\bbA\sub\B$ be complete Boolean algebras, and let $G_0$ be $\bbA$-generic over $\V$. Suppose that in $\V[G_0]$, there is a sequence $\seq{r_i}{i<\omega}$ such that $r_i\in \B$ and $r_{i+1}/G_0\le_{\B/G_0}r_i/G_0$, for all $i<\omega$. Then, in $\V[G_0]$, there is a sequence $\seq{g_i}{i<\omega}$ such that for all $i<\omega$, $g_i\in G=\{b\in\B\st\exists a\in G_0\ a\le b\}$ and, letting $s_i=r_i\land g_i$, for $i<\omega$, it follows that $\vec{s}$ is weakly decreasing in $\B$.
\end{lem}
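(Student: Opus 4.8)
The plan is to write each $g_i$ as an explicit finite meet and then verify the chain condition by a one-line Boolean computation. Everything happens in $\V[G_0]$; but note that since $\B\in\V$, each individual $r_i$ is already an element of $\V$ --- it is only the sequence $\seq{r_i}{i<\omega}$ that need not lie in $\V$ --- so the finite meets formed below are genuine elements of $\B$, and the resulting sequences $\seq{g_i}{i<\omega}$ and $\seq{s_i}{i<\omega}$ live in $\V[G_0]$.

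First I would unravel what the hypothesis gives. By the definition of the ordering on the factor algebra, $r_{i+1}/G_0\le_{\B/G_0}r_i/G_0$ holds exactly when $e_i:=(r_{i+1}\Rightarrow r_i)=\neg r_{i+1}\lor r_i$ belongs to the filter $G=\{b\in\B\st\exists a\in G_0\ a\le b\}$ generated by $G_0$ on $\B$. So all the implications $e_i$ lie in $G$, and $G$, being the upward closure of the filter $G_0$ with respect to the complete subalgebra $\bbA$, is itself a filter on $\B$, hence closed under finite meets.

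Next, set $g_i:=\bigwedge_{j<i}e_j$, computed in $\B$, so that $g_0=\eins$ and $g_{i+1}=g_i\land e_i$. Since each $g_i$ is a \emph{finite} meet of members of $G$, we get $g_i\in G$ for all $i$, which is the first assertion of the lemma. To see that $\vec s$ is weakly decreasing, where $s_i=r_i\land g_i$, I would simply compute, using the identity $r_{i+1}\land(\neg r_{i+1}\lor r_i)=r_{i+1}\land r_i$:
\[
s_{i+1}=r_{i+1}\land g_{i+1}=(r_{i+1}\land e_i)\land g_i=(r_{i+1}\land r_i)\land g_i\le r_i\land g_i=s_i .
\]
This completes the argument.

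There is essentially no obstacle here. The only two points requiring care are the dictionary between the order on $\B/G_0$ and membership of the corresponding implication in $G$, and the fact that one may only take \emph{finite} meets of the $e_j$ --- a countable meet of elements of $G$ need not lie in $G$, so the ``running infimum'' $g_i$ has to be built one step at a time. One could also note that, because $g_i\in G$, one automatically has $s_i/G_0=r_i/G_0$ in $\B/G_0$, which is the form in which the lemma is typically applied.
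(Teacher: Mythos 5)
Your proof is correct and follows essentially the same route as the paper: the same choice $g_i=\bigwedge_{j<i}(r_{j+1}\Rightarrow r_j)$, the same observation that each $g_i$ is a finite meet of elements of the filter $G$, and the same Boolean identity $r_{i+1}\land(r_{i+1}\Rightarrow r_i)=r_{i+1}\land r_i$ driving the induction. The paper phrases the conclusion as $r_i\land g_i=\bigwedge_{j\le i}r_j$, which is just a repackaging of your direct verification that $\vec{s}$ is weakly decreasing.
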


\begin{proof}
Recall that in this situation, $G$ is a $\V$-complete filter in $\B$, and that the quotient $\B/G_0=\B/G$ consists of equivalence classes with respect to the equivalence relation that identifies $b_0,b_1\in\B$ iff $(b_0\Rightarrow b_1)\land(b_1\Rightarrow b_0)\in G$ (where $b_0\Rightarrow b_1=\neg b_0\lor b_1$). $b/G_0$ denotes the equivalence class of $b$ with respect to this equivalence relation. The partial order on $\B/G_0$ is defined by $b_0/G_0\le_{\B/G_0}b_1/G_0$ iff $(b_0\Rightarrow b_1)\in G$.

Now let $g_i=\bigwedge_{j<i}(r_{j+1}\Rightarrow r_j)$ and $s_i=\bigwedge_{j\le i}r_j$. Then clearly, each $g_i$ is in $G$, and the sequence $\vec{s}$ is weakly decreasing in $\B$. It follows by induction on $i$ that $r_i\wedge g_i=s_i$, completing the proof. This is clear if $i=0$, since $s_0=r_0$ and $g_0=1$. For the inductive step we get that $r_{i+1}\wedge g_{i+1}=r_{i+1}\wedge(r_{i+1}\Rightarrow r_i)\wedge g_i=r_{i+1}\wedge r_i\wedge g_i$, which inductively is $r_{i+1}\wedge s_i=s_{i+1}$.
\end{proof}

The following lemma is the version of \cite[Lemma 3.4]{Abraham:ProperForcing} for subproper forcing with complete Boolean algebras.

\begin{lem}
\label{lem:oosubproperonesteplemma}
Let $\B_0\sub\B_1\sub\B_2$ be complete Boolean algebras. Note that $h_{\B_1,\B_0}=h_{\B_2,\B_0}\rest\B_1$. So let us write $h_0$ for $h_{\B_2,\B_0}$ and $h_1$ for $h_{\B_2,\B_1}$.

Suppose that $\B_0$ is subproper and $\oo$-bounding, and that $\forces_{\B_0}$ ``$\check{\B}_1/\dot{G}_{\B_0}$ is subproper and $\oo$-bounding.'' Let $\dot{f}\in\V^{\B_2}$ be a name for a real, and let $\delta=\delta(\B_1)$.

Let $N=L_\tau^A$ be such that $H_\theta\sub N$, where $\theta$ is sufficiently large, and fix $s\in N$. Let $a\in\B^+_0$, $\btheta, \bar{\B}_0,\bar{\B}_1,\bar{\B}_2, \bar{s}\in\bN$, where $\bN$ is countable, transitive and full, and $\bar{\B}_0\sub\bar{\B}_1\sub\bar{\B}_2$ are complete Boolean algebras in $\bN$.

Let $g\in\oo$ bound all the reals in $\bN$.

Let $\bS=\kla{\btheta,\bar{\B}_0,\bar{\B}_1,\bar{\B}_2,\dot{\barf},\bar{s},\blambda_1,\ldots,\blambda_n}$ and $S=\kla{\theta,\B_0,\B_1,\B_2,\dot{f},s,\lambda_1,\ldots,\lambda_n}$, where $\lambda_i>\delta$ is regular. Let $\dot{\sigma}_0$, $\dot{t}$, $\dot{\bar{b}}$, $\dot{b}$ be $\B_0$-names and suppose that $a$ forces:
\begin{enumerate}
\item[\textnormal{(A1)}] $\dot{\sigma}_0:\check{\bN}\prec\check{N}$,
\item[\textnormal{(A2)}] $\dot{\sigma}_0(\check{\bS})=\check{S}$,
\item[\textnormal{(A3)}] $\dot{\sigma}_0(\dot{\bar{b}})=\dot{b}$ and $\dot{\bar{b}}\in\check{\bar{\B}}_2$ (and so, $\dot{b}\in\check{\B}_2$),
\item[\textnormal{(A4)}] $\check{h}_0(\dot{b})\in\dot{G}_{\B_0}$,
\item[\textnormal{(A5)}] $\dot{t}\in\check{\bN}$,
\item[\textnormal{(A6)}] $\dot{\sigma}_0^{-1}``\dot{G}_{\B_0}$ is $\check{\bN}$-generic for $\check{\bar{\B}}_0$,
\item[\textnormal{(A7)}] Letting $M=\ran(\dot{\sigma}_0)$, there is in $M[\dot{G}_{\B_0}]$ a decreasing sequence $\vb$ of conditions in $\B_2^+$, below $\dot{b}$, such that $\vb/\dot{G}_{\B_0}$ is decreasing in $(\B_2/\dot{G}_{\B_0})^+$, and such that $\interpretation(\check{\dot{f}},\vb)\le_0\check{g}$.
\end{enumerate}

Then there are a condition $c\in\B^+_1$ with $h_0(c)=a$ and a $\B_1$-name $\dot{\sigma}_1$ such that whenever $I$ is $\B_1$-generic with $c\in I$, letting $\sigma_1=\dot{\sigma}_1^I$, $G=I\cap\B_0$ and $\sigma_0=\dot{\sigma}_0^G$, the following conditions hold:
\begin{enumerate}[label=\textnormal{(C\arabic*)}]
\item $\sigma_1:\bN\prec N$,
\item $\sigma_1(\bS)=S$,
\item $\sigma_1(\dot{\bar{b}}^G)=\sigma_0(\dot{\bar{b}}^G)$ and $\sigma_1(\dot{t}^G)=\sigma_0(\dot{t}^G)$,
\item $h_1(\dot{b}^G)\in I$,
\item ${\sigma_1}^{-1}``I$ is $\bar{\B}_1$-generic over $\bar{N}$,
\item Letting $\blambda_0=\On\cap\bN$, we have that for $i\le n$, $\sup\sigma_0``\blambda_i=\sup\sigma_1``\blambda_i$.
\item Letting $M=\ran(\sigma_1)$, there is in $M[I]$ a decreasing sequence $\vc$ of conditions in $\B_2^+$ below $\dot{b}^G$ such that $\vc/I$ is decreasing in $(\B_2/I)^+$ and $\interpretation(\dot{f},\vc)\le_0g$.
\end{enumerate}
\end{lem}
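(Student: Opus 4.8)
The plan is to merge the argument of the Subproperness Extension Lemma (Lemma~\ref{lem:SubpropernessExtensionLemma}) with Abraham's proof of \cite[Lemma~3.4]{Abraham:ProperForcing}, inserting the derived‑sequence machinery of Lemmas~\ref{lem:DerivedSequences} and~\ref{lem:DecresingSequences} at the appropriate point. As in the proof of Lemma~\ref{lem:SubpropernessExtensionLemma}, I would fix an arbitrary $\B_0$‑generic $G_0$ with $a\in G_0$, carry out the construction in $\V[G_0]$, and pull back to $\B_0$‑names at the very end. In $\V[G_0]$, writing $\sigma_0=\dot\sigma_0^{G_0}$, $b=\dot b^{G_0}$, $\bar b=\dot{\bar b}^{G_0}$, $t=\dot t^{G_0}$, $\bG_0=\sigma_0^{-1}``G_0$, and $\sigma_0^*:\bN[\bG_0]\prec N[G_0]$ for the canonical lift of $\sigma_0$ with $\sigma_0^*(\bG_0)=G_0$, $M=\ran\sigma_0^*$, the hypotheses (A1)--(A6) yield $\sigma_0:\bN\prec N$, $\sigma_0(\bS)=S$, $\sigma_0(\bar b)=b$, $t\in\bN$; and, exactly as in the proof of Lemma~\ref{lem:SubpropernessExtensionLemma}, $\bN[\bG_0]$ is full in $\V[G_0]$, $H_\theta^{\V[G_0]}\sub N[G_0]$, and $b/G_0\neq 0$, whence also $h_1(b)/G_0\neq 0$ in $\B_1/G_0$ (using $h_0(b)=h_{\B_1,\B_0}(h_1(b))\in G_0$). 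Moreover $\B_1/G_0$ is subproper, as verified by $\theta$, and $\oo$‑bounding in $\V[G_0]$. Finally, $g$ bounds all reals of $\bN[\bG_0]$ --- hence of $M$, since $M$ and $\bN[\bG_0]$ have the same reals --- because $\bN\models$``$\bar\B_0$ is $\oo$‑bounding'' (by elementarity, as $N$ correctly computes that $\B_0$ is $\oo$‑bounding) and $g$ bounds the reals of $\bN$. By (A7) I would fix in $M$ a weakly decreasing $\vb$ in $\B_2^+$ below $b$ with $\vb/G_0$ weakly decreasing in $(\B_2/G_0)^+$ and $\interpretation(\dot f,\vb)\le_0 g$, and set $\bar{\vb}=(\sigma_0^*)^{-1}(\vb)\in\bN[\bG_0]$.

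Next I would apply Lemma~\ref{lem:DerivedSequences} inside $\V[G_0]$ to the complete subalgebra $\B_1/G_0$ of $\B_2/G_0$, the countable model $M\prec N[G_0]$, the name $\dot f/G_0$, the sequence $\vb/G_0$, the condition $b/G_0$, and the bound $g$. This produces a condition $0\neq d_1\le h_{\B_2/G_0,\B_1/G_0}(b/G_0)=h_1(b)/G_0$ with $d_1\in M$ forcing, with respect to $\B_1/G_0$, that the derived sequence $\va=\delta_{\dot G_{\B_1/G_0}}(\vb/G_0,\dot f/G_0)$ is weakly decreasing below $b/G_0$, interprets $\dot f/G_0$ and respects $g$, and (by the construction of the derived sequence) also that $\va/\dot G_{\B_1/G_0}$ is weakly decreasing in the corresponding quotient. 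Now I would apply the subproperness of $\B_1/G_0$, verified by $\theta$, to $\sigma_0^*:\bN[\bG_0]\prec N[G_0]$, taking as distinguished condition $\ba=(\sigma_0^*)^{-1}(d_1)$ and including in the parameter list the $\sigma_0^*$‑preimages of $\B_1/G_0$, $\B_2/G_0$, $\bS$, $b$, $t$, $\bG_0$, the single object $\bar{\vb}$, and the $\lambda_i$ (the $\lambda_i$ being handled exactly as in the proof of Lemma~\ref{lem:SubpropernessExtensionLemma}). This yields a condition $d\le d_1$ in $(\B_1/G_0)^+$ and a $\B_1/G_0$‑name $\dot\pi$ such that whenever $H$ is $\B_1/G_0$‑generic with $d\in H$, the map $\sigma'=\dot\pi^H:\bN[\bG_0]\prec N[G_0]$ agrees with $\sigma_0^*$ on all listed parameters --- in particular $\sigma'(\bS)=S$, $\sigma'(\bar b)=b$, $\sigma'(t)=\sigma_0(t)$, $\sigma'(\bG_0)=G_0$, $\sigma'(\bar{\vb})=\vb$, and $\sigma'$ sends the preimages of $\B_1/G_0$, $\B_2/G_0$ back to $\B_1/G_0$, $\B_2/G_0$ --- $(\sigma')^{-1}``H$ is $\bar\B_1/\bG_0$‑generic over $\bN[\bG_0]$, and $\sup\sigma'``\bar\lambda_i=\sup\sigma_0^*``\bar\lambda_i$ for all $i\le n$.

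Fixing such an $H$, setting $I=G_0*H$ (a $\B_1$‑generic over $\V$), $\sigma_1=\sigma'\rest\bN$, and letting $\sigma'':\bN[\bI]\prec N[I]$ ($\bI=\sigma_1^{-1}``I=\bG_0*(\sigma')^{-1}``H$) be the corresponding lift of $\sigma'$, the argument of the proof of Lemma~\ref{lem:SubpropernessExtensionLemma} gives conclusions (1)--(6): restricting $\sigma'$ to $\bN$, $\sigma_1:\bN\prec N$, $\sigma_1(\bS)=S$, $\sigma_1$ agrees with $\sigma_0$ on $\bar b$ and $t$, ${\sigma_1}^{-1}``I$ is $\bar\B_1$‑generic over $\bN$, the suprema condition holds, and $h_1(b)\in I$ (since $d\le d_1\le h_1(b)/G_0\in H$). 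For conclusion (7): since $d\le d_1\in H$, in $\V[I]$ the derived sequence $\va=\delta_H(\vb/G_0,\dot f/G_0)$ is weakly decreasing below $b/G_0$, interprets $\dot f/G_0$ as some $f^{**}\le_0 g$, and $\va/H$ is weakly decreasing in $(\B_2/I)^+$; moreover $\va\in\ran\sigma''$, because $\vb/G_0,\dot f/G_0\in\ran\sigma'$ (this is why $\bar{\vb}$ was placed in the parameter list) and $H\in\ran\sigma''$. To pass to the ground‑model algebra, for each $n$ I would choose, inside $\ran\sigma''$, a representative $c_n\in\B_2$ of $a_n/H$ that decides $\dot f\rest n$ as $f^{**}\rest n$ with respect to $\forces_{\B_2}$ --- possible by meeting a lift of $a_n$ with the $\B_1$‑condition in $I$ witnessing the corresponding statement over $\B_2/I$ --- note $c_{n+1}/I\le c_n/I$, apply Lemma~\ref{lem:DecresingSequences} to $\B_1\sub\B_2$ and the $\B_1$‑generic $I$ to thin $\seq{c_n}{n<\omega}$ to a genuinely weakly decreasing $\seq{s_n}{n<\omega}$ in $\B_2$ with $s_n/I=a_n/H$, and set $\vc=\seq{s_n\wedge b}{n<\omega}$. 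Then $\vc$ is weakly decreasing in $\B_2^+$, below $b$, interprets $\dot f$ as $f^{**}\le_0 g$, has $\vc/I=\va/H$ weakly decreasing in $(\B_2/I)^+$, and lies in $\ran\sigma''$, which is the submodel intended by ``$M[G]$'' in the statement; this gives (7). Since all of this holds for arbitrary $G_0\ni a$, one finishes exactly as in the proof of Lemma~\ref{lem:SubpropernessExtensionLemma}: pass to $\B_0$‑names $\dot d$ (for $d$), $\dot\sigma_1$ (with $\dot\sigma_1^{G_0*H}=\sigma_1$), and so on, arrange by mixing that $\forces_{\B_0}\dot d\in\check\B_1/\dot G_{\B_0}$ with $\BV{\dot d\neq 0}_{\B_0}=a$, and invoke Jensen's Facts on complete subalgebras to get the unique $c\in\B_1$ with $\forces_{\B_0}\check c/\dot G_{\B_0}=\dot d$, whence $h_0(c)=h_{\B_1,\B_0}(c)=\BV{\dot d\neq 0}_{\B_0}=a$.

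I expect the main obstacle to be conclusion~(7). Lemma~\ref{lem:DerivedSequences} naturally produces the good sequence inside the quotient $\B_2/G_0$, deciding $\dot f/G_0$ only relative to $\forces_{\B_2/G_0}$, whereas (7) asks for a sequence in the ground‑model algebra $\B_2^+$ that honestly interprets $\dot f$ while remaining weakly decreasing, staying below $b$, still decreasing after quotienting by $I$, and living in $\ran\sigma''$. Reconciling these --- choosing representatives over $\B_2$ that decide the relevant initial segments of $\dot f$ and then applying Lemma~\ref{lem:DecresingSequences} to restore monotonicity --- is the one genuinely new step beyond a verbatim combination of Lemma~\ref{lem:SubpropernessExtensionLemma} with Abraham's Lemma~3.4; the only other subtlety is the bookkeeping ensuring that all the sequences involved end up in the submodel $\ran\sigma''$, which is handled by feeding the single object $\bar{\vb}$ into the subproperness application so that $\sigma'(\bar{\vb})=\vb$.
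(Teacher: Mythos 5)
Your proof is correct and takes essentially the same route as the paper: fix a $\B_0$-generic $G_0\ni a$, apply Lemma \ref{lem:DerivedSequences} in $\V[G_0]$ to obtain a condition in $(\B_1/G_0)^+\cap\ran(\sigma_0^*)$ forcing the derived sequence to be good, run the subproperness extension argument (which the paper simply invokes as Lemma \ref{lem:SubpropernessExtensionLemma} rather than unfolding), and then convert the derived sequence back into a genuinely weakly decreasing sequence in $\B_2^+$ via Lemma \ref{lem:DecresingSequences} before pulling everything back to $\B_0$-names and extracting the condition $c$ with $h_0(c)=a$ by the mixing argument. Your additional care in conclusion (7) --- choosing representatives of the classes $a_n/H$ that honestly decide $\dot{f}\rest n$ with respect to $\forces_{\B_2}$ by meeting them with $\B_1$-conditions from $I$ --- is a worthwhile refinement, since the paper's write-up only notes $c_i\le a_i'$ and asserts that $\vec{c}$ interprets $\dot{f}$, which needs exactly the justification you supply.
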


\begin{proof}
Let $G_0$ be $\B_0$-generic over $\V$, with $a\in G_0$. Let $\sigma_0=\dot{\sigma}_0^{G_0}$, $M=\ran(\sigma_0)$, $b=\dot{b}^{G_0}$, etc. Then $\sigma_0:\bN\prec M$, and $\sigma_0$ lifts to $\sigma_0^*:\bN[\bG]\prec M[G_0]$, where $\bG=\sigma_0^{-1}``G_0$. Since $M[G_0]\prec N[G_0]$, we will be able to apply Lemma \ref{lem:DerivedSequences} in $\V[G_0]$.

Note that $\oo\cap M[G_0]=\oo\cap\bN[\bG]$. Since $\B_0$ is $\oo$-bounding, $\bN$ thinks that $\bar{\B}_0$ is $\oo$-bounding, and since $\bG$ is $\bar{\B}_0$-generic over $\bN$, it follows that every real of $\bN[\bG]$ is bounded by some real of $\bN$, which is bounded by $g$. Thus, the reals of $\bN[\bG]$ are bounded by $g$, and hence, the reals of $M[G_0]$ are bounded by $g$.

The $\B_2$-name $\dot{f}$ can be viewed as a $\B_2/G_0$-name in the obvious way. Let's write $\dot{f}/G_0$ for this name. Then $\dot{f}/G_0$ is a $\B_2/G_0$-name for a real.

By assumption (A7), let $\vb$ be a decreasing sequence of conditions in $\B_2$, below $b$, such that $\vb/G_0$ is decreasing in $\B_2/G_0$, and such that $\interpretation(\dot{f},\vb)\le_0g$. This means that $\interpretation(\dot{f}/G_0,\vb/G_0)\le_0g$.

Thus, by Lemma \ref{lem:DerivedSequences}, applied to $\bbA=\B_1/G_0$, $\B=\B_2/G_0$, the name $\dot{f}/G_0$, the model $M[G_0]$, the condition $b/G_0$ and the sequence $\vb/G_0$, there is a condition $d\in\B^+_1\cap M[G_0]$ such that $d/G_0\le h_{\B_2/G_0,\B_1/G_0}(b/G_0)$ and such that $d/G_0$ forces with respect to $\B_1/G_0$ that if $\va=\delta_{\dot{G}_{\B_1/G_0}}(\vb/G_0,\dot{f}/G_0)$, then $\vec{a}$ is below $\check{b}/G_0$ in $(\B_2/G_0)^+$, interprets $(\dot{f}/G_0)\check{}$ and respects $g$.

Let $\dot{d}$ be a $\B_0$-name for $d$ such that $a$ forces (with respect to $\B_0$) that all of these properties hold. Let $\dot{\bar{d}}$ be a $\B_0$-name such that $a$ forces that $\dot{\sigma}_0^*(\dot{\bar{d}})=\dot{d}$.

We can now apply Lemma \ref{lem:SubpropernessExtensionLemma} to $\B_0\sub\B_1$, $\dot{\sigma}_0$, $\dot{\bard}$, $\dot{d}$ getting a condition $c\in\B_1$ with $h_0(c)=a$ and a $\B_1$-name $\dot{\sigma}_1$ such that whenever $I$ is $\B_1$-generic over $\V$ with $c\in I$ and $G=I\cap\B_0$, it follows that
\begin{enumerate}
  \item $\sigma_1:\bN\prec N$,
  \item $\sigma_1(\bS)=S$,
  \item $\sigma_1(\dot{\barb}^G)=\sigma_0(\dot{\barb}^G)$ and $\sigma_1(\dot{t}^G)=\sigma_0(\dot{t}^G)$,
  \item $\sigma_1(\dot{\bard}^G)=\sigma_0(\dot{\bard}^G)=\dot{d}^G$,
  \item $\dot{d}^G\in I$,
  \item $\sigma_1^{-1}``I$ is $\bar{\B}$-generic over $\bN$,
  \item for all $i\le n$, $\sup\sigma_0``\blambda_i=\sup\sigma_1``\blambda_i$.
\end{enumerate}
where $\delta=\delta(\B_1)$. To conclude the proof, we verify that we also have: $h_1(\dot{b}^G)\in I$, and
letting $M=\ran(\sigma_1)$, there is in $M[I]$ a decreasing sequence $\vc$ of conditions in $\B_2$ below $\dot{b}^G$ such that $\vc/I$ is decreasing in $\B_2/I$ and $\interpretation(\dot{f},\vc)\le_0g$.

In the present situation, we have $I\sub\B_1$ is generic over $\V$, $G=I\cap\B_0$ is $\B_0$-generic over $\V$, and $I/G=\{x/G\st x\in I\}$ is $\B_1/G$-generic over $\V[G]$. Moreover, $I=G*(I/G)=\{x\in\B_1\st x/G\in I/G\}$.

Let $b=\dot{b}^G\in\B_2$, $b'=h_1(b)$ and $d=\dot{d}^G\in\B_1$. We have that $d/G\le h_{\B_2/G_0,\B_1/G_0}(b)$, since this was forced by $a$. But $h_{\B_2/G,\B_1/G}(b)=h_1(b)/G$ (see \cite[Prop.~4.9]{VialeEtAl:BooleanApproachToSPiterations}). Thus, $d/G\le b'/G$. Since $d\in I$, we have that $d/G\in I/G$, and so, $b'/G\in I/G$. But since $I=\{x\in\B_1\st x/G\in I/G\}$, this means that $b'=h_1(b)\in I$, as wished.

Further, what $d/G$ forces over $\V[G]$ is true in $\V[G][I/G]=\V[I]$. Thus, let $\va=\delta_{I/G}(\vb/G,\dot{f}/G)$.  Then $\vec{a}$ is below $b/G$ in $(\B_2/G)^+$, interprets $\dot{f}/G$ and respects $g$. Note that $\va\in M[I]$, and $\va/(I/G)$ is weakly decreasing in $((\B_2/G)/(I/G))^+$, which we can identify with $\B_2/I$.

For every $i<\omega$, let $a'_i\in\B_2$ be such that $a_i=a'_i/G$. We then have that for every $i<\omega$, $a_i/(I/G)=(a'_i/G)/(I/G)$, which we may identify with $a'_i/I$. Thus, $\seq{a'_i}{i<\omega}$ is a sequence in $\B_2^+$ such that for all $i<\omega$, $a'_{i+1}/I\le_{\B_2/I}a'_i/I$, and $a'_i/I\neq 0$. By replacing $a'_i$ with $a'_i\land b$ if necessary, we may assume that $a'_i\le b$, for all $i$ -- note that $a_i=a'_i/G=(a'_i\land b)/G$ since $a_i/G\le b/G$.

By Lemma \ref{lem:DecresingSequences}, there is a sequence $\seq{q_i}{i<\omega}$ in $\V[I]$ such that each $q_i\in I^*=\{x\in\B_2\st\exists y\in I\ y\le x\}$, and such that if we let $c_i=a'_i\land q_i$, then $\vec{c}$ is weakly decreasing in $\B_2$. Since $a'_i/I\neq 0$, we know that $h_{\B_2,\B_1}(a'_i)\in I$. If $q_i\ge q'_i\in I$, then we have that $h_{\B_2,\B_1}(a'_i\land q'_i)=h_{\B_2,\B_1}(a'_i)\land q'_i\in I$ (we used here \cite[P.~87, second bullet point]{Jensen2014:SubcompleteAndLForcingSingapore} and the fact that $q'_i\in\B_1$), and so, $(a'_i\land q'_i)/I\neq 0$. Since $q_i\ge q'_i$, it follows that $c_i/I\neq 0$. And since $c_i\le a'_i$ for all $i$, it follows that $\vec{c}$ interprets $\dot{f}$ and respects $g$, and $\vec{c}$ is below $b$, as wished.
\end{proof}

We can now follow the proof templates of Theorems \ref{thm:RCSIteratingSubproperSouslinTreePreservingForcing} and
\ref{thm:RCSIteratingSubproperForcingNotAddingBranchesToOmega1Trees} to obtain our iteration theorem for $\oo$-bounding subproper forcing.

\begin{thm}
\label{thm:RCSIterationOfSubproperOO-boundingForcing}
The class of subproper $\oo$-bounding Boolean algebras is standard RCS iterable.
\end{thm}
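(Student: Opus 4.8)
The plan is to follow the proof template established in Theorems~\ref{thm:RCSIteratingSubproperSouslinTreePreservingForcing} and \ref{thm:RCSIteratingSubproperForcingNotAddingBranchesToOmega1Trees}, with the combinatorial core now supplied by Lemma~\ref{lem:oosubproperonesteplemma} (the $\oo$-bounding one-step lemma) in place of the Souslin-tree density argument. Let $\vec{\B}=\seq{\B_i}{i<\alpha}$ be a standard RCS iteration of forcings that are subproper and $\oo$-bounding, and prove by induction on $\delta$ that whenever $h\le\delta<\alpha$ and $G_h$ is $\B_h$-generic, then in $\V[G_h]$, $\B_\delta/G_h$ is subproper and $\oo$-bounding. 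Subproperness is handled by Theorem~\ref{thm:NicelySubproperIterations}, so we only need $\oo$-bounding. The successor and $h=\delta$ cases are trivial (recall that a two-step iteration of $\oo$-bounding forcings is $\oo$-bounding, which is forced by the iterands). As in the earlier proofs, Observation~\ref{obs:StandardRCSIterationsOfSubPAreNicelySubP} lets us use only that $\vec\B$ is nicely subproper wherever possible, with the RCS structure invoked only at stages of countable cofinality.

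\textbf{Case 2} ($\cf(\delta)>\delta(\B_i)$ for all $i<\delta$) goes through verbatim as in the proof of Theorem~\ref{thm:RCSIteratingSubproperSouslinTreePreservingForcing}: one reduces to the case that $\delta>\omega_1$ is regular, deduces that $\B_\delta$ is ${<}\delta$-c.c.\ (via Observation~\ref{obs:P_is_delta(P)+cc} and Fact~\ref{fact:lambdacc}), and then any new real added by $\B_\delta/G_h$ would appear at some earlier stage $\B_\gamma/G_h$ by the chain condition, contradicting the inductive hypothesis. So the real work is \textbf{Case 1}, where there is an $i<\delta$ with $\cf(\delta)\le\delta(\B_i)$; exactly as before, this reduces to showing: if $\cf(\delta)\le\omega_1$, then $\B_\delta$ is $\oo$-bounding. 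Fix a $\B_\delta$-name $\dot f$ for a real and a condition $a_0\in X$, where $X$ is the dense set of threads' infima (if $\cf(\delta)=\omega$) or $\bigcup_{i<\delta}\B_i$ (if $\cf(\delta)=\omega_1$), and fix $g\in\oo$; we must find $c\le a_0$ in $\B_\delta^+$ and a real in $\V$ below $g$-dominating $\dot f$ — or, more precisely, interpret $\dot f$ and respect $g$. Pick $\pi\colon\omega_1\to\delta$ cofinal, a suitable $N=L_\tau^A$ verifying subproperness of each $\B_i$, $i\le\delta$, set $S=\kla{\theta,\delta,\vec\B,X,\dot f, g,\pi,a_0}$, take a countable $M_0\prec N$ with $S\in M_0$, $\sigma_0\colon\bN\to M_0$ the inverse collapse with $\bN$ full, $\tdelta=\sup(M_0\cap\delta)$, and $\gamma_i=\sigma_0(\bpi(\nu_i))$ cofinal in $\tdelta$ with $\gamma_0=0$, exactly as in the template. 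Crucially, take $g$ to bound all reals of $\bN$ — here we use that $\bN$ is countable and the $\bar\B_{\gamma_i}$ are $\oo$-bounding in $\bN$.

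Now run the recursion along $n<\omega$ building $\dot\sigma_n\in\V^{\B_{\gamma_n}}$, $c_n\in\B_{\gamma_n}$, $\dot b_n,\dot{\barb}_n\in\V^{\B_{\gamma_n}}$ as in the template, where each step from $\gamma_m$ to $\gamma_n$ ($n=m+1$) applies Lemma~\ref{lem:oosubproperonesteplemma} with $\B_0=\B_{\gamma_m}$, $\B_1=\B_{\gamma_n}$, and $\B_2=\B_\delta$, carrying along the name $\dot f$ and the interpreting-sequence data. The key additional invariant, supplied by clause~(7) of Lemma~\ref{lem:oosubproperonesteplemma}, is that $c_n$ forces there to be, in $M_n[G_{\gamma_m}]$, a weakly decreasing sequence $\vec c^{(n)}$ of conditions in $\B_\delta$ below $\dot b_n$ whose quotient by $\dot G_{\B_{\gamma_n}}$ is weakly decreasing in $\B_\delta/\dot G_{\B_{\gamma_n}}$ and interprets $\dot f$ respecting $g$; the hypothesis (A7) needed to feed back into the next application is exactly clause~(7) of the previous one (at $n=0$, (A7) is arranged by choosing $\vec b$ inside $M_0$ interpreting $\dot f$ — possible since $\interpretation(\dot f,\vec b)$ for a suitable $M_0$-sequence $\vec b$ is a real of $M_0$, hence below $g$; one may need to shrink $a_0$ slightly or absorb this into the choice of $g$). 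As in the template, $\seq{c_n}{n<\omega}$ is a thread (by the $h_{\gamma_m}(c_n)=c_m$ clause), so $c=\bigwedge_n c_n\in\B_\delta^+$ by Definition~\ref{def:NicelyGammaIteration}\ref{item:ConditionsAtCountableCofinality}\ref{item:ThreadsAreNonzero} (if $\cf(\delta)=\omega$) or because $c\in\B_\gamma^+$ with $\gamma=\sup_n\gamma_n<\delta$ of countable cofinality (if $\cf(\delta)=\omega_1$), and $c\le a_0$. The claim (C) that $b_n\in G$ whenever $c\in G$ is proved exactly as in the template (using $X=\bigcup_i\B_i$ or the thread characterization, and here the RCS assumption at $\cf(\delta)=\omega$). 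Finally, to see $c$ forces $\dot f\le^*\check g$: in $\V[G]$ with $c\in G$, the sequences $\vec c^{(n)}$ cohere across $n$ into a weakly decreasing sequence in $\B_\delta^+$ below $c$ (each lies below $b_n\in G$ and interprets $\dot f$ respecting $g$, and their interpretations agree on longer and longer initial segments since each $\vec c^{(n)}$ decides $\dot f\rest n$ consistently with $g$), so their common interpretation $h^*\in\oo$ satisfies $h^*\le_0 g$ and $c\forces\dot f\rest\check k=\check h^*\rest\check k$ for all $k$, whence $c\forces\dot f=\check h^*\le_0\check g$. Since $a_0$ and $g$ were arbitrary (with $g$ bounding the reals of a suitable $\bN$, which can always be arranged over a given $g$), this shows $\B_\delta$ is $\oo$-bounding.

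\textbf{The main obstacle} I anticipate is the bookkeeping in threading the interpreting sequences $\vec c^{(n)}$ coherently through the recursion: clause~(7) of Lemma~\ref{lem:oosubproperonesteplemma} produces, at stage $n$, a sequence \emph{in $M_n[G_{\gamma_m}]$} below $\dot b_n$, but to apply the lemma again at stage $n+1$ one needs this as hypothesis (A7) relative to the \emph{next} model $M_{n+1}$ and the next pair of algebras, and one must ensure the elementarity of $\sigma_{n+1}$ (which agrees with $\sigma_n$ on the relevant finite data, by clause (3)) transports the sequence correctly — i.e., that $\dot{\barb}_n$, the $\sigma^*$-preimage of $b_n$, and the preimage of the interpreting sequence all sit in $\bN$ and are moved consistently. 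This is precisely the kind of argument Abraham carries out in \cite[\S3]{Abraham:ProperForcing} for countable support iterations of proper forcing, and the subproper analog differs only in that $\sigma_{n+1}$ need not extend $\sigma_n$ — only agree with it on finitely much — which is already accommodated by the ``moreover'' clauses of Lemmas~\ref{lem:SubpropernessExtensionLemma} and \ref{lem:oosubproperonesteplemma}. So, as with Theorems~\ref{thm:RCSIteratingSubproperTPreservingForcing} and \ref{thm:IteratingSubproper[T]-preservingForcing}, the honest statement is that the proof of the template theorem goes through with the Souslin/branch density argument replaced by the derived-sequence machinery of Lemmas~\ref{lem:DerivedSequences}--\ref{lem:oosubproperonesteplemma}, and we omit the routine repetition.
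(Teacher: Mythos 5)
Your proposal is correct and follows essentially the same route as the paper: reduce to the case $\cf(\delta)\le\omega_1$, build the interpreting sequence inside $M_0$, choose $g$ bounding the reals of $M_0$ after the fact, thread Lemma~\ref{lem:oosubproperonesteplemma} through the recursion with its clause~(7) as the maintained invariant so that each $\dot{b}_n$ decides $\dot{f}\rest\check{n}$ respecting $g$, and dispose of the remaining case by the ${<}\delta$-c.c. The only slip is at the very end: $c$ need not force $\dot{f}$ to equal a single ground-model real $h^*$ (the decided value of $\dot{f}\rest n$ depends on $G\cap\B_{\gamma_n}$), but since each $b_n\in G$ forces $\dot{f}\rest\check{n}\le_0\check{g}\rest\check{n}$, the required conclusion $c\forces\dot{f}\le_0\check{g}$ follows anyway, which is exactly how the paper concludes.
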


\begin{proof}
Let $\vec{\B}=\seq{\B_i}{i<\alpha}$ be a standard RCS iteration of subproper and $\oo$-bounding complete Boolean algebras. We prove by induction on $\delta$: whenever $h\le\delta<\alpha$ and $G_h$ is $\B_h$-generic, then in $\V[G_h]$, $\B_\delta/G_h$ is subproper and $\oo$-bounding.

As before, it suffices to focus on the $\oo$-bounding property, and it suffices to focus on the case that $\delta$ is a limit ordinal.

\noindent{\bf Case 1:} there is an $i<\delta$ such that $\cf(\delta)\le\delta(\B_i)$.

Let $i<\delta$ have this property. As before, it suffices to prove:

\begin{enumerate}
\item[(A)] if $i<j<\delta$ and $G_j$ is $\B_j$-generic, then in $\V[G_j]$, $\B_\delta/G_j$ is $\oo$-bounding.
\end{enumerate}

This further reduces to showing

\begin{enumerate}
\item[(B)] if $\cf(\delta)\le\omega_1$ then $\B_\delta$ is $\oo$-bounding.
\end{enumerate}

As before, we can define a dense set $X\sub\B_\delta$, depending on the cofinality of $\delta$: if $\cf(\delta)=\omega_1$, then $X=\bigcup_{i<\delta}\B_i$, and if $\cf(\delta)=\omega_1$, then
$\{\bigwedge_{i<\delta}t_i\st\seq{t_i}{i<\delta}\ \text{is a thread in}\ \vBB\rest\delta\}$.

Let $\pi:\omega_1\To\delta$ be cofinal, with $\pi(0)=0$.

Let $\dot{f}$ be a $\B_\delta$-name for a member of $\oo$, and let $a_0\in\B_\delta$ be a condition. We have to find $g\in\oo$ and a condition extending $a_0$ that forces that $\dot{f}\le_0\check{g}$.
Since $X$ is dense in $\B_\delta$, we may assume that $a_0\in X$.

Let $N=L_\tau^A$, with $H_\theta\sub N$, such that $\theta$ verifies the subproperness of each $\B_i$, for $i\le\delta$.
Let $S=\kla{\theta,\delta,\vec{\B},X,\dot{f},\pi,a_0}$.
Let $M_0\prec N$ with $S\in M_0$, $M_0$ countable and such that, letting $\sigma_0:\bN\To M_0$ be the inverse of the Mostowski collapse (so that $\bN$ is transitive), $\bN$ is full. Let $\sigma_0^{-1}(S)=\bS=\kla{\btheta,\bdelta,\vec{\bar{\B}},\bX,\dot{\barf},\bpi,\bar{a}_0}$.
Let $\seq{\nu_i}{i<\omega}$ be a sequence of ordinals $\nu_i<\omega_1^\bN$ such that if we let $\bgamma_i=\bpi(\nu_i)$, it follows that $\seq{\bgamma_i}{i<\omega}$ is monotone and cofinal in $\bdelta$, and such that $\nu_0=0$, so that $\bgamma_0=0$. Hence, letting $\gamma_i=\sigma_0(\bgamma_i)$, we have that $\sup_{i<\omega}\gamma_i=\sup(M_0\cap\delta)$. Moreover, whenever $\sigma':\bN\prec N$ is such that $\sigma'(\bpi)=\pi$, it follows that $\sigma'(\bgamma_i)=\gamma_i=\pi(\nu_i)$, as before.

Working inside $M_0$, construct a decreasing sequence $\seq{a_i}{i<\omega}$ 
that interprets $\dot{f}$ as some $f^*\in M_0$ (so $\va$ starts with the given condition fixed above). Let $g$ bound the reals of $M_0$, with $f^*\le_0g$.

By induction on $n<\omega$, construct sequences $\seq{\dot{\sigma}_n}{n<\omega}$, $\seq{c_n}{n<\omega}$, $\seq{\dot{b}_n}{n<\omega}$ and $\seq{\dot{\bar{b}}_n}{n<\omega}$ with $c_n\in\B_{\gamma_n}$, $\dot{\sigma}_n\in\V^{\B_n}$, $\dot{b}_n,\dot{\barb}_n\in\V^{\B_{\gamma_n}}$, such that for every $n<\omega$, $c_n$ forces the following statements:
\begin{enumerate}
\item $\dot{\sigma}_n:\check{\bN}\prec\check{N}$.
\item $\dot{\sigma}_n(\check{\bS})=\check{S}$, and for all $k<n$, $\dot{\sigma}_n(\dot{\bar{b}}_k)=\dot{\sigma}_k(\dot{\bar{b}}_k)$.
\item $\dot{\sigma}_n(\dot{\bar{b}}_n)=\dot{b}_n$ and $\dot{\bar{b}}_n\in\check{\bar{\B}}_{\bar{\delta}}$ (and so, $\dot{b}_n\in\check{\B}_\delta$). Moreover, $\dot{\bar{b}}_n\in\check{\bX}$, so $\dot{b}_n\in\check{X}$.
\item $\check{h}_{\gamma_n}(\dot{b}_n)\in\dot{G}_{\B_{\gamma_{n}}}$.
\item $\dot{\sigma}_n^{-1}``\dot{G}_{\B_{\gamma_n}}$ is $\check{\bN}$-generic for $\check{\bar{\B}}_{\gamma_n}$.
\item Letting $M_n=\ran(\dot{\sigma}_n)$, there is in $M_n[\dot{G}_{\B_{\gamma_n}}]$ a decreasing sequence of conditions in $\check{\B}_\delta$, below $\dot{b}_n$, such that $\vb/\dot{G}_{\B_\delta}$ is decreasing in $(\check{\B}_{\delta}/\dot{G}_{\B_{\gamma_n}})^+$, and such that $\interpretation(\check{\dot{f}},\vb)\le_0\check{g}$. 
\item $\dot{b}_n$ decides $\dot{f}\rest\check{n}$ (with respect to $\B_\delta$), and $\dot{b}_n\forces_{\check{\B}_\delta}\dot{f}\rest\check{n}\le_0\check{g}\rest\check{n}$.
\item $c_{n-1}=h_{\gamma_{n-1}}(c_{n})$ (for $n>0$).
\item $\dot{b}_n\le_{\check{\B}_\delta}\dot{b}_{n-1}$ (for $n>0$).
\end{enumerate}
To start off, in the case $n=0$, we set $c_0=\eins$, $\dot{\sigma}_0=\check{\sigma}_0$ and $\dot{b}_0=\check{a}_0$ and $\dot{\barb}_0=\sigma_0^{-1}(\dot{b}_0)$. Clearly then, (1)-(7) are satisfied for $n=0$ -- for (3) and (6), note that we picked $\vec{a}$ in $X$. Conditions (8)-(9), as well as the second part of (2), are vacuous for $n=0$.

Now suppose $m=n-1$, and $\dot{\sigma}_l$, $c_l$, $\dot{b}_l$ and $\dot{\barb}_l$ have been defined, so that (1)-(9) are satisfied
for $l\le m$.

We want to apply Lemma \ref{lem:oosubproperonesteplemma} in the present situation. Here is the conversion:

\begin{center}
\begin{tabular}{c|c}
Lemma \ref{lem:oosubproperonesteplemma} & Current situation\\
\hline
$\B_0,\B_1,\B_2$ & $\B_{\gamma_m},\B_{\gamma_n},\B_\delta$\\
$\dot{\sigma}_0$ & $\dot{\sigma}_m$\\
$\dot{t}$ & $\kla{\dot{\barb}_0,\ldots,\dot{\barb}_m}$\\
$\dot{\barb},\dot{b}$ & $\dot{\barb}_m,\dot{b}_m$\\
$a$ & $c_m$
\end{tabular}
\end{center}

The assumptions (1)-(7) stated in the lemma are then satisfied, by our inductive assumption, and the lemma then guarantees the existence of certain objects, which we convert to the current situation as follows:

\begin{center}
\begin{tabular}{c|c}
Lemma \ref{lem:oosubproperonesteplemma} & Current situation\\
\hline
$c$ & $c_n$\\
$\dot{\sigma}_1$ & $\dot{\sigma}_n$
\end{tabular}
\end{center}

We are thus left to define $\dot{b}_n$ and $\dot{\barb}_n$. To do this, let $I$ be $\B_{\gamma_n}$-generic over $\V$, and let $G=I\cap\B_{\gamma_m}$, with $c_n\in I$. Working in $\V[I]$, let $\sigma_n=\dot{\sigma}_n^I$, $\sigma_m=\dot{\sigma}_m^G$, $t=\dot{t}^G$, $b_k=\dot{b}_k^G$ and $\barb_k=\dot{\barb}_k^G$, for $k\le m$. Since $\dot{\sigma}_n$ and $c_n$ were chosen according to Lemma \ref{lem:oosubproperonesteplemma}, we then have that $\sigma_n:\bN\prec N$, $\sigma_n(\bS)=S$, $\sigma_n(\barb_k)=b_k$ for $k\le m$, $h_{\gamma_m}(b_m)\in I$ and $\bI=\sigma_n^{-1}``I$ is $\bar{\B}_{\bgamma_n}$-generic over $\bN$. 
So let $M_n=\ran(\sigma_n)$, and let $\sigma^*_n:\bN[\bI]\prec M[I]$ be such that $\sigma_n\sub\sigma^*_n$ and $\sigma^*_n(\bI)=I$. By conclusion (7) of Lemma \ref{lem:oosubproperonesteplemma}, there is in $M_n[I]$ a sequence $\vr=\seq{r_i}{i<\omega}$ decreasing in $\B_\delta$, below $b_m$, such that $\vr/I$ is decreasing in $(\B_\delta/I)^+$ and $\interpretation(\dot{f},\vr)\le_0g$. 
Let $\dot{b}_n$ be a $\B_{\gamma_n}$-name such that $\dot{b}_n^I=r_n$, and such that $c_n$ forces that $\dot{b}_n$ decides $\dot{f}\rest\check{n}$ and forces $\dot{f}\rest\check{n}\le_0\check{g}\rest\check{n}$. Finally, let $\dot{\barb}_n$ be a $\B_{\gamma_n}$-name for $(\sigma^*_n)^{-1}(\dot{b}_n)$.

This concludes the construction of the sequences $\vec{\dot{\sigma}}$, $\vec{c}$, $\vec{b}$ and $\vec{\bar{b}}$.

Now, the sequence $\seq{c_n}{c<\omega}$ is a thread, and it follows as before that $c=\bigwedge_{n<\omega}c_n\in\B_\delta^+$. 

We claim that $c$ forces that $\dot{f}\le_0\check{g}$. To see this, let $G$ be $\B_\delta$-generic over $\V$, with $c\in G$. Let $n<\omega$. Let $b_n:=\dot{b}_n^G\in G$. We show as before:

\claim{(C)}{For all $n<\omega$, $b_n\in G$.}

Now by point (7) in our recursive construction, $b_n$ decides $\dot{f}\rest\check{n}$ to be totally bounded by $g\rest n$. This shows that $\dot{f}^G\le_0 g$.

\noindent{\bf Case 2:} for all $i<\delta$, $\cf(\delta)>\delta(\B_i)$.

It follows as before that $\B_\delta$ the direct limit of $\vBB\rest\delta$, and is ${<}\delta$-c.c.

But then it follows that if $G$ is $\B_\delta$-generic over $\V$, then any real in $\V[G]$ is already in $\V[G\cap\B_\gamma]$, for some $\gamma<\delta$, and hence is bounded by a real in $\V$, since we know inductively that $\B_\gamma$ is $\oo$-bounding.
\end{proof}

As before, the previous RCS iteration theorems imply:

\begin{obs}
\label{obs:StandardRCSIterationsOfSubPooboundingAreNicelySubPoobounding}
Every standard RCS iteration of subproper and $\oo$-bounding forcings is nicely subproper and $\oo$-bounding.
\end{obs}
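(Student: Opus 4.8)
The plan is to mirror the proof of Observation \ref{obs:StandardRCSIterationsOfSubPAreNicelySubP} essentially verbatim, replacing ``subproper'' by ``subproper and $\oo$-bounding'' throughout, and invoking Theorem \ref{thm:RCSIterationOfSubproperOO-boundingForcing} in place of Theorem \ref{thm:RCSiterationOfSubPforcing} at the single point where the full iteration theorem is needed. So let $\Gamma$ be the class of subproper, $\oo$-bounding complete Boolean algebras, let $\vBB=\seq{\B_i}{i<\alpha}$ be a standard RCS iteration of forcings in $\Gamma$, fix $i<\alpha$ and a $\B_i$-generic $G_i$, and verify conditions \ref{item:GammaIterands}--\ref{item:DirectLimitAtUncountableCofinality} of Definition \ref{def:NicelyGammaIteration} for the tail iteration $\seq{\B_{i+j}/G_i}{j<\alpha-i}$ in $\V[G_i]$.

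First I would dispatch condition \ref{item:GammaIterands}: for $j+1<\alpha-i$, since $\forces_{\B_{i+j}}$``$\check{\B}_{i+j+1}/\dot{G}_{i+j}$ is subproper and $\oo$-bounding'', the usual factoring argument gives that $\forces_{\B_{i+j}/G_i}$``$(\check{\B}_{i+j+1}/\check{G}_i)/\dot{G}$ is subproper and $\oo$-bounding'' in $\V[G_i]$, and the same factoring shows the tail is again a standard iteration there. For condition \ref{item:ConditionsAtCountableCofinality}, let $\lambda<\alpha-i$ have countable cofinality in $\V[G_i]$. By part \ref{RCSFact.item:PropertiesAreHereditary} of Fact \ref{fact:BasicsOnRCSiterations}, parts \ref{RCSFact.item:InverseLimitAtCountableCofinalities} and \ref{RCSFact.item:DirectLimitAtStronglyUncountableCofinalities} of that fact hold in $\V[G_i]$ for the tail iteration; in particular $\B_\lambda/G_i$ is the inverse limit of $\seq{\B_{i+j}/G_i}{j<\lambda}$, which gives \ref{item:ConditionsAtCountableCofinality}\ref{item:ThreadsAreNonzero}, and by Theorem \ref{thm:RCSIterationOfSubproperOO-boundingForcing} applied in $\V[G_i]$ (the tail being, there, a standard RCS iteration of subproper $\oo$-bounding forcings), $\B_\lambda/G_i$ is subproper and $\oo$-bounding, giving \ref{item:ConditionsAtCountableCofinality}\ref{item:SubpropernessPropagates}. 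Condition \ref{item:DirectLimitAtUncountableCofinality} follows from part \ref{RCSFact.item:DirectLimitAtStronglyUncountableCofinalities} of Fact \ref{fact:BasicsOnRCSiterations}, again transported to $\V[G_i]$ via part \ref{RCSFact.item:PropertiesAreHereditary}. Finally, condition \ref{item:AllOfThisHoldsInV[G_i]} is automatic, since the preceding was carried out for an arbitrary $i$ and $\B_i$-generic $G_i$, and a composed generic $G_i*H$ for $\B_i$ followed by $\B_{i+k}/G_i$ is generic for $\B_{i+k}$.

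I do not expect a genuine obstacle: the only substantive (non-bookkeeping) input is Theorem \ref{thm:RCSIterationOfSubproperOO-boundingForcing}, which is already available, and everything else is the same ``hereditary properties of RCS iterations'' routine used in the subproper and Souslin-tree-preserving cases. The one place meriting a little care is simply confirming, inside $\V[G_i]$, that $\seq{\B_{i+j}/G_i}{j<\alpha-i}$ really is a standard RCS iteration of $\Gamma$-forcings, so that Theorem \ref{thm:RCSIterationOfSubproperOO-boundingForcing} genuinely applies — but this is precisely the content of condition \ref{item:GammaIterands} together with Fact \ref{fact:BasicsOnRCSiterations}, so no new work is required.
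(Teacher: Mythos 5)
Your proposal is correct and is exactly the argument the paper intends: the paper gives no separate proof for this observation, merely noting "as before, the previous RCS iteration theorems imply" it, i.e.\ the proof of Observation \ref{obs:StandardRCSIterationsOfSubPAreNicelySubP} is to be repeated with Theorem \ref{thm:RCSIterationOfSubproperOO-boundingForcing} supplying the propagation of the property to limits of countable cofinality. Your write-up fills in precisely those details, so there is nothing to add.
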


And as before, we can generalize Theorem \ref{thm:RCSIterationOfSubproperOO-boundingForcing} as follows.

\begin{thm}
\label{thm:NicelyIteratingSubproperOobounding}
The class of subproper and $\oo$-bounding Boolean algebras is nicely iterable.
\end{thm}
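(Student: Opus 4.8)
The plan is to follow the template established in the proofs of Theorems \ref{thm:NicelyIteratingSubproperT/SouslinnessPreserving} and \ref{thm:NiceIterabilityOfSubPNotAddingBranches/[T]-presForcing}. By Observation \ref{obs:StandardRCSIterationsOfSubPooboundingAreNicelySubPoobounding}, this theorem contains Theorem \ref{thm:RCSIterationOfSubproperOO-boundingForcing} as a special case, so the task is to revisit the proof of that theorem and verify that it appeals to the RCS structure of the iteration only at limit stages of countable cofinality, which are precisely the stages built into the definition of a nicely $\Gamma$ iteration.

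Concretely, let $\vBB=\seq{\B_i}{i<\alpha}$ be a nicely subproper $\oo$-bounding iteration, and argue by induction on $\delta$ that $\B_\delta/G_h$ is subproper and $\oo$-bounding in $\V[G_h]$ for every $h\le\delta<\alpha$ and $\B_h$-generic $G_h$. Subproperness is handled by Theorem \ref{thm:NicelySubproperIterations}, so only the $\oo$-bounding property needs attention, and one may assume $\delta$ is a limit ordinal. If $\cf^{\V[G_h]}(\delta)=\omega$, there is nothing to do: passing to $\V[G_h]$, which is legitimate by condition \ref{item:AllOfThisHoldsInV[G_i]} of Definition \ref{def:NicelyGammaIteration}, the inductive hypothesis gives that each $\B_{h+j}/G_h\in\Gamma$ for $j<\delta-h$, so condition \ref{item:ConditionsAtCountableCofinality}\ref{item:SubpropernessPropagates} immediately yields $\B_\delta/G_h\in\Gamma$. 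Otherwise, as in Theorem \ref{thm:RCSIterationOfSubproperOO-boundingForcing}, one splits into Case 1 ($\cf(\delta)\le\delta(\B_i)$ for some $i<\delta$, which reduces---using that the iteration is standard, hence collapses $\delta(\B_i)$ to at most $\omega_1$ by stage $i+1$---to the subcase $\cf(\delta)=\omega_1$) and Case 2 ($\cf(\delta)>\delta(\B_i)$ for all $i$). In Case 2 the original argument already uses nothing beyond nice subproperness: $\B_\delta$ is the direct limit by condition \ref{item:DirectLimitAtUncountableCofinality}, is ${<}\delta$-c.c.\ by Fact \ref{fact:lambdacc} together with Observation \ref{obs:P_is_delta(P)+cc}, and every real added by $\B_\delta$ appears at a bounded stage, hence is dominated by a ground-model real by the inductive hypothesis. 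In the $\cf(\delta)=\omega_1$ subcase of Case 1 one reruns the recursive construction verbatim, building $\seq{\dot\sigma_n}{n<\omega}$, $\seq{c_n}{n<\omega}$, $\seq{\dot b_n}{n<\omega}$ via repeated application of Lemma \ref{lem:oosubproperonesteplemma}, and then checks that the two appeals to the RCS structure are both covered by the nice-iteration hypotheses: the nonvanishing of the thread, $\bigwedge_{n<\omega}c_n\neq 0$, follows from condition \ref{item:ConditionsAtCountableCofinality}\ref{item:ThreadsAreNonzero} applied at the countable-cofinality stage $\gamma=\sup_{n<\omega}\gamma_n<\delta$; and the density of $X=\bigcup_{i<\delta}\B_i$ used in Claim (C) follows from condition \ref{item:DirectLimitAtUncountableCofinality}.

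The part I expect to require the most care is the bookkeeping in the $\cf(\delta)=\omega_1$ subcase: one must confirm that \emph{every} use of the RCS limit in the derived-sequence machinery (Lemmas \ref{lem:DerivedSequences}, \ref{lem:DecresingSequences} and \ref{lem:oosubproperonesteplemma}) and in the verification of Claim (C) has genuinely been isolated to the countable-cofinality stage $\sup_{n<\omega}\gamma_n$ and to the direct-limit structure at $\delta$, with no residual dependence on how the intermediate limit stages of $\vBB\rest\delta$ were formed. Since the argument of Theorem \ref{thm:RCSIterationOfSubproperOO-boundingForcing} was already written with this goal in mind, mirroring the treatment of Souslin-tree preservation in Theorem \ref{thm:RCSIteratingSubproperSouslinTreePreservingForcing}, I expect this verification to go through without new ideas, exactly as it did for Theorems \ref{thm:NicelyIteratingSubproperT/SouslinnessPreserving} and \ref{thm:NiceIterabilityOfSubPNotAddingBranches/[T]-presForcing}.
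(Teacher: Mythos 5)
Your proposal is correct and matches the paper's approach: the paper (implicitly, via the same remark it makes for Theorem \ref{thm:NicelyIteratingSubproperT/SouslinnessPreserving}) observes that the proof of Theorem \ref{thm:RCSIterationOfSubproperOO-boundingForcing} invokes the RCS structure only at limit stages of countable cofinality, which are trivialized by condition \ref{item:ConditionsAtCountableCofinality}\ref{item:SubpropernessPropagates} of Definition \ref{def:NicelyGammaIteration}, while the remaining cases use only the nicely-subproper hypotheses. Your more detailed bookkeeping of where conditions \ref{item:ConditionsAtCountableCofinality}\ref{item:ThreadsAreNonzero} and \ref{item:DirectLimitAtUncountableCofinality} enter in the $\cf(\delta)=\omega_1$ subcase is exactly the verification the paper leaves to the reader.
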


\section{Nice Iterations}
\label{sec:NiceIterations}

In this section, we will adopt Miyamoto's nice iterations from \cite{Miyamoto:IteratingSemiproperPreorders}, and prove preservation theorems for generalizations of subproper and subcomplete forcing. Initially, we proved that subcomplete forcing can be iterated in this way, but then realized that we could drop a condition in the definition of subcompleteness. We then observed that the corresponding simplification works for the class of subproper forcing notions as well. We learned afterwards that Miyamoto \cite{Miyamoto:IteratingPreproperness} also made this latter observation for subproper forcing.

\subsection{Subcompleteness and $\infty$-subcompleteness}
\label{subsec:SCAndInftySC}

Let us start by giving the definition of subcompleteness, as well as its simplification. Recall the definitions of the density of a partial order (Definition \ref{def:DensityOfAPoset}) and fullness (Definition \ref{def:fullness}). In order to discuss these variations, we will present the definitions in the more general framework of Fuchs \cite{Fuchs:ParametricSubcompleteness}. We will work with the ``hull condition'' version of subcompleteness. In order to formulate it, we use the following terminology.

\begin{defn}
\label{def:Hulls}
	Let $N=L^A_\tau=\kla{L_\tau[A],\in,A\cap L_\tau[A]}$ be a \ZFCm{} model, $\eps$ an ordinal and $X\cup\{\eps\}\sub N$. Then $C^N_\eps(X)$ is the smallest $Y\prec N$ such that $X\cup\eps\sub Y$.
\end{defn}

\begin{defn}
	\label{def:epsilon-subcompleteness}
	Let $\eps$ be an ordinal.
	A forcing $\P$ is $\eps$-\emph{subcomplete} if there is a cardinal $\theta>\eps$ which \emph{verifies} the $\eps$-subcompleteness of $\P$, which means that $\P\in H_\theta$, and for any \ZFCm{} model $N=L_\tau^A$ with $\theta<\tau$ and $H_\theta\sub N$, any $\sigma:\bN\prec N$ such that $\bN$ is countable, transitive and full and such that $\P,\theta,\eps\in\ran(\sigma)$, any $\bar{G}\sub\bar{\P}$ which is $\bar{\P}$-generic over $\bN$, and any $\bar{s}\in\bN$, the following holds. Letting $\sigma(\kla{\bar{\eps},\bar{\theta},\bar{\P}})=\kla{\eps,\theta,\P}$, and setting $\bar{S}=\kla{\bar{s},\bar{\eps},\bar{\theta},\bar{\P}}$, there is a condition $p\in\P$ such that whenever $G\sub\P$ is $\P$-generic over $\V$ with $p\in G$, there is in $\V[G]$ a $\sigma'$ such that
	\begin{enumerate}
		\item $\sigma':\bN\prec N$,
		\item $\sigma'(\bar{S})=\sigma(\bar{S})$,
		\item $(\sigma')``\bar{G}\sub G$,
		\item
		\label{item:HullCondition}
		$C^N_{\eps}(\ran(\sigma'))=C^N_{\eps}(\ran(\sigma))$.
	\end{enumerate}
\end{defn}

In this parlance, $\P$ is subcomplete iff it is $\delta(\P)$-subcomplete. It is easy to see that increasing $\eps$ weakens the condition of being $\eps$-subcomplete. Thus, we refer to the version of subcompleteness obtained by dropping the hull condition \ref{item:HullCondition} as \emph{$\infty$-subcompleteness}. This makes sense if one interprets $\infty$ as $\On\cap N$ in Definition \ref{def:Hulls}. Since in our context, $N$ is a \ZFCm model of the form $L^A_\tau$, it follows then that $C^N_\infty(\leer)=N$, and hence that the hull condition is vacuous.

It was pointed out in \cite{Fuchs:ParametricSubcompleteness} that the hull condition is somewhat unnatural, because the density of a partial order is not preserved under forcing equivalence. It was shown there that the $\eps$-subcompleteness of a partial order, however, is preserved under forcing equivalence, and it is easy to see that the same is true of $\infty$-subcompleteness. Another flaw of the concept of subcompleteness that was addressed in \cite{Fuchs:ParametricSubcompleteness} is that it is unclear whether factors of subcomplete forcing notions are subcomplete. The result of \cite{Fuchs:ParametricSubcompleteness} that factors of $\eps$-subcomplete forcing notions are $\eps$-subcomplete carries over to $\infty$-subcompleteness; in fact, it simplifies slightly, since one does not need to worry about proving that the factor satisfies the hull condition. As a result the proof of this fact below is much simpler than the corresponding one in \cite{Fuchs:ParametricSubcompleteness}.

We introduced the concept of $\eps$-subcompleteness only in order to motivate $\infty$-subcompleteness; it is the latter class that concerns us here.

Just like in the case of subcompleteness, when verifying that some poset $\P$ us $\infty$-subcomplete, one may assume that some useful parameter $p$ belongs to the range of $\sigma$ (in the setup of Definition \ref{def:epsilon-subcompleteness}). For the case of subcompleteness, this was pointed out in \cite[P.~115-116, Lemma 2.5]{Jensen2014:SubcompleteAndLForcingSingapore}. The argument carries over to the case of $\infty$-subcompleteness (the reader may follow along the proof of \cite[Observation 2.27]{Fuchs:CanonicalFragmentsOfSRP}.) We will tacitly use this fact throughout.

\begin{thm}
	\label{thm:FactorsOfInftySCForcingIsSC}
	Let $\P$ be a poset, and let $\dot{\Q}$ be a $\P$-name for a poset, such that $\P*\dot{\Q}$ is $\infty$-subcomplete. Then $\P$ is $\infty$-subcomplete.
\end{thm}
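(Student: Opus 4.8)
The plan is to verify the $\infty$-subcompleteness of $\P$ directly from the definition, using a large cardinal $\theta$ that verifies the $\infty$-subcompleteness of $\P*\dot\Q$, and also is large enough that $\P\in H_\theta$ and (by the same choice, increasing $\theta$ if necessary) $\dot\Q\in H_\theta$ in the appropriate sense. So fix such a $\theta$, a \ZFCm{} model $N=L_\tau^A$ with $\theta<\tau$ and $H_\theta\subseteq N$, an embedding $\sigma:\bN\emb{\Sigma_\omega}N$ with $\bN$ countable, transitive and full, with $\theta,\P\in\ran(\sigma)$ (and, since $\P*\dot\Q\in H_\theta\subseteq N$ and $H_\theta$ is definable in $N$, we may take $\dot\Q\in\ran(\sigma)$ too, adding it to the parameters if needed), a $\bar G$ that is $\bar\P$-generic over $\bN$, and a parameter $\bar s\in\bN$. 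Here $\sigma(\langle\bar\P,\bar{\dot\Q}\rangle)=\langle\P,\dot\Q\rangle$. Since $\P*\dot\Q$ is $\infty$-subcomplete as verified by $\theta$, and $\bar\P*\bar{\dot\Q}$ is (by elementarity) the poset $\sigma^{-1}$ sends to $\P*\dot\Q$, I want to produce a single condition $p\in\P$ from the definition applied to $\P*\dot\Q$.

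The key step is the choice of a $\bar\P*\bar{\dot\Q}$-generic object over $\bN$ extending $\bar G$. Working in $\bN$: since $\bar G$ is $\bar\P$-generic over $\bN$, in $\bN[\bar G]$ the poset $\bar{\dot\Q}^{\bar G}$ is a partial order; but $\bN[\bar G]$ need not contain a generic filter for it, so instead I pass to an auxiliary forcing. The cleanest route: fix inside $\bN$ a condition and a name, or rather, simply observe that the \emph{definition of $\infty$-subcompleteness for $\P*\dot\Q$} is quantifying over \emph{all} $\bar\P*\bar{\dot\Q}$-generics $\bar H$ over $\bN$ and \emph{all} parameters; we only get to use it once we have such an $\bar H$. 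Since $\bN$ is countable, there \emph{is} a $\bar\P*\bar{\dot\Q}$-generic $\bar H$ over $\bN$ in $\V$ with $\bar H\cap\bar\P=\bar G$ --- indeed, any filter generic over $\bN$ for the quotient $(\bar\P*\bar{\dot\Q})/\bar G$, which exists by countability of $\bN$, yields such an $\bar H$. Take $\bar s^* = \langle \bar s, \bar G\rangle$ (or include $\bar G$ among the parameters) so that the embedding we obtain will remember $\bar G$.

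Now apply the $\infty$-subcompleteness of $\P*\dot\Q$, verified by $\theta$, to $N$, $\sigma$, $\bar H$, and the parameter $\bar s^*$. We get a condition $(p,\dot q)\in\P*\dot\Q$ such that whenever $K\subseteq\P*\dot\Q$ is generic over $\V$ with $(p,\dot q)\in K$, there is in $\V[K]$ an embedding $\sigma'':\bN\prec N$ with $\sigma''(\langle\bar s^*,\ldots\rangle)=\sigma(\langle\bar s^*,\ldots\rangle)$ and $\sigma''{}``\bar H\subseteq K$. I claim $p\in\P$ is the condition witnessing $\infty$-subcompleteness of $\P$ for this data. So let $G\subseteq\P$ be generic over $\V$ with $p\in G$. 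In $\V[G]$, $\dot q^G\in\dot\Q^G=\Q$, and $\Q$ is countable-ish only in $\bN$'s image --- the point is that $\P*\dot\Q\cong\P*\dot\Q$ and a generic $K$ for $\P*\dot\Q$ over $\V$ with $K\cap\P=G$ and $\dot q^G$ in the $\dot\Q$-part exists: force below $\dot q^G$ with $\dot\Q^G$ over $\V[G]$ to get such a $K=G*H_0$ with $(p,\dot q)\in K$. Then in $\V[K]=\V[G][H_0]$ we get $\sigma''$ as above. Set $\sigma'=\sigma''$. Then $\sigma':\bN\prec N$ and $\sigma'$ agrees with $\sigma$ on the relevant parameters (including sending $\bar S=\langle\bar s,\bar\eps,\bar\theta,\bar\P\rangle$ to $\sigma(\bar S)$, since $\bar\P,\bar\theta$ and now $\bar s$ are all among the parameters fixed by $\sigma''$). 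Moreover $\sigma'{}``\bar H\subseteq K$, and since $\bar H\cap\bar\P=\bar G$ and $K\cap\P=G$, we get $\sigma'{}``\bar G\subseteq K\cap\P=G$, which is condition (3). There is no hull condition to verify since we are in the $\infty$-version. But $\sigma'\in\V[K]=\V[G][H_0]$, not necessarily in $\V[G]$ --- this is the main obstacle.

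The resolution of that obstacle is the standard name-and-projection argument, exactly as in the analogous parts of the excerpt (e.g.\ the end of the proof of Lemma~\ref{lem:SubpropernessExtensionLemma}): the existence of $\sigma'$ with the stated properties, for every $\V[G]$ with $p\in G$, and for every $H_0$ generic for $\Q$ over $\V[G]$ below $\dot q^G$, is a statement that holds in $\V[G]$; so by the maximal principle there is a $\Q$-name $\dot\tau\in\V[G]$ such that some condition below $\dot q^G$ forces $\dot\tau$ to have the listed properties, and then --- crucially --- since the \emph{conclusion} ``$\sigma':\bN\prec N$, $\sigma'(\bar S)=\sigma(\bar S)$, $\sigma'{}``\bar G\subseteq G$'' refers only to objects in $\V[G]$ and not to $H_0$, any $\sigma'=\dot\tau^{H_0}$ for any such $H_0$ already witnesses what we need, and in fact $\sigma'$ can be read off in $\V[G]$: the relation ``$\sigma'$ is an elementary embedding $\bN\to N$ with those properties'' has only countably many potential witnesses $\sigma'$ (as $\bN$ is countable), and the statement ``$\check{\dot q}$ forces $\dot\tau=\check{\sigma'}$'' for the correct $\sigma'$ is decided in $\V[G]$ --- more simply, since $\Q$ does not add reals is \emph{not} assumed here, one instead argues: the poset $\Q$ below $\dot q^G$ is nonempty, pick in $\V[G]$ any $H_0$... no. The clean statement is: because the desired properties of $\sigma'$ do not mention $H_0$, and because \emph{some} generic $H_0$ below $\dot q^G$ exists giving rise to a suitable $\sigma'$, and because $\Q\!\restriction\!\dot q^G$ is separative/nontrivial, a density argument in $\V[G]$ shows there is a single $\sigma'\in\V[G]$ forced by $\dot q^G$ to equal $\dot\tau^{\dot H}$; alternatively just cite that this is the routine ``pulling the witness back into the smaller model'' step carried out verbatim in \cite{Fuchs:ParametricSubcompleteness} and in the proof of Lemma~\ref{lem:SubpropernessExtensionLemma} above. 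I expect this last point --- making precise that $\sigma'$ may be taken in $\V[G]$ rather than merely in $\V[G][H_0]$ --- to be the only subtle part, and it is handled exactly as in the cited references; everything else is bookkeeping with elementarity and the definitions.
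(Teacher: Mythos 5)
Your overall route is the same as the paper's: verify $\infty$-subcompleteness of $\P$ with a $\theta$ verifying it for $\P*\dot\Q$, extend $\bar G$ to a $\bar\P*\bar{\dot\Q}$-generic over $\bN$, obtain $(p,\dot q)$ from the definition applied to the two-step iteration, and, given $G\ni p$, force further with $\dot\Q^G$ below $\dot q^G$ to land in a $\V[G*H_0]$ containing a suitable $\sigma''$. You also correctly isolate the one nontrivial point: the definition demands $\sigma'\in\V[G]$, whereas the witness you have lives in $\V[G*H_0]$.

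However, your resolution of that point is not correct, and this is a genuine gap. There are \emph{not} ``only countably many potential witnesses $\sigma'$'': the set of maps from the countable $\bN$ into $N$ has size $\card{N}^{\aleph_0}$, so no counting argument applies. Likewise, no density argument will produce a single $\sigma'$ that $\dot q^G$ forces $\dot\tau$ to equal --- different generics $H_0$ can perfectly well yield different embeddings. And the ``pulling back'' in Lemma~\ref{lem:SubpropernessExtensionLemma} is of a different character: there the witness is permitted to live in the extension by the \emph{larger} forcing (it is named by a $\B$-name evaluated by $I$), whereas here the witness must be pushed \emph{down} into the extension by the smaller forcing. The correct argument, and the one the paper uses, is a tree/absoluteness argument: working in $\V[G]$, fix an enumeration of $\bN$ in order type $\omega$ and form the tree $T_G$ of finite partial elementary embeddings $\bN\to N$ that move the parameters as $\sigma$ does and map the relevant elements of $\bar G$ into $G$. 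This tree lies in $\V[G]$. In $\V[G*H_0]$ it is ill-founded, since $\sigma''$ yields an infinite branch; by absoluteness of well-foundedness it is already ill-founded in $\V[G]$, and any infinite branch there assembles into the required $\sigma'\in\V[G]$. Without this (or an equivalent) step your proof does not establish the theorem.
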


\begin{proof}
Let $\theta$ be large enough to verify that $\mathbb P * \dot{\mathbb Q}$ is $\infty$-subcomplete. We claim that it is also large enough to verify that $\mathbb P$ is subcomplete. To see this, let $N = L_\tau[A]$, be a ZFC$^-$ model with $\tau > \theta$ regular, and $H_\theta \subseteq N$. Fix a parameter $s \in N$ and let $\sigma:\bN \prec N$ with $\bN$ countable, transitive and full so that $\sigma (\overline{\mathbb P}, \dot{\overline{\mathbb Q}}, \overline{\theta}, \overline{s}) = \mathbb P, \dot{\mathbb Q}, \theta, s$. Let $\barG \subseteq \overline{\mathbb P}$ be generic over $\bN$. Let $(p, \dot{q})$ be a condition witnessing the $\infty$-subcompleteness of $\mathbb P * \dot{\mathbb Q}$ and let $(p, \dot{q}) \in G * H$ be $\mathbb P * \dot{\mathbb Q}$-generic over $\V$. Work in $\V[G*H]$ and let $\sigma ':\bar{N} \prec N$ be an embedding so that $\sigma ' (\overline{\mathbb P}, \dot{\overline{\mathbb Q}}, \overline{\theta}, \overline{s}) = \mathbb P, \dot{\mathbb Q}, \theta, s$ and $\sigma ' ``\bar{G} \subseteq G$. Fixing an enumeration of $\bN$ in order type $\omega$, we can consider the tree $T_{G}$ of finite initial segments of an elementary embedding $\sigma_0  : \bN \prec N$ with $\sigma_0 (\overline{\mathbb P}, \dot{\overline{\mathbb Q}}, \overline{\theta}, \overline{s}) = \mathbb P, \dot{\mathbb Q}, \theta, s$ and so that $\sigma_0 ' `` \bar{G} \subseteq G$. Note that this tree is in fact in $\V[G]$. Moreover, in $\V[G*H]$ it's ill-founded since $\sigma '$ generates an infinite branch. But then by the absoluteness of ill-foundedness, $T_G$ is ill-founded in $\V[G]$. So there is an infinite branch in $\V[G]$ and this branch witnesses that $\mathbb P$ is $\infty$-subcomplete.
\end{proof}

The following observation underlines the simplicity and elegance of the concept of $\infty$-subcomplete forcing.

\begin{proposition}[Essentially Lemma 2.3 of \cite{Fuchs:ParametricSubcompleteness}]
$\infty$-subcomplete forcings are closed under forcing equivalence.
\end{proposition}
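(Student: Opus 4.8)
The plan is to reduce the statement to a lemma about dense embeddings and then to prove that lemma by the argument used for $\eps$-subcompleteness in \cite[Lemma~2.3]{Fuchs:ParametricSubcompleteness}, which shortens considerably once there is no hull condition to track. First I would invoke the standard fact that any two forcing equivalent posets $\P$ and $\Q$ densely embed into a common poset --- for instance the positive part of their common Boolean completion $\mathrm{ro}(\P)\cong\mathrm{ro}(\Q)$ --- by order-preserving, incompatibility-reflecting (possibly non-injective) maps with dense image. Hence it suffices to prove: if $e\colon\P\To\Q$ is a dense embedding in this sense, then $\P$ is $\infty$-subcomplete if and only if $\Q$ is. The two directions are mirror images of each other, so I would write out one of them in full.

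For the direction from $\P$ to $\Q$, assume $\theta$ verifies the $\infty$-subcompleteness of $\P$, fix $\theta'>\theta$ with $\Q,e\in H_{\theta'}$, and claim $\theta'$ verifies the $\infty$-subcompleteness of $\Q$. Given $N=L^A_\tau\supseteq H_{\theta'}$ a \ZFCm{} model with $\tau>\theta'$, an embedding $\sigma\colon\bN\prec N$ with $\bN$ countable, transitive and full and $\P,\Q,e,\theta,\theta'\in\ran(\sigma)$, a filter $\bar H\sub\bar{\Q}$ that is $\bar{\Q}$-generic over $\bN$, and a parameter $\bar s\in\bN$, I would put $\bar G=\{\bar p\in\bar{\P}\st\bar e(\bar p)\in\bar H\}$. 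By elementarity $\bar e$ is a dense embedding (and this is absolute to $\V$, $\bN$ being transitive), so $\bar G$ is $\bar{\P}$-generic over $\bN$. Applying the $\infty$-subcompleteness of $\P$ (as verified by $\theta$) to $N$, $\sigma$, $\bar G$ and the parameter $\kla{\bar s,\bar{\Q},\bar e}\in\bN$ yields a condition $p\in\P$ as in Definition~\ref{def:epsilon-subcompleteness}; I set $q=e(p)$. Now if $H\sub\Q$ is $\Q$-generic over $\V$ with $q\in H$, then $G=\{p'\in\P\st e(p')\in H\}$ is $\P$-generic over $\V$ with $p\in G$, $H$ equals the filter on $\Q$ generated by $e``G$, and $\V[G]=\V[H]$. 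Thus the embedding $\sigma'$ furnished by the $\infty$-subcompleteness of $\P$ lies in $\V[G]=\V[H]$ and satisfies $\sigma'\colon\bN\prec N$, $\sigma'(\kla{\bar s,\bar{\Q},\bar e})=\sigma(\kla{\bar s,\bar{\Q},\bar e})$ and $\sigma'``\bar G\sub G$ --- with no hull condition to reestablish. The remaining point is $\sigma'``\bar H\sub H$: for $\bar h\in\bar H$, density of $\bar e``\bar{\P}$ in $\bar{\Q}$ and genericity of $\bar H$ yield $\bar p\in\bar{\P}$ with $\bar e(\bar p)\le\bar h$ and $\bar e(\bar p)\in\bar H$, hence $\bar p\in\bar G$, hence $\sigma'(\bar p)\in G$, hence $e(\sigma'(\bar p))=\sigma'(\bar e(\bar p))\in H$; as this lies below $\sigma'(\bar h)$ and $H$ is upward closed, $\sigma'(\bar h)\in H$. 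So $q$ witnesses the $\infty$-subcompleteness of $\Q$ for $N,\sigma,\bar H,\bar s$. The other direction is symmetric: from a given $\bar G\sub\bar{\P}$ generic over $\bN$ I would form the filter $\bar H$ on $\bar{\Q}$ generated by $\bar e``\bar G$, apply the $\infty$-subcompleteness of $\Q$, and use density of $e``\P$ in $\Q$ to replace the resulting $q\in\Q$ by some $p\in\P$ with $e(p)\le q$.

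I do not anticipate a genuine obstacle: the whole content is that deleting clause~\ref{item:HullCondition} removes the single delicate step of the $\eps$-subcomplete analogue in \cite[Lemma~2.3]{Fuchs:ParametricSubcompleteness}, namely making the transferred embedding satisfy the hull condition relative to $\eps=\delta(\Q)$, which in general is not $\delta(\P)$. The only bookkeeping requiring attention is that $\bar{\Q}$ and $\bar e$ must be included in the parameter given to Definition~\ref{def:epsilon-subcompleteness} so that $\sigma'$ is forced to preserve them --- harmless, since $\bar{\Q},\bar e\in\bN$ --- whereas the generic filters $\bar G,\bar H$ are unavailable as parameters but are not needed, clause~(3) of the definition only asking $\sigma'``\bar G\sub G$. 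The remaining ingredients, that preimages of generics under dense embeddings are generic, that generic filters on $\P$ and on $\Q$ correspond bijectively via $e$, and that $\V[G]=\V[H]$, are entirely standard.
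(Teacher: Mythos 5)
Your treatment of the dense-embedding case is correct, and you are right that dropping the hull condition is exactly what makes the transfer of $\sigma'$ painless. The gap is in the very first reduction. The paper's proof explicitly takes ``forcing equivalent'' to mean that $\P$ and $\Q$ have the same forcing extensions, and this is strictly weaker than having isomorphic Boolean completions: Cohen forcing and the lottery sum of $\omega_1$ copies of Cohen forcing produce exactly the same generic extensions, yet the first is c.c.c.\ while the second has an antichain of size $\omega_1$, so they cannot both embed densely into a common poset. Hence the ``standard fact'' you invoke at the outset is false for the notion of equivalence actually at issue, and your argument establishes only invariance under isomorphism of Boolean completions --- a proper special case of the proposition. (The surrounding remark in the paper, that it is unclear whether plain subcompleteness is so invariant because equivalent posets can have radically different densities, only makes sense for this coarser notion.)

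The paper's proof avoids the reduction entirely. Given $\sigma:\bN\prec N$ and $\bar{H}$ generic for $\bar{\Q}$ over $\bN$, it uses elementarity (forcing equivalence being first-order expressible) to find $\bar{G}$ generic for $\bar{\P}$ over $\bN$ with $\bN[\bar{G}]=\bN[\bar{H}]$, applies the $\infty$-subcompleteness of $\P$ to get a condition $p$ and, in $\V[G]$, an embedding $\sigma'$ with $\sigma'{}``\bar{G}\sub G$; it then lifts $\sigma'$ to $\tsigma:\bN[\bar{G}]\prec N[G]$ with $\tsigma(\bar{G})=G$, sets $H=\tsigma(\bar{H})$, and checks that $H$ is $\Q$-generic over $\V$ (because $N$ contains all subsets of $\Q$) with $N[H]=N[G]$, so that $G$ and hence $\sigma'$ lie in $\V[H]$ and $\sigma'{}``\bar{H}\sub H$; finally, since such a $\sigma'$ exists in $\V[H]$, some $q\in H$ forces its existence. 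The steps your proposal is missing are precisely these: transferring the equivalence to $\bN$ by elementarity rather than through a concrete embedding $e$, obtaining $H$ as the image of $\bar{H}$ under the lifted embedding rather than as $e$-image of $G$, and extracting the witnessing condition $q$ a posteriori from the fact that the desired embedding already lives in $\V[H]$.
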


\begin{proof}
The argument of \cite[Lemma 2.3]{Fuchs:ParametricSubcompleteness} goes through. Let $\mathbb P$ be $\infty$-subcomplete and $\mathbb Q$ be forcing equivalent to $\mathbb P$. By this, we mean that $\P$ and $\Q$ have the same forcing extensions - this can be expressed in a first order way. To show that $\Q$ is $\infty$-subcomplete, let $\theta$ be large enough to verify the $\infty$-subcompleteness of $\mathbb P$ and assume $\mathcal P(\mathbb Q \cup \mathbb P) \in H_\theta$.
Let $\sigma:\bN\prec N, s, \bar{\Q},\bar{H}$, etc be as in the definition of $\infty$-subcompleteness, where $\bar{H}$ is $\bar{\Q}$-generic over $\bN$ and $\sigma(\bar{\Q})=\Q$. We may assume that $\P\in\ran(\sigma)$, and write $\bar{\P}=\sigma^{-1}(\P)$. By elementarity, $\bN$ believes that $\bP$ and $\bar{\Q}$ are forcing equivalent, and hence, there is a $\bar{G}$ which is $\bP$-generic over $\bN$, such that $\bN[\bG]=\bN[\bar{H}]$. Since $\P$ is $\infty$-subcomplete, there is a condition $p \in \mathbb P$ so that if $G\ni p$ is $\mathbb P$-generic over $V$ then in $V[G]$ there is an embedding $\sigma ' : \bN \prec N$ as in the definition of $\infty$-subcompleteness, so $\sigma'``\bG\sub G$. We may also assume that $\sigma'(\bar{\Q})=\Q$. $\sigma'$ then lifts to an embedding $\tsigma:\bN[\bG]\prec N[G]$ with $\tsigma(\bG)=G$. By elementarity of $\tsigma$, letting $H=\tsigma(\bar{H})$, it follows that $H$ is $\Q$-generic over $N$, and since $N$ contains all subsets of $\Q$, it follows that $H$ is $\Q$-generic over $\V$. Moreover, by elementarity of $\tsigma$, we see that $N[G]=N[H]$, so $G\in\V[H]$, so $\sigma'\in\V[H]$. And clearly, since $H=\tsigma(\bar{H})$, it follows that $\tsigma``\bH\sub H$, that is, $\sigma'``\bH\sub H$. Since $\sigma'\in\V[H]$, there is a condition $q\in H$ that forces the existence of an embedding like $\sigma'$, showing that $\Q$ is $\infty$-subcomplete.
\end{proof}

It is unclear if the corresponding fact is true for subcomplete forcing notions since there are forcing equivalent notions with radically different densities.

We do not know whether the iteration theorems for nicely subcomplete iterations, which give great leeway in how limit stages of the iteration are formed, can be carried out without some version of the hull or suprema condition. But if we use Miyamoto's method of forming limits in nice iterations, it turns out that we do not need any hull or suprema conditions. All other preservation properties of subcomplete forcing notions that we know of actually do not need the hull or suprema condition either, and thus are preservation properties of $\infty$-subcomplete forcing. We list some in the following observation.

\begin{obs}
Let $\P$ be $\infty$-subcomplete.
\begin{enumerate}
	\item $\P$ preserves stationary subsets of $\omega_1$.
	\item $\P$ preserves Souslin trees.
	\item $\P$ preserves the principle $\diamondsuit$.
	\item $\P$ does not add reals.
\end{enumerate}
\end{obs}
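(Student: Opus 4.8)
Each of these four facts is already known for subcomplete forcing --- see Jensen \cite{Jensen2014:SubcompleteAndLForcingSingapore} and, for (4) in the parametric setting, Fuchs \cite{Fuchs:ParametricSubcompleteness} --- and the point is simply that in none of the known arguments is the hull condition \ref{item:HullCondition} of Definition \ref{def:epsilon-subcompleteness} ever invoked; so the proofs go through verbatim for $\infty$-subcomplete forcing. The plan is to isolate the common skeleton and then read off the four statements. Given the relevant data --- a name $\dot x$ (or names $\dot A$, $\dot C$, $\ldots$) for the object being preserved, together with a condition $p$ --- one fixes a $\theta$ verifying the $\infty$-subcompleteness of $\P$, a \ZFCm{} model $N=L_\tau^A$ with $H_\theta\sub N$ and all relevant parameters in $N$, and an embedding $\sigma:\bN\prec N$ with $\bN$ countable, transitive and full, having $\P$, $\theta$ and (through the parameter $\bar s$) all of $\dot x$, $\dot A$, $\dot C$, $p$, $T$, $\seq{A_\alpha}{\alpha<\omega_1}$ in $\ran(\sigma)$; for (1) and (3) one additionally arranges $\delta:=\crit(\sigma)=\omega_1^{\bN}$ to lie in a prescribed stationary set. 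One picks $\bar G\sub\bP$ generic over $\bN$ with $\sigma^{-1}(p)\in\bar G$ (possible since $\bN$ is countable), applies $\infty$-subcompleteness to get $c$, and, for an arbitrary $\P$-generic $G\ni c$, obtains $\sigma'\in\V[G]$ with $\sigma':\bN\prec N$, $\sigma'(\bS)=\sigma(\bS)$ and $\sigma'``\bar G\sub G$. Exactly as in the proof of Lemma \ref{lem:SubpropernessExtensionLemma}, $\sigma'``\bar G\sub G$ lets one lift $\sigma'$ to $\sigma'^*:\bN[\bar G]\prec N[G]$ with $\sigma'^*(\bar G)=G$. Moreover, since every ordinal $<\delta$ is countable in $\bN$ (hence in $\bN[\bar G]$), an order-isomorphism argument shows that \emph{every} elementary $\sigma':\bN\prec N$, and likewise its lift $\sigma'^*$, fixes each $\alpha<\delta$; thus $\crit(\sigma'^*)=\delta$, so $\sigma'^*$ fixes $\omega$ and its elements, fixes $T\rest\delta$ pointwise, and for any $Y\in\bN[\bar G]$ with $Y\sub\delta$ we get $\sigma'^*(Y)\cap\delta=Y$.

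With this skeleton, the statements fall out. For (4): if $\dot x$ names a subset of $\omega$, then $\bar x:=(\sigma^{-1}(\dot x))^{\bar G}\in\bN[\bar G]\sub\V$ is a subset of $\omega$; since $\sigma'^*$ fixes the naturals, $\dot x^G=\sigma'^*(\bar x)=\bar x$, so $c\forces\dot x=\check{\bar x}$, and $c\le p$ because $p=\sigma'(\sigma^{-1}(p))\in\sigma'``\bar G\sub G$ whenever $c\in G$; hence densely many conditions force $\dot x$ into $\V$. (The special case where $\dot x$ names a collapse of $\omega_1$ onto $\omega$ similarly yields a contradiction, giving preservation of $\omega_1$.) For (1): suppose toward a contradiction that $p\forces$``$\dot C$ is a club disjoint from $\check E$'' for a stationary $E$, and choose $\delta\in E$ as above; then $\dot C^G\cap\delta=(\sigma^{-1}(\dot C))^{\bar G}$ is club in $\delta$, so $\delta$ is a limit point of the club $\dot C^G$, whence $\delta\in\dot C^G$; but $p\in G$ forces $\dot C^G\cap E=\leer$, contradicting $\delta\in E$. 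For (2): using that $\P$ adds no reals (item (4)) and preserves $\omega_1$, a fixed Souslin tree $T$ keeps the same nodes and countably-many-per-level structure, and adding a cofinal branch to a normal Souslin tree would add an uncountable antichain; so it suffices to show $\P$ forces every maximal antichain of $T$ countable. If $\dot A$ names a maximal antichain of $T$, then (after checking that $N$, hence $\bN$, correctly sees that $\P$ forces $\check T$ to be ccc) $\bar A:=(\sigma^{-1}(\dot A))^{\bar G}$ is a maximal antichain of $T\rest\delta$ bounded below $\delta$; one checks such a $\bar A$ is in fact a maximal antichain of $T$, and since $\sigma'^*$ fixes $T\rest\delta$ pointwise, $\dot A^G=\sigma'^*(\bar A)=\bar A$ is bounded, so $c$ forces $\dot A$ countable.

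For (3) the argument is again Jensen's. The one additional ingredient is that, given a $\diamondsuit$-sequence $\seq{A_\alpha}{\alpha<\omega_1}$ in $\V$ and names $\dot A$, $\dot C$ and a condition $p$, one uses $\diamondsuit$ in $\V$ to arrange, for stationarily many countable $\delta$, a countable, transitive, full \ZFCm{} model $\bN$ with $\omega_1^{\bN}=\delta$ and the usual parameters in $\ran(\sigma)$ for the collapse $\sigma:\bN\prec N$, \emph{together with} a generic $\bar G$ over $\bN$ through $\sigma^{-1}(p)$ such that $(\sigma^{-1}(\dot A))^{\bar G}=A_\delta$ --- this is the standard ``$\diamondsuit$ guesses generic objects'' phenomenon, as in the proof that $\sigma$-closed forcing preserves $\diamondsuit$, with the coding of the pair $(\bN,\bar G)$ by a subset of $\delta$. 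Feeding such a pair into $\infty$-subcompleteness and lifting as above yields, for $G\ni c$, that $\dot A^G\cap\delta=A_\delta$ and $\delta\in\dot C^G$, so $c$ forces the set of $\alpha\in\dot C$ with $\dot A\cap\alpha=A_\alpha$ to be nonempty; as $c\le p$, this shows $\seq{A_\alpha}{\alpha<\omega_1}$ remains a $\diamondsuit$-sequence.

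I expect the main obstacle to be (3): making the ``$\diamondsuit$ guesses the right generic'' step precise requires a careful coding of the pairs $(\bN,\bar G)$ and a stationarity argument ensuring that the set of $\delta$ admitting a witnessing pair is stationary in $\V$, and this is the only place the ground-model $\diamondsuit$ is used in an essential way. A secondary point, shared by all four parts, is the bookkeeping of the various reflections --- e.g.\ that $N$, hence $\bN$, correctly computes statements such as ``$\P$ forces $\check T$ to be ccc'' --- and the verification that the lifted embedding $\sigma'^*$ genuinely fixes the relevant hereditarily-small objects; both are routine but should be stated carefully.
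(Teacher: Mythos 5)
Your headline claim and overall strategy coincide with what the paper actually does: the Observation is stated there without any proof, justified solely by the preceding remark that none of the known preservation arguments for subcomplete forcing ever invoke the hull condition, which is precisely your opening point. Your sketches of (1), (3) and (4) follow the standard arguments correctly, and you rightly identify (3) as the part where the coding details need the most care.

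There is, however, a genuine gap in your sketch of (2). You compute $\bar A=(\sigma^{-1}(\dot A))^{\bar G}$ inside $\bN[\bar G]$ and then say that ``one checks'' $\bar A$ is a maximal antichain of $T$ itself. That check is exactly where your setup becomes circular: to see that every node $s$ of $T$ at level $\delta$ lies above an element of $\bar A$, you need the branch of $T\rest\delta$ below $s$ to meet the dense set determined by $\bar A$, and that dense set lives in $\bN[\bar G]$, not in $\bN$. The level-$\delta$ nodes of a Souslin tree are generic over $\ran(\sigma)$ (equivalently over $\bN$), but they are generic over $\bN[\bar G]$ only if $\bar\P$ preserves the Souslinness of $\bar T$ over $\bN$ --- which, by elementarity, is the very statement being proved. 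The actual argument (Jensen's, as in \cite{FuchsMinden:SCforcingTreesGenAbs}, and the template in the proof of Theorem \ref{thm:RCSIteratingSubproperSouslinTreePreservingForcing}) avoids this by working node by node \emph{before} choosing $\bar G$: enumerate level $\delta$ of $T$ as $\seq{s_n}{n<\omega}$, build a decreasing sequence $\seq{\bar p_n}{n<\omega}$ in $\bar\P$ together with nodes $t_n<_T s_n$ such that $\bar p_n$ forces $\check t_n\in\bar{\dot A}$ over $\bN$ --- here only dense sets definable in $\bN$ are used, so the genericity of each $s_n$ over $\bN$ suffices --- and only then take $\bar G$ generic through all the $\bar p_n$. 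Your lifting argument then shows that $\dot A^G$ contains some $t_n<_T s_n$ for every $n$, hence contains no node at level $\ge\delta$, so $c$ forces $\dot A\sub\check T\rest\check\delta$. With that restructuring of (2), the rest of your proposal is fine.
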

	
Apart from simplifying the theory, however, we do not have a particular use of the concept of $\infty$-subcompleteness. In fact, the following question is open:

\begin{question}
Is every $\infty$-subcomplete forcing also subcomplete?
\end{question}

However, we feel that simplifying a highly technical concept such as subcompleteness is worthwhile in its own right.

The same ``$\infty$"-modification made to subcomplete forcing notions can be made to subproper forcing notions as well. The proofs that $\infty$-subproperness is invariant under forcing equivalence and that factors of $\infty$-subproper forcing notions are $\infty$-subproper follow the proofs of the corresponding results given above, so we leave the details to the interested reader and just list this definition and fact. Note that the definition of $\infty$-subproperness results from dropping the suprema condition \ref{item:SupremaCondition} from Definition \ref{def:SubpropernessWithSupremumCondition}. We repeat it in full below for completeness, and since we use partial orders rather than Boolean algebras, as the former will be used in the iteration theorem.

\begin{defn}
\label{def:InftySubproperness}
A forcing notion $\P$ is $\infty$-\emph{subproper} if every sufficiently large cardinal $\theta$ verifies the $\infty$-subproperness of $\P$, meaning that the following holds: $\P\in H_\theta$, and if $\tau>\theta$ is such that $H_\theta\sub N=L_\tau^A\models\ZFC^-$, and $\sigma:\bN\prec N$, where $\bN$ is countable, transitive and full, and $\bS=\kla{\btheta,\bar{\mathbb P},\bp,\bs}\in\bN$,
$S=\kla{\theta,\P,p,s}=\sigma(\bS)$, where $\bp\in\P$, for $1\le i\le n$, then there is a $q \in \P$ such that $q \le p$ and such that whenever $G\sub\P$ is generic with $q\in G$, then there is a $\sigma'\in\V[G]$ such that
\begin{enumerate}[label=(\alph*)]
  \item $\sigma':\bN\prec N$.
  \item $\sigma'(\bS)=\sigma(\bS)$.
  \item $(\sigma')^{-1}``G$ is $\P$-generic over $\bN$.
\end{enumerate}
\end{defn}

\begin{lem}
$\infty$-subproper forcing notions are closed under factors and forcing equivalence.
\end{lem}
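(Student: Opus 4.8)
The plan is to handle the two closure properties in turn, in each case reproducing the argument already given for the corresponding statement about $\infty$-subcomplete forcing — Theorem~\ref{thm:FactorsOfInftySCForcingIsSC} for closure under factors, and the Proposition that $\infty$-subcomplete forcings are closed under forcing equivalence — while accommodating the one structural difference, namely that in Definition~\ref{def:InftySubproperness} the genericity requirement is phrased as ``$(\sigma')^{-1}``G$ is $\P$-generic over $\bN$'' rather than as a containment ``$\sigma'``\bG\sub G$'' for a prescribed $\bG$. For closure under factors, suppose $\P*\dot\Q$ is $\infty$-subproper and let $\theta$ be sufficiently large to verify this, with $\P,\dot\Q\in H_\theta$; I would show $\theta$ also verifies the $\infty$-subproperness of $\P$. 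So fix a \ZFCm{} model $N=L_\tau^A$ with $H_\theta\sub N$, an embedding $\sigma:\bN\prec N$ with $\bN$ countable, transitive and full, and $\bS=\kla{\btheta,\bar\P,\bp,\bs}\in\bN$ with $\sigma(\bS)=S$; as in the proof of Theorem~\ref{thm:FactorsOfInftySCForcingIsSC} we may assume $\dot\Q\in\ran(\sigma)$, say $\sigma(\dot{\bar\Q})=\dot\Q$, so that $\bN$ sees the two-step iteration $\bar\P*\dot{\bar\Q}$. Applying the $\infty$-subproperness of $\P*\dot\Q$ to $\sigma$ with the condition $(\bp,\eins)$, I obtain $(p^*,\dot q^*)\le(p,\eins)$ in $\P*\dot\Q$ such that whenever $(p^*,\dot q^*)\in G*H$ is $\P*\dot\Q$-generic over $\V$, there is in $\V[G*H]$ an embedding $\sigma'':\bN\prec N$ with $\sigma''(\kla{\btheta,\bar\P*\dot{\bar\Q},(\bp,\eins),\bs})=\kla{\theta,\P*\dot\Q,(p,\eins),s}$ such that $(\sigma'')^{-1}``(G*H)$ is $\bar\P*\dot{\bar\Q}$-generic over $\bN$. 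Since $\bar\P$ and $\bp$ are definable in $\bN$ from $\bar\P*\dot{\bar\Q}$ and $(\bp,\eins)$, elementarity gives $\sigma''(\bS)=S$; and since every dense subset of $\bar\P$ in $\bN$ extends to a predense subset of $\bar\P*\dot{\bar\Q}$ in $\bN$, the genericity of $(\sigma'')^{-1}``(G*H)$ entails that $(\sigma'')^{-1}``G$ is $\bar\P$-generic over $\bN$. Thus $\sigma''$ is a witness of exactly the kind we want for $\P$, except that a priori it lives in $\V[G*H]$ and not in $\V[G]$.

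To pull it down, I would run the tree/absoluteness argument of Theorem~\ref{thm:FactorsOfInftySCForcingIsSC}. Fix in $\V$ an enumeration of $\bN$ in order type $\omega$ and an enumeration $\seq{\bar D_n}{n<\omega}$ of the dense subsets of $\bar\P$ that lie in $\bN$, and set $q=p^*$. Working in $\V[G]$ with $q\in G$, build the tree $T_G$ whose nodes are finite partial maps $\bN\rightharpoonup N$ that approximate an elementary $\sigma':\bN\prec N$ sending the components of $\bS$ to those of $S$, whose domains absorb an ever-larger initial segment of $\bN$, and which by level $n$ place into the domain some $\bar d\in\bar D_n$ with image in $G$; this tree belongs to $\V[G]$. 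An infinite branch through $T_G$ unions to a $\sigma'\in\V[G]$ with $\sigma':\bN\prec N$, $\sigma'(\bS)=S$, and $(\sigma')^{-1}``G$ meeting every $\bar D_n$, hence $\bar\P$-generic over $\bN$. Conversely, any $\sigma''$ as above determines an infinite branch of $T_G$ — after reorganizing the order in which its finitely many relevant values are listed so as to discharge the $n$-th density requirement by level $n$ — so $T_G$ is ill-founded in $\V[G*H]$, hence ill-founded already in $\V[G]$ by the absoluteness of ill-foundedness. Thus $q=p^*\le p$ witnesses the $\infty$-subproperness of $\P$.

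For closure under forcing equivalence, let $\P$ be $\infty$-subproper and $\Q$ forcing equivalent to $\P$; fix an isomorphism $e$ between their Boolean completions (supplied by forcing equivalence), and let $\theta$ verify the $\infty$-subproperness of $\P$ and be large enough that $\P$, $\Q$, $e$ and the relevant completions lie in $H_\theta$. Given $N$, $\sigma:\bN\prec N$ and $\bS=\kla{\btheta,\bar\Q,\bar r,\bs}$ with $\sigma(\bS)=S=\kla{\theta,\Q,r,s}$, I may assume $\P,e\in\ran(\sigma)$, with preimages $\bar\P$, $\bar e$, so that $\bN$ sees that $\bar e$ is an isomorphism between the completions of $\bar\P$ and $\bar\Q$. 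Pick $\bar p_0\in\bar\P$ in $\bN$ with $\bar p_0\le\bar e^{-1}(\bar r)$, apply the $\infty$-subproperness of $\P$ to $\sigma$ with the condition $\bar p_0$ and a parameter absorbing $r,s,e,\Q$, and obtain $p\le\sigma(\bar p_0)$ in $\P$ such that every $\P$-generic $G\ni p$ carries a $\sigma'\in\V[G]$ mapping the $\P$-tuple correctly and with $\bG:=(\sigma')^{-1}``G$ being $\bar\P$-generic over $\bN$. Choose $q^*\in\Q$ with $q^*\le e(p)$; then $q^*\le r$. Now for any $\Q$-generic $H\ni q^*$, let $G$ be the $\P$-generic induced from $H$ via $e$; then $p\in G$ and $\V[G]=\V[H]$, so the $\sigma'$ above is available. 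Lifting $\sigma'$ to $\tsigma:\bN[\bG]\prec N[G]$ (legitimate since $\sigma'``\bG\sub G$) and letting $\bH$ be the $\bar\Q$-generic over $\bN$ induced from $\bG$ via $\bar e$, elementarity of $\tsigma$ together with $\tsigma(\bG)=G$ computes $\tsigma(\bH)=H$. Since $\bH\sub\bN$ and $\tsigma\rest\bN=\sigma'$, transitivity of $\bN[\bG]$ and $N[G]$ gives $(\sigma')^{-1}``H=\bH$, which is $\bar\Q$-generic over $\bN$, while $\sigma'(\bS)=S$; and $\sigma'\in\V[G]=\V[H]$. Hence $q^*\le r$ witnesses the $\infty$-subproperness of $\Q$.

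The only genuinely new work, relative to the $\infty$-subcomplete proofs, is caused by the reformulation of the genericity clause noted at the outset. In the factor argument this shows up in the design of $T_G$: one must encode, and discharge along every branch, all of the dense-set requirements that constitute ``$(\sigma')^{-1}``G$ is $\bar\P$-generic over $\bN$'', and then check that a witness $\sigma''$ coming from the $\infty$-subproperness of $\P*\dot\Q$ really can be rearranged into a branch of $T_G$; this is the one delicate bookkeeping point. In the forcing-equivalence argument, one is not handed a $\bar H$; instead one has to manufacture the $\bar\Q$-generic $\bH$ over $\bN$ out of $\bG$ via the fixed isomorphism $\bar e$, arranged precisely so that the lift $\tsigma$ sends $\bH$ to the given $H$ (and hence $(\sigma')^{-1}``H=\bH$ is generic over $\bN$), and one has to ensure that $e$, $\bar e$ and their preimages may be assumed to lie in the ranges of all the embeddings involved, which is handled by loading them into the parameters. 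None of this is hard, but it is where essentially all of the (routine) work sits; everything else is, as the remark before the lemma indicates, a word-for-word transcription of the $\infty$-subcomplete arguments.
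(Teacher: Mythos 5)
Your proof is correct and takes exactly the route the paper intends: the paper explicitly leaves this lemma to the reader, saying the arguments follow the proofs of Theorem~\ref{thm:FactorsOfInftySCForcingIsSC} and the forcing-equivalence proposition for $\infty$-subcompleteness, and you have supplied precisely that adaptation, correctly handling the two points where the genericity clause of Definition~\ref{def:InftySubproperness} differs from the $\infty$-subcomplete setting (encoding the dense-set requirements into the tree $T_G$, and manufacturing $\bH$ from $\bG$ via the isomorphism so that $(\sigma')^{-1}``H=\bH$).
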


Let us make a remark on the suprema condition that is part of Definition \ref{def:SubpropernessWithSupremumCondition} and is omitted in Definition \ref{def:InftySubproperness}. When motivating his definition of subproperness, Jensen writes in \cite[\S0, p.~3]{Jensen:SPSCF} on this condition:
\begin{quotation}
\it We needed (d) to handle certain regular limit points in the iteration. The experts on the subject may well be able to modify or eliminate (d).
\end{quotation}
This prediction came true, as Miyamoto showed, for the case of subproperness, which we learned after proving our iteration theorems for nice iterations. Our contribution is that this can be done for subcompleteness as well, and that some subclasses of subproper forcing, exhibiting more preservation properties, without requiring the hull or suprema condition, can be iterated in this way as well. The additional preservation properties of the subclasses we consider are the same properties that Miyamoto imposed on the class of semiproper forcing in his iteration theorems from \cite{Miyamoto:IteratingSemiproperPreorders} and \cite{Miyamoto:IteratingOmegaOmegaBoundingSPforcing}. Our proofs in this section are adaptations of Miyamoto's arguments in the context of semiproper forcing. The honor of being considered an expert on the subject by Jensen is entirely Miyamoto's.

\subsection{The theory of nice iterations}
\label{subsec:TheoryOfNiceIterations}

We collect here first the relevant facts and definitions from \cite{Miyamoto:IteratingSemiproperPreorders}. For a more in depth discussion, including proofs, see that article. For basic notions of projection etc, see the introduction there. For a sequence $x$ we denote its length by $l(x)$.

\begin{defn}
Let $\nu$ be a limit ordinal. A sequence of separative partial preorders%
\footnote{Here, $(\P,\le,1)$ is a partial preorder if $(\P,\le)$ is reflexive and transitive, and if $1$ is a greatest element. There may be several such greatest elements since $\P$ is not required to be antisymmetric. If $p\le q$ and $q\le p$, then we write $p\equiv q$.}
of length $\nu$, $\langle (\P_\alpha, \leq_\alpha, 1_\alpha)\; | \; \alpha < \nu\rangle$ is called a {\em general iteration} iff for any $\alpha \leq \beta < \nu$ the following holds
\begin{enumerate}
\item
For any $p \in \P_\beta$, $p$ is a sequence of length $\beta$, $p \upharpoonright \alpha \in \P_\alpha$ and $1_\beta \upharpoonright \alpha = 1_\alpha$.
\item
For any $p \in \P_\alpha$ and any $q \in \P_\beta$, if $p \leq_\alpha q \upharpoonright \alpha$ then $p^\frown q \upharpoonright [\alpha, \beta) \in \P_\beta$ and $p^\frown q \upharpoonright [\alpha, \beta) \leq_\beta q$
\item
For any $p, q \in \P_\beta$, if $p \leq_\beta q$ then $p \hook \alpha \leq_\alpha q \hook \alpha$ and $p\leq_\beta p \hook \alpha^\frown q\hook[\alpha, \beta)$.
\end{enumerate}
A general iteration $\langle (\P_\alpha, \leq_\alpha, 1_\alpha)\; | \; \alpha < \nu\rangle$ is an \emph{iteration} iff for every limit ordinal $\beta<\nu$ and all $p,q\in \P_\beta$, $p\le_\beta q$ iff for all $\alpha<\beta$, $p\rest\alpha\le_\alpha q\rest\alpha$.
\end{defn}

We will use the following fact (and the notation introduced there).

\begin{fact}[see {\cite[Prop.~1.3]{Miyamoto:IteratingSemiproperPreorders}}]
\label{fact:FactorsAndQuotients}
Let $\langle (\P_\alpha, \leq_\alpha, 1_\alpha)\; | \; \alpha < \nu\rangle$ be a general iteration, and let $\alpha\le\beta<\nu$. Then
\begin{enumerate}[label=(\arabic*)]
  \item Let $G_\beta$ be $\P_\beta$-generic over $\V$. Set $G_\beta\rest\alpha=\{p\rest\alpha\st p\in G_\beta\}$, $G_\beta\rest[\alpha,\beta)=\{p\rest[\alpha,\beta)\st p\in G_\beta \}$, and let \[\P_{\alpha,\beta}=\P_\beta/(G_\beta\rest\alpha)=\{p\rest[\alpha,\beta)\st p\in \P_\beta\ \text{and}\ p\rest\alpha\in G_\beta\rest\alpha\}\] equipped with the ordering $p\le_{\alpha,\beta}q$ iff there is an $r\in G_\beta\rest\alpha$ such that $r\verl p\le_\beta r\verl q$.
      Then $G_\beta\rest\alpha$ is $\P_\alpha$-generic over $\V$ and $G_\beta \rest [\alpha, \beta)$ is $\P_{\alpha,\beta}$-generic over $\V[G_\beta\rest\alpha]$.
  \item If $G_\alpha$ is $\P_\alpha$-generic over $\V$ and $H$ is $\P_{\alpha,\beta}=\P_\beta/G_\alpha$-generic over $\V[G_\alpha]$, then $G_\alpha*H=\{p\in \P_\alpha\st p\rest\alpha\in G_\alpha\ \text{and}\ p\rest[\alpha,\beta)\in H\}$ is $\P_\beta$-generic over $\V$, $(G_\alpha*H)\rest\alpha=G_\alpha$ and $(G_\alpha*H)\rest[\alpha,\beta)=H$.
  \item
  \label{item:CriterionForBeingInGeneric}
  Let $G_\beta$ be $\P_\beta$-generic over $\V$. Then a condition $p\in \P_\beta$ is in $G_\beta$ iff $p\rest\alpha\in G_\beta\rest\alpha$ and $p\rest[\alpha,\beta)\in G_\beta\rest[\alpha,\beta)$.
\end{enumerate}
\end{fact}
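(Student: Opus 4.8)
The plan is to run the standard two-step factorization argument for iterated forcing inside Miyamoto's framework of separative partial preorders, with clauses (1)--(4) in the definition of a general iteration playing the roles normally played by the projection and amalgamation operations of a two-step iteration $\P_\alpha * \dot\Q$. I would prove the three parts in the order (1), (2), (3), since (3) falls out quickly once (1) and (2) are available. Before anything else I would isolate two facts that get used over and over. First, the projection $p\mapsto p\rest\alpha$ from $\P_\beta$ to $\P_\alpha$ is order preserving, by clause (3). Second, the \emph{amalgamation trick}: if $q\in\P_\beta$ and $d\in\P_\alpha$ with $d\le_\alpha q\rest\alpha$, then $d\verl q\rest[\alpha,\beta)\in\P_\beta$ by clause (2), it satisfies $d\verl q\rest[\alpha,\beta)\le_\beta q$ by clause (2), and it projects to $d$. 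The amalgamation trick is exactly what makes $\{p\in\P_\beta\st p\rest\alpha\in D\}$ dense (below the relevant conditions) whenever $D\sub\P_\alpha$ is dense, and dually for tails.

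For part (1): that $G_\beta\rest\alpha$ is a filter on $\P_\alpha$ follows from order-preservation of the projection (upward closure, using directedness of $G_\beta$) and from clause (3) (directedness: if $p,q\in G_\beta$ pick $r\in G_\beta$ below both, and note $r\rest\alpha\le_\alpha p\rest\alpha,q\rest\alpha$). Genericity of $G_\beta\rest\alpha$ over $\V$ is immediate from the amalgamation trick: if $D\sub\P_\alpha$ is dense in $\V$ then $\{p\in\P_\beta\st p\rest\alpha\in D\}$ is dense in $\P_\beta$, $G_\beta$ meets it, and the projection of a witness lies in $D\cap(G_\beta\rest\alpha)$. Next, $G_\beta\rest[\alpha,\beta)$ is a filter on $\P_{\alpha,\beta}$: upward closure is read off the definition of $\le_{\alpha,\beta}$ together with the filter property of $G_\beta\rest\alpha$, and directedness comes from directedness of $G_\beta$ and clause (3). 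The step that requires genuine work is the genericity of $G_\beta\rest[\alpha,\beta)$ over $\V[G_\beta\rest\alpha]$: given a dense $D\in\V[G_\beta\rest\alpha]$, fix a $\P_\alpha$-name $\dot D$ and a condition $r_0\in G_\beta\rest\alpha$ forcing (with respect to $\P_\alpha$) that $\dot D$ is dense in the quotient, and let $E$ be the set of $p\in\P_\beta$ with $p\rest\alpha\le_\alpha r_0$ such that $p\rest\alpha$ forces that the tail $p\rest[\alpha,\beta)$ belongs to $\dot D$. Using the amalgamation trick together with a maximal-antichain mixing argument below $r_0$ one checks that $E$ is dense below $r_0\verl 1_\beta\rest[\alpha,\beta)$; since this last condition lies in $G_\beta$ (because $r_0\in G_\beta\rest\alpha$, by the amalgamation trick and upward closure), $G_\beta$ meets $E$, and a member of $G_\beta\cap E$ witnesses that $G_\beta\rest[\alpha,\beta)$ meets $D$.

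Part (2) is the mirror image. Given $G_\alpha$ and $H$ as stated, $G_\alpha * H$ is a filter on $\P_\beta$: a lower bound of finitely many of its members is obtained by amalgamating a lower bound in $G_\alpha$ of the $\alpha$-parts with a lower bound in $H$ of the tails via clauses (2) and (3), and upward closure uses both parts of clause (3) together with the filter properties of $G_\alpha$ and $H$. For genericity over $\V$: given $D\sub\P_\beta$ dense in $\V$, let $\dot E$ be the $\P_\alpha$-name for $\{p\rest[\alpha,\beta)\st p\in D,\ p\rest\alpha\in\dot G_\alpha\}$; the amalgamation trick shows that, in $\V$, every condition of $\P_\alpha$ forces $\dot E$ to be dense in $\P_{\alpha,\beta}$, so $H$ meets $E=\dot E^{G_\alpha}$, and unwinding yields $p\in D\cap(G_\alpha * H)$. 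The identities $(G_\alpha * H)\rest\alpha=G_\alpha$ and $(G_\alpha * H)\rest[\alpha,\beta)=H$ then follow by unwinding the definition of $G_\alpha * H$ together with the amalgamation trick (for the nontrivial inclusions, an element $a\in G_\alpha$ is witnessed by $a\verl 1_\beta\rest[\alpha,\beta)$, and an element $h\in H$, being of the form $p\rest[\alpha,\beta)$ with $p\rest\alpha\in G_\alpha$, is witnessed by that $p$). Finally, part (3): the forward direction is trivial, and for the converse, if $p\rest\alpha\in G_\beta\rest\alpha$ and $p\rest[\alpha,\beta)\in G_\beta\rest[\alpha,\beta)$, then using directedness of $G_\beta\rest\alpha$ and of $G_\beta\rest[\alpha,\beta)$ and refining inside $G_\beta$ one produces $q\in G_\beta$ with $q\rest\gamma\le_\gamma p\rest\gamma$ for all $\gamma<\beta$, hence $q\le_\beta p$ by clause (4), so $p\in G_\beta$ by upward closure.

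The main obstacle is the genericity of the quotient $G_\beta\rest[\alpha,\beta)$ over $\V[G_\beta\rest\alpha]$ (and its mirror, the genericity of $G_\alpha * H$ over $\V$): one must transfer a dense set living in the extension by the first factor back to a dense set of $\P_\beta$ in $\V$, which forces a maximal-antichain mixing argument and careful bookkeeping with the facts that $\le_{\alpha,\beta}$ is defined only relative to $G_\beta\rest\alpha$ and that conditions are merely separative, not separated, so that ``equality'' of conditions must be read as $\equiv$ throughout. Everything else is a mechanical unwinding of clauses (1)--(4).
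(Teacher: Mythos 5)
The paper does not actually prove this Fact: it is imported verbatim from Miyamoto's Proposition~1.3, so there is no in-paper argument to compare against. Your proposal is the standard two-step factorization argument, run inside the preorder framework with projection (clause (3)) and amalgamation (clause (2)) as the two workhorses, and that is indeed how the result is proved in the source; the overall structure, including the density-transfer arguments for the two genericity claims, is sound.

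Two local issues. First, in part (3) you conclude $q\le_\beta p$ from ``$q\rest\gamma\le_\gamma p\rest\gamma$ for all $\gamma<\beta$, hence $q\le_\beta p$ by clause (4)''; but clause (4) is available only when $\beta$ is a \emph{limit} ordinal, so this step fails for successor $\beta$. The detour is unnecessary: taking $u,v\in G_\beta$ with $u\rest\alpha=p\rest\alpha$ and $v\rest[\alpha,\beta)=p\rest[\alpha,\beta)$ and $q\in G_\beta$ with $q\le_\beta u,v$, clause (3) gives $q\le_\beta q\rest\alpha\verl v\rest[\alpha,\beta)=q\rest\alpha\verl p\rest[\alpha,\beta)$, and since $q\rest\alpha\le_\alpha u\rest\alpha=p\rest\alpha$, clause (2) gives $q\rest\alpha\verl p\rest[\alpha,\beta)\le_\beta p$; this works for all $\beta$. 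Second, the ``maximal-antichain mixing argument'' you invoke for the density of $E$ in part (1) is not an operation that general iterations of preorders support (there is no mixing axiom for these preorders), but it is also not needed: given $s\le_\beta r_0\verl 1_\beta\rest[\alpha,\beta)$, extend $s\rest\alpha$ to a single condition $d$ deciding a concrete $e_0=q\rest[\alpha,\beta)\in\dot D$ together with a witness $r$ to $e_0\le_{\alpha,\beta}s\rest[\alpha,\beta)$, arranging $d\le_\alpha r$ and $d\le_\alpha q\rest\alpha$ (using separativity to pass from ``$d$ forces $\check r\in\dot G_\alpha$'' to $d\le_\alpha r$); then the amalgamation $d\verl e_0$ lies in $E$ and is $\le_\beta s$. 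With these two repairs the proof goes through.
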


In what follows we suppress the notation $\leq_\alpha, 1_\alpha$ and identify a partial preorder with its underlying set. We have the following useful proposition.

\begin{proposition}[Proposition 1.7 of \cite{Miyamoto:IteratingSemiproperPreorders}]
\label{generic}
Let $\kla{\P_\alpha\st\alpha < \nu}$ be an iteration and $\beta<\nu$ a limit ordinal. Then for any $p \in\P_\beta$ and any $\P_\beta$-generic $G_\beta$ we have $p\in G_\beta$ if and only if for all $\alpha<\beta$, $p\rest\alpha \in G_\beta\rest\alpha$.
\end{proposition}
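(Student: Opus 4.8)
The statement to prove is Proposition~\ref{generic}: for an iteration $\langle P_\alpha \mid \alpha<\nu\rangle$ and limit $\beta<\nu$, a condition $p\in P_\beta$ lies in a $P_\beta$-generic $G_\beta$ iff $p\hook\alpha\in G_\beta\hook\alpha$ for all $\alpha<\beta$.

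\medskip

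The plan is to prove both directions directly from the axioms of a general iteration together with clause~(4) of the definition of an iteration (the clause that distinguishes ``iteration'' from ``general iteration''), and from Fact~\ref{fact:FactorsAndQuotients}\ref{item:CriterionForBeingInGeneric}. First I would handle the easy direction: if $p\in G_\beta$ then by definition $G_\beta\hook\alpha=\{q\hook\alpha\mid q\in G_\beta\}$, so $p\hook\alpha\in G_\beta\hook\alpha$ for every $\alpha<\beta$ trivially. For the converse, suppose $p\hook\alpha\in G_\beta\hook\alpha$ for all $\alpha<\beta$, and suppose toward a contradiction that $p\notin G_\beta$. By genericity there is a condition $q\in G_\beta$ incompatible with $p$ in $P_\beta$ (I would first argue, using that $P_\beta$ is a separative partial preorder, that $p\notin G_\beta$ yields such a $q$: the set of conditions either below $p$ or incompatible with $p$ is dense, $G_\beta$ meets it, and no condition below $p$ can be in $G_\beta$ since that would force $p\in G_\beta$ upward). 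Now $q\hook\alpha$ and $p\hook\alpha$ both lie in $G_\beta\hook\alpha$, which is $P_\alpha$-generic over $\V$ by Fact~\ref{fact:FactorsAndQuotients}(1), hence is a filter; so $p\hook\alpha$ and $q\hook\alpha$ are compatible in $P_\alpha$ for every $\alpha<\beta$.

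\medskip

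The heart of the matter is then to convert ``$p\hook\alpha$ compatible with $q\hook\alpha$ in $P_\alpha$ for all $\alpha<\beta$'' into ``$p$ compatible with $q$ in $P_\beta$,'' using that $\beta$ is a limit ordinal. I expect the cleanest route is to build a common extension by a coherence/amalgamation argument using axiom~(2) (the concatenation axiom $p^\frown q\hook[\alpha,\beta)\in P_\beta$) together with axiom~(4). More precisely, I would work inside $G_\beta$: for each $\alpha<\beta$, since $p\hook\alpha, q\hook\alpha\in G_\beta\hook\alpha$ and $G_\beta\hook\alpha$ is a filter, pick $r_\alpha\in G_\beta\hook\alpha$ with $r_\alpha\le_\alpha p\hook\alpha$ and $r_\alpha\le_\alpha q\hook\alpha$; in fact, I can take $r_\alpha=(r)\hook\alpha$ for a single $r\in G_\beta$ with $r\le_\beta q$ and $r\hook\alpha\le_\alpha p\hook\alpha$ for a \emph{fixed} large $\alpha$ — but the subtlety is that one $\alpha$ is not enough, I need it for all $\alpha$ simultaneously. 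So instead: since $G_\beta$ is a filter and $q\in G_\beta$, and since (claim) $p$ is ``in $G_\beta$ at every coordinate,'' I would show the condition $r$ defined coordinatewise as a common refinement lies in $P_\beta$ by axiom~(4) applied at the limit $\beta$, namely: the natural candidate common extension $s$ of $p$ and $q$ satisfies $s\hook\alpha\le_\alpha p\hook\alpha$ and $s\hook\alpha\le_\alpha q\hook\alpha$ for all $\alpha<\beta$, so by clause~(4) $s\le_\beta p$ and $s\le_\beta q$, contradicting incompatibility of $p$ and $q$. The remaining work is to produce such an $s\in P_\beta$; here I would again use $G_\beta$ itself: take $s\in G_\beta$ with $s\le_\beta q$ (possible since $q\in G_\beta$), and then strengthen to get $s\hook\alpha\le_\alpha p\hook\alpha$ — but since $G_\beta\hook\alpha$ is directed and contains both $s\hook\alpha$ and $p\hook\alpha$, and $\beta$ is a limit so membership in $G_\beta$ is determined by the coordinates $<\beta$ of $s$ (this is precisely Fact~\ref{fact:FactorsAndQuotients}\ref{item:CriterionForBeingInGeneric} applied with $\alpha$ replaced by a cofinal sequence, or a direct density argument), one concludes by a diagonalization/directedness argument that $p$ itself is compatible with every element of $G_\beta$, so $p\in G_\beta$ by genericity.

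\medskip

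The main obstacle I anticipate is exactly this last amalgamation step: producing, from coordinatewise compatibility in all the $P_\alpha$ ($\alpha<\beta$), a single condition in $P_\beta$ below both $p$ and $q$. Clause~(4) tells us that \emph{if} such a coordinatewise-below object is a genuine member of $P_\beta$ then it is below $p$ and $q$ in the $\le_\beta$ order, so the real content is the closure of $P_\beta$ under taking coordinatewise infima of compatible threads — which is not given abstractly and must be extracted from the genericity of $G_\beta$ rather than from the iteration axioms alone. I would therefore organize the argument so that $G_\beta$ does the amalgamation: rather than building $s$ by hand, I would show the set $D=\{r\in P_\beta\mid r\le_\beta q\ \text{or}\ \exists\alpha<\beta\ r\hook\alpha\perp_\alpha p\hook\alpha\}$ is dense in $P_\beta$ below $q$, note $G_\beta$ meets $D$ via some $r\le_\beta q$ (as $q\in G_\beta$ and $G_\beta$ is generic, and the second disjunct is impossible since $r\hook\alpha\in G_\beta\hook\alpha\ni p\hook\alpha$), and then iterate/refine within $G_\beta$ using its directedness at each coordinate, invoking that $\beta$ is a limit to stitch the coordinates together via clause~(4), to conclude $r\le_\beta p$ for a suitable $r\in G_\beta$, whence $p\in G_\beta$. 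This is essentially the argument of \cite[Prop.~1.7]{Miyamoto:IteratingSemiproperPreorders}, and I would follow that presentation.
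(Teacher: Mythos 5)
Your easy direction is fine, and you correctly locate the difficulty: converting coordinatewise information into a statement about $\le_\beta$. But the resolution you sketch does not close. Coordinatewise compatibility does not amalgamate: from the fact that $p\rest\alpha$ and $q\rest\alpha$ are compatible in $P_\alpha$ for every $\alpha<\beta$ you cannot conclude that $p$ and $q$ are compatible in $P_\beta$, because the witnessing common extensions at different coordinates need not cohere, and $P_\beta$ is not closed under coordinatewise fusion of such witnesses (this is exactly why Miyamoto needs nested antichains and mixtures to populate limit stages at all). So your plan of taking $q\in G_\beta$ with $q\perp_\beta p$ and deriving a contradiction from coordinatewise compatibility has no contradiction in it. Likewise, directedness of $G_\beta$ only amalgamates finitely many conditions: for each fixed $\alpha$ you can find $r_\alpha\in G_\beta$ with $r_\alpha\rest\alpha\le_\alpha p\rest\alpha$, but no single $r\in G_\beta$ working for all $\alpha<\beta$ simultaneously is produced, and that is what the limit clause would need before it yields $r\le_\beta p$. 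Finally, the dense set you write down, $\{r : r\le_\beta q\ \text{or}\ \exists\alpha<\beta\ r\rest\alpha\perp_\alpha p\rest\alpha\}$, contains everything below $q$ via its first disjunct, so meeting it carries no information; presumably you meant $r\le_\beta p$ there, but then the density of that set is the entire content of the proposition and you never prove it.

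The missing idea is the separativity of each $P_\alpha$, which is built into the definition of a general iteration. Show that $D=\{r\in P_\beta : r\le_\beta p\}\cup\{r\in P_\beta : \exists\alpha<\beta\ r\rest\alpha\perp_\alpha p\rest\alpha\}$ is dense: given $q\in P_\beta$, either there are $\alpha<\beta$ and $s\le_\alpha q\rest\alpha$ with $s\perp_\alpha p\rest\alpha$, in which case $s^\frown q\rest[\alpha,\beta)$ is an extension of $q$ lying in $D$ by clause (2) of the definition; or else for every $\alpha<\beta$ every extension of $q\rest\alpha$ is compatible with $p\rest\alpha$, which by separativity of $P_\alpha$ forces $q\rest\alpha\le_\alpha p\rest\alpha$ for every $\alpha$, whence $q\le_\beta p$ by the limit clause and $q\in D$ already. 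Now pick $q\in G_\beta\cap D$: the second alternative is impossible because $q\rest\alpha$ and $p\rest\alpha$ both lie in the filter $G_\beta\rest\alpha$, so $q\le_\beta p$ and hence $p\in G_\beta$. (The paper itself gives no proof of this proposition, citing Miyamoto, so there is no in-paper argument to compare against; the above is the standard one.)
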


From now on we always assume the sequence $\vec{\P} = \kla{\P_\alpha\st\alpha<\nu}$ is an iteration. Let us quickly define the relevant concepts: nested antichain, $S \hooks T$, mixtures and $(T, \beta)$-niceness. We refer the reader to \cite{Miyamoto:IteratingSemiproperPreorders} for an in depth discussion of these ideas and their significance.

\begin{defn}
\label{definition:NestedAC-Mixture-beta-Nice-hooks}
A {\em nested antichain} in $\vec{\P}=\langle\P_\alpha\st\alpha<\nu\rangle$ is a triple $\langle T, \langle T_n \; | \; n < \omega\rangle, \langle {\rm suc}^n_T \; | \; n < \omega\rangle \rangle$ so that
\begin{enumerate}
\item
$T = \bigcup_{n < \omega} T_n$
\item
$T_0$ consists of a unique element of some $\P_\alpha$ for $\alpha < \nu$. We denote this unique element by $\Root(T)$.

\noindent For each $n < \omega$ we have that

\item $T_n \subseteq \bigcup\{\P_\alpha \; | \; \alpha < \nu\}$ and ${\rm suc}^n_T: T_n \to \mathcal P(T_{n+1})$.

\item For $a \in T_n$ and $b \in {\rm suc}^n_T (a)$, $l(a) \leq l(b)$ and $b \hook l(a) \leq a$.

\item If $a \in T_n$ and $b_0,b_1\in\suc^n_T(a)$ are distinct, then $b_0\rest l(a)$ and $b_1\rest l(a)$ are incompatible in $\P_{l(a)}$. Moreover, the set of all $b\rest l(a)$ with $b \in {\rm suc}^n_T(a)$ forms a maximal antichain below $a$ in $\P_{l(a)}$. In particular, this set is not empty.

\item $T_{n+1} = \bigcup \{ {\rm suc}^n_T (a) \; | \; a \in T_n\}$
\end{enumerate}

Given such a nested antichain $T$ in $\vec{\P}$ with $a_0=\Root(T)$, for a condition $p \in \P_\beta$ with $\beta<\nu$ we say that $p$ is {\em a mixture of $T$ up to }$\beta$ if for all $i<\beta$ the condition $p\rest i$ forces that
\begin{enumerate}
\item
$p\rest[i, i+1) \equiv a_0\rest [i, i+1)$ in $\P_{i,i+1}(=\P_{i+1}/\dot{G}_i)$ and $a_0\rest i\in\dot{G}_i$ if $i<l(a_0)$, where $\dot{G}_i$ is the canonical name for the $\P_i$-generic filter.
\item
$p\rest[i,i+1) \equiv s\rest[i,i+1)$ if there are $r, s \in T$ with $s \in {\rm suc}^n_T(r)$ for some $n$ and $l(r) \leq i < l(s)$ and $s\rest i \in \dot{G}_i$.
\item
$p\rest[i, i+1)\equiv 1_{i+1}\rest[i,i+1)$ if there is a sequence $\kla{a_n\st n<\omega}$ so that $a_0\in T_0$ and for all $n<\omega$, $a_{n+1} \in {\rm suc}^n_T(a_n)$ and $l(a_n)\leq i$ and $a_n\in\dot{G}_i\rest l(a_n)$.
\end{enumerate}

If $\beta$ is a limit ordinal we say that a sequence $p$ of length $\beta$ (not necessarily in $\mathbb P_\beta$) is $(T, \beta)$-{\em nice} if for all $\alpha < \beta$, $p \hook \alpha$ is a mixture of $T$ up to $\alpha$.

Finally, given two nested antichains $S$ and $T$ in $\vec{\P}$ we define $S \hooks T$ (``$S$ hooks $T$'') if for every $n < \omega$ and all $b \in S_n$ there is an $a \in T_{n+1}$ so that $l(a) \leq l(b)$ and $b\hook l(a) \leq a$.
\end{defn}

This previous definition combines Definitions 2.0, 2.4 and 2.10 of \cite{Miyamoto:IteratingSemiproperPreorders}. We will need the following characterization of mixtures.

\begin{fact}[see {\cite[Prop.~2.5]{Miyamoto:IteratingSemiproperPreorders}}]
\label{fact:CharacterizationOfMixtures}
Let $T$ be a nested antichain in an iteration $\seq{\P_\alpha}{\alpha<\nu}$, $\beta<\nu$ and $p\in \P_\beta$. Then $p$ is a mixture of $T$ up to $\beta$ iff the following hold:
\begin{enumerate}[label=(\arabic*)]
    \item
    \label{item:Root}
    Let $T_0=\{a_0\}$ and $\mu=\min(l(a_0),\beta)$. Then $a_0\rest\mu\equiv p\rest\mu$.
    \item
    \label{item:General}
    For any $a\in T$, letting $\mu=\min(l(a),\beta)$, we have that $a\rest\mu\le p\rest\mu$.
    \item
    \label{item:HigherUp}
    If $n<\omega$, $a\in T_n$, $b\in\suc_T^n(a)$ and $l(a)\le\beta$, then, letting $\mu=\min(\beta,l(b))$, we have that $b\rest\mu\equiv b\rest l(a)\verl p\rest[l(a),\mu)$.
    \item
    \label{item:Chain}
    For any $i\le\beta$ and any $q\in \P_i$ with $q\le_i p\rest i$, if $q$ forces with respect to $\P_i$ that there is a sequence $\seq{a_n}{n<\omega}$ such that $a_0\in T_0$, and for all $n<\omega$, $a_{n+1}\in\suc_T^n(a_n)$, $l(a_n)\le i$ and $a_n\in\dot{G}_i\rest l(a_n)$, then $q\verl 1_\beta\rest[i,\beta)\equiv q\verl p\rest[i,\beta)$.
\end{enumerate}
\end{fact}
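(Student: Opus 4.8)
The plan is to prove the two implications separately; each is an induction along the ordinals below $\beta$, driven by two structural features of iterations. The first is the two-step decomposition at successor stages: a condition $r\in\P_{i+1}$ is determined up to $\equiv$ by $r\hook i$ together with what $r\hook i$ forces about $r\hook[i,i+1)$, so an equivalence $r\equiv r'$ in $\P_{i+1}$ amounts to $r\hook i\equiv r'\hook i$ plus $r\hook i\Vdash r\hook[i,i+1)\equiv r'\hook[i,i+1)$ (immediate from the definition of general iteration, cf.\ Fact \ref{fact:FactorsAndQuotients}). The second is clause~(4) in the definition of iteration, which recovers $\le_\lambda$ at a limit $\lambda$ from the orderings $\le_\alpha$, $\alpha<\lambda$. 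I will also use clause~(2) of that definition constantly: whenever $q\le_i p\hook i$ one has $q\verl p\hook[i,\beta)\in\P_\beta$, so that the patched conditions occurring in \ref{item:HigherUp} and \ref{item:Chain} are genuine.

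For the forward direction, assume $p$ is a mixture of $T$ up to $\beta$. To get \ref{item:Root}, show by induction on $\mu'\le\mu=\min(l(a_0),\beta)$ that $p\hook\mu'\equiv a_0\hook\mu'$: the limit step is clause~(4) of the iteration definition, and the successor step is clause~(1) of the mixture definition, applicable because $\mu'<l(a_0)$ and, by the inductive hypothesis, $p\hook\mu'\equiv a_0\hook\mu'$ forces $a_0\hook\mu'\in\dot G_{\mu'}$. For \ref{item:General}, induct on the level $n$ with $a\in T_n$: the case $n=0$ is \ref{item:Root}, and when $a\in\suc^n_T(a')$ one invokes the claim for $a'$ and then proves $a\hook i\le p\hook i$ by a secondary induction on $i\le\min(l(a),\beta)$ — below $\min(l(a'),\beta)$ using $a\hook l(a')\le a'$, and above it using clause~(2) of the mixture definition (with the pair $(a',a)$) at each successor step. \ref{item:HigherUp} is analogous: fix the pair $(a,b)$ with $b\in\suc^n_T(a)$ and prove $b\hook j\equiv b\hook l(a)\verl p\hook[l(a),j)$ by induction on $j$ from $l(a)$ to $\min(\beta,l(b))$, the successor step again being clause~(2) of the mixture definition for $(a,b)$; one needs \ref{item:General} here to know the patched conditions are legitimate. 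Finally, for \ref{item:Chain}, given $i\le\beta$ and $q\le_i p\hook i$ forcing that a descending branch $\langle a_n\rangle$ through $T$ with $l(a_n)\le i$ and $a_n\in\dot G_i\hook l(a_n)$ exists, prove $q\verl 1_\beta\hook[i,j)\equiv q\verl p\hook[i,j)$ by induction on $j$ from $i$ to $\beta$: at a successor $j$, the condition $q\verl p\hook[i,j)$ extends $p\hook j$ and still forces the branch hypothesis at stage $j$ (since $l(a_n)\le i\le j$), so clause~(3) of the mixture definition yields $p\hook[j,j+1)\equiv 1_{j+1}\hook[j,j+1)$, which is exactly the successor step.

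For the converse, assume \ref{item:Root}--\ref{item:Chain} and fix $i<\beta$; I must show $p\hook i$ forces the three clauses of the mixture definition at coordinate $i$. For clause~(1), restrict the equivalence $a_0\hook\mu\equiv p\hook\mu$ from \ref{item:Root} to $i+1\le\mu$ and read off the one-step behaviour at $i$: $a_0\hook i$ — hence any generic containing $p\hook i$ in which $a_0\hook i\in\dot G_i$ — forces $a_0\hook[i,i+1)\equiv p\hook[i,i+1)$. For clause~(2), given a witnessing pair $(r,s)$ with $l(r)\le i<l(s)$, apply \ref{item:HigherUp} to $(r,s)$, restrict $s\hook\mu\equiv s\hook l(r)\verl p\hook[l(r),\mu)$ to $i+1$, and read off at coordinate $i$ to get that $s\hook i$ forces $s\hook[i,i+1)\equiv p\hook[i,i+1)$; here \ref{item:General} (for $s$) secures $s\hook i\le p\hook i$, so this is compatible with $p\hook i\in\dot G_i$. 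For clause~(3), given a generic $G_i\ni p\hook i$ carrying such a descending branch, pick by genericity $q\in G_i$ with $q\le_i p\hook i$ forcing the branch exists; then \ref{item:Chain} gives $q\verl 1_\beta\hook[i,\beta)\equiv q\verl p\hook[i,\beta)$, and restricting to $i+1$ and reading off at coordinate $i$ shows $q$, hence $G_i$, forces $p\hook[i,i+1)\equiv 1_{i+1}\hook[i,i+1)$.

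The main point requiring care is the repeated move between a condition in $\P_{j+1}$ and the pair consisting of its restriction to $\P_j$ and the name for its one-step tail: every successor step of every induction above is such a move, and it is valid only because one has, via \ref{item:General} and clause~(2) of the iteration definition, arranged the relevant restrictions and patchings to genuinely lie in the iteration. No smaller instance of the characterization is invoked — the inductions run over ordinals $<\beta$ only, with the clauses of the mixture definition and the defining properties of iterations doing all the work.
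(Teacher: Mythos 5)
The paper does not actually prove this statement: it is imported verbatim as a black box from Miyamoto's Proposition 2.5 in \cite{Miyamoto:IteratingSemiproperPreorders}, so there is no in-paper proof to compare against. Your argument is correct and is essentially the standard proof from that source — coordinate-wise inductions in both directions, with the successor steps being precisely the three clauses of the mixture definition read through the two-step decomposition of $\le$ and $\equiv$ at a successor stage (which itself rests on the separativity of the preorders and the basic quotient facts of Miyamoto's \S 1, as you implicitly acknowledge) and the limit steps handled by clause (4) of the definition of an iteration.
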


The following is Definition 3.6 in Miyamoto's article.

\begin{defn}
\label{def:NiceIterations}
An iteration $\kla{\P_\alpha\st\alpha<\nu}$ is called a {\em nice iteration} if
\begin{enumerate}
\item
For any $i$ such that $i+1<\nu$, if $p \in \P_i$ and $\tau$ is a $\P_i$-name such that $p\forces_i$``$\tau \in \P_{i+1}$ and $\tau\rest i \in\dot{G_i}$'' then there is a $q \in \P_{i +1}$ so that $q \hook i = p$ and $p \forces_i\tau\hook[i, i+1) \equiv q \hook [i, i+1)$.
\item
For any limit ordinal $\beta < \nu$ and any sequence $x$ of length $\beta$, $x \in \P_\beta$ if and only if there is a nested antichain $T$ in $\langle \P_\alpha \; | \; \alpha < \beta\rangle$ such that $x$ is $(T, \beta)$-nice.
\end{enumerate}
\end{defn}

In its original formulation, the following corollary assumes the iteration in question to be \emph{rich}, a concept we will not need here. But \cite[Proposition 3.7]{Miyamoto:IteratingSemiproperPreorders} says that nice iterations are rich, so we restate the corollary in terms of nice iterations.

\begin{cor}[Corollary 3.3 of \cite{Miyamoto:IteratingSemiproperPreorders}]
\label{cor:Fullness}
Let $\seq{\P_\alpha}{\alpha<j}$ be a nice iteration, and fix $i\le\beta<j$. Let $p\in\P_i$, and let $\tau\in\V^{\P_i}$ be such that $p\forces_i$ ``$\tau\in\P_\beta$ and $\tau\rest i\in\dot{G}_i$.'' Then there is a $q\in\P_\beta$ such that $q\rest i=p$ and $p\forces_i$ ``$q\rest[i,\beta)\equiv_{\P_{i,\beta}}\tau\rest[i,\beta)$.''
\end{cor}

\begin{lem}[Lemma 2.7 of \cite{Miyamoto:IteratingSemiproperPreorders}]
Let $\nu$ be a limit ordinal and $A \subseteq \nu$ be cofinal. Suppose that $T$ is a nested antichain in an iteration $\langle \P_\alpha \; | \; \alpha < \nu\rangle$ and $p$ is a sequence of length $\nu$ such that $p$ is $(T, \nu)$-nice. Then for any $\beta < \nu$ and any $s \in \P_\beta$ strengthening $p \hook \beta$ we get a nested antichain $S$ so that
\begin{enumerate}
\item
If $T_0 = \{a_0\}$ and $S_0 = \{b_0\}$ then $l(b_0) \in A$ and $l(a_0), \beta \leq l(b_0)$.
\item
For any $b \in S$, $l(b) \in A$.
\item
$r = s^\frown p \hook [\beta, \nu)$ is $(S, \nu)$-nice.
\end{enumerate}
\label{2.7}
\end{lem}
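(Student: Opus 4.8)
\emph{Approach.} I would construct the nested antichain $S$ by a recursion of length $\omega$, obtained from $T$ by ``re‑rooting'' it at a level lying in $A$ above $\max(\beta,l(a_0))$ and then copying the branching structure of $T$ level by level, at each step pushing all the lengths occurring in $S$ into $A$ (using that $A$ is cofinal in $\nu$) and replacing the ``tail following $p$'' by the ``tail following $r$''. Concretely, fix $\gamma_0\in A$ with $\gamma_0\ge\max(\beta,l(a_0))$, where $T_0=\{a_0\}$, and set $b_0:=s\verl p\hook[\beta,\gamma_0)$ and $S_0:=\{b_0\}$. Since $s\le_\beta p\hook\beta$, clauses (1)–(2) of the definition of general iteration give $b_0\in\P_{\gamma_0}$, $b_0\le_{\gamma_0}p\hook\gamma_0$, and $r\hook\gamma_0=b_0$. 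This already yields clause~1 of the Lemma: $l(b_0)=\gamma_0\in A$ and $l(a_0),\beta\le l(b_0)$. Because $p$ is $(T,\nu)$‑nice, $p\hook\gamma_0$ is a mixture of $T$ up to $\gamma_0$, so by Fact~\ref{fact:CharacterizationOfMixtures}(1) we get $a_0\equiv p\hook l(a_0)$ and hence $b_0\hook l(a_0)\le a_0$; record $a_0$ as the ``shadow'' of $b_0$.

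\emph{The recursive step.} Suppose $S_n$ is built and to each $b\in S_n$ I have attached a shadow node $\tau(b)\in T$ with $l(\tau(b))\le\gamma_0\le l(b)\in A$, with $b\hook l(\tau(b))\le\tau(b)$, with $b\hook\gamma_0\le b_0$, and with $b\equiv b\hook\gamma_0\verl p\hook[\gamma_0,l(b))$ (i.e.\ $b$ agrees with $r$ above $\gamma_0$). Given $b\in S_n$ with $a=\tau(b)\in T_m$ (some $m$), the set $\{c\hook l(a)\st c\in\suc_T^m(a)\}$ is a maximal antichain below $a$ in $\P_{l(a)}$; as $b\hook l(a)\le a$, choose a maximal antichain $\{e_k\st k\in K\}$ below $b\hook l(a)$ in $\P_{l(a)}$ together with $k\mapsto c_k\in\suc_T^m(a)$ such that $e_k\le_{l(a)}c_k\hook l(a)$. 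Using clauses (2)–(3) of the iteration definition, lift each $e_k$ to $d_k:=e_k\verl b\hook[l(a),l(b))\in\P_{l(b)}$, so that $d_k\le_{l(b)}b$, $d_k\hook l(a)=e_k$, and $\{d_k\st k\in K\}$ is a maximal antichain below $b$ in $\P_{l(b)}$. For each $k$ pick $\gamma_k\in A$ with $\gamma_k>l(b)$ (one such choice per successor is all that is needed, no uniform bound), set $b_k:=d_k\verl p\hook[l(b),\gamma_k)$, declare $b_k\in\suc_S^n(b)$, and take $\tau(b_k)$ to be $c_k$ when $l(c_k)\le\gamma_0$ and $\tau(b_k)=a$ otherwise (the case $l(c_k)>\gamma_0$ being exactly the one where $c_k$ lives above $\gamma_0$ and cannot serve as a shadow). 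The invariants are preserved: lengths stay in $A$ and above $\gamma_0$; restrictions decrease along $S$‑branches, so each node restricts below $b_0$; $b_k\le r\hook\gamma_k$ follows from $d_k\le_{l(b)}b\le_{l(b)}p\hook l(b)$ together with the defining gluing; and $b_k\hook l(\tau(b_k))\le\tau(b_k)$ uses $e_k\le c_k\hook l(a)$, the invariant on $b$, and $c_k\equiv c_k\hook l(a)\verl p\hook[l(a),l(c_k))$ (Fact~\ref{fact:CharacterizationOfMixtures}(3) for $p$, $T$).

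\emph{Verification that $r=s\verl p\hook[\beta,\nu)$ is $(S,\nu)$‑nice.} By Definition~\ref{definition:NestedAC-Mixture-beta-Nice-hooks} it suffices to show, for every $\alpha<\nu$, that $r\hook\alpha$ is a mixture of $S$ up to $\alpha$, which I check against Fact~\ref{fact:CharacterizationOfMixtures}. Condition (1) holds because $r\hook\gamma_0=b_0$ is the root of $S$. Condition (2), that $b\hook\min(l(b),\alpha)\le r\hook\min(l(b),\alpha)$ for every $b\in S$, is the invariant ``$b\le r\hook l(b)$''. Condition (3), that each $S$‑node agrees, above the level of its parent, with $r$, is built into the gluing $b_k=d_k\verl p\hook[l(b),\gamma_k)$ combined with the invariant that $b$ itself agrees with $r$ above $\gamma_0$ and with $d_k\le b$. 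Finally, clause~2 of the Lemma, that every $b\in S$ has $l(b)\in A$, is immediate from the construction, and clause~1 was noted above.

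\emph{Main obstacle.} The genuinely delicate point is condition (4) of Fact~\ref{fact:CharacterizationOfMixtures} for $S$: whenever (a condition forces that) the $S$‑generic selects an infinite branch $\seq{b_n}{n<\omega}$ of $S$ whose lengths are bounded by some $i$, then $r$ must be trivial on $[i,\nu)$. Since all lengths in $S$ are $\ge\gamma_0\ge\beta$, such an $i$ is necessarily $\ge\gamma_0$, so triviality of $r$ on $[i,\nu)$ amounts to triviality of $p$ there, and one extracts this from condition (4) for $p$ with respect to $T$ — provided the shadow bookkeeping has been set up so that a bounded infinite $S$‑branch forces a bounded infinite $T$‑branch into the generic. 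Making the shadow assignment in the recursive step precise enough to guarantee this correspondence — in particular handling the split according to whether the $T$‑node $c_k$ has length $\le\gamma_0$ or $>\gamma_0$, and making sure the latter case still feeds correctly into the termination bookkeeping — is where the real care is needed; the remaining verifications are routine but lengthy manipulations with the iteration axioms and Fact~\ref{fact:CharacterizationOfMixtures}.
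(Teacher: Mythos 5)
The paper does not actually prove this lemma --- it is imported verbatim as Lemma 2.7 of Miyamoto's paper --- but your outline does follow the standard argument there in its overall shape: root $S$ at a level $\gamma_0\in A$ above $\max(\beta,l(a_0))$, set $b_0=s\verl p\hook[\beta,\gamma_0)$, and recursively refine the successor antichains of $T$ below the current $S$-node, gluing on $p$'s tail up to levels in $A$. The skeleton (the refinement $\{e_k\}$ of $\{c\hook l(a)\st c\in\suc_T^m(a)\}$ below $b\hook l(a)$, the lift $d_k=e_k\verl b\hook[l(a),l(b))$, the extension $b_k=d_k\verl p\hook[l(b),\gamma_k)$) is right, and conditions (1)--(3) of Fact \ref{fact:CharacterizationOfMixtures} for $r$ and $S$ do come out of it as you indicate.

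The genuine gap is the shadow rule you adopt when $l(c_k)>\gamma_0$, and it sits exactly at the point you yourself identify as the crux. Setting $\tau(b_k)=a$ in that case allows the shadow sequence along an infinite branch of $S$ to stabilize at a single $T$-node $a$ whose ``pending'' successor $c_k$ is never recorded; note that once $e_k\le c_k\hook l(a)$ enters the picture, every later refinement below $b_k\hook l(a)$ stays below $c_k\hook l(a)$, so the shadow never advances past $a$. Now suppose $q\le r\hook i$ forces such a branch $\seq{b_n}{n<\omega}$ of $S$ into the generic with all $l(b_n)\le i$. Condition \ref{item:Chain} of Fact \ref{fact:CharacterizationOfMixtures} for $S$ demands $q\verl r\hook[i,\alpha)\equiv q\verl 1\hook[i,\alpha)$, and since $i\ge\gamma_0\ge\beta$ this is a statement about $p$. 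The only available tool is condition \ref{item:Chain} for $p$ and $T$, which requires $q$ to force a bounded infinite branch of $T$ into the generic --- but the stabilized shadows supply only a finite stem ending at $a$, together with $c_k\hook l(a)$ in the generic with $l(c_k)$ possibly exceeding $i$. In that configuration clause (2) of the definition of mixture forces $p$ to follow $c_k$ on $[i,l(c_k))$, which need not be trivial, so $r$ would fail to be $(S,\nu)$-nice: the case split does not merely ``require care,'' it breaks the lemma. The repair is small: there is no reason to cap shadow lengths at $\gamma_0$. Choose $\gamma_k\in A$ with $\gamma_k\ge\max(l(b)+1,\,l(c_k))$ (possible since $A$ is cofinal) and always set $\tau(b_k)=c_k$. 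Then $\tau(b_{n+1})\in\suc_T^n(\tau(b_n))$, $l(\tau(b_n))\le l(b_n)$ and $b_n\hook l(\tau(b_n))\le\tau(b_n)$ hold at every node, so a bounded infinite $S$-branch in the generic yields a bounded infinite $T$-branch in the generic and condition \ref{item:Chain} transfers. With that change your outline can be completed.
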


\begin{lem}[Lemma 2.11 of \cite{Miyamoto:IteratingSemiproperPreorders}]
Let $\nu$ be a limit ordinal and $\langle \P_\alpha \; | \; \alpha < \nu\rangle$ an iteration. Let $A \subseteq \nu$ be a cofinal subset of $\nu$. If $T$ and $U$ are nested antichains, $p$ and $q$ are sequences of length $\nu$ with $p$ $(T, \nu)$-nice and $q$ $(U, \nu)$-nice and $r \in T_1$ so that $q \hook l(r) \leq r$ and for all $\alpha \in [l(r), \nu)$, $q \hook \alpha \leq r^\frown p\hook[l(r), \alpha)$; then there is a nested antichain $S$ in $\kla{\P_\alpha\st\alpha<\nu}$ so that $q$ is $(S, \nu)$-nice, if $\{b_0\} = S_0$ then $l(r) \leq l(b_0)$ and $b_0 \hook l(r) \leq r$, for all $s \in S$, $l(s) \in A$ and $S \hooks T$.
\label{2.11}
\end{lem}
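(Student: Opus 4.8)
The plan is to apply the refinement Lemma~\ref{2.7} to the nested antichain $T$ and the $(T,\nu)$-nice sequence $p$, with $\beta=l(r)$ and $s=q\hook l(r)$, and then to establish the two features of the conclusion that go beyond its statement: that $S\hooks T$, and that one may replace the $(S,\nu)$-nice sequence handed back by Lemma~\ref{2.7} with $q$ itself.

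First one must see that this application is legitimate, i.e.\ that $s=q\hook l(r)$ strengthens $p\hook l(r)$ in $\P_{l(r)}$. Since $p$ is $(T,\nu)$-nice, $p\hook l(r)$ is a mixture of $T$ up to $l(r)$, so Fact~\ref{fact:CharacterizationOfMixtures}, clause~\ref{item:General}, applied to $r\in T_1\subseteq T$ at $\beta=l(r)$, gives $r=r\hook l(r)\le p\hook l(r)$; with $q\hook l(r)\le r$ this yields $q\hook l(r)\le p\hook l(r)$. Lemma~\ref{2.7} then produces a nested antichain $S$ in $\langle\P_\alpha\mid\alpha\le\nu\rangle$ with $S_0=\{b_0\}$, $l(a_0),l(r)\le l(b_0)\in A$ (writing $T_0=\{a_0\}$), $l(t)\in A$ for all $t\in S$, and $p':=(q\hook l(r))\verl p\hook[l(r),\nu)$ being $(S,\nu)$-nice. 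Two of the required clauses are then essentially immediate: $l(b_0)\ge l(r)$, and, since $p'$ is $(S,\nu)$-nice, Fact~\ref{fact:CharacterizationOfMixtures} clause~\ref{item:Root} gives $b_0\equiv p'\hook l(b_0)$, hence $b_0\hook l(r)\equiv q\hook l(r)\le r$; and, inspecting the construction in the proof of Lemma~\ref{2.7}, each $b\in S_n$ sits under restriction below an element of $T_{n+1}$ of no greater length, which is precisely $S\hooks T$ (for $n=0$ this is witnessed by $r\in T_1$, as just noted).

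It then remains to upgrade ``$p'$ is $(S,\nu)$-nice'' to ``$q$ is $(S,\nu)$-nice''. Using clause~(3) of the definition of general iteration together with $q\hook l(r)\le r$ and the hypothesis $q\hook\alpha\le r\verl p\hook[l(r),\alpha)$, one records that $q\hook\alpha\le(q\hook l(r))\verl p\hook[l(r),\alpha)=p'\hook\alpha$ for every $\alpha<\nu$, with equality for $\alpha\le l(r)$. One then verifies clauses \ref{item:Root}--\ref{item:Chain} of Fact~\ref{fact:CharacterizationOfMixtures} for $q$ with respect to $S$, reducing each to the corresponding statement for $p'$: the $\le$-parts (clauses \ref{item:General} and \ref{item:HigherUp}) follow from the componentwise domination $q\hook\alpha\le p'\hook\alpha$; the $\equiv$-parts at the root and at the splitting nodes (clauses \ref{item:Root} and \ref{item:HigherUp}) use that $q$ is itself $(U,\nu)$-nice --- so its coordinates above any node are pinned down up to $\equiv$ by the generic picture --- together with $q\hook l(r)=p'\hook l(r)$; and clause~\ref{item:Chain} follows by comparing the generic descriptions of $p'$ and $q$ below a common strengthening in $\P_i$. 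I expect this upgrade to be the main obstacle: since being $(S,\nu)$-nice is not closed downward, extracting genuine $\equiv$-equalities for $q$ from the hypotheses, not just inequalities, is delicate, and if the direct verification stalls the remedy is to pass to a common refinement of $S$ with a nested antichain witnessing the $(U,\nu)$-niceness of $q$, which by transitivity of the refinement relation still has all lengths in $A$ and still hooks $T$, and to conclude there.
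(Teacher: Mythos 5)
First, note that the paper does not prove this statement at all: it is imported verbatim as Lemma~2.11 of Miyamoto's article \cite{Miyamoto:IteratingSemiproperPreorders}, so there is no in-paper argument to compare yours against. Judged on its own merits, your proposal has a genuine gap at exactly the step you flag as "the main obstacle," and the gap is not repaired.

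The problem is the upgrade from ``$p'=(q\hook l(r))\verl p\hook[l(r),\nu)$ is $(S,\nu)$-nice'' to ``$q$ is $(S,\nu)$-nice.'' The clauses \ref{item:Root}, \ref{item:HigherUp} and \ref{item:Chain} of Fact~\ref{fact:CharacterizationOfMixtures} demand genuine $\equiv$-identities between segments of $q$ and segments of the nodes of $S$ (e.g.\ $b_0\hook\mu\equiv q\hook\mu$ for $\mu=\min(l(b_0),\alpha)$, and $\mu$ can exceed $l(r)$ since $l(b_0)\ge l(r)$). All your hypotheses deliver above level $l(r)$ is the one-sided inequality $q\hook\alpha\le p'\hook\alpha$, and $b_0\hook\mu\equiv p'\hook\mu$ together with $q\hook\mu\le p'\hook\mu$ does not yield $b_0\hook\mu\equiv q\hook\mu$. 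The appeal to ``$q$ is $(U,\nu)$-nice, so its coordinates are pinned down by the generic picture'' does not close this, because the generic picture pinning down $q$ is governed by $U$, whereas $S$ was built from $T$ and $p$; nothing relates the splitting nodes of $S$ to those of $U$. Your fallback --- ``pass to a common refinement of $S$ with a nested antichain witnessing the $(U,\nu)$-niceness of $q$'' --- is precisely the nontrivial construction the lemma is asserting: one must build $S$ by interleaving refinements of $U$ (so that $q$ stays nice for $S$) with nodes dominated by the appropriate levels of $T$ (so that $S\hooks T$, with the level shift supplied by $r\in T_1$ at the root). Naming that construction is not the same as carrying it out. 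A secondary, smaller issue: you derive $S\hooks T$ for $n\ge 1$ by ``inspecting the construction in the proof of Lemma~\ref{2.7},'' but the statement of Lemma~\ref{2.7} as recorded here does not assert $S\hooks T$, so this is not available without reproducing Miyamoto's construction. The preliminary step ($q\hook l(r)\le r\le p\hook l(r)$ via clause~\ref{item:General}, legitimizing the application of Lemma~\ref{2.7}) is fine.
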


We also recall the definition of a fusion structure.

\begin{defn}[Def.~3.4 of \cite{Miyamoto:IteratingSemiproperPreorders}]
\label{def:FusionStructure}
Let $\vec{\P}=\kla{\P_\alpha\st\alpha\le\nu}$ be an iteration, where $\nu$ is a limit ordinal. Given a nested antichain $T$ in $\vec{\mathbb P}\rest\nu$, we call a structure $\langle (q^{(a, n)}, T^{(a, n)}) \st a\in T_n, n<\omega\rangle$ a {\em fusion structure} if for all $n<\omega$ and $a\in T_n$ the following hold:
\begin{enumerate}
\item
$T^{(a, n)}$ is a nested antichain in $\langle \P_\alpha \; | \; \alpha < \nu\rangle$.
\item
$q^{(a, n)} \in \P_\nu$ is a mixture of $T^{(a, n)}$ up to $\nu$.
\item
$a\leq q^{(a, n)}\rest l(a)$ and $l(a)=l(\Root(T^{(a,n)}))$. 
\item
For any $b \in {\rm suc}^n_T(a), T^{(b, n+1)} \hooks T^{(a, n)}$ so $q^{(b, n+1)} \leq q^{(a, n)}$.
\end{enumerate}
If $p \in \P_\nu$ is a mixture of $T$ up to $\nu$ then we call $p$ a {\em fusion} of the fusion structure.
\end{defn}

\begin{proposition}[Proposition 3.5 of \cite{Miyamoto:IteratingSemiproperPreorders}]
Let $\kla{\P_\alpha\st\alpha\leq\nu}$ be an iteration, where $\nu$ is a limit ordinal. If $p\in\P_\nu$ is a fusion of a fusion structure $\langle(q^{(a, n)}, T^{(a, n)})\st a\in T_n, n<\omega\rangle$ then $p$ forces that there is a sequence $\kla{a_n\st n<\omega}$ such that the following hold:
\begin{enumerate}
\item
$a_0\in T_0$, and for all $n<\omega$, $a_{n+1}\in{\rm suc}^n_T(a_n)$, $a_n\in\dot{G}_\nu\rest l(a_n)$ and $q^{(a_n, n)}\in\dot{G}_\nu$.
\item
If $\beta=\sup\{l(a_n)\st n<\omega\}$ then $q^{(a_n, n)}\rest\beta\in \dot{G}_\nu\rest\beta$ and $q^{(a_n,n)}\rest[\beta,\nu)\equiv 1_\nu\rest[\beta,\nu)$.
\end{enumerate}
\label{fusion}
\end{proposition}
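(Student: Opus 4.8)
The plan is to force with $\P_\nu$ below $p$, extract in the extension the branch through $T$ that the generic determines, and then use the fusion-structure hypotheses to see that the relevant mixtures lie in the generic; the forcing theorem then turns everything into a statement forced by $p$. So fix $G=G_\nu$ that is $\P_\nu$-generic over $\V$ with $p\in G$ and work in $\V[G]$. The branch $\langle a_n\mid n<\omega\rangle$ is built by recursion: let $\{a_0\}=T_0$, so that clause~\ref{item:Root} of Fact~\ref{fact:CharacterizationOfMixtures} gives $a_0\equiv p\hook l(a_0)$ and hence $a_0\in G_{l(a_0)}$. Given $a_n\in T_n$ with $a_n\in G_{l(a_n)}$, the nested-antichain clause of Definition~\ref{definition:NestedAC-Mixture-beta-Nice-hooks} makes $\{b\hook l(a_n)\mid b\in\suc^n_T(a_n)\}$ a maximal antichain below $a_n$ in $\P_{l(a_n)}$, so $G_{l(a_n)}$ meets it in a unique $b\hook l(a_n)$; put $a_{n+1}=b\in\suc^n_T(a_n)$. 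That $a_{n+1}\in G_{l(a_{n+1})}$ follows from clause~\ref{item:HigherUp} of Fact~\ref{fact:CharacterizationOfMixtures} (with $\beta=\nu$), which gives $a_{n+1}\equiv a_{n+1}\hook l(a_n)\verl p\hook[l(a_n),l(a_{n+1}))$, together with $a_{n+1}\hook l(a_n)\in G_{l(a_n)}$, the fact that $p\hook[l(a_n),l(a_{n+1}))$ lies in the quotient generic $G_{l(a_{n+1})}\hook[l(a_n),l(a_{n+1}))$, and the criterion for membership in a generic from clause~\ref{item:CriterionForBeingInGeneric} of Fact~\ref{fact:FactorsAndQuotients}. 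This establishes the branch part of clause~(1) and makes $\beta=\sup_n l(a_n)\le\nu$ well defined, the assertions about $[\beta,\nu)$ being vacuous if $\beta=\nu$.

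For the rest, write $q_n=q^{(a_n,n)}$. The third clause in the definition of a fusion structure gives $a_n\le q_n\hook l(a_n)$, so upward closure of $G_{l(a_n)}$ yields $q_n\hook l(a_n)\in G_{l(a_n)}$, i.e.\ $q_n\in\dot{G}_\nu\hook l(a_n)$, the middle assertion of clause~(1). For $q_n\in\dot{G}_\nu$ and for clause~(2), the key point is that, along the generic branch $\langle a_m\rangle$, the fusion $p$ ``tracks'' $q_n$ on the interval $[l(a_n),\beta)$ and is trivial on $[\beta,\nu)$, while $q_n$ --- being a mixture of the inner nested antichain $T^{(a_n,n)}$, whose root has length $l(a_n)$ --- is trivial on $[\beta,\nu)$ too. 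For the latter, the fourth clause in the definition of a fusion structure gives $T^{(a_{m+1},m+1)}\hooks T^{(a_m,m)}$ for $m\ge n$, and since the root of $T^{(a_m,m)}$ has length $l(a_m)$, with $\sup_{m}l(a_m)=\beta$, the nested antichains $T^{(a_m,m)}$ ($m\ge n$) trace out in $\V[G]$ a branch of $T^{(a_n,n)}$ that is contained in $\dot{G}$ and has lengths cofinal in $\beta$; here one invokes the transfer of niceness and branch data across $\hooks$ packaged in Lemmas~\ref{2.7} and~\ref{2.11}. Feeding this branch into clause~\ref{item:Chain} of Fact~\ref{fact:CharacterizationOfMixtures} for $q_n$ and $T^{(a_n,n)}$ at $i=\beta$, with a $q\in G_\beta$ below $p\hook\beta$ and $q_n\hook\beta$, produces $q\verl 1_\nu\hook[\beta,\nu)\equiv q\verl q_n\hook[\beta,\nu)$, whence $q_n\hook[\beta,\nu)\equiv 1_\nu\hook[\beta,\nu)$, which is half of clause~(2). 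Comparing $p$ with $q_n$ on $[l(a_n),\beta)$ via clauses~\ref{item:General} and~\ref{item:HigherUp} of Fact~\ref{fact:CharacterizationOfMixtures} gives $q_n\hook[l(a_n),\beta)\in G\hook[l(a_n),\beta)$; combining this with $q_n\hook l(a_n)\in G_{l(a_n)}$, $q_n\hook[\beta,\nu)\equiv 1_\nu\hook[\beta,\nu)$ and Fact~\ref{fact:FactorsAndQuotients} yields $q_n\in G_\nu$ and $q_n\hook\beta\in G_\nu\hook\beta$, completing clauses~(1) and~(2).

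The step I expect to be the main obstacle is the branch-coherence assertion in the last paragraph: that the generic branch through the outer nested antichain $T$ and the generic branches through the inner nested antichains $T^{(a_n,n)}$ are aligned --- so that a single $q\in G$ simultaneously witnesses the hypothesis of clause~\ref{item:Chain} of Fact~\ref{fact:CharacterizationOfMixtures} for $p$ (relative to $T$) and for each $q_n$ (relative to $T^{(a_n,n)}$) --- and that $\sup_m l(a_m)$ is exactly the place where all these branches terminate. This is precisely where the fusion-structure axioms ($a\le q^{(a,n)}\hook l(a)$, the root of $T^{(a,n)}$ having length $l(a)$, and $T^{(b,n+1)}\hooks T^{(a,n)}$ for $b\in\suc^n_T(a)$) enter essentially, and it is the content that Miyamoto isolates in Lemmas~\ref{2.7} and~\ref{2.11}; granting it, the remaining genericity manipulations above are routine.
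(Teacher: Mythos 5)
The paper offers no proof of this proposition at all --- it is quoted directly from Miyamoto's article --- so your argument has to stand on its own. Your skeleton is the right one: pass to $\V[G]$ with $p\in G$, build the branch $\seq{a_n}{n<\omega}$ through $T$ by meeting the maximal antichains $\{b\hook l(a_n)\st b\in\suc^n_T(a_n)\}$ in $G_{l(a_n)}$, use clauses \ref{item:Root} and \ref{item:HigherUp} of Fact \ref{fact:CharacterizationOfMixtures} together with Fact \ref{fact:FactorsAndQuotients}.\ref{item:CriterionForBeingInGeneric} to keep everything in the generic, and dispose of the tail $[\beta,\nu)$ via clause \ref{item:Chain}. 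One intermediate step is misdescribed but easily repaired: $q_n\hook\beta\in G_\beta$ (writing $q_n=p^{(a_n,n)}$) does not come from ``comparing $p$ with $q_n$'' via clauses \ref{item:General} and \ref{item:HigherUp} of Fact \ref{fact:CharacterizationOfMixtures}, which compare elements of a nested antichain with its mixture and say nothing about $q_n$ versus $p$. It comes straight from the fusion-structure axioms: $q^{(a_m,m)}\le q_n$ for $m\ge n$ by clause (4) iterated along the branch, and $a_m\le q^{(a_m,m)}\hook l(a_m)$ by clause (3), so $q_n\hook l(a_m)\in G_{l(a_m)}$ for every $m$, and Proposition \ref{generic} then gives $q_n\hook\beta\in G_\beta$.

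The genuine gap is exactly the step you flag yourself: exhibiting, in $\V[G_\beta]$, a branch $\seq{b_k}{k<\omega}$ through $T^{(a_n,n)}$ with $l(b_k)\le\beta$ and $b_k\in G\hook l(b_k)$, which is the hypothesis of clause \ref{item:Chain} at $i=\beta$. Lemmas \ref{2.7} and \ref{2.11} will not deliver this: they are ground-model constructions of nested antichains, used when one \emph{builds} a fusion structure, and they say nothing about branches determined by a generic filter. A greedy construction also fails, since nothing bounds the length of a greedily chosen successor by $\beta$. What does work: let $r^{(m)}$ be the root of $T^{(a_m,m)}$; by fusion clause (3) and Fact \ref{fact:CharacterizationOfMixtures}.\ref{item:Root} we have $l(r^{(m)})=l(a_m)$ and $r^{(m)}\equiv q^{(a_m,m)}\hook l(a_m)\ge a_m$, so $r^{(m)}\in G_{l(a_m)}$. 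Composing $T^{(a_{n+k},n+k)}\hooks\cdots\hooks T^{(a_n,n)}$ starting from $r^{(n+k)}$ produces $c_k\in T^{(a_n,n)}_k$ with $l(c_k)\le l(a_{n+k})\le\beta$ and $r^{(n+k)}\hook l(c_k)\le c_k$, hence $c_k\in G\hook l(c_k)$. Tracing $c_k$ back toward the root of $T^{(a_n,n)}$ yields a partial branch of length $k$ lying in the generic with lengths below $\beta$, and the incompatibility clause (5) of Definition \ref{definition:NestedAC-Mixture-beta-Nice-hooks} forces these partial branches to agree (two successors of a common node whose restrictions both lie in the generic must coincide), so their union is the required branch. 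With that inserted, your application of clause \ref{item:Chain} and the final assembly via Fact \ref{fact:FactorsAndQuotients} to get $q_n\in G_\nu$ and clause (2) are correct.
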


\subsection{Nice iterations of $\infty$-subcomplete forcing}
\label{subsec:NiceIterationsOfInftySC}

First we prove that $\infty$-sub\-com\-plete forcing is preserved under nice iterations. We use the following notational convention: if $\langle \P_\alpha \; |$ $\; \alpha \leq \nu\rangle$ is an iteration then for $i \leq j \leq \nu$ the poset $\P_{i, j}$, which is defined in Fact \ref{fact:FactorsAndQuotients}, depends on the $\P_i$-generic chosen, so we will identify it with its $\P_i$ name $\P_j/\dot{G}_i$.

The special case $i=0$ and $j=\nu$ of following theorem implies that if every successor stage of a nice iteration $\seq{\P_\alpha}{\alpha\le\nu}$ is forced to be $\infty$-subcomplete, then $\P_\nu$ is $\infty$-subcomplete.

\begin{thm}
\label{thm:NiceIterationsOfSCforcingAreSC}
Let $\vec{\mathbb P}=\kla{\P_\alpha\st\alpha\leq\nu}$ be a nice iteration so that $\P_0=\{1_0\}$ and for all $i$ with $i+1<\nu$, $\forces_i\P_{i,i+1}$ is $\infty$-subcomplete. Then for all $j\le\nu$ the following statement $\phi(j)$ holds:

{\bf if} $i\le j$, $p\in \P_i$, $\dot{\sigma}\in\V^{\P_i}$, $\theta$ is a sufficiently large cardinal, $\tau$ is an ordinal, $H_\theta\sub N=L_\tau^A\models\ZFC^-$, $\bN$ is a transitive model, $\bs,\bar{\vec{\P}},\bar{i},\bar{j}\in\bN$, $\bar{G}_{\bar{i}},\bar{G}_{\bar{i},\bar{j}}\sub\bN$, and the following assumptions hold:
\begin{enumerate}[label=\textnormal{(A\arabic*)}]
\item
  \label{item:FirstAssumption1}
  $p$ forces with respect to $\P_i$ that
  \[\dot{\sigma}(\check{\bar{\vec{\P}}},\check{\bar{i}},\check{\bar{j}},\check{\btheta},\check{\bG}_{\bar{i}})=\check{\vec{\P}},\check{i},\check{j},\check{\theta},\dot{G}_i.\] 
  \item
  \label{item:StuffMovedRight}
  $\bar{G}_{\bar{i}}$ is $\bar{\P}_{\bar{i}}$-generic over $\bN$ and
  $\bar{G}_{\bar{i},\bar{j}}$ is $\bar{\P}_{\bar{i},\bar{j}}$-generic over $\bN[\bar{G}_{\bar{i}}]$, where $\bar{\P}_{\bar{i},\bar{j}}=\bar{\P}_{\bar{j}}/\bar{G}_{\bar{i}}$.
  \item
  \label{item:Lastassumption}
  $\bN$ is countable, transitive and full, and
  $p$ forces with respect to $\P_i$ that
  \[\dot{\sigma}:\check{\bN}[\check{\bG}_{\bar{i}}]\prec \check{N}[\dot{G}_i].\]
\end{enumerate}
{\bf then} there is a condition $p^*\in \P_j$ such that 
$p^*\rest i=p$ and whenever $G_j\ni p^*$ is $\P_j$-generic, then in $\V[G_j]$, there is a $\sigma'$ such that, letting $\sigma=\dot{\sigma}^{G_i}$, the following hold:
\begin{enumerate}[label=\textnormal{(C\arabic*)}]
  \item
  \label{item:FirstConclusionSC}
  $\sigma'(\bs,\bar{\vec{\P}},\bar{i},\bar{j},\btheta,\bG_{\bar{i}})=\sigma(\bs,\bar{\vec{\P}},\bar{i},\bar{j},\btheta,\bG_{\bar{i}})$.
  \item
  \label{item:Sigma'MovesEverythingCorrectly1}
  $(\sigma')``\bar{G}_{\bar{i},\bar{j}}\sub G_{i,j}$.
  \item
  \label{item:LastConclusionSC}
  $\sigma':\bN[\bG_{\bar{i}}]\prec N[G_i]$.
\end{enumerate}
\label{sciteration}
\end{thm}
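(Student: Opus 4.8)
The plan is to prove $\phi(j)$ by induction on $j\le\nu$. The cases $i=j$ and $j=0$ are immediate: then $\P_{i,j}$ is trivial, so take $p^{*}=p$ and, in any $\P_j$-generic extension, $\sigma'=\dot\sigma^{G_i}$. So assume $i<j$ and that $\phi(j')$ holds for all $j'<j$.

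\emph{Successor case $j=k+1$.} Below $p$, elementarity of $\dot\sigma$ forces $\bar j=\bar k+1$ with $\dot\sigma(\check{\bar k})=\check k$, $\bar k\in\bN$, and by Fact~\ref{fact:FactorsAndQuotients} the generic $\bar G_{\bar i,\bar j}$ splits as $\bar G_{\bar i,\bar k}*\bar h$, where $\bar G_{\bar i,\bar k}$ is $\bar\P_{\bar i,\bar k}$-generic over $\bN[\bar G_{\bar i}]$ and $\bar h$ is $\bar\P_{\bar k,\bar k+1}$-generic over $\bN[\bar G_{\bar k}]$, $\bar G_{\bar k}:=\bar G_{\bar i}*\bar G_{\bar i,\bar k}$. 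First I would invoke $\phi(k)$ (with $\bar k$ in the role of $\bar j$, and $\bs$ enlarged to record $\bar j$), obtaining $p^{\dagger}\in\P_k$ with $p^{\dagger}\rest i=p$ and, by the maximal principle, a $\P_k$-name $\dot\sigma_1$ which $p^{\dagger}$ forces to be an embedding $\bN[\bar G_{\bar i}]\prec N[\dot G_i]$ agreeing with $\dot\sigma^{\dot G_i}$ on the designated parameters and with $(\dot\sigma_1)``\bar G_{\bar i,\bar k}\sub\dot G_{i,k}$. Then, in $\V[G_k]$ for $G_k\ni p^{\dagger}$, I would lift $\sigma_1:=\dot\sigma_1^{G_k}$ to $\sigma_1^{*}:\bN[\bar G_{\bar k}]\prec N[G_k]$, so that $\sigma_1^{*}$ sends $\bar G_{\bar i,\bar k}\mapsto G_{i,k}$, $\bar G_{\bar i}\mapsto G_i$, $\bar\P_{\bar k,\bar k+1}\mapsto\P_{k,k+1}$, $\btheta\mapsto\theta$. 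The heart of this case is then a single application of the definition of $\infty$-subcompleteness (Definition~\ref{def:epsilon-subcompleteness} with the hull clause~\ref{item:HullCondition} dropped) to $\P_{k,k+1}$ in $\V[G_k]$: this is legitimate since $N[G_k]$ is a $\ZFCm$-model of the form $L^{A'}_\tau$ with $H_\theta^{\V[G_k]}\sub N[G_k]$ (using that $\theta$ was taken so large that $\vec{\P}\in H_\theta$), $\bN[\bar G_{\bar k}]$ is countable, transitive and full (fullness passes through the forcing $\bar\P_{\bar i,\bar k}$, which is an element of the full model, and is absolute), and $\bN[\bar G_{\bar k}]$ correctly believes $\bar\P_{\bar k,\bar k+1}=(\sigma_1^{*})^{-1}(\P_{k,k+1})$ to be $\infty$-subcomplete. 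Applied to $\sigma_1^{*}$, the generic $\bar h$, and a parameter recording $\bs,\bar{\vec{\P}},\bar i,\bar j,\btheta,\bar G_{\bar i},\bar G_{\bar i,\bar k}$, it yields $q\in\P_{k,k+1}$ such that, generically below $q$, there is $\rho:\bN[\bar G_{\bar k}]\prec N[G_k]$ agreeing with $\sigma_1^{*}$ on that parameter and with $\rho``\bar h\sub G_{k,k+1}$. I would finish by: using clause (1) of Definition~\ref{def:NiceIterations} to glue a $\P_k$-name for $q$ onto $p^{\dagger}$, getting $p^{*}\in\P_{k+1}$ with $p^{*}\rest i=p$; and setting $\sigma':=\rho\rest\bN[\bar G_{\bar i}]$, which is elementary into $N[G_i]$ because $\rho$ sends $\bar\P_{\bar i},\bar G_{\bar i}$ (which define $\bN[\bar G_{\bar i}]$ inside $\bN[\bar G_{\bar k}]$) to $\P_i,G_i$ (which define $N[G_i]$ inside $N[G_k]$). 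This $\sigma'$ agrees with $\dot\sigma^{G_i}$ on the parameters, and $\sigma'``\bar G_{\bar i,\bar j}\sub G_{i,j}$ since $\rho(\bar G_{\bar i,\bar k})=\sigma_1^{*}(\bar G_{\bar i,\bar k})=G_{i,k}$ and $\rho``\bar h\sub G_{k,k+1}$, while $\bar G_{\bar i,\bar j}$ is generated by $\bar G_{\bar i,\bar k}\cup\bar h$.

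\emph{Limit case $j=\nu$ (with $i<\nu$).} Here I would run Miyamoto's fusion machinery. Put $\bar\nu:=\dot\sigma^{-1}(\check\nu)$, fix a cofinal $\omega$-sequence $\langle\bar\gamma_n\st n<\omega\rangle$ in $\bar\nu$ with $\bar\gamma_0=\bar i$, an enumeration $\langle\bar x_n\st n<\omega\rangle$ of $\bN[\bar G_{\bar i}]$, and an enumeration $\langle\bar g_n\st n<\omega\rangle$ of the countable set $\bar G_{\bar i,\bar\nu}$. I would build, by recursion on $n$, a nested antichain $T$ in $\vec{\P}\rest\nu$ with $T_0=\{p\}$, a fusion structure $\langle q^{(a,n)},T^{(a,n)}\st a\in T_n,\ n<\omega\rangle$ over $T$, and for each $a\in T_n$ a $\P_{l(a)}$-name $\dot\sigma^{(a,n)}$ which $a$ forces to be a partial elementary map into $N[\dot G_{l(a)}]$ whose domain contains $\{\bar x_m,\bar g_m\st m<n\}$, agrees with $\dot\sigma^{\dot G_i}$ on the parameters, sends $\{\bar g_m\st m<n\}$ into $\dot G_{i,l(a)}$, and is ``decided up to $\bar\gamma_n$'' on the $\bN$-side in a manner meeting the hypotheses of $\phi$ at stage $l(a)$. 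The step from level $n$ to $n+1$ at a node $a$ would proceed by invoking the induction hypothesis $\phi(\beta)$ for an appropriate $\beta<\nu$ carrying the committed embedding past $\bar\gamma_{n+1}$ (reducing, via the successor case, to repeated single applications of $\infty$-subcompleteness), then gluing the resulting nested antichains onto $T^{(a,n)}$ using Lemma~\ref{2.11} and drawing the new stages $l(b)$ from a prescribed cofinal subset of $\nu$ via Lemma~\ref{2.7}, so that the fusion-structure requirements $T^{(b,n+1)}\hooks T^{(a,n)}$ and $q^{(b,n+1)}\le q^{(a,n)}$ hold. By niceness (Definition~\ref{def:NiceIterations}) there is then a fusion $p^{*}\in\P_\nu$ of this structure, a mixture of $T$ up to $\nu$, with $p^{*}\rest i=p$ since $T_0=\{p\}$. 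Given a $\P_\nu$-generic $G_\nu\ni p^{*}$, Proposition~\ref{fusion} supplies a branch $\langle a_n\rangle$ through $T$ with $a_n\in\dot G_\nu\rest l(a_n)$ and $q^{(a_n,n)}\in G_\nu$ for all $n$; along it the maps $(\dot\sigma^{(a_n,n)})^{G_{l(a_n)}}$ cohere to a single map whose domain is the union of their domains, which contains every $\bar x_n$ and hence equals $\bN[\bar G_{\bar i}]$, yielding a total elementary $\sigma':\bN[\bar G_{\bar i}]\prec N[G_i]$ that agrees with $\sigma$ on the parameters; and since $\bar g_m$ is sent by $(\dot\sigma^{(a_{m+1},m+1)})^{G_{l(a_{m+1})}}$ into $G_{i,l(a_{m+1})}\sub G_{i,\nu}$, we get $\sigma'``\bar G_{\bar i,\bar\nu}\sub G_{i,\nu}$. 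This gives $\phi(\nu)$, and the special case $i=0$, $j=\nu$, $\bar G_{\bar 0}=\{1_{\bar 0}\}$ is the advertised consequence that a nice iteration whose successor factors are forced to be $\infty$-subcomplete is itself $\infty$-subcomplete.

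I expect the limit case to be the main obstacle: the delicate point is to run the recursion so that the partial embeddings attached to the nodes of $T$ genuinely cohere along every branch and exhaust $\bN[\bar G_{\bar i}]$, while simultaneously the conditions and auxiliary nested antichains assemble into a bona fide fusion structure (in particular the $\hooks$- and $\le$-relations), and so that the $\bN$-side bookkeeping sequence $\langle\bar\gamma_n\rangle$ stays synchronized with the $\V$-side stages $l(a)$ attained by $T$. A recurring technical nuisance in both cases is verifying that the intermediate models $\bN[\bar G]$ stay full and that the models $N[G]$ remain legitimate ``big models'' (of $L^{A'}_\tau$-form, containing the relevant $H_\theta$), together with the ground-model-definability bookkeeping needed to restrict $\rho$ and to identify the targets of the lifts; these are standard in this setting but must be tracked with care.
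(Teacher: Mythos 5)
Your proposal follows essentially the same route as the paper: induction on $j$, with the successor case handled by $\phi(k)$ followed by a single application of $\infty$-subcompleteness to $\P_{k,k+1}$ over $\V[G_k]$ and clause (1) of the definition of niceness, and the limit case handled by building a fusion structure with coherent (names for) embeddings $\dot\sigma^{(a,n)}$, using Lemmas \ref{2.7} and \ref{2.11} and Proposition \ref{fusion}, with the predensity of the set of admissible successor nodes established by invoking $\phi(\beta)$ for $\beta<\nu$. The only substantive work your sketch leaves implicit — the precise formulation of the set $D$ of candidate successors and the verification that $D\rest l(a)$ is predense below $a$ — is exactly where the paper spends its effort, and your outline points to the correct ingredients for it.
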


Let us stress that our proof is similar to that of \cite[Lemma 4.3]{Miyamoto:IteratingSemiproperPreorders}, adapting to the case of $\infty$-subcomplete forcings in place of semiproper forcings.

\begin{proof}
The proof is by induction on $j$. So let us assume that $\phi(j')$ holds for every $j'<j$.
Fix some $i\le j$.
Since nothing is to be shown when $i=j$, let $i<j$. In particular, the case $j=0$ is trivial.

Let us fix $p\in\P_i$, $\dot{\sigma}\in\V^{\P_i}$, 
$\theta$, $\tau$, $A$, $N$, $\bN$, $\bs$, $\bar{\vec{\P}}$, $\bar{i}$, $\bar{j}$, $\bar{G}_{\bar{i}}$, $\bar{G}_{\bar{i},\bar{j}}$ so that assumptions \ref{item:FirstAssumption1}-\ref{item:Lastassumption} hold.

When it causes no confusion, if $a$ is in the range of some elementary embedding $\sigma_0: \bN \prec N$ then we will denote $\sigma_0^{-1}(a)$ by $\overline{a}$. 

\medskip

\noindent\emph{Case 1:} $j$ is a limit ordinal.

Let $\{t_n \; | \; n < \omega \}$ enumerate the elements of $\bN$, with $t_0=\emptyset$. Throughout this proof we will identify the $t_n$'s with their check names when it causes no confusion. Also let $\{\overline{p}_n \; | \; n <\omega \}$ enumerate the elements of $\bar{G}_{\bar{i}}*\bar{G}_{\bar{i},\bar{j}}$, with $\bar{p}_0=1_{\bar{\P}_{\bar{j}}}$. It follows then that $p$ forces that $\dot{\sigma}(\check{\overline{p}}_0)=\check{1}_j$, so we write $p_0=1_j$.

Note that since in some forcing extension, there is an elementary embedding from $\bN$ to $N = L_\tau^A$, it follows that $\bN$ is of the form $L_{\bar{\tau}}^{\bar{A}}$, and hence, $\bN$ has a canonical definable well-order, which we denote by $<_\bN$.
Since $\bar{p}_0=1_{\bar{\P}_{\bar{j}}}$, there is a trivial nested antichain $\bar{W}\in\bN$ in $\vec{\bar{\P}}\rest\bar{j}$ so that $\bar{p_0}$ is a mixture of $\bar{W}$ up to $\bar{j}$ and $l(\Root(\bar{W}))=\bar{i}$.
Letting $\bar{W}$ be the $<_\bN$-least one, we know that $p$ forces that $\dot{\sigma}(\check{\bar{W}})$ is the $L_\tau^A$-least nested antichain $W$ in $\vec{\P}\rest j$ such that $p_0$ is a mixture of $W$ up to $j$ and such that the length of the root of $W$ is $i$, since we know that $p$ forces that $\bar{p}_0,\bar{i},\bar{j}$ are mapped to $p_0,i,j$ by $\dot{\sigma}$, respectively.

We will define a nested antichain $\langle T, \langle T_n \; | \; n < \omega \rangle, \langle \suc^n_T \; | \; n < \omega\rangle \rangle$ in $\vec{\P}\rest j$, a fusion structure $\seq{\kla{q^{(a, n)}, T^{(a, n)}}}{n < \omega, a\in T_n}$ in $\seq{\P_\alpha}{\alpha \leq j}$ and a sequence $\langle \dot{\sigma}^{(a, n)} \; | \; n < \omega , a \in T_n\rangle$ so that the following conditions hold.

\begin{enumerate}[label=(\arabic*)]
\item
\label{claim:Start1}
$T_0 = \{p\}$, $q^{(p,0)}=p_0$, $T^{(p,0)}=W$ and $\dot{\sigma}^{(p,0)} = \dot{\sigma}$.
\end{enumerate}
Further, for any $n<\omega$ and $a\in T_n$:
\begin{enumerate}[label=(\arabic*)]
\setcounter{enumi}{1}
\item 
\label{item:middlepart-beginning1}
$q^{(a, n)}\in \P_j$ and $\dot{\sigma}^{(a, n)}$ is a $\P_{l(a)}$-name,
\item 
\label{item:sigma(a,n)1}
$a$ forces the following statements with respect to $\P_{l(a)}$:
\begin{enumerate}[label=(\alph*)]
\item
\label{subitem:iselementary1}
$\dot{\sigma}^{(a, n)} : \bN[\bG_{\bar{i}}]\prec N[\dot{G}_i]$,
\item
\label{subitem:moveseverythingcorrectly1}
$\dot{\sigma}^{(a, n)}(\overline{\theta}, \bar{\vec{\P}}, \overline{s}, \overline{\nu}, \bar{i},\bar{j},\bar{G}_{\bar{i}}) = \theta, \vec{\P}, \tau,\dot{\sigma}(\bs), \nu, i, j, \dot{G}_i$,
\item
\label{subitem:hasx(a,n)initsrange1}
$q^{(a, n)} \leq_j \dot{\sigma}^{(a, n)}(\overline{p}_n)$, and $q^{(a,n)}\in\ran(\dot{\sigma}^{(a,n)})$.
\end{enumerate}
\item 
\label{item:middlepart-end1}
for some $\bar{q}^{(a,n)}\in\bar{\P}_{\bar{j}}$, $\bar{T}^{(a,n)}\in\bN$ and $\overline{l(a)}\in\bN$, we have that $\bar{q}^{(a, n)} \in \bG_{\bar{i}}*\bar{G}_{\bar{i},\bar{j}}$, $\bar{q}^{(a, n)} \leq \bar{p}_n$ and \\
$a \forces \dot{\sigma}^{(a, n)} (\kla{\bar{q}^{(a, n)}, \bar{T}^{(a, n)}, \overline{l(a)}}) =\kla{q^{(a, n)}, T^{(a, n)}, l(a)}$.
\end{enumerate}
\noindent If $b \in {\rm suc}^n_T (a)$, then
\begin{enumerate}[label=(\arabic*)]
\setcounter{enumi}{4}
\item 
\label{item:coherence1}
$b \forces_{l(b)} \dot{\sigma}^{(b, n+1)}(t_m) = \dot{\sigma}^{(a, n)}(t_m)$ and $\dot{\sigma}^{(b, n+1)}(\bar{p}_m) = \dot{\sigma}^{(a, n)}(\bar{p}_m)$ for all $m\le n$.

\item 
\label{claim:LastOfFirstList1}
$b\rest l(a)\forces_{l(a)}q^{(b,n+1)},T^{(b,n+1)}\in\ran(\dot{\sigma}^{(a,n)})$.
\end{enumerate}
First let's see that constructing such objects is sufficient to prove the existence of a condition $p^*$ as in the statement of the theorem. So suppose that we have constructed a nested antichain $\langle T, \langle T_n \; | \; n < \omega \rangle, \langle \suc^n_T \; | \; n < \omega\rangle \rangle$, a fusion structure $\seq{\kla{q^{(a, n)}, T^{(a, n)}}}{n < \omega, a\in T_n}$ in $\seq{\P_\alpha}{\alpha \leq j}$ and a sequence $\langle \dot{\sigma}^{(a, n)} \; | \; n < \omega , a \in T_n\rangle$, so that \ref{claim:Start1} through \ref{claim:LastOfFirstList1} above are satisfied. Let $q^*\in \P_j$ be a fusion of the fusion structure, and let $p^*=p\verl q^*\rest[i,j)$. To see that $p^*$ is as wished, note first that by \ref{claim:Start1}, we have that $q^*\rest i\equiv p$, so $p^*\equiv q^*$ and $p^*\rest i=p$, as required. Now let $G_j$ be $\P_j$-generic over $\V$ with $p^* \in G_j$. We have to show that in $\V[G_j]$, there is a $\sigma'$ so that conclusions \ref{item:FirstConclusionSC}-\ref{item:LastConclusionSC} are satisfied.
Since $p^*\equiv q^*$, we have that $q^*\in G_j$. Work in $\V[G_j]$. By Proposition \ref{fusion} there is a sequence $\langle a_n \; | \; n < \omega\rangle \in \V[G_j]$ so that for all $n < \omega$, $a_n\in T_n$, $a_{n+1} \in {\rm suc}^n_T(a_n)$, $a_n \in G_j \hook l(a_n)$ and $q^{(a_n, n)} \in G_j$. Let $\sigma_n$ be the evaluation of $\dot{\sigma}^{(a_n, n)}$ by $G_j$. Then we define $\sigma': \bN \to N$ to be the map such that $\sigma ' (t_n) = \sigma_n (t_n)$. 
We claim that $\sigma'$ satisfies the conclusions \ref{item:FirstConclusionSC}-\ref{item:LastConclusionSC}.

Condition \ref{item:FirstConclusionSC} says that $\sigma'$ moves the parameters $\bar{s},\bar{\vec{\P}},\bar{i},\bar{j},\btheta$ and $\bG_{\bar{i}}$ the same way $\sigma=\dot{\sigma}^{G_i}$ does. But this is true of every $\sigma_n$, hence also of $\sigma'$.

Condition \ref{item:Sigma'MovesEverythingCorrectly1} says that $(\sigma')``\bG_{\bar{i},\bar{j}}\sub G_{i,j}$. So let $\bp\in\bG_{\bar{i},\bar{j}}$. We have to show that $\sigma'(\bp)\in G_{i,j}$.
Recall $\bG_{\bar{i}}*\bG_{\bar{i},\bar{j}} = \{\overline{p}_n \; | \; n < \omega \}$. Let $n$ be such that $\bp=\bp_n\rest[\bar{i},\bar{j})$. By \ref{item:sigma(a,n)1}\ref{subitem:hasx(a,n)initsrange1}, we have that $q^{(a_n, n)}\leq_j\sigma_n(\overline{p}_n)$, so since $q^{(a_n, n)}\in G_j$, it follows that $\sigma_n(\overline{p}_n)\in G_j$ as well. By \ref{item:coherence1} and the definition of $\sigma'$,  we have that $\sigma_n(\overline{p}_n) = \sigma'(\overline{p}_n)$.
It follows that  $\sigma'(\bp)=\sigma'(\bp_n\rest[\bar{i},\bar{j}))=\sigma'(\bp_n)\rest[i,j)\in G_{i,j}$, as claimed.

Condition \ref{item:LastConclusionSC} says that $\sigma':\bN[\bar{G}_{\bar{i}}\prec N[G_i]$ is elementary.
Since any one formula can only use finitely many parameters, and $\sigma'\rest\{t_0,\ldots,t_n\}=\sigma_n\rest\{t_0,\ldots,t_n\}$, this is true by \ref{item:coherence1} and \ref{item:sigma(a,n)1}\ref{subitem:iselementary1}.

Therefore it remains to show that the construction described above can actually be carried out. This is done by recursion on $n$. It proceeds as follows. At stage $n+1$ of the construction, we
assume that for all $m\le n$ and all $a\in T_m$,
we have defined $T_m$, $T^{(a,m)}$, $q^{(a,m)}$ and $\dot{\sigma}^{(a,m)}$. Also, for $m<n$ and $a\in T_m$, we assume that $\suc_T^m(a)$ has been defined. Our inductive hypothesis is that for all $m\le n$ and all $a\in T_m$, conditions
\ref{item:middlepart-beginning1}-\ref{item:middlepart-end1}
hold, and that for all $m<n$, all $a\in T_m$ and all $b\in\suc_T^m(a)$, conditions \ref{item:coherence1}-\ref{claim:LastOfFirstList1} are satisfied. In order to define $T_{n+1}$, we will specify $\suc_T^n(a)$, for every $a\in T_n$, which implicitly defines $T_{n+1}=\bigcup_{a\in T_n}\suc_T^n(a)$. Simultaneously, we will define, for every such $a$ and every $b\in\suc_T^n(a)$, the objects $T^{(b,n+1)}$, $q^{(b,n+1)}$ and $\dot{\sigma}^{(b,n+1)}$ in such a way that whenever $a\in T_n$ and $b\in\suc_T^n(a)$, \ref{item:coherence1}-\ref{claim:LastOfFirstList1} are satisfied by $a$ and $b$, and \ref{item:middlepart-beginning1}-\ref{item:middlepart-end1} are satisfied by $b$ and $n+1$ (instead of $a$ and $n$). In order to ensure that we ultimately produce a fusion structure, we arrange that $q^{(b,n+1)}$ is a mixture of $T^{(b,n+1)}$ up to $j$, $b\le q^{(b,n+1)}\rest l(b)$, $l(b)=l(\Root(T^{(b,n+1)}))$ and $T^{(b,n+1)}\hooks T^{(a,n)}$, so that Definition \ref{def:FusionStructure}(2)-(4) hold (and we also assume that the corresponding statements are true of the earlier defined $q^{(c,k)}$ and $T^{(c,k)}$).

For stage 0 of the construction, notice that \ref{claim:Start1} gives the base case where $n =0$ and in this case \ref{item:middlepart-beginning1}-\ref{item:middlepart-end1} are satisfied, $p$ forces that $q^{(p, 0)}=p_0=\dot{\sigma}(\overline{p}_0)$ and $p$ forces that $W=T^{(p,0)}$ has a preimage under $\dot{\sigma}$, namely $\bar{W}$. Note that we have $p\le_{l(p)}q^{(p,0)}\rest l(p)=1_i$ and $l(p)=i=l(\Root(T^{(p,0)}))$, as required in a fusion structure.

At stage $n+1$ of the construction, we work under the assumptions described above. Fixing $a\in T_n$, we have to define $\suc_T^n(a)$.
To this end, let $D$ be the set of all conditions $b\in\bigcup_{l(a)\le\xi<j}\P_\xi$ for which there are a nested antichain $S$ in $\vec{\P}\rest j$ and objects $\dot{\sigma}^b$, $u$, $\bar{u}$, $\bar{S}$ and $\overline{l(b)}$ satisfying the following:

\begin{enumerate}[label=(D\arabic*)]
\item 
\label{item:FirstConditionDefiningPredenseSet1}
$b\rest l(a)\le_{l(a)}a$.
\item 
$S \hooks T^{(a, n)}$, $\bS\in\bN$, $S\in N$.
\item 
$u\in \P_j$, $u\le q^{(a,n)}$ and $u$ is a mixture of $S$ up to $j$.
\item
\label{item:EnsuringGettingAFusionStructure}
$b\le_{l(b)}u\rest l(b)$ and $l(b)=l(\Root(S))$.
\item 
$\overline{u} \in \bG_{\bar{i}}*\bG_{\bar{i},\bar{j}}$, and $\overline{u} \leq \overline{p}_{n+1}$.
\item 
\label{item:InitialSegmentOfbForcesStuffIntoRangeOfTheOldEmbedding1}
$b\rest l(a)\forces_{l(a)}\dot{\sigma}^{(a,n)}(\check{\bS},\check{\bar{u}},(\overline{l(b)})\check{})
=\check{S},\check{u},(l(b))\check{}$.
\item 
\label{item:LastConditionDefiningPredenseSet1}
$b$ forces the following statements with respect to $\P_{l(b)}$: \begin{enumerate}
\item $\dot{\sigma}^b (\overline{\theta},\bar{i},\bar{j},\bar{\vec{\P}},\bG_{\bar{i}}, \overline{s},\overline{u},\bS,(\overline{l(b)})\check{}) = \theta,i,j,\vec{\P},\dot{G}_i,\dot{\sigma}(\bs), u,S,(l(b))\check{}$.
\item $\forall m \leq n$ $\dot{\sigma}^b(t_m) = \dot{\sigma}^{(a, n)}(t_m)$ and $\dot{\sigma}^b(\overline{p}_m) = \dot{\sigma}^{(a, n)}(\overline{p}_m)$.
\item $\dot{\sigma}^b: \bN[\bG_{\bar{i}}] \prec N[\dot{G}_i]$.
\end{enumerate}
\end{enumerate}

Note that if $b\in D$ and $b'\le_{l(b)}b$, then $b'\in D$ as well. It follows that $D\rest l(a):=\{b\rest l(a)\st b\in D\}$ is open in $\P_{l(a)}$.
Thus, it suffices to show that $D\rest l(a)$ is predense below $a$ in $\P_{l(a)}$. For if we know this, $D\rest l(a)$ is dense below $a$, and we may choose a maximal antichain $A\sub D\rest l(a)$ (with respect to $\P_{l(a)}$), which then is a maximal antichain in $\P_{l(a)}$ below $a$. Thus, for every $c\in A$, we may pick a condition $b(c)\in D$ such that $b(c)\rest l(a)=c$, and define $\suc_T^n(a)=\{b(c)\st c\in A\}$ (in order to satisfy Definition \ref{definition:NestedAC-Mixture-beta-Nice-hooks}, part $(5)$). Now, for every $b\in\suc_T^n(a)$, let $S$, $\dot{\sigma}^b$, $u$ and $\bar{u}$ witness that $b\in D$, i.e., let them be chosen in such a way that \ref{item:FirstConditionDefiningPredenseSet1}-\ref{item:LastConditionDefiningPredenseSet1} hold. Set $T^{(b,n+1)}=S$, $\dot{\sigma}^{(b, n+1)}=\dot{\sigma}^b$, $q^{(b, n+1)}=u$, $\bar{q}^{(b,n+1)}=\bar{u}$ and $\bar{T}^{(b,n+1)}=\bS$. Then $a$, $b$ satisfy \ref{item:coherence1}-\ref{claim:LastOfFirstList1} at stage $n$, and $b$ satisfies \ref{item:middlepart-beginning1}-\ref{item:middlepart-end1} at stage $n+1$.

To see that $D\rest l(a)$ is predense below $a$, let
$G_{l(a)}$ be $\P_{l(a)}$-generic over $V$ with $a \in G_{l(a)}$. We have to find a $b\in D$ so that $b \hook l(a) \in G_{l(a)}$. Work in $V[G_{l(a)}]$. Let $\sigma_n =(\dot{\sigma}^{(a, n)})^{G_{l(a)}}$. Since \ref{item:sigma(a,n)1} holds at stage $n$, we have that $\sigma_n:\bN[\bG_{\bar{i}}]\prec N[G_i]$ 
and $\sigma_n(\overline{\theta}, \bar{\vec{\P}},\bs,\overline{\nu},\bar{i},\bar{j},\bG_{\bar{i}})
=\theta,\vec{\P},\sigma(\bs),\nu,i,j,G_i$.
%
We also have objects $\bar{q}^{(a,n)},\bT^{(a,n)},l(\overline{a})$ satisfying condition \ref{item:middlepart-end1}, so that $\bar{q}^{(a,n)}\in\bG_{\bar{i}}*\bG_{\bar{i},\bar{j}}$, $\bT^{(a,n)}\in\bN$, $\bar{q}^{(a,n)}\le\bp_n$, $\sigma_n(\bar{q}^{(a,n)},\bT^{(a,n)})=q^{(a,n)},T^{(a,n)}$ and $\sigma_n(\overline{l(a)})=l(a)$.

First we find the requisite $u$ and $\bar{u}$. By elementarity of $\sigma_n$, $\bar{q}^{(a, n)}$ is a mixture of $\bar{T}^{(a, n)}$ up to $\overline{j}$.

Let $\bT_0^{(a,n)}=\{\ba_0\}$. Let's write $\bG_{\bar{j}}=\bG_{\bar{i}}*\bG_{\bar{i},\bar{j}}$, and for $k\le\bar{j}$, let's set $\bG_k=\bG_{\bar{j}}\rest k$.
Since $\bar{T}_0^{(a,n)}$ is a nested antichain in $\bar{\P}\rest(\bar{j}+1)$, $\bar{j}<j+1$ and $\bar{q}^{(a,n)}\in\bar{\P}_{\bar{j}}$,
Fact \ref{fact:CharacterizationOfMixtures}.\ref{item:Root} applies, yielding that $\ba_0\equiv\bar{q}^{(a,n)}\rest l(\ba_0)\in\bG_{l(\ba_0)}$, since $l(\bar{q}^{(a,n)})=\bar{j}>l(\ba_0)$. So $\ba_0\in\bG_{l(\ba_0)}$.
Let $\bar{r}\in\bT_1^{(a,n)}$, that is, $\bar{r}\in\suc_{\bT^{(a,n)}}^0(\ba_0)$, be such that $\bar{r}\rest l(\ba_0)\in\bar{G}_{l(\ba_0)}$. There is such an $\bar{r}$ by Definition \ref{definition:NestedAC-Mixture-beta-Nice-hooks}(5). By Fact \ref{fact:CharacterizationOfMixtures}.\ref{item:HigherUp}, again since $l(\bar{r})<\bar{j}=l(\bar{q}^{(a,n)})$, it follows that $\bar{r}\equiv\bar{r}\rest l(\ba_0)\verl\bar{q}^{(a,n)}\rest[l(\ba_0),l(\bar{r}))$.
Since $\bar{r}\rest l(\ba_0)\in\bG_{l(\ba_0)}$, this implies that $\bar{r}\rest[l(\ba_0),l(\bar{r}))\equiv\bar{q}^{(a,n)}\rest[l(\ba_0),l(\bar{r}))$ (in the partial order $\bar{\P}_{l(\ba_0),l(\bar{r})}=\bar{\P}_{l(\bar{r})}/\bG_{l(\ba_0)}$), and $\bar{q}^{(a,n)}\rest[l(\ba_0),l(\bar{r}))\in\bG\rest[l(\ba_0),l(\bar{r}))$.
So we have that $\bar{r}\rest l(\ba_0)\in\bG_{l(\ba_0)}$ and $\bar{r}\rest[l(\ba_0),l(\bar{r}))\in\bG\rest[l(\ba_0),l(\bar{r}))$. By Fact \ref{fact:FactorsAndQuotients}.\ref{item:CriterionForBeingInGeneric}, this implies that $\bar{r}\in\bG_{l(\bar{r})}$.

It follows that $\overline{r}\verl\overline{q}^{(a, n)}\rest[l(\bar{r}), \bar{j})\in\bG_{\bar{j}}$, again using Fact \ref{fact:FactorsAndQuotients}\ref{item:CriterionForBeingInGeneric}.
Let $\overline{u} \in \bG_{\bar{j}}$ strengthen both $\overline{r}\verl\overline{q}^{(a, n)}\rest[l(\overline{r}),\bar{j})$ and $\overline{p}_{n+1}$.
By Lemma \ref{2.11}, applied in $\bar{N}$, there is a nested antichain $\bar{S} \hooks \bar{T}^{(a, n)}$ such that $\bar{u}$ is a mixture of $\bS$ up to $\bar{j}$ and such that letting $\bS_0 = \{\bd_0\}$, we have that  $l(\bar{r}) \leq l(\bd_0)$ and $\bd_0 \rest l(\bar{r}) \leq \bar{r}$. Let $S,d_0,u=\sigma_n(\bS,\bd_0,\bu)$, and let $w\in G_{l(a)}$ force this. Since $a\in G_{l(a)}$, we may choose $w$ so that $w\le a$.

Note that $S,d_0,u$ are in $N$ (and hence in $\V$), since $\bS,\bd_0,\bu\in\bN$.

We are going to  apply our inductive hypothesis $\phi(l(d_0))$, noting that $l(d_0)<j$, to $i =l(a) \leq l(d_0)$, the filters $\bG_{l(\overline{a})}$, $\bG_{{l(\overline{a})},l(\bd_0)}$, the models $\bN$, $N$, the condition $w$ (in place of $p$), the name $\dot{\sigma}^{(a,n)}$ (in place of $\dot{\sigma}$ and the parameter $\bs'\in\bN$ which we will specify below).
No matter which $\bs'$ we choose, by the inductive hypothesis, there is a condition $w^*\in\P_{l(d_0)}$ with $w^*\rest l(a)=w$ and a name
$\dot{\sigma}'$ such that $w^*$ forces with respect to $\P_{l(d_0)}$:
\begin{enumerate}[label=(\alph*)]
  \item
  $\dot{\sigma}'(\check{\bs}',\check{\bar{\vec{\P}}},\check{\overline{l(a)}},\check{l(\overline{d}_0)},\check{\btheta},\check{\bG}_{\overline{l(a)}})=\dot{\sigma}^{(a,n)}(\check{\bs}',\check{\bar{\vec{\P}}},\check{\overline{l(a)}},\check{l(\overline{d}_0)},\check{\btheta},\check{\bG}_{\overline{l({a})}})$.
  \item
  $(\dot{\sigma}')``\check{\bar{G}}_{\overline{l(a)},l(\bd_0)}\sub \dot{G}_{l(a),l(d_0)}$.
  \item
  $\dot{\sigma}':\check{\bN}[\check{\bG}_{\overline{l(a)}}]\prec \check{N}[\dot{G}_{l(a)}]$.
\end{enumerate}

By choosing $\bs'$ appropriately, we may insure that it is forced that $\dot{\sigma}'$ moves any finite number of members of $\bN$ the same way $\dot{\sigma}^{(a,n)}$ does.
Thus, we may insist that $w^*$ forces that $\dot{\sigma}'(\bu,\bd_0,\bS)=\dot{\sigma}^{(a,n)}(\bu,\bd_0,\bS)$. Recall that $w$ forced that $\dot{\sigma}^{(a,n)}(\bu,\bd_0,\bS)=u,d_0,S$.
Hence, since $w^*\rest l(a)=w$, we get that $w^*$ forces that $\dot{\sigma}'(\bu,\bd_0,\bS)=u,d_0,S$ as well.

In addition, we may insist that $\sigma'$ moves the parameters $\bar{i}$, $\bar{j}$, $\bar{\vec{\P}}$, $\btheta$, $\bs$, $\bp_0$, $\ldots$, $\bp_n$, $t_0$, $\ldots$, $t_n$ the same way $\dot{\sigma}^{(a,n)}$ does. Note that already $a$ forced with respect to $\P_{l(a)}$ that $\bar{i},\bar{j},\bar{\vec{\P}},\btheta$ are mapped to $i,j,\vec{\P},\theta$ by $\dot{\sigma}^{(a,n)}$.



Now, setting $b=w^*$, $\dot{\sigma}^b=\dot{\sigma}'$, conditions \ref{item:FirstConditionDefiningPredenseSet1}-\ref{item:LastConditionDefiningPredenseSet1} are satisfied, that is, $b\in D$. Most of these are obvious; let us just remark that $b$ forces that $\dot{\sigma}^b(\bG_{\bar{i}})=\dot{G}_i$ because it forces that $\dot{\sigma}^b(\bar{i})=i$ and $\dot{\sigma}^b(\bG_{l(\overline{a})})=\dot{G}_{l(a)}$.
Condition \ref{item:EnsuringGettingAFusionStructure} holds because $l(b)=l(w^*)=l(d_0)$ and $d_0=\Root(S)$. To see that $b=w^*\le u\rest l(d_0)$, note that $w^*$ forces with respect to $\P_{l(d_0)}$
that $\dot{\sigma}'``\bar{G}_{l(\bar{d}_0)}\sub\dot{G}_{l(b)}$ and
that $\dot{\sigma}'(\bar{u}\rest l(\bar{d}_0))=u\rest l(d_0)$. Since $\bar{u}\rest l(\bar{d}_0)\in\bar{G}_{l(\bar{d}_0)}$, it follows that $b$ forces that $u\rest l(d_0)\in\dot{G}_{l(d_0)}$. This means that $b\le_{l(b)}u\rest l(d_0)$, as claimed. Condition \ref{item:InitialSegmentOfbForcesStuffIntoRangeOfTheOldEmbedding1} holds because $b\rest l(a)=w$.
For the same reason, we have that $b\rest l(a)\in G_{l(a)}$, completing the proof that $D\rest l(a)$ is predense below $a$. This concludes the treatment of case 1.

\medskip

\noindent\emph{Case 2:} $j$ is a successor ordinal.

Let $j=k+1$. Since we assumed $i<j$, it follows that $i\le k$. Inductively, we know that $\phi(k)$ holds. Note that $\bar{j}$ is of the form $\bk+1$, where $p$ forces with respect to $\P_i$ that $\dot{\sigma}(\bk)=k$, and if we let $\bG_{\bk}=\bG_\bj\rest\bk$, then the assumptions \ref{item:FirstAssumption1}-\ref{item:Lastassumption} are satisfied by
$p\in \P_i$, $\dot{\sigma}\in\V^{\P_i}$, 
$\theta$, $\tau$, $A$, $N$, $\bN$, $\bs,\bar{\vec{\P}},\bar{i},\bar{k}\in\bN$, $\bar{G}_{\bar{i}},\bar{G}_{\bar{i},\bar{k}}\sub\bN$ and $k$. By $\phi(k)$, we obtain a condition $p^{**}\in\P_k$ with $p^{**}\rest i=p$ and a $\P_k$-name $\dot{\bsigma}$ such that $p^{**}$ forces with respect to $\P_k$:
\begin{enumerate}[label=(\alph*1)]
  \item
  $\dot{\bsigma}(\check{\bs},\check{\bar{\vec{\P}}},\check{\bar{i}},\check{\bk},\check{\bar{j}},\check{\btheta},\check{\bG}_{\bar{i}})=\dot{\sigma}(\check{\bs},\check{\bar{\vec{\P}}},\check{\bar{i}},\check{\bk},\check{\bar{j}},\check{\btheta},\check{\bG}_{\bar{i}})$.
  \item
  $\dot{\bsigma}``\check{\bar{G}}_{\bar{i},\bar{k}}\sub\dot{G}_{i,k}$.
  \item
  $\dot{\bsigma}:\check{\bN}[\check{\bG}_{\bar{i}}]\prec \check{N}[\dot{G}_i]$.
\end{enumerate}
It follows then that $p^{**}$ forces that $\dot{\bsigma}``\bG_\bk\sub\dot{G}_k$, and hence that $\dot{\bsigma}$ lifts to an elementary embedding from $\bN[\bG_\bk]\prec N[\dot{G}_k]$ that maps $\bG_\bk$ to $\dot{G}_k$. Let $\dot{\tsigma}$ be a $\P_k$-name such that $p^{**}$ forces that $\dot{\tsigma}$ is that lifted embedding.

Temporarily fix a $\P_k$-generic filter $H$ that contains $p^{**}$. In $\V[H]$, the forcing $\P_{k,k+1}=\P_{k,j}=\P_j/H$ is $\infty$-subcomplete. Note that the structure $N[H]=L_\tau^A[H]$ can be thought of as having the form $L_\tau^{A\oplus H}$, and similarly, $\bar{N}[\bar{G}_{\bar{k}}]$ can be thought of as having the form $L_{\bar{\tau}}^{\bar{A}\oplus\bar{G}_{\bar{k}}}$, where $x\oplus y=x\times\{0\}\cup y\times\{1\}$, say. Then, $\bar{N}[\bar{G}_{\bar{k}}]$ is full and has the right form, and letting $\tsigma=\dot{\tsigma}^H$, we have that $\tsigma:\bN[\bG_\bk]\prec N[H]$, and thus, there is a condition $q$ in $\P_{k,j}$ such that $q$ forces the existence of an elementary embedding $\sigma'$ with
\begin{enumerate}[label=(\alph*2)]
  \item
  $\sigma'(\check{\bs},\check{\bar{\vec{\P}}},\check{\bar{i}},\check{\bk},\check{\bar{j}},\check{\btheta},\check{\bG}_{\bar{i}},\check{\bG}_{\bk})=\tsigma(\check{\bs},\check{\bar{\vec{\P}}},\check{\bar{i}},\check{\bk},\check{\bar{j}},\check{\btheta},\check{\bG}_{\bar{i}},\check{\bG}_{\bk})$.
  \item
  $(\sigma')``\check{\bar{G}}_{\bar{k},\bar{j}}\sub\dot{G}_{k,j}$.
  \item
  $\sigma':\check{\bN}[\check{\bG}_{\bar{k}}]\prec \check{N}[\check{H}]$.
\end{enumerate}
Since this holds in $\V[H]$ whenever $p^{**}\in H$, there is a $\P_k$-name $\tau$ which is essentially a name for $q$ above - more precisely, $\tau$ is such that $p^{**}$ forces that $\tau\in \P_j$, $\tau\rest k\in\dot{G}_k$ and $\tau\rest[k,j)$ has the properties of $q$, as listed above.
Since the iteration is nice, there is a condition $p^*\in \P_j$ such that $p^*\rest k=p^{**}$ and $p^*$ forces that $\tau\rest[k,j)\equiv p^*\rest[k,j)$; see Definition \ref{def:NiceIterations}, part (1). We claim that $p^*$ is as wished.

First, note that $p^*\rest i=(p^*\rest k)\rest i=p^{**}\rest i=p$. Now, let $G_j$ be a $\P_j$-generic filter with $p^*\in G_j$. We have to show that in $\V[G_j]$, there is a $\sigma'$ such that, letting $\sigma=\dot{\sigma}^{G_i}$, the following hold:
\begin{enumerate}[label=(\alph*)]
  \item
  $\sigma'(\bs,\bar{\vec{\P}},\bar{i},\bar{j},\btheta,\bG_{\bar{i}})=\sigma(\bs,\bar{\vec{\P}},\bar{i},\bar{j},\btheta,\bG_{\bar{i}})$.
  \item
  $(\sigma')``\bar{G}_{\bar{i},\bar{j}}\sub G_{i,j}$.
  \item
  $\sigma':\bN[\bG_{\bar{i}}]\prec N[G_i]$.
\end{enumerate}
But this follows, because $\V[G_j]=\V[G_k][G_{k,j}]$, $p^*\rest[k,j)\in G_{k,j}$ and $p^{**}\in G_k$. So writing $H$ for $G_k$ puts us in the situation described above; $p^*\rest[k,j)\equiv\dot{q}^{G_j}$, where $\dot{q}$ is a name for the condition $q$ mentioned above. Thus, there is a $\sigma'$ in $\V[G_j]$ such that the conditions (a2)-(c2) listed above hold in $\V[G_j]$. Remembering that $\tsigma$ lifts $\bsigma$ and $\bsigma$ moves the required parameters as prescribed (by (a1)-(c1)), it follows that (a)-(c) are satisfied.
\end{proof}

\subsection{Nice iterations of $\infty$-subproper forcing}

Next we prove a similar theorem for $\infty$-subproper forcing. Even though it was originally shown by Miyamoto, we present here a complete proof we found before learning about his result. We will use this proof as a template later.

\begin{thm}[{Miyamoto \cite{Miyamoto:IteratingPreproperness}}]
\label{thm:NiceIterationsOfSPforcingAreSP}
Let $\vec{\P} = \langle \P_\alpha \; | \; \alpha \leq \nu\rangle$ be a nice iteration so that $\P_0 = \{1_0\}$ and for all $i<\nu$, $\forces_i \P_{i, i+1}$ is $\infty$-subproper. Then for all $j\le\nu$ the following statement $\phi(j)$ holds:

{\bf if} $i\le j$, $p\in \P_i$, $\dot{\sigma}\in\V^{\P_i}$, $q\in\P_j$ is such that $p\le_i q\rest i$,
$\theta$ is a sufficiently large cardinal, $\tau$ is an ordinal, $H_\theta\sub N=L_\tau^A\models\ZFC^-$, $s$ is an element of $N$,
$\bN$ is a countable, full, transitive model so that $\bs,\bar{\vec{\P}},\bar{i},\bar{j},\bar{q}\in\bN$ and $p$ forces with respect to $\P_i$ that the following assumptions hold:
\begin{enumerate}[label=\textnormal{(A\arabic*)}]
  \item
  \label{item:LastAssumptionSP}
  $\dot{\sigma}:\check{\bN}\prec \check{N}$.
  \item
  \label{item:FirstAssumptionSP}
  $\dot{\sigma}(\check{\bs}, \check{\bar{\vec{\P}}},\check{\bar{i}},\check{\bar{j}},\check{\btheta}, \check{\bar{q}})=\check{s}, \check{\vec{\P}},\check{i},\check{j},\check{q}$.
  \item
  \label{item:Generics1}
   $\dot{\sigma}^{-1}``\dot{G_i}$ is $\bar{\P}_{\bar{i}}$-generic over $\check{\bN}$.
\end{enumerate}
{\bf then} there is a condition $p^*\in\P_j$ with $p^*\le_j q$ and
$p^*\rest i=p$ such that whenever $G_j\ni p^*$ is $\P_j$-generic, we have that in $\V[G_j]$, there is a $\sigma'$ such that, letting $\sigma=\dot{\sigma}^{G_i}$
, the following conclusions hold:
\begin{enumerate}[label=\textnormal{(C\arabic*)}]
  \item
  \label{item:LastConclusionSP}
  $\sigma':\bN\prec N$.
  \item
  \label{item:FirstConclusionSP}
  $\sigma'(\bs,\bar{\vec{\P}},\bar{i},\bar{j},\btheta, \bar{q})=s,\vec{\P},i,j,\theta,q$.
  \item
  \label{item:Sigma'MovesEverythingCorrectly2}
  $(\sigma')^{-1}``G_i=\sigma^{-1}``G_i$ and $(\sigma')^{-1}``G_j$ is $\bar{\P}_{\bar{j}}$-generic over $\bN$.
\end{enumerate}
\end{thm}

%

\begin{proof}
Like in the previous proof, we induct on $j$. So let us assume that $\phi(j')$ holds for every $j'<j$.
Fix some $i\le j$.
Since nothing is to be shown when $i=j$, let $i<j$. In particular, the case $j=0$ is trivial.

Let us fix $p\in \P_i$, $\dot{\sigma}\in\V^{\P_i}$, $q\in\P_j$ with $p\le_i q\rest i$. Also fix $\theta$, $\tau$, $A$, $N$, $\bN$, $\bs,\bar{\vec{\P}},\bar{i},\bar{j}\in\bN$ so that assumptions \ref{item:LastAssumptionSP}-\ref{item:Generics1} hold.

When it causes no confusion, we will again employ our notational ``bar'' convention.

\medskip

\noindent\emph{Case 1:} $j$ is a limit ordinal.

Let $\seq{t_n}{n < \omega}$ enumerate the elements of $\bN$, starting with $t_0=\leer$. Also let $\seq{\bar{D}_n}{n<\omega}$ enumerate the dense open subsets of $\bar{\P}_{\bar{j}}$ in $\bN$ in such a way that $\bar{q}\in\bar{D}_0$.

As before, there is an $\bN$-definable well ordering of the universe of $\bN$, $<_\bN$.
As noted in \cite{Miyamoto:IteratingSemiproperPreorders}, by Lemma \ref{2.7}, in $\bN$, $\bar{q}$ is a mixture up to $\bar{j}$ of some nested antichain in $\bar{\vec{\P}}\rest{\bar{j}}$ whose root has length $\bar{i}$. Letting $\bar{W}$ be the $<_\bN$-least one, we know that $p$ forces that $\dot{\sigma}(\check{\bar{W}})$ is the $L_\tau^A$-least nested antichain $W$ in $\vec{\P}\rest j$ such that $q$ is a mixture of $W$ up to $j$, since we know that $p$ forces that $\bar{q},\bar{i},\bar{j}$ are mapped to $q,i,j$ by $\dot{\sigma}$, respectively.

We will define a nested antichain $\langle T, \langle T_n \; | \; n < \omega \rangle, \langle \suc^n_T \; | \; n < \omega\rangle \rangle$, a fusion structure $\seq{\kla{q^{(a, n)}, T^{(a, n)}}}{n < \omega, a\in T_n}$ in $\seq{\P_\alpha}{\alpha \leq j}$ and a sequence $\langle \dot{\sigma}^{(a, n)} \; | \; n < \omega, a \in T_n\rangle$ so that the following conditions hold.

\begin{enumerate}[label=(\arabic*)]
\item
\label{claim:Start2}
$T_0 = \{p\}$, $q^{(p,0)}=q$, $T^{(p,0)}=W$ and $\dot{\sigma}^{(p,0)} = \dot{\sigma}$.
\end{enumerate}
Further, for any $n<\omega$ and $a\in T_n$:
\begin{enumerate}[label=(\arabic*)]
\setcounter{enumi}{1}
\item 
\label{item:middlepart-beginning2}
$q^{(a, n)} \leq q$,
$q^{(a, n)}\in \P_j$ and $\dot{\sigma}^{(a, n)}$ is a $\P_{l(a)}$-name.
\item 
\label{item:sigma(a,n)2}
$a$ forces the following statements with respect to $\P_{l(a)}$:
\begin{enumerate}[label=(\alph*)]
\item
\label{subitem:iselementary2}
$\dot{\sigma}^{(a, n)} : \check{\bN}\prec \check{N}$.
\item
\label{subitem:moveseverythingcorrectly2}
$\dot{\sigma}^{(a, n)}(\check{\overline{\theta}}, \check{\bar{\vec{\P}}}, \overline{s}, \check{\bar{i}},\check{\bar{j}}, \check{\bar{q}}) = \check{\theta}, \check{\vec{\P}}, \check{s}, \check{i}, \check{j}, \check{q}$.
\item
\label{subitem:hasx(a,n)initsrange2}
 $q^{(a,n)}\in\ran(\dot{\sigma}^{(a,n)})$ and its preimage is in $\overline{D}_n$.
\end{enumerate}
\item 
\label{item:middlepart-end2}
For some $\bar{q}^{(a,n)}\in\bar{D}_n$, $\bar{T}^{(a,n)}\in\bN$ and ordinal $\overline{l(a)}$, we have that 
\[a \forces \dot{\sigma}^{(a, n)} (\kla{\bar{q}^{(a, n)}, \bar{T}^{(a, n)},\overline{l(a)}}) =\kla{q^{(a, n)}, T^{(a, n)}, l(a)}\]
and $(\dot{\sigma}^{(a,n)})^{-1}``\dot{G}_{l(a)}$ is $\bar{\P}_{\overline{l(a)}}$-generic over $\bN$.
\end{enumerate}
\noindent For $b \in {\rm suc}^n_T (a)$:
\begin{enumerate}[label=(\arabic*)]
\setcounter{enumi}{4}
\item 
\label{item:coherence2}
$b$ forces with respect to $\P_{l(b)}$ that \[(\dot{\sigma}^{(b,n+1)})^{-1}``\dot{G}_{l(a)}=(\dot{\sigma}^{(a,n)})^{-1}``\dot{G}_{l(a)}.\]
Further, if $m\le n$, then
\[b \forces_{l(b)} \dot{\sigma}^{(b, n+1)}(\check{t}_m) = \dot{\sigma}^{(a, n)}(\check{t}_m).\] 
Also, let $<_T$ be the transitive closure of the order $<'_T$ on $\{\kla{c,k}\st k<\omega\land c\in T_k\}$ defined by $\kla{c,k}<'_T\kla{d,l}$ iff $l=k+1$ and $d\in\suc_T^k(c)$. Then, if $\kla{c,m}\le_T\kla{a,n}$, we have that \[b\forces\dot{\sigma}^{(b,n+1)}(\bar{q}^{(c,m)})=\dot{\sigma}^{(a,n)}(\bar{q}^{(c,m)}).\]
\item 
\label{claim:LastOfFirstList2}
$b\rest l(a)\forces_{l(a)}q^{(b,n+1)},T^{(b,n+1)}\in\ran(\dot{\sigma}^{(a,n)})$.
\end{enumerate}
First let's see that constructing such objects is sufficient to prove the existence of a condition $p^*$ as in the statement of the theorem. Suppose that we have constructed sequences satisfying \ref{claim:Start2} through \ref{claim:LastOfFirstList2} above. Let $q^*\in \P_j$ be a fusion of the fusion structure, and let $p^*=p\verl q^*\rest[i,j)$. By \ref{claim:Start2}, we have that $q^*\rest i\equiv p$, so $p^*\equiv q^*$ and $p^*\rest i=p$, as required.
To see that $p^*$ is as wished, let $G_j$ be $\P_j$-generic over $\V$ with $p^* \in G_j$. Let $\sigma=\dot{\sigma}^{G_j}$ and $\bar{G}_{\bar{i}}=\sigma^{-1}``G_i$. We have to show that in $\V[G_j]$, there is a $\sigma'$ so that conclusions \ref{item:LastConclusionSP}-\ref{item:Sigma'MovesEverythingCorrectly2} are satisfied.
Since $p^*\equiv q^*$, we have that $q^*\in G_j$. Work in $\V[G_j]$. By Proposition \ref{fusion} there is a sequence $\langle a_n \; | \; n < \omega\rangle \in V[G_j]$ so that $a_0\in T_0$ (so $a_0=p$) and for all $n < \omega$, $a_{n+1} \in {\rm suc}^n_T(a_n)$, $a_n \in G_j \rest l(a_n)$ and $q^{(a_n, n)} \in G_j$. Let $\sigma_n$ be the evaluation of $\dot{\sigma}^{(a_n, n)}$ by $G_j$. So $\sigma_n:\bN\prec N$. Define $\sigma':\bN\prec N$ by letting $\sigma'(t_n)=\sigma_n(t_n)$. It follows as before that $\sigma'$ is elementary, so that conclusion \ref{item:LastConclusionSP} is satisfied.
Conclusion \ref{item:FirstConclusionSP}, stating that $\sigma'$ moves the given parameters the same way $\sigma$ does, follows since each $\sigma_n$ moves them that way.
The crucial claim we have to verify is condition \ref{item:Sigma'MovesEverythingCorrectly2}, which has two parts.

The first part states that $(\sigma')^{-1}``G_i=\sigma^{-1}``G_i$. Let $\delta_n=l(a_n)$. By construction, we have that for $m<n<\omega$, $\delta_m\in\ran(\sigma_m)$ and $\bar{\delta}_m=\sigma_m^{-1}(\delta_m)=\sigma_n^{-1}(\delta_m)$.
We also have that $(\sigma_{n+1})^{-1}``G_{\delta_n}=(\sigma_n^{-1})``G_{\delta_n}$. It follows by induction on $n$ that $(\sigma_n)^{-1}``G_{\delta_0}=(\sigma_0)^{-1}``G_{\delta_0}$. The base case is trivial, and for the inductive step, we use that by construction, we have that $(\sigma_{n+1})^{-1}``G_{\delta_n}=(\sigma_n)^{-1}``G_{\delta_n}$.

This implies that $(\sigma_{n+1})^{-1}``G_{\delta_0}=(\sigma_n)^{-1}``G_{\delta_0}$:
for $x\in\P_{\delta_0}$, let
\[x'=x\verl 1_{\delta_{n}}\rest[\delta_0,\delta_{n}).\]
For $\bar{x}\in\bar{\P}_{\bar{\delta}_0}$, let $\bar{x}'$ be defined similarly, with $\bar{\delta}_0$, $\bar{\delta}_{n}$ in place of $\delta_0$, $\delta_{n}$.
Then $x\in G_{\delta_0}$ iff $x'\in G_{\delta_{n}}$. To prove the claimed identity, let $\bar{x}\in(\sigma_{n+1})^{-1}``G_{\delta_0}$. So $x=\sigma_{n+1}(\bar{x})\in G_{\delta_0}$. Then $x'\in G_{\delta_n}$, and $\sigma_{n+1}(\bar{x}')=x'$. So $\bar{x}'\in(\sigma_{n+1})^{-1}``G_{\delta_n}$. Hence, $\bar{x}'\in(\sigma_n)^{-1}``G_{\delta_n}$, that is, $\sigma_n(\bar{x}')\in G_{\delta_n}$. But $\sigma_n(\bar{x}')=\sigma_n(\bar{x})'$. So $\sigma_n(\bar{x})'\in G_{\delta_n}$, which implies that $\sigma_n(\bar{x})\in G_{\delta_0}$. So $\bar{x}\in(\sigma_n)^{-1}``G_{\delta_0}$. For the converse, let $\bar{x}\in(\sigma_n)^{-1}``G_{\delta_0}$. So $\bar{x}'\in\sigma_n^{-1}``G_{\delta_n}=\sigma_{n+1}^{-1}``G_{\delta_n}$. So $\sigma_{n+1}(\bar{x}')=\sigma_{n+1}(\bar{x})'\in G_{\delta_n}$, so $\sigma_{n+1}(\bar{x})\in G_{\delta_0}$, that is, $\bar{x}\in\sigma_{n+1}^{-1}``G_{\delta_0}$.

Inductively, we have that $\sigma_n^{-1}``G_{\delta_0}=\sigma_0^{-1}``G_{\delta_0}$, which together with the identity shown in the previous paragraph yields that $(\sigma_{n+1})^{-1}``G_{\delta_0}=\sigma_0^{-1}``G_{\delta_0}$, completing the induction step.

Since $\delta_0=i$ and $\sigma_0=\sigma$, this means that $(\sigma_n)^{-1}``G_i=\sigma^{-1}``G_i$, for every $n$.

This easily implies that $(\sigma')^{-1}``G_i=\sigma^{-1}``G_i$. From left to right, suppose $\sigma'(\bar{x})\in G_i$. Say $\bar{x}=t_n$. So $\sigma_n(\bar{x})\in G_i$, i.e., $\bar{x}\in\sigma_n^{-1}``G_i=\sigma^{-1}``G_i$. Vice versa, suppose $\sigma(\bar{x})\in G_i$. Let $\bar{x}=t_n$. Then $\bar{x}\in\sigma_n^{-1}``G_i$, i.e., $\sigma'(\bar{x})=\sigma_n(\bar{x})\in G_i$. So $\bar{x}\in(\sigma')^{-1}``G_i$.

The second part of conclusion \ref{item:Sigma'MovesEverythingCorrectly2} is that $\bar{G}_{\bar{j}}=(\sigma')^{-1}``G_j$ is $\bar{\P}_{\bar{j}}$-generic over $\bar{N}$.
This is taken care of in the construction since the pre-image of $q^{(a_n,n)}$ under $\sigma'$ is in $\bar{D}_n$ by \ref{item:sigma(a,n)2}\ref{subitem:hasx(a,n)initsrange2}.
In detail, let $t_m=\bar{q}^{(a_n,n)}$. Then $\sigma'(\bar{q}^{(a_n,n)})=\sigma'(t_m)=\sigma_m(t_m)=\sigma_m(\bar{q}^{(a_n,n)})$.

If $m\le n$, then
\[\sigma'(\bar{q}^{(a_n,n)})
=\sigma_m(t_m)
=\sigma_n(t_m)
=\sigma_n((\bar{q}^{(a_n,n)})
=q^{(a_n,n)},\] so $(\sigma')^{-1}(q^{(a_n,n)})=\bar{q}^{(a_n,n)}\in\bar{D}_n$ in this case. And if $n<m$, then
\[\sigma'(\bar{q}^{(a_n,n)})=\sigma_m(t_m)
=\sigma_m(\bar{q}^{(a_n,n)})
=\sigma_n(\bar{q}^{(a_n,n)})
=q^{(a_n,n)},\]
so again, $(\sigma')^{-1}(q^{(a_n,n)})=\bar{q}^{(a_n,n)}\in\bar{D}_n$.

Thus it remains to see that such a construction can be carried out. This is done by induction on $n$, in a manner similar to the previous proof. Like last time, at stage $n+1$ of the construction, we
assume that $T_m$, $T^{(a,m)}$, $q^{(a,m)}$ and $\dot{\sigma}^{(a,m)}$ have been defined, for all $m\le n$ and all $a\in T_m$. Also, for $m<n$ and $a\in T_m$, we assume that $\suc_T^m(a)$ has been defined. Our inductive hypothesis is that for all $m\le n$ and all $a\in T_m$, conditions
\ref{item:middlepart-beginning2}-\ref{item:middlepart-end2}
hold, and that for all $m<n$, all $a\in T_m$ and all $b\in\suc_T^m(a)$, conditions \ref{item:coherence2}-\ref{claim:LastOfFirstList2} are satisfied. As before, for every $a\in T_n$, we will specify $\suc_T^n(a)$, and for every $b\in\suc_T^n(a)$, the objects $T^{(b,n+1)}$, $q^{(b,n+1)}$ and $\dot{\sigma}^{(b,n+1)}$ in such a way that  \ref{item:coherence2}-\ref{claim:LastOfFirstList2} are satisfied by $a$ and $b$, and \ref{item:middlepart-beginning2}-\ref{item:middlepart-end2} are satisfied by $b$ and $n+1$ (instead of $a$ and $n$). As before, in order to ensure that we ultimately produce a fusion structure, we again arrange that $q^{(b,n+1)}$ is a mixture of $T^{(b,n+1)}$ up to $j$, $b\le q^{(b,n+1)}\rest l(b)$, $l(b)=l(\Root(T^{(b,n+1)}))$ and $T^{(b,n+1)}\hooks T^{(a,n)}$, so that Definition \ref{def:FusionStructure}(2)-(4) hold (and we also assume that the corresponding statements are true of the earlier defined $q^{(c,k)}$ and $T^{(c,k)}$).

For stage 0 of the construction, notice that \ref{claim:Start2} gives the base case where $n=0$ and in this case \ref{item:middlepart-beginning2}-\ref{item:middlepart-end2} are satisfied, $p$ forces that $q^{(p, 0)}=q=\dot{\sigma}(\overline{q}) \in \overline{D}_0$ and $p$ forces that $W=T^{(p,0)}$ has a preimage under $\dot{\sigma}$, namely $\bar{W}$.

At stage $n+1$ of the construction, work under the assumptions described above. Fixing $a\in T_n$, we have to define $\suc_T^n(a)$.
To this end let $D$ be the set of all conditions $b\in\bigcup_{l(a)\le\xi<j}\P_\xi$ for which there are a nested antichain $S$ in $\vec{\P}\rest j$ and objects $\dot{\sigma}^b$, $u$, $\bar{u}$, $\overline{l(b)}$ and $\bar{S}$ satisfying the following:

\begin{enumerate}[label=(D\arabic*)]
\item 
\label{item:FirstConditionDefiningPredenseSet2}
$b\rest l(a) \le a$.
\item 
$S \hooks T^{(a, n)}$, $\bS\in\bN$, $S\in N$.
\item 
$u\in \P_j$, $u \leq q^{(a,n)}$ and $u$ is a mixture of $S$ up to $j$.
\item
\label{item:EnsuringGettingAFusionStructure2}
$b\le_{l(b)}u\rest l(b)$ and $l(b)=l(\Root(S))$.
\item 
$\overline{u} \in \overline{D}_{n+1}$.
\item 
\label{item:InitialSegmentOfbForcesStuffIntoRangeOfTheOldEmbedding2}
$b\rest l(a)\forces_{l(a)}\dot{\sigma}^{(a,n)}(\check{\bar{S}},\check{\bar{u}},\check{\overline{l(b)})}=\check{S},\check{u},(l(b))\check{}$.
\item 
\label{item:LastConditionDefiningPredenseSet2}
$b$ forces the following statements with respect to $\P_{l(b)}$: \begin{enumerate}
\item $\dot{\sigma}^b: \check{\bN}\prec \check{N}$.
\item $\dot{\sigma}^b (\check{\overline{\theta}},\check{\bar{i}},\check{\bar{j}},\check{\bar{\vec{\P}}}, \check{\overline{s}},\check{\bar{u}},\check{\bS}, \check{\bar{q}},(\overline{l(b)})\check{}) = \check{\theta},\check{i},\check{j},\check{\vec{\P}},\dot{\sigma}(\bs), \check{u},\check{S}, \check{q},(l(b))\check{}$, and\\
    $\forall m \leq n\forall c\quad$ $\dot{\sigma}^b(t_m) = \dot{\sigma}^{(a, n)}(t_m)$ 
    and \\ \hspace*{13ex}$\kla{c,m}\le_T\kla{a,n}\To\dot{\sigma}^b(\bar{q}^{(c,m)})=\dot{\sigma}^{(a,n)}(\bar{q}^{(c,m)})$.
\item $(\dot{\sigma}^b)^{-1}``\dot{G}_{l(a)}=(\dot{\sigma}^{(a,n)})^{-1}``\dot{G}_{l(a)}$ and\\
    $(\dot{\sigma}^b)^{-1}``\dot{G}_{l(b)}$ is $\bar{\P}_{\overline{l(b)}}$-generic over $\bN$.
\end{enumerate}
\end{enumerate}

As before, $D\rest l(a):=\{b\rest l(a)\st b\in D\}$ is open in $\P_{l(a)}$, and it suffices to show that $D\rest l(a)$ is predense below $a$ in $\P_{l(a)}$, because knowing this, we may choose a maximal antichain $A\sub D\rest l(a)$, which then is a maximal antichain in $\P_{l(a)}$ below $a$. For every $c\in A$, we may then pick a condition $b(c)\in D$ such that $b(c)\rest l(a)=c$, and define $\suc_T^n(a)=\{b(c)\st c\in A\}$. Now, for every $b\in\suc_T^n(a)$, let $S$, $\dot{\sigma}^b$, $u$ and $\bar{u}$ be chosen in such a way that \ref{item:FirstConditionDefiningPredenseSet2}-\ref{item:LastConditionDefiningPredenseSet2} hold. Set $T^{(b,n+1)}=S$, $\dot{\sigma}^{(b, n+1)}=\dot{\sigma}^b$, $q^{(b, n+1)}=u$, $\bar{q}^{(b,n+1)}=\bar{u}$ and $\bar{T}^{(b,n+1)}=\bS$. Then $a$, $b$ satisfy \ref{item:coherence2}-\ref{claim:LastOfFirstList2} at stage $n$, and $b$ satisfies \ref{item:middlepart-beginning2}-\ref{item:middlepart-end2} at stage $n+1$.

To see that $D\rest l(a)$ is predense below $a$, let
$G_{l(a)}$ be $\P_{l(a)}$-generic over $V$ with $a \in G_{l(a)}$. We have to find a $b\in D$ so that $b\rest l(a) \in G_{l(a)}$. Work in $V[G_{l(a)}]$. Let $\sigma_n = (\dot{\sigma}^{(a, n)})_{G_{l(a)}}$. Since \ref{item:sigma(a,n)2} holds at stage $n$, we have that $\sigma_n:\bN\prec N$
and $\sigma_n(\overline{\theta}, \bar{\vec{\P}},\bs,\overline{\nu},\bar{i},\bar{j})
=\theta,\vec{\P},\sigma(\bs),\nu,i,j$.
We also have objects $\bar{q}^{(a,n)},\bT^{(a,n)},l(\overline{a})$ satisfying condition \ref{item:middlepart-end2}, so that 
$\bT^{(a,n)}\in\bN$, 
$\sigma_n(\bar{q}^{(a,n)},\bT^{(a,n)})=q^{(a,n)},T^{(a,n)}$ and $\sigma_n(\overline{l(a)})=l(a)$. We also know that $\bar{G}_{\overline{l(a)}}=\sigma_n^{-1}``G_{l(a)}$ is $\bar{\P}_{\overline{l(a)}}$-generic over $\bN$.

Let's first find $u$ and $\bar{u}$ again. By elementarity, $\overline{q}^{(a, n)}$ is a mixture of $\overline{T}^{(a, n)}$ up to $\overline{j}$. 
Let $\bT_0^{(a,n)}=\{\ba_0\}$. Let $a_0=\sigma_n(\ba_0)$. So $T_0^{(a,n)}=\{a_0\}$. We know that $a\le q^{(a,n)}\rest l(a)$, so $q^{(a,n)}\rest l(a)\in G_{l(a)}$, so $\bar{q}^{(a,n)}\rest\overline{l(a)}\in\bar{G}_{\overline{l(a)}}$. Since $\bar{q}^{(a,n)}$ is a mixture of $\bar{T}^{(a,n)}$ up to $\bar{j}$ in $\bN$, it follows that  $\bar{q}^{(a,n)}\rest l(\ba_0)\equiv\ba_0$, by Fact \ref{fact:CharacterizationOfMixtures}.\ref{item:Root}, since $l(\bar{q}^{(a,n)})=\bar{j}>l(\ba_0)$.
Since we have ensured that $l(a)=l(\Root(T^{(a,n)})$, we have by elementarity that $\overline{l(a)}=l(\bar{a}_0)$.
So $\ba_0\in\bar{G}_{\bar{l(a)}}$.
Let $\bar{r}\in\bT_1^{(a,n)}$ be such that $\bar{r}\rest l(\ba_0)\in\bG_{l(\ba_0)}$. There is such an $\bar{r}$ by Definition \ref{definition:NestedAC-Mixture-beta-Nice-hooks}(5), since $\bG_{l(\ba_0)}$ is $\bP_{l(\ba_0)}$-generic over $\bN$. So we have that $\bar{r}\rest\overline{l(a)}\in\bar{G}_{\overline{l(a)}}$.


Now let $\bar{u}\in\bP_{\bar{j}}$ strengthen $\tilde{q}=\bar{r}^{\frown}\bar{q}^{(a, n)}\rest [{l(\bar{r})},\bar{j})$ so that $\overline{u}\in\overline{D}_{n+1}$ and $\bar{u}\rest\overline{l(a)}\in\bG_{\overline{l(a)}}$. This can be achieved since $\tilde{D}=\{x\in\bar{D}_{n+1}\st x\le\tilde{q}\}$ is dense below $\tilde{q}$ in $\bar{\P}_{\bar{j}}$, so $\tilde{D}\rest\overline{l(a)}=\{x\rest\overline{l(a)}\st x\in\tilde{D}\}$ is dense below $\bar{r}\rest\overline{l(a)}$ in $\bar{\P}\rest\overline{l(a)}$, so there is a $\tilde{u}\in\tilde{D}\rest\overline{l(a)}\cap\bG_{\overline{l(a)}}$, and $\tilde{u}=\bar{u}\rest\overline{l(a)}$ for some $\bar{u}\in\tilde{D}$ which then is as wished.

By Lemma \ref{2.11}, applied in $\bN$, we can pick a nested antichain $\bar{S} \hooks \bar{T}^{(a, n)}$ such that $\bar{u}$ is a mixture of $\bS$ up to $\bar{j}$ and such that letting $\bS_0 = \{\bd_0\}$, we have that  $l(\bar{r}),\overline{l(a)}\leq l(\bd_0)$ and $\bd_0 \rest l(\bar{r}) \leq \bar{r}$.
Moreover, we have that $\bar{u}\rest l(\bd_0)\equiv\bd_0$, since $\bar{u}$ is a mixture of $\bS$ up to $\bar{j}$. In particular, $\bd_0\rest\overline{l(a)}\in\bG_{\overline{l(a)}}$.

Let $S,d_0,u=\sigma_n(\bS,\bd_0,\bu)$. So $u\rest l(d_0)\equiv d_0$. Note that $d_0\rest l(a)\in G_{l(a)}$.

Also, let $\bs'\in\bN$ be a tuple of parameters to be specified below, and let $s'=\sigma_n(\bs')$.

Let $w\in G_{l(a)}$ force these facts about $\dot{\sigma}^{(a,n)}$ and the check names for the parameters $\bar{S},\bar{d_0},\bu,\bs'$ and $S,d_0,u,s'$. Since $a,d_0\rest l(a)\in G_{l(a)}$, we may choose $w$ so that $w\le_{l(a)}a,d_0\rest l(a)$.

We are now going to apply our inductive hypothesis $\phi(l(d_0))$ (noting that $l(d_0)<j$) to $l(a)$ (in place of $i$; note that $l(a)\leq l(d_0)$ as required), 
the models $\bN$, $N$, the condition $w$ (in place of $p$), the condition $d_0$ (in place of $q$; note that $w\le_{l(a)} d_0\rest l(a)$, as required), the name $\dot{\sigma}^{(a,n)}$ (in place of $\dot{\sigma}$) and the parameter $\bs'\in\bN$.

No matter which $\bs'$ is chosen, by the inductive hypothesis, there are a condition $w^*\in\P_{l(d_0)}$ with $w^*\rest l(a)=w$ and $w^*\le d_0$, and a $\P_{l(d_0)}$-name $\dot{\sigma}'$ such that $w^*$ forces:
\begin{enumerate}[label=(\alph*)]
  \item
  $\dot{\sigma}':\check{\bN}\prec \check{N}$.
  \item
  $\dot{\sigma}'(\check{\bs}',
  \check{\bar{\vec{\P}}},
  \check{l(\overline{a})},
  \check{l(\overline{d}_0)},
  \check{\btheta})=
  \dot{\sigma}^{(a,n)}(\check{\bs}',
  \check{\bar{\vec{\P}}},
  \check{l(\overline{a})},\check{l(\overline{d}_0)},\check{\btheta})$.
  \item
  $(\dot{\sigma}'{}^{-1})`` \dot{G}_{l(a)}=\dot{\sigma}^{-1}``\dot{G}_{l(a)}$ and
  $(\dot{\sigma}'{}^{-1})`` \dot{G}_{l(d_0)}$ is $\P_{l(d_0)}$-generic over $\check{\bN}$.
\end{enumerate}

Thus, we let $\bs'$ be a name for the finite tuple of further objects of which we want $w^*$ to force that $\dot{\sigma}'$ moves them the same way as $\dot{\sigma}^{(a,n)}$: $\bu,\bd_0,\bS,\bar{i},\bar{j},\bar{\vec{\P}},\btheta,\bs,t_0,\ldots,t_n$ and for every $\kla{c,m}\le_T\kla{a,n}$,  $\bar{q}^{(c,m)}$.
Recall that $w$ forced that $\dot{\sigma}^{(a,n)}(\bu,\bd_0,\bS)=u,d_0,S$.
Hence, since $w^*\rest l(a)=w$, we get that $w^*$ forces that $\dot{\sigma}'(\bu,\bd_0,\bS)=u,d_0,S$ as well.

Note that already $a$ forced with respect to $\P_{l(a)}$ that $\bar{i},\bar{j},\bar{\vec{\P}},\btheta$ are mapped to $i,j,\vec{\P},\theta$ by $\dot{\sigma}^{(a,n)}$.



Now, set $b=w^*$, $\dot{\sigma}^b=\dot{\sigma}'$. It follows that the conditions \ref{item:FirstConditionDefiningPredenseSet2}-\ref{item:LastConditionDefiningPredenseSet2} are satisfied, that is, $b\in D$. Most of these are straightforward to verify. Let us remark that condition \ref{item:EnsuringGettingAFusionStructure2} holds because $b=w^*\le_{l(b)}d_0\equiv u\rest l(d_0)$, so $b\le u\rest l(b)$ as $l(b)=l(d_0)$. Condition \ref{item:InitialSegmentOfbForcesStuffIntoRangeOfTheOldEmbedding2} holds because $b\rest l(a)=w$. This completes the proof that $D\rest l(a)$ is predense below $a$, and hence the treatment of case 1.

\medskip

\noindent\emph{Case 2:} $j$ is a successor ordinal.

Let $j=k+1$. Since we assumed $i<j$, it follows that $i\le k$. Inductively, $\phi(k)$ holds. Note that $\bar{j}$ is of the form $\bk+1$, and $p$ forces with respect to $\P_i$ that $\dot{\sigma}(\bk)=k$, the assumptions \ref{item:LastAssumptionSP}-\ref{item:Generics1} are satisfied by
$p\in \P_i$, $\dot{\sigma}\in\V^{\P_i}$, $q\rest k\in\P_k$,
$\theta$, $\tau$, $A$, $N$, $\bN$,  $\bs,\bar{\vec{\P}},\bar{i},\bar{k}\in\bN$, and $k$. By $\phi(k)$, we obtain a condition $p^{**}\in\P_k$ with $p^{**}\rest i=p$ and $p^{**}\le_k q\rest k$, as well as a $\P_k$-name $\dot{\bsigma}$ such that $p^{**}$ forces with respect to $\P_k$:
\begin{enumerate}[label=(\alph*1)]
  \item
  $\dot{\bsigma}:\check{\bN}\prec \check{N}$.
  \item
  $\dot{\bsigma}(\check{\bs},\check{\bar{\vec{\P}}},\check{\bar{i}},
  \check{\bk},\check{\bar{j}},\check{\btheta},\check{\bar{q}})
  =\dot{\sigma}(\check{\bs},\check{\bar{\vec{\P}}},\check{\bar{i}},\check{\bk},
  \check{\bar{j}},\check{\btheta},\check{\bar{q}})$.
  \item
  $\dot{\bsigma}^{-1}``\dot{G}_i=\dot{\sigma}^{-1}``\dot{G}_i$ and
  $\dot{\bsigma}^{-1}``\dot{G}_k$ is $\bar{\P}_{\bar{k}}$-generic over $\check{\bN}$.
\end{enumerate}
As pointed out in the beginning of the proof, there is a $\P_k$-name $\dot{\bsigma}^*$ such that $p^{**}$ forces that $\dot{\bsigma}$ lifts nicely - in detail, $p^{**}$ forces with respect to $\P_k$:
\begin{enumerate}[label=(\alph*2)]
  \item
  $\dot{\bsigma}^*:\check{\bN}[\dot{\bsigma}^{-1}``\dot{G}_k]\prec \check{N}[\dot{G}_k]$.
  \item
  $\dot{\bsigma}^*(\check{\bs},\check{\bar{\vec{\P}}},\check{\bar{i}},
  \check{\bk},\check{\bar{j}},\check{\btheta},\check{\bar{q}})
  =\sigma(\check{\bs},\check{\bar{\vec{\P}}},\check{\bar{i}},\check{\bk},
  \check{\bar{j}},\check{\btheta},\check{\bar{q}})$.
  \item
  $(\dot{\bsigma}^*)^{-1}``\dot{G}_i=\dot{\sigma}^{-1}``\dot{G}_i$,
  $\dot{\bsigma}^*(\dot{\sigma}^{-1}``\dot{G}_i)=\dot{G}_i$
  and
  $(\dot{\bsigma}^*)^{-1}``\dot{G}_k$ is $\bar{\P}_{\bar{k}}$-generic over $\check{\bN}$.
\end{enumerate}

Temporarily fix a $\P_k$-generic filter $G_k$ that contains $p^{**}$.
In $\V[G_k]$, the forcing $\P_{k,k+1}=\P_{k,j}=\P_j/G_k$ is $\infty$-subproper by assumption.
Note that since $p^{**}\le_k q\rest k$, $q^*=q\rest[k,j)\in\P_{k,j}$.
Let $\bsigma^*=(\dot{\bsigma}^*)^{G_k}$. Then $q^*$ is in the range of $\bsigma^*$ and  we have that $\bsigma^*:\bN[(\bsigma^*)^{-1}``G_k]\prec N[G_k]$. Also, letting $\sigma=\dot{\sigma}^{G_k}$, $\sigma^{-1}``G_i=(\bsigma^*)^{-1}``G_i$ and $\bsigma^*(\sigma^{-1}``G_i)=G_i$.

Since $\bN$ is full, so is $\bN[\bG_\bk]$, and since $H_\theta\sub N$, $H_\theta^{\V[G_k]}=H_\theta[G_k]\sub N[G_k]$. Hence, by $\infty$-subproperness of $\P_{k,j}$ in $\V[G_k]$, there is a condition $r$ in $\P_{k,j}$ with $r\le_{k,j}q^*$ such that, writing $\dot{G}_{k,j}$ for the canonical $\P_{k,j}$-name for the generic filter, $r$ forces the existence of an elementary embedding $\sigma'$ with
\begin{enumerate}[label=(\alph*3)]
  \item
  $\sigma':\check{\bN}[\bsigma^{-1}``G_k]\prec N[G_k]$.
  \item
  $\sigma'(\check{\bs},\check{\bar{\vec{\P}}},\check{\bar{i}},\check{\bk},
  \check{\bar{j}},\check{\btheta},\check{\bar{q}},\sigma^{-1}``G_i)
  =\bsigma^*(\check{\bs},\check{\bar{\vec{\P}}},\check{\bar{i}},\check{\bk},
  \check{\bar{j}},\check{\btheta},\check{\bar{q}},\sigma^{-1}``G_i)$.
  \item
  $(\sigma')^{-1}``\dot{G}_{k,j}$ is $\bar{\P}_{\bar{k},\bar{j}}$-generic over $\check{\bN}[\bsigma^{-1}``G_k]$.
\end{enumerate}
Since this holds in $\V[G_k]$ whenever $p^{**}\in G_k$, there is a $\P_k$-name $\tau$ which is essentially a name for $r$ above - more precisely, $p^{**}$ forces that $\tau\in \P_j$, $\tau\rest k\in\dot{G}_k$ and $\tau\rest[k,j)$ has the properties of $r$, as listed above.
Since the iteration is nice, there is a condition $p^*\in \P_j$ such that $p^*\rest k=p^{**}$ and $p^*$ forces that $\tau\rest[k,j)\equiv p^*\rest[k,j)$; see Definition \ref{def:NiceIterations}, part (1). We claim that $p^*$ is as wished.

First, note that $p^*\rest i=(p^*\rest k)\rest i=p^{**}\rest i=p$. Also, $p^{**}=p^*\rest k\le q\rest $k and $p^{**}$ forces that $p^*\rest[k,j)\equiv\tau\rest[k,j)\le_{k,j}q^*$, so $p^*\le q$.

Now, let $G_j$ be a $\P_j$-generic filter with $p^*\in G_j$. Then $\V[G_j]=\V[G_k][G_{k,j}]$, where $p^{**}\in G_k$ and $p^*\rest[k,j)\in G_{k,j}$. Since $p^*\rest[k,j)$ has the properties of the condition $r$ above in $\V[G_k]$, there is in $\V[G_j]$ a $\sigma'$ with (a3)-(c3). Letting $\sigma=\dot{\sigma}^{G_j}$, it follows then that $\bG_i:=\sigma^{-1}``G_i=\sigma'{}^{-1}``G_i$, since $\sigma'(\bG_i)=G_i$. Moreover, since $(\sigma')^{-1}``G_{k,j}$ is $\bar{\P}_{\bar{k},\bar{j}}$-generic over $\bN[\bG_{\bar{k}}]$ and $\bG_{\bar{k}}=(\sigma')^{-1}``G_k$ is $\bar{\P}_{\bar{k}}$-generic over $\bN$, $(\sigma')^{-1}``G_j$ is $\bar{\P}_{\bar{j}}$-generic over $\bN$.
Thus, the restriction of $\sigma'$ to $\bN$ has the desired properties.
%
\end{proof}

So $\infty$-subproper and $\infty$-subcomplete forcings are nicely iterable when using nice iterations. Let us make one strengthening of these theorems that will be useful in applications. The key step in both proofs was the construction of the fusion sequence in the limit stage and in particular the conditions $u$ and $\bar{u}$. In both proofs we needed $u$ to be as strong as a certain condition, but in fact, we could have strengthened it further if we liked. We get the following theorem, a version of \cite[Lemma 4.3]{Miyamoto:IteratingSemiproperPreorders}:

\begin{thm}
Let $\vec{\P}=\seq{\P_\alpha}{\alpha\le\nu}$
be a nice iteration with $\P_0 = \{1_0\}$ such that for all $i$ with $i+1< \nu$,  $\forces_i \P_{i, i+1}$ is $\infty$-subproper, where $\nu$ is a limit ordinal.
Let $\theta$, $N$, $\bN$, $\bar{s}$, $s$ be as in Theorem \ref{thm:NiceIterationsOfSPforcingAreSP}, and let $\sigma:\bN\prec N$.
Let $q\in\P_\nu$ and suppose $\seq{E_n}{n<\omega}$ is a sequence of subsets of $\P_\nu$ such that for each $n<\omega$,
any $u\le_\nu q$, any $i<\nu$ and any $\P_i$-name $\dot{\sigma}$ we have that $1_i$ forces:

``if $\dot{\sigma}:\bN\prec N$, $u,q,\theta,\vec{\P},s,i\in\ran(\dot{\sigma})$, $\dot{\sigma}(\bar{s},\bar{\theta},\bar{\vec{\P}})=s,\theta,\vec{\P}$, $u\rest i\in\dot{G}_i$,  and $\dot{\sigma}^{-1}``\dot{G}_i$ is $\dot{\sigma}^{-1}(\P_i)$-generic over $\bN$, then there is an $r\in E_n\cap\ran(\dot{\sigma})$ such that $r\le_\nu u$ and $r\rest i\in\dot{G}_i$.''

Then, there is a $p^*\le_\nu q$ such that whenever $G_\nu$ is $\P_\nu$-generic with $p^*\in G_\nu$, there is a $\sigma'\in\V[G_\nu]$ such that the conclusions (a)-(c) of the previous theorem hold (with $i=0$, $j=\nu$, $p=1_0$,$\dot{\sigma}=\check{\sigma}$), {\bf and} in addition, $G_\nu\cap E_n\neq\leer$ for every $n<\omega$.
\label{thmEn}
\end{thm}

The proof of
\cite[Lemma 4.3]{Miyamoto:IteratingSemiproperPreorders} describes in the context of semiproper forcing the modifications to the construction of the fusion sequence in the limit case needed to ensure that the sets $E_n$ ($n<\omega$) are met. The corresponding modifications can be carried out in the context of $\infty$-subproper forcing as well. These ideas are due to Miyamoto, and we refer the reader to his article for the details.

%

\subsection{Preserving Properties of Trees}

In this section we lift some results about preservation of properties of trees from \cite{Miyamoto:IteratingSemiproperPreorders} to the context of $\infty$-subproper forcing. The proofs are in the same spirit as those given using RCS iterations, but since those results do not apply to $\infty$-subproper forcing we give them here in the modified context as well. Both results are instances of Theorem \ref{thmEn}.

\begin{lem}
Let $S = (S, \leq_S)$ be a Souslin tree and $\vec{\P}=\seq{\P_\alpha}{\alpha\le\nu}$ be a nice iteration of $\infty$-subproper forcings as in Theorem \ref{thm:NiceIterationsOfSPforcingAreSP}, such that for each $i+1\le\nu$, $\forces_i$
``if $\check{S}$ is Souslin, then $\P_{i,i+1}$ preserves $\check{S}$ as a Souslin tree.'' Then $\forces_\nu$ ``$\check{S}$ is a Souslin tree.''
\label{souslinpre}
\end{lem}

We stress that the proof of this theorem is similar to that of Lemma 5.0 and Theorem 5.1 of \cite{Miyamoto:IteratingSemiproperPreorders}. 

\begin{proof}
If not, then let $\vec{\P}=\seq{\P_i}{i\le\nu}$ be a counterexample with $\nu$ minimal. Clearly, $\nu$ must be a limit ordinal, and by minimality, for every $i<\nu$, $\forces_i$``$\check{S}$ is a Souslin tree.''
Let $\dot{A}$ be a $\P_\nu$ name such that $q \in \P_\nu$ forces that $\dot{A}$ is a maximal antichain in $\check{S}$. We need to find a $p^* \leq q$ forcing that $\dot{A}$ is countable. Fix $\theta$ sufficiently large that $\dot{A}, \mathbb P, S \in H_\theta$ and fix $\sigma:\bN \prec N$ as in the standard setup; we can ensure that $\bN$ is full using the usual arguments; see the remark following Def.~2.21 of \cite{Fuchs:CanonicalFragmentsOfSRP}. Let $s=\kla{S,\dot{A}}$, $\bar{s}=\sigma^{-1}(s)$.
Set $\delta = \omega_1^\bN=\sigma^{-1}(\omega_1)$. We may assume that $\sigma$ does not move the nodes in $\bar{S}=\sigma^{-1}(S)$, since we may assume $S\subseteq H_{\omega_1}$. Enumerate the $\delta^{\rm th}$ level of $S$ as $\langle s_n \; | \; n < \omega\rangle$. For each $n$, define $E_n = \{r \in \P_\nu \; | \; \exists s \in S \; {\rm such \; that} \; s <_S s_n \; {\rm and} \; r \forces s \in \dot{A}\}$. We claim that the $E_n$'s satisfy the ``predensity condition'' of Theorem \ref{thmEn}. If we can do this then it follows that there is a $p^*\le q$ forcing that $\dot{G} \cap E_n \neq \emptyset$ for all $n< \omega$, and hence that the levels of the nodes occurring in $\dot{A}$ are all below $\delta$, rendering $\dot{A}$ countable.

To check the predensity condition, fix $n<\omega$, $u\le_\nu q$, $i<\nu$ and a $\P_i$-name $\dot{\sigma}$. Suppose that $1_i$ forces that
$\dot{\sigma}:\bN\prec N$, $u,q,\theta,\nu,\vec{\P}\in\ran(\dot{\sigma})$, $u\rest i\in\dot{G}_i$, $\dot{\sigma}(\bar{\theta},\bar{s},\bar{i})=\theta,s,i$,
and $\dot{\sigma}^{-1}``\dot{G}_i$ is $\dot{\sigma}^{-1}(\P_i)$-generic over $\bN$.
Let $G_i$ be $\P_i$-generic, $\sigma_i=\dot{\sigma}^{G_i}$ and $\bar{G}_{\bar{i}}=\sigma_i^{-1}``G_i$.


Recall that $\sigma_i$ lifts to an embedding $\sigma^*_i: \bN[\barG_{\bar{i}}]\prec N[G_i]$ with $\sigma^*_i(\bar{G}_{\bar{i}})=G_i$.
Working in $\bN[\barG_{\bar{i}}]$, let 
$D=\{s\in\bar{S}\st\exists\bar{r}\in\bar{\P}_{\bar{\nu}}\  \bar{r}\le\bar{u},\ \bar{r}\rest\bar{i} \in \bG_{\bar{i}}\ \text{and}\ \bar{r}\forces_{\bar{\nu}}s\in\bar{\dot{A}}\}$.
We claim that as a subset of $\bar{S}$, $D$ is predense, that is, every $t\in\bar{S}$ is comparable with some element of $D$.

To see this, work in $\bN$. There, $\bar{\dot{A}}$ is forced to be a maximal antichain in $\check{\bar{S}}$ by $1_{\bar{\P}_{\bar{\nu}}}$. So fixing $t\in\bar{S}$, the set $E$ of $\bar{r}\in\bar{\P}_{\bar{\nu}}$ such that $\bar{r}\le_{\bar{\P}_{\bar{\nu}}}\bar{u}$ and there is an $s\in\bar{S}$ that is comparable to $t$ and $\bar{r}\forces_{\bP_{\bnu}}$ ``$s\in\bar{\dot{A}}$'' is dense below $\bar{u}$ as a subset of $\bP_{\bnu}$. Hence, the set $E\rest\bar{i}=\{\bar{r}\rest\bar{i}\st\bar{r}\in E\}$ is dense below $\bar{u}\rest\bar{i}$ as a subset of $\bP_{\bar{i}}$. Since $\bar{G}_{\bar{i}}$ is $\bP_{\bar{i}}$-generic over $\bN$ and $\bar{u}\rest\bar{i}\in\bar{G}_{\bar{i}}$, it follows that $\bar{G}_{\bar{i}}\cap E\rest\bar{i}\neq\leer$. So let $\bar{r}\in E$ be such that $\bar{r}\rest{\bar{i}}\in\bar{G}_{\bar{i}}\cap E\rest\bar{i}$, and let $s\in\bar{S}$ witness that $\bar{r}\in E$. Then $s\in D$, as witnessed by $\bar{r}$, and $s$ is comparable to $t$. This shows that $D$ is a predense subset of $\bar{S}$.

By assumption, $S$ is Souslin in $\V[G_i]$, hence in $N[G_i]$, and hence, $\bar{S}$ is Souslin in $\bN[\bar{G}_{\bar{i}}]$, by the elementarity of $\sigma^*_i$. Now the set $b_n=\{t\in\bar{S}\st t<_Ss_n\}$ is a cofinal branch of $\bar{S}$, and hence it is $\bar{S}$-generic over $\bN[\bar{G}_{\bar{i}}]$. So $b_n\cap D\neq\leer$, as $D$ is predense in $\bar{S}$. Let $s\in D$, $s<_Ss_n$. Let $\bar{r}$ witness this and set $r=\sigma_i(\bar{r})$. Then $\sigma_i(s)=s$ witnesses that $r\in E_n$, $r\le_\nu u$ and $r\rest i\in G_i$. Clearly, $r\in\ran(\sigma_i)$, so the predensity condition is satisfied.
\end{proof}

A similar modification of Lemma 5.2 and Theorem 5.3 of \cite{Miyamoto:IteratingSemiproperPreorders} can be used to prove the preservation of ``not adding uncountable branches through trees.''

\begin{lem}
Let $T$ be an $\omega_1$-tree and let $\vec{\P}=\seq{\P_\alpha}{\alpha\le\nu}$ be a nice iteration of $\infty$-subproper forcings such that for each $i<\nu$ with $i + 1 \leq \nu$, $\forces_i ``\P_{i, i+1}$ does  not add an uncountable branch through $\check{T}$'' then $\P_\nu$ does not add an uncountable branch through $T$.
\end{lem}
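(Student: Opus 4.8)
The plan is to run the argument of Lemma~\ref{souslinpre} almost verbatim, making the same modifications that turn the RCS Souslin-tree preservation proof into the RCS branch preservation proof of Theorem~\ref{thm:RCSIteratingSubproperForcingNotAddingBranchesToOmega1Trees}. We argue by induction on $\nu$. The successor case $\nu=k+1$ follows from the hypothesis on the iterand together with the inductive hypothesis applied to $\P_k$: an $\infty$-subproper forcing preserves $\omega_1$ and adds no nodes to $T$, so $T$ is still an $\omega_1$-tree in $\V[G_k]$, the quotient $\P_{k,k+1}$ adds no new cofinal branch through $T$ over $\V[G_k]$, and hence any cofinal branch of $T$ in $\V[G_\nu]$ lies in $\V[G_k]$ and so in $\V$. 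Thus assume $\nu$ is a limit ordinal, and note that by the inductive hypothesis $\P_i$ adds no new cofinal branch through $T$ for every $i<\nu$.

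Suppose toward a contradiction that some $p\in\P_\nu$ forces that $\dot B$ is a cofinal branch of $T$ with $\dot B\notin\V$. Then $p$ already forces $\dot B\notin\V^{\P_\alpha}$ for every $\alpha<\nu$, since a cofinal branch of $T$ lying in an intermediate model $\V[G_\alpha]$ but not in $\V$ would contradict that $\P_\alpha$ adds no new cofinal branch. Fix $\theta$ sufficiently large with $\dot B,T,\mathbb P\in H_\theta$, fix $N=L_\tau[A]$ with $H_\theta\sub N$, and fix $\sigma:\bN\prec N$ as in the setup of Lemma~\ref{souslinpre} (so $\bN$ is countable, transitive, and full, and the relevant parameters lie in $\ran(\sigma)$). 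Put $\delta=\omega_1\cap\bN=\crit(\sigma)$, enumerate $T(\delta)=\{s_n\mid n<\omega\}$, and note that $\sigma$ fixes each node of $T$ of level $<\delta$. For each $n<\omega$ set
\[
E_n=\{\,r\in\P_\nu \st \text{there is }t\in T\text{ incomparable with }s_n\text{ such that }r\forces_{\P_\nu}\check t\in\dot B\,\}.
\]
The point of this definition is that $r\in E_n$ forces $\check s_n\notin\dot B$ (a chain cannot contain two incomparable nodes), so any $q$ that forces $\dot G\cap E_n\ne\leer$ for all $n$ forces that $\dot B$ contains no node of level $\delta$ of $T$, which is impossible for a cofinal branch.

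The heart of the argument is the verification that the $E_n$ satisfy the predensity hypothesis of Theorem~\ref{thmEn}. Fix $\alpha<\nu$, a $\P_\alpha$-generic $G_\alpha$, and an embedding $\sigma_\alpha:\bN\prec N$ witnessing $\infty$-subproperness of $\P_\alpha$ (with $\bG_\alpha:=\sigma_\alpha^{-1}``G_\alpha$ being $\bar\P_\alpha$-generic over $\bN$), lifted to $\sigma_\alpha':\bN[\bG_\alpha]\prec N[G_\alpha]$; and fix $u\le p$ in $\ran(\sigma_\alpha)$ with $u\hook\alpha\in G_\alpha$. Working in $\bN[\bG_\alpha]$, elementarity of $\sigma_\alpha'$ transports the fact that $p$ (hence $u$) forces $\dot B\notin\V^{\P_\alpha}$ to the statement: the condition $\bu\hook[\balpha,\bnu)$ forces, over $\bN[\bG_\alpha]$, that $\dot{\bB}$ is a cofinal branch of $\bT$ not lying in $\bN[\bG_\alpha]$. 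Since $\bT$ has height $\delta=\omega_1^{\bN}$, the standard forking fact used in Theorem~\ref{thm:RCSIteratingSubproperForcingNotAddingBranchesToOmega1Trees} (and in \cite[Lemma~5.2]{Miyamoto:IteratingSemiproperPreorders}) provides, inside $\bN[\bG_\alpha]$, conditions $\bu_1,\bu_2\le\bu$ with $\bu_i\hook\balpha=\bu\hook\balpha\in\bG_\alpha$ and incomparable nodes $\bt_1,\bt_2\in\bT$ with $\bu_i\forces\bt_i\in\dot{\bB}$. Applying $\sigma_\alpha$ yields $r_i:=\sigma_\alpha(\bu_i)\le u$ in $\ran(\sigma_\alpha)$ with $r_i\hook\alpha\in G_\alpha$ and $r_i\forces\sigma_\alpha(\bt_i)\in\dot B$, and $t_1:=\sigma_\alpha(\bt_1)$, $t_2:=\sigma_\alpha(\bt_2)$ are incomparable nodes of $T$ of level $<\delta$. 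At most one of $t_1,t_2$ can be $<_T s_n$ (two such would be comparable), so the other is incomparable with $s_n$ (it is of lower level, hence neither equal to, below, nor above $s_n$), and the corresponding $r_i$ lies in $E_n$ --- exactly what the hypothesis of Theorem~\ref{thmEn} demands. Granting this, Theorem~\ref{thmEn} produces $q\le p$ with $q\forces\dot G\cap E_n\ne\leer$ for all $n<\omega$, so $q$ forces $\dot B$ to omit every $s_n$ and hence to be bounded below $\delta$ in $T$; this contradicts that $q$ forces $\dot B$ to be a cofinal branch, completing the limit case and the induction.

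The step I expect to be the main obstacle is the bookkeeping of the pullback: making sure that the forking witnesses $\bt_1,\bt_2$ and $\bu_1,\bu_2$, although produced by an argument carried out inside $\bN[\bG_\alpha]$, are genuine elements of $\bN$ (so that $\sigma_\alpha$ applies to them directly, as with $\bar r$ in the proof of Lemma~\ref{souslinpre}), that $\bu_i\hook\balpha$ stays in $\bG_\alpha$, and that the inductive branch-preservation hypothesis is correctly reflected from $\V$ down to $\bN[\bG_\alpha]$ through the lifted embedding --- in particular keeping straight the difference between ``$\dot B$ is new over $\V$'' and ``$\dot B$ is new over $\V[G_\alpha]$'', which is where the inductive hypothesis on the earlier stages is really used.
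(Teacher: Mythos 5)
Your proposal is correct and follows essentially the same route as the paper's proof: induction on $\nu$ with the successor case immediate, the sets $E_n$ of conditions forcing a node incomparable with $s_n$ into $\dot B$, verification of the predensity hypothesis of Theorem~\ref{thmEn} via the splitting-pair argument (two conditions with the same restriction to $\alpha$ forcing incomparable nodes into $\dot B$, reflected into $\bN[\bG_\alpha]$ by elementarity of the lifted embedding), and the observation that at most one of the two incomparable nodes of level $<\delta$ can lie below $s_n$. The bookkeeping concerns you flag at the end are handled in the paper exactly as you anticipate, so there is no gap.
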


\begin{proof}
If not, then let $\vec{\P}$ be a counterexample of minimal length $\nu$. Then $\nu$ is a limit ordinal, and we have that for every $i<\nu$, $\P_i$ does not add an uncountable branch through $T$.
Let $\dot{b}$ be a $\P_\nu$-name and $q\in\P_\nu$ a condition that forces $\dot{b}$ to be a new uncountable branch through $T$, that is, a branch that did not exist in $\V$. 
Fix $\theta$ sufficiently large that $\dot{b},\vec{\P}, T \in H_\theta$ and fix $\sigma:\bN \prec N$ as before. Let $\delta = \omega_1^\bN$. As before, assume that $\sigma$ does not move the nodes in $\bar{T}=\sigma^{-1}(T)$. So $\bar{T}=T|\delta$. Enumerate the $\delta^{\rm th}$ level of $T$ as $\langle t^n \; | \; n < \omega\rangle$. For $n<\omega$, define
\[E_n = \{r \in\P_\nu \; | \; \exists t \in T_{<\delta}\ \text{such that}\ t \nleq_T t^n \; {\rm and} \; r \forces \check{t} \in \dot{b}\}.\]
We will again show that the predensity condition of Theorem \ref{thmEn} is satisfied. Knowing this, it then follows there is a $p^*\leq q$ forcing that $\dot{G}_\nu\cap E_n \neq \emptyset$ for all $n< \omega$. But if $G_\nu$ is $\P_\nu$-generic, $r\in G_\nu\cap E_n$ and $t$ witnesses that $r\in E_n$, then $t_n\notin\dot{b}^{G_\nu}$ (because $t\in\dot{b}^{G_\nu}$, so if $t_n$ were in $\dot{b}^{G_\nu}$, then it would follow that $t<_Tt_n$, which does not hold). Thus, $\dot{b}^{G_\nu}$ does not intersect the $\delta$-th level of $T$, a contradiction.

So fix $n<\omega$, $u\le_\nu q$, $i<\nu$ and a $\P_i$-name $\dot{\sigma}$. Suppose that $1_i$ forces that
$\dot{\sigma}:\bN\prec N$, $u,q,\theta,\nu,\vec{\P}\in\ran(\dot{\sigma})$, $u\rest i\in\dot{G}_i$, $\dot{\sigma}(\bar{\theta},\bar{s},\bar{i})=\theta,s,i$,
and $\dot{\sigma}^{-1}``\dot{G}_i$ is $\dot{\sigma}^{-1}(\P_i)$-generic over $\bN$.
Let $G_i$ be $\P_i$-generic, $\sigma_i=\dot{\sigma}^{G_i}$ and $\bar{G}_{\bar{i}}=\sigma_i^{-1}``G_i$.
Let $\sigma^*_i: \bN[\barG_{\bar{i}}]\prec N[G_i]$ be the lifting of $\sigma_i$.


Since $\P_i$ does not add a new uncountable branch to $T$, $T$ has the same uncountable branches in $\V[G_i]$ as it has in $\V$. This implies that $G_i$ intersects the set $D$ of conditions $x\in \P_i$ such that there are incomparable nodes $t_1$, $t_2\in T_{{<}\delta}$ and conditions $u_1,u_2\le_\nu u$ such that $u_1$ forces that $t_1\in\dot{b}$, $u_2$ forces that $t_2\in\dot{b}$ and $u_1\rest i=u_2\rest i=x$. 
To see this, let $G'$, $G''$ be mutually generic over $\V[G_i]$ for $\P_{i,\nu}$, such that $u\in G_i*G'$ and $u\in G_i*G''$. Then $\dot{b}^{G_i*G'}\neq\dot{b}^{G_i*G''}$, or else $b=\dot{b}^{G_i*G'}=\dot{b}^{G_i*G''}\in\V[G_i][G']\cap\V[G_i][G'']=\V[G_i]$, meaning that already $\P_i$ would add a new uncountable branch to $T$. Let $t_1\in\dot{b}^{G_i*G'}\ohne\dot{b}^{G_i*G''}$, $t_2\in\dot{b}^{G_i*G''}\ohne\dot{b}^{G_i*G'}$. Then $t_1$ and $t_2$ are incomparable in $T$. Let $u'_1\in G_i*G'$, $u'_2\in G_i*G''$ be such that $u'_1\forces_\nu$ ``$\check{t}_1\in\dot{b}$'' and $u'_2\forces_\nu$ ``$\check{t}_2\in\dot{b}$''. Since both $u'_1\rest i$ and $u'_2\rest i$ are in $G_i$, we may find a common extension $x\le_i u'_1,u'_2$ with $x\in G_i$. Then, setting $u_1=x\verl u'_1\rest[i,\nu)$, $u_2=x\verl u'_2\rest[i,\nu)$, we have that $t_1, t_2, u_1, u_2$ witness that $x\in D$, and $x\in G_i$, as wished.

By the elementarity of $\sigma^*_i$, $\bar{D}=(\sigma^*_i)^{-1}(D)$ intersects $\bar{G}_{\bar{i}}$. Let $\bar{r}\in\bar{D}\cap\bar{G}_{\bar{i}}$, and let $t_1,t_2,\bar{u}_1,\bar{u}_2$ witness this. Since $t_1$ and $t_2$ are incomparable in $\bar{T}=T\rest\delta$, at least one of them is not $T$-below $t^n$. Say $t_1\not<t^n$. Then $r=\sigma_i(\bar{u}_1)$ is in $E_n$, as witnessed by $t_1=\sigma_i(t_1)$, $r\le u$, $r\rest i\in G_i$ and $r\in\ran(\sigma_i)$, verifying the predensity condition.
\end{proof}

\subsection{Nice iterations of $\infty$-subproper $\oo$-bounding forcing}
\label{subsec:NiceIterationsOfOO-bounding}

In this section we will prove that $\infty$-subproper and $\oo$-bounding forcing notions are nicely iterable. Throughout, by a real we will mean a member of Baire space. We will use the concepts of a descending sequence of conditions in a poset \emph{interpreting} a name for a real, the sequence \emph{respecting} a real (in its interpretation of the given name for a real), and derived sequences, as outlined in Subsection \ref{subsec:NicelySubPiterationsOfOOboundingForcing} and described in detail in \cite[Section 3]{Abraham:ProperForcing}.

\begin{thm}
\label{thm:NiceIterationsOfOOBoundingInftySPforcing}
Nice iterations of $\oo$-bounding $\infty$-subproper forcing notions are $\oo$-bounding.
\end{thm}

\begin{proof}
Let $\seq{\P_i}{i\le\lambda}$ be a nice iteration of $\infty$-subproper, $\oo$-preserving forcing posets.

Let $\phi(j)$ be the statement that $\P_j$ is $\oo$-preserving and $\infty$-subproper. We prove $\phi(j)$ by induction on $j\le\lambda$. So suppose $\phi(\bar{j})$ holds for every $\bar{j}<j$.

We have to show that $\P_j$ is $\oo$-preserving. This follows trivially from the inductive hypothesis if $j$ is a successor ordinal. So assume that $j$ is a limit ordinal.

Fix a condition $q\in\P_j$ and a $\P_j$-name $\dot{x}$ such that $q$ forces that $\dot{x}$ is a real (in Baire space). We have to find an extension of $q$ that forces $\dot{x}$ to be bounded by some real in $\V$.

To this end, let $\theta$ be sufficiently large, $H_\theta\sub L_\tau^A\models\ZFCm$, $\theta<\tau$, $\sigma:\bN\prec N$ with $\dot{x},p,\vec{\P},j\in\ran(\sigma)$, where $\bN$ is countable, transitive and full. Use the bar notation for the preimages of these objects under $\sigma$. To emphasize the similarity to earlier constructions, the reader may think of $p=\leer$, the only member of $\P_0$ (which could also be denoted $1_0$), and $i=0$.

In $\bN$, pick some $\le_{\bar{\P}_{\bar{j}}}$-descending sequence of conditions $\seq{\bar{s}_k}{k<\omega}$ with $\bar{s}_0\le\bar{q}$ which interprets $\dot{\bar{x}}$ as some real $x_0$.

Fix a real $y$ to that for every $x\in\oo\cap\bN$, $x\le^* y$, and so that $x_0\le_0 y$.

We will find a condition $p^*$ extending $q$ in $\P_j$, such that $p^*$ forces that $\dot{x}\le_0\check{y}$.

Note that for any $\alpha<\beta<j$, $\P_\alpha$ and $\P_\beta$ are $\oo$-bounding. It follows that $\P_\alpha$ forces that $\P_{\alpha,\beta}$ is $\oo$-bounding. Hence, a variation of the proof of Lemma \ref{lem:oosubproperonesteplemma}, using Theorem \ref{thm:NiceIterationsOfSPforcingAreSP} instead of Lemma \ref{lem:SubpropernessExtensionLemma}, \cite[Lemma 3.3]{Abraham:ProperForcing} instead of Lemma \ref{lem:DerivedSequences} and again applying Corollary \ref{cor:Fullness}, yields the following version of Lemma \ref{lem:oosubproperonesteplemma} for the stages of the iteration:

\begin{itemize}
\item[$(*)$]
\label{item:oosubpropernessextension}
Let $\alpha\le\beta<j$. Fix some $s\in\bN$. Let $w\in\P_\alpha$, $q\in\P_j$ with $w\le_\alpha q\rest\alpha$.
Let $S=\kla{\theta,\vec{\P},\alpha,\beta,j,q,\dot{x},s}$. Let $\bar{q}\in\bN$ and let $\dot{\sigma}_\alpha$ be a $\P_\alpha$-name. Let
$\bar{S}=\kla{\bar{\theta},\vec{\bar{\P}},\balpha,\bbeta,\bar{j},\bar{q},\dot{\bar{x}},\bar{s}}\in\bN$ so that $w$ forces with respect to $\P_\alpha$:
\begin{enumerate}[label=(A\arabic*)]
\item $\dot{\sigma}_\alpha:\check{\bN}\prec\check{N}$,
\item $\dot{\sigma}_\alpha(\check{\bS})=\check{S}$,
\item $\dot{\sigma}_\alpha^{-1}``\dot{G}_\alpha$ is $\check{\bN}$-generic for $\check{\bar{\P}}_{\balpha}$,
\item
\label{item:SequenceExists1}
Letting $M=\ran(\dot{\sigma}_\alpha)$, there is in $M[\dot{G_\alpha}]$ a decreasing sequence of conditions in $\P_{\alpha,j}$, below $\dot{q}\rest[\alpha,j)$, which interprets $\check{\dot{x}}$ and respects $\check{y}$.
\end{enumerate}

\noindent{\bf Then:} there are a condition $w^*\in\P_\beta$ with $w^*\rest\alpha=w$ and $w^*\le q\rest\beta$, and a $\P_\beta$-name $\dot{\sigma}_\beta$ such that whenever $G_\beta$ is $\P_\beta$-generic with $c\in G_\beta$, letting $\sigma_\beta=\dot{\sigma}_\beta^{G_\beta}$, $G_\alpha=G_\beta\rest\alpha$ and $\sigma_\alpha=\dot{\sigma}_\alpha^{G_\alpha}$, the following conditions hold:
\begin{enumerate}[label=(C\arabic*)]
\item
\label{item:FirstConsequence**}
 $\sigma_\beta:\bN\prec N$,
\item $\sigma_\beta(\bS)=S$,
\item
\label{item:ThirdConsequence**}
 $\sigma_\beta^{-1}``G_\alpha=\sigma_\alpha^{-1}``G_\alpha$ and ${\sigma_\beta}^{-1}``G_\beta$ is $\bar{\P}_{\bar{\beta}}$-generic over $\bar{N}$.
\item
\label{item:LastConsequence**}
Letting $M=\ran(\sigma_\beta)$, there is in $M[G_\beta]$ a decreasing sequence $\vc$ of conditions in $\P_{\beta,j}$ below $q\rest[\beta,j)$ which interprets $\dot{x}$ and respects $y$.
\end{enumerate}
\end{itemize}

\begin{proof}[Proof of $(*)$.] We give an argument, for completeness.
Let $G_\alpha$ be $\P_\alpha$-generic with $w\in G_\alpha$. Let $\sigma_\alpha=\dot{\sigma}_\alpha^{G_\alpha}$ and let $M=\ran(\sigma_\alpha)$. Let $\bar{G}_{\bar{\alpha}}=\sigma_\alpha^{-1}``G_\alpha$, and let $\sigma^*_\alpha:\bN[\bar{G}_{\bar{\alpha}}]\prec N[G_\alpha]$ be the unique elementary embedding that maps $\bar{G}_{\bar{\alpha}}$ to $G_\alpha$. Observe that the reals of $\bN[\bar{G}_{\balpha}]$ are eventually bounded by $g$, as are the reals of $M[G_\alpha]$. Let $\dot{x}/G_\alpha$ be the canonical $\P_{\alpha,j}$-name such that if $G_{\alpha,j}$ is $\P_{\alpha,j}$-generic, then $(\dot{x}/G_\alpha)^{G_\alpha,j}=\dot{x}^{G_\alpha*G_{\alpha,j}}$. It follows from \ref{item:SequenceExists1} that there is in $M[G_\alpha]$ a decreasing sequence $\vec{r}$ in $\P_{\alpha,j}$ below $q\rest[\alpha,j)$ which interprets $\dot{x}/G_\alpha$ and respects $y$. Applying \cite[Lemma 3.3]{Abraham:ProperForcing} to $M[G_\alpha]$, the orderings $\P_{\alpha,\beta}$ and $\P_{\alpha,j}$ and the condition $q\rest[\alpha,j)$, there is a condition $d\in\P_\beta$  with $d\rest\alpha\in G_\alpha$ (so $d\rest[\alpha,\beta)\in\P_{\alpha,\beta}$) such that $d\rest[\alpha,\beta)\in M[G_0]$ and $d\rest[\alpha,\beta)\le_{\alpha,\beta}q\rest[\alpha,\beta)$, with the key property that $d\rest[\alpha,\beta)$ forces with respect to $\P_{\alpha,\beta}$ that there is in $M[G_\alpha][\dot{G}_{\alpha,\beta}]$ a decreasing sequence $\vec{r}$ in $\P_{\beta,j}$ below $q\rest[\alpha,j)$ which interprets $\dot{x}$ and respects $y$; in fact the derived sequence $\delta_{G_\beta}(\vec{r},\dot{x})$ is such a sequence. There is a name for $d$, forced by $w$ to have all the properties listed, but since $w$ then also forces that $d\rest\alpha\in\dot{G}_\alpha$, it follows by Corollary \ref{cor:Fullness} that this name can be identified with a condition in $\P_\beta$ such that $d\rest\alpha=w$. It follows that $\bar{d}=\sigma_\alpha^{-1}(d)\in\bar{\P}_{\bar{\beta}}$.

Let $\dot{\vec{r}}\in M$ be a $\P_\alpha$-name for the sequence $\vec{r}$, and let $\dot{\vec{\bar{r}}}$ be its preimage under $\sigma_\alpha$. I.e., $\dot{\vec{r}}^{G_\alpha}=\vec{r}$ and $\sigma_\alpha(\dot{\vec{\bar{r}}})=\dot{\vec{r}}$.

Let $w'\in G_\alpha$, $w'\le w$, be such that $w'\forces\dot{\sigma}_\alpha(\check{\bar{d}})=\check{d}$ and $\dot{\sigma}_\alpha((\dot{\vec{\bar{r}}})\check{})=(\dot{\vec{r}})\check{}$, and such that $w'$ forces ``the interpretation of $(\dot{\vec{r}})\check{}$ is a decreasing sequence as described''.

Note that $w'\le w=d\rest\alpha$. By Theorem \ref{thm:NiceIterationsOfSPforcingAreSP}, there is a $\tilde{w}\in\P_\beta$ with $\tilde{w}\le_\beta d$ (so $\tilde{w}\rest[\alpha,\beta)\le q\rest[\alpha,\beta)$) and $\tilde{w}\rest\alpha=w'$ such that whenever $H_\beta$ is $\P_\beta$-generic over with $\tilde{w}\in H_\beta$, letting $H_\alpha=H\rest\alpha$, there is in $\V[H_\beta]$ an elementary embedding $\sigma_\beta:\bN\prec N$ satisfying conditions \ref{item:FirstConsequence**}-\ref{item:ThirdConsequence**} (using $\sigma_\alpha=\dot{\sigma}_\alpha^{H_\alpha}$ here), such that also $\sigma_\alpha(\bar{d})=d=\sigma_\beta(\bard)$ and $\sigma_\alpha(\dot{\vec{\bar{r}}})=\dot{\vec{r}}$.

Now let us let $G_{\alpha,\beta}$ be $\P_{\alpha,\beta}$-generic over $\V[G_\alpha]$, with $\tilde{w}\in G_\alpha*G_{\alpha,\beta}=G_\beta$. Note that $d\rest[\alpha,\beta)\in H_{\alpha,\beta}$, since $\tilde{w}\le d$. So by what $d\rest[\alpha,\beta)$ forced,
	the derived sequence $\delta_{G_\beta}(\dot{\vec{r}}^{G_\alpha},\dot{x})$ is below $q\rest[\beta,j)$, interprets $\dot{x}$ and respects $y$. Moreover, it is in $\ran(\sigma_\beta)[G_\beta]$.

In particular, $\tilde{w}$ forces the existence of an embedding $\sigma_\beta$ with all the desired properties \ref{item:FirstConsequence**}-\ref{item:LastConsequence**}. Since such a $\tilde{w}$ exists, no matter what $G_\alpha$ is (as long as $w\in G_\alpha$), there is a $\P_\alpha$-name $\dot{\tilde{w}}$ such that $w$ forces that $\dot{\tilde{w}}\rest\alpha\in G_\alpha$, and that $\dot{\tilde{w}}\rest[\alpha,j)$ forces the existence of such an embedding $\sigma_\beta$. Furthermore, $w$ forces that $\dot{\tilde{w}}\rest[\alpha,\beta)\le q\rest[\alpha,\beta)$. But, again by Corollary \ref{cor:Fullness}, there is then a $w^*\in\P_\beta$ such that $w^*\rest\alpha=w$ and such that $w$ forces that $w^*\rest[\alpha,\beta)\equiv\dot{\tilde{w}}\le q\rest[\alpha,\beta)$. This is the $w^*$ we were looking for.
\end{proof}

As before, let $\seq{t_n}{n < \omega}$ enumerate the elements of $\bN$, starting with $t_0=\leer$. Since we already know that the iteration is $\infty$-subproper, we don't have to worry about meeting any dense sets, so we don't need to fix an enumeration of the dense open subsets of $\bar{\P}_{\bar{j}}$ in $\bN$ this time. But we again let $\bar{W}$ be a nested antichain in $\bar{\vec{\P}}\rest{\bar{j}}$ such that $\bar{q}$ is a mixture of $\bar{W}$ up to $\bar{j}$, whose root has length $\bar{i}=0$, $<_{\bN}$-minimal with this property, and we let $W$ be the $<_N$-least nested antichain in $\vec{\P}\rest j$ whose root has length $i$ and such that $q$ is a mixture of $W$ up to $j$.

Much like in the $\infty$-subproperness iteration theorem, we will construct a fusion structure whose fusion can serve as $p^*$. Specifically, we construct a sequence
\[\seq{q^{(a,n)}, T^{(a, n)}
, \dot{\sigma}^{(a, n)}}
{n < \omega \; {\rm and} \; a \in T_n}\]
which satisfies all the properties 
listed in the proof of Theorem \ref{thm:NiceIterationsOfSPforcingAreSP}, with two extra requirements (items \ref{item:ExtraStuffOn x^(a,n)} and \ref{item:ExtraStuffOnxdot}) regarding the interpretation of $\dot{x}$ (but dropping the requirement regarding the enumeration of dense open sets in $\bN$):
\begin{enumerate}[label=(\arabic*)]
\item
\label{claim:Start2OOB}
$T_0 = \{p\}$, $q^{(p,0)}=q$, $T^{(p,0)}=W$, $\dot{\sigma}^{(p,0)} = \check{\sigma}$.
\end{enumerate}
Further, for any $n<\omega$ and $a\in T_n$:
\begin{enumerate}[label=(\arabic*)]
\setcounter{enumi}{1}
\item 
\label{item:middlepart-beginning2OOB}
$a\in\P_{l(a)}$, where $l(a)<j$. \\
$q^{(a, n)}\in \P_j$, $q^{(a, n)}\le_j q$ 
and
$\dot{\sigma}^{(a, n)}$ is a $\P_{l(a)}$-name.\\
\item 
\label{item:sigma(a,n)2OOB}
$a$ forces the following statements with respect to $\P_{l(a)}$:
\begin{enumerate}[label=(\alph*)]
\item
\label{subitem:iselementary2OOB}
$\dot{\sigma}^{(a, n)} : \check{\bN}\prec \check{N}$.
\item
\label{subitem:moveseverythingcorrectly2OOB}
$\dot{\sigma}^{(a, n)}(\check{\bar{\theta}}, \check{\bar{\vec{\P}}}, 
\check{\bar{i}},\check{\bar{j}}, \check{\bar{q}}
) = \check{\theta}, \check{\vec{\P}}, \check{s}, \check{i}, \check{j}, \check{q}
$.
\item
\label{subitem:hasx(a,n)initsrange2OOB}
$q^{(a,n)}\in\ran(\dot{\sigma}^{(a,n)})$. 
\end{enumerate}
\item 
\label{item:middlepart-end2OOB}
There are $\bar{T}^{(a,n)}$, $\overline{l(a)}$ 
which are forced by $a$ to be the preimages of $T^{(a,n)}$, $l(a)$ 
under $\dot{\sigma}^{(a,n)}$, in the sense that $a\forces\dot{\sigma}^{(a,n)}((\bar{T}^{(a,n)})\check{})=(T^{(a,n)})\check{}$. 
Moreover, $a$ forces that
$(\dot{\sigma}^{(a,n)})^{-1}``\dot{G}_{l(a)}$ is $\bar{\P}_{\overline{l(a)}}$-generic over $\bN$.
\item
\label{item:ExtraStuffOn s_k^(a,n)}
\label{item:ExtraStuffOn x^(a,n)}
$a$ forces wrt.~$\P_{l(a)}$ that in the range of $\dot{\sigma}^{(a,n)}$ there is a decreasing sequence of conditions in $\P_{l(a),j}$  below $q^{(a,n)}\rest[l(a),j)$,
such that the sequence interprets $\dot{x}$ and respects $y$.
\end{enumerate}
\noindent For $b \in {\rm suc}^n_T (a)$:
\begin{enumerate}[label=(\arabic*)]
\setcounter{enumi}{5}
\item 
\label{item:coherence2OOB}
$b$ forces with respect to $\P_{l(b)}$ that \[(\dot{\sigma}^{(b,n+1)})^{-1}``\dot{G}_{l(a)}=(\dot{\sigma}^{(a,n)})^{-1}``\dot{G}_{l(a)}.\]
Further, if $m\le n$, then
\[b \forces_{l(b)} \dot{\sigma}^{(b, n+1)}(\check{t}_m) = \dot{\sigma}^{(a, n)}(\check{t}_m).\]
Also, let $<_T$ be the transitive closure of the order $<'_T$ on $\{\kla{c,k}\st k<\omega\land c\in T_k\}$ defined by $\kla{c,k}<'_T\kla{d,l}$ iff $l=k+1$ and $d\in\suc_T^k(c)$. Then, if $\kla{c,m}\le_T\kla{a,n}$, we have that \[b\forces\dot{\sigma}^{(b,n+1)}(\bar{q}^{(c,m)})=\dot{\sigma}^{(a,n)}(\bar{q}^{(c,m)}).\]
\item 
\label{claim:LastOfFirstList2OOB}
There are $\bar{q}^{(b,n+1)}$, $\bar{T}^{(b,n+1)}$ such that\\
$b\rest l(a)\forces_{l(a)}
\dot{\sigma}^{(a,n)}((\bar{q}^{(b,n+1)})\check{},(\bar{T}^{(b,n+1)})\check{})=
(q^{(b,n+1)})\check{},(\bar{T}^{(b,n+1)})\check{}$.
\item
\label{item:ExtraStuffOnxdot}
$q^{(b,n+1)}$ decides $\dot{x}\rest n$, and
$q^{(b, n+1)}\forces_j\dot{x}\rest n \leq_0 \check{y} \rest n$.
\end{enumerate}

Further, as before, in order to ensure that we end up constructing a fusion sequence, we require that $T^{(a,n)}$ is a nested antichain in $\vec{\P}\rest j$, that $q^{(a,n)}$ is $(T^{(a,n)},j)$-nice, that $l(\Root(T^{(a,n)}))=l(a)$, that $a\le q^{(a,n)}\rest l(a)$
 and that for $b\in\suc^n_T(a)$, $T^{(b,n+1)}\hooks T^{(a,n)}$ and $b\rest l(a)\le a$.

Supposing we can construct such a sequence and letting $p^*$ be a fusion of the fusion sequence, then in any extension by a generic filter $G$ containing $p^*$, it follows that $\dot{x}^G \leq_0 y$, because in $\V[G]$, there is a sequence $\seq{a_n}{a<n}$ such that for all $n<\omega$, $q^{(a_n,n)}\in G$. So by \ref{item:ExtraStuffOnxdot}, $\dot{x}^G\rest n\le_0y\rest n$ for every $n$. We don't have to worry about piecing together the embeddings $\sigma^{(a_n,n)}$ as before, because we already know that $\P_j$ is $\infty$-subproper.

Thus it suffices to show that such a sequence can be constructed. This is done by recursion on $n<\omega$. The case $n=0$ is given by \ref{claim:Start2OOB}. For the inductive step, suppose for some $n<\omega$ we have constructed  $q^{(a,n)}, T^{(a, n)}$ and $\dot{\sigma}^{(a, n)}$ for some $a \in T_n$ satisfying \ref{claim:Start2OOB}-\ref{item:ExtraStuffOn x^(a,n)} and ${\rm suc}^m_T$ has been defined for all $m<n$. Assume also $q^{(a, n)}$ decides $\dot{x}\rest(n-1)$ (this is for free if $n=0$ and it follows from \ref{item:ExtraStuffOnxdot} at stage $n$ if $n>0$). We have to define $\suc_T^n(a)$. To this end, let $D$ be the set of all $b\in\bigcup_{l(a)\le\xi<j}\P_\xi$ such that there are a nested antichain $S$ in $\vec{\P}\rest j$ as well as objects $\dot{\sigma}^b$, $u$, $\bar{u}$, $\bar{S}$, $\overline{l(b)}$, 
satisfying the following conditions:

\begin{enumerate}[label=(D\arabic*)]
\item
\label{item:FirstConditionDefiningPredenseSet2OOB}
$b\rest l(a)\le a$.
\item
$S \hooks T^{(a, n)}$, $\bS\in\bN$, $S\in N$.
\item
$u\in \P_j$, $u \leq q^{(a,n)}$ and $u$ is a mixture of $S$ up to $j$.
\item
\label{item:EnsuringGettingAFusionStructure3}
$b\le_{l(b)}u\rest l(b)$ and $l(b)=l(\Root(S))$.
\item
\label{item:InitialSegmentOfbForcesStuffIntoRangeOfTheOldEmbedding2OOB}
$b\rest l(a)\forces_{l(a)}\dot{\sigma}^{(a,n)}(\check{\bar{S}},\check{\bar{u}},(\overline{l(b)})\check{})=\check{S},\check{u},(l(b))\check{}$.
\item
\label{item:LastConditionDefiningPredenseSet2OOB}
$\dot{\sigma}^b$ is a $\P_{l(b)}$-name, and
$b$ forces with respect to $\P_{l(b)}$:
\begin{enumerate}
\item $\dot{\sigma}^b: \check{\bN}\prec \check{N}$.
\item $\dot{\sigma}^b (\check{\overline{\theta}},\check{\bar{i}},\check{\bar{j}},\check{\bar{\vec{\P}}}, \check{\overline{s}},\check{\overline{u}},\check{\bS}, \check{\bar{q}},(\overline{l(b)})\check{}
    ) = \check{\theta},\check{i},\check{j},\check{\vec{\P}},\dot{\sigma}(\bs), \check{u},\check{S}, \check{q},(l(b))\check{}
    $,
    and\\
    $\forall m \leq n\forall c\quad$ $\dot{\sigma}^b(t_m) = \dot{\sigma}^{(a, n)}(t_m)$ 
    and \\ \hspace*{13ex}$\kla{c,m}\le_T\kla{a,n}\To\dot{\sigma}^b(\bar{q}^{(c,m)})=\dot{\sigma}^{(a,n)}(\bar{q}^{(c,m)})$.
\item $(\dot{\sigma}^b)^{-1}``\dot{G}_{l(a)}=(\dot{\sigma}^{(a,n)})^{-1}``\dot{G}_{l(a)}$ and $(\dot{\sigma}^b)^{-1}``\dot{G}_{l(b)}$ is $\bar{\P}_{\overline{l(b)}}$-generic over $\bN$.
\end{enumerate}
\item
\label{item:D-ExtraStuffOn s_k^(a,n)}
\label{item:D-ExtraStuffOn x^(a,n)}
$b$ forces with respect to $\P_{l(b)}$ that in the range of $\dot{\sigma}^b$ there is sequence $\vec{q'}$ of conditions in $\P_j$, such that $\vec{q'}/\dot{G}_{l(b)}$ is decreasing in $\P_{l(b),j}$, below $u/\dot{G}_{l(b)}$, interprets $\dot{x}$ and respects $y$.
\item
\label{item:D-ExtraStuffOnxdot}
$u$ decides $\dot{x}\rest n$, and
$u\forces_j\dot{x}\rest\check{n}\le_0\check{y}\rest\check{n}$.
\end{enumerate}

Let $\alpha=l(a)$. As in the proofs of the two iteration theorems, $D\rest\alpha$ is open in $\P_\alpha$, and it suffices to show that $D\rest\alpha$ is predense below $a$ in $\P_\alpha$.

To prove that $D\rest\alpha$ is predense below $a$, let $G_\alpha$ be a $\P_\alpha$-generic filter with $a\in G_\alpha$. Work in $\V[G_\alpha]$. Let $\sigma_n$ be the evaluation of $\dot{\sigma}^{(a, n)}$ by $G_\alpha$. We have that $\sigma_n:\bN\prec N$ moves the various parameters we care about correctly, $\bar{G}_{\balpha}=\sigma_n^{-1}``G_{l(a)}$ is $\bar{\P}_{\balpha}$-generic over $\bN$, and in $\ran(\sigma_n)$ there is a sequence $\vec{q}$ of conditions in $\P_j$, below $q^{(a,n)}$, such that $q_m\rest\alpha\in G_{\alpha}$ for each $m<\omega$, decreasing in $\P_{\alpha,j}$, interpreting $\dot{\bar{x}}$, and respecting $y$. Let $q_n$ be the $n$-th element of this sequence, which decides $\dot{x}\rest n$, and let $\bar{q}_n$ be its preimage under $\sigma_n$. Let $v:n\To\omega$ be such that $q_n\forces\dot{x}\rest n=\check{v}$.

By elementarity, $\overline{q}^{(a, n)}$ is a mixture of $\overline{T}^{(a, n)}$ up to $\overline{j}$.
Let $\bT_0^{(a,n)}=\{\ba_0\}$. Let $a_0=\sigma_n(\ba_0)$. We know that $a\le q^{(a,n)}\rest\alpha$, so $q^{(a,n)}\rest\alpha\in G_\alpha$, so $\bar{q}^{(a,n)}\rest\balpha\in\bar{G}_{\balpha}$. By Fact \ref{fact:CharacterizationOfMixtures}.\ref{item:Root}, $\bar{q}^{(a,n)}\rest l(\ba_0)\equiv\ba_0$.
Since we have ensured that $\alpha=l(\Root(T^{(a,n)}))$, we have by elementarity that $\balpha=l(\bar{a}_0)$.
So $\ba_0\in\bar{G}_{\balpha}$.
Let $\bar{r}\in\bT_1^{(a,n)}$ be such that $\bar{r}\rest\balpha\in\bG_{\balpha}$.
Let $\bar{\beta}=l(\bar{r})$ and $\beta=\sigma_n(\bar{\beta})$.
Let $\bar{u}\in\bP_{\bar{j}}$ strengthen $\tilde{q}=\bar{r}^{\frown}\bar{q}_n\rest [{l(\bar{r})},\bar{j})$ so that $\bar{u}\rest\balpha\in\bG_{\balpha}$.

Let $\tilde{u}\in\bar{G}_{\balpha}$ with $\tilde{u}\le\bar{r}\rest\balpha,\bar{q}_n\rest\balpha$ (as both of these conditions are in $\bar{G}_\balpha$), and then set $\bar{u}=\tilde{u}\verl\bar{q}_n\rest[\balpha,\bar{j}]$. So we have:
\[\bar{u}\rest\balpha\in\bar{G}_\balpha,\ \bar{u}\le_{\bar{j}}\bar{q}_n, \ \bar{u}\rest\balpha\le\bar{r}\rest\balpha\ \text{and}\ \bar{u}\rest[\balpha,\bar{j})\equiv\bar{q}_n\rest[\balpha,\bar{j}).\]
By Lemma \ref{2.11}, applied in $\bN$, we can pick a nested antichain $\bar{S} \hooks \bar{T}^{(a, n)}$ such that $\bar{u}$ is a mixture of $\bS$ up to $\bar{j}$ and such that letting $\bS_0 = \{\bd_0\}$, we have that  $l(\bar{r}),\bar{l(a)}\leq l(\bd_0)$ and $\bd_0 \rest l(\bar{r}) \leq \bar{r}$.
Moreover, we have that $\bar{u}\rest l(\bd_0)\equiv\bd_0$, since $\bar{u}$ is a mixture of $\bS$ up to $\bar{j}$. In particular, $\bd_0\rest\overline{l(a)}\in\bG_{\overline{l(a)}}$.

Let $S,d_0,u,q_n=\sigma_n(\bS,\bd_0,\bu,\bar{q}_n)$, and let $l(d_0) = \beta=\sigma_n(l(\bd_0))=\sigma_n(\bar{\beta})$.
We have that $u\rest l(d_0)\equiv d_0$. Note that $d_0\rest l(a)\in G_{l(a)}$. Also, let $\bar{s}'$ be a tuple of parameters in $\bN$ we want our new embedding to move the same way $\sigma_n$ does, and let $s'=\sigma(\bar{s}')$.

Let $w\in G_{l(a)}$ force these facts about $\dot{\sigma}^{(a,n)}$ and the check names for the parameters $\bar{S},\bar{d_0},\bu,\bs'$ and $S,d_0,u,s',v$. Since $a,d_0\rest l(a)\in G_{l(a)}$, we may choose $w$ so that $w\le_{l(a)}a,d_0\rest l(a)$.

Now we apply $(*)$ to 
$w$, $q=u$, $\bar{q}=\bar{u}$ and $\dot{\sigma}_\alpha=\dot{\sigma}^{(a,n)}$. Note here that the sequence $\seq{q_m\rest[\alpha,j)}{n\le m<\omega}$ is forced to be decreasing and below $u$ in $\P_{\alpha,j}$ by $w$.
We obtain a condition $b=w^*\in\P_\beta$ with $b\rest\alpha=w$ and $b\le u\rest\beta\equiv d_0$, and a $\P_\beta$-name $\dot{\sigma}'$ such that $b$ forces with respect to $\P_\beta$:
\begin{enumerate}[label=(\alph*)]
  \item
  $\dot{\sigma}'$ and $\dot{\sigma}^{(a,n)}$ move the parameters
  $\check{\bar{\vec{\P}}}$, $\check{l(\overline{a})}$, $\check{\overline{\beta}}$, $\check{\btheta}$, $\check{\bu}$, $\check{\bd}_0$, $\bar{s}'$, $\bar{q}_n$ and $\check{\bS}$ the same way.
  \item
  $(\dot{\sigma}'{}^{-1})``\dot{G}_\beta$ is generic over $\check{\bN}$.
  \item
  $(\dot{\sigma}'{}^{-1})``\dot{G}_{l(\alpha)}=(\dot{\sigma}^{(a,n)})^{-1}``G_{l(\alpha)}$.
  \item there is in $\ran(\dot{\sigma}')$ a sequence of conditions $\vec{q'}$ in $\P_j$ such that $\vec{q'}\rest\dot{G}_{l(b)}$ is decreasing and below $u/\dot{G}_{l(b)}$ in $\P_{l(b),j}$, that interprets $\dot{x}$ and respects $y$.
\end{enumerate}

Note that $w$ forced that $\dot{\sigma}^{(a,n)}(\bu,\bd_0,\bS)=u,d_0,S$ and hence, since $b\rest l(a)=w$, $b$ forces that $\dot{\sigma}'(\bu,\bd_0,\bS)=u,d_0,S$ as well. In addition, we may insist that $\sigma'$ moves all the required parameters the same way $\dot{\sigma}^{(a,n)}$ does (by listing them in $\bar{s}'$).

To finish, we claim that $b\in D$ as witnessed by $u$ and $\dot{\sigma}'$. The only new point that requires checking (compared to the proof of Theorem \ref{thm:NiceIterationsOfSPforcingAreSP}) is \ref{item:D-ExtraStuffOnxdot}: that $u$ decides $\dot{x}$ and forces that $\dot{\bar{x}}\rest n \leq_0 \check{y} \rest n$. But this is clear since $\bar{u}\le\bar{q}_n\forces_{\bar{j}}\dot{x}\rest n=\check{v}\le_0\check{y}\rest n$.
\end{proof}

\begin{remark}
In the second author's PhD thesis (written under the direction of the first author) a general version of Theorem \ref{thm:NiceIterationsOfOOBoundingInftySPforcing} is proved that implies that many preservation results on the reals for proper forcing hold for $\infty$-subproper forcing as well including the Sacks and Laver properties.
\end{remark}

\section{Applications}
\label{sec:Applications}

In this section we provide some applications of the preservation theorems proved in the previous two sections, as announced in the introduction. In what follows, one can choose to iterate using either RCS iterations or nice iterations. If using the latter, ``subcomplete'' and ``subproper'' can be replaced by their ``infinity'' versions.


The main technique used here is as follows. 
We start in a model with a supercompact cardinal.
Suppose we have a subproper forcing $\mathbb P$ which preserves some property that is also preserved by all subcomplete forcing (for instance not killing a fixed Souslin tree $S$). Then we can add $\mathbb P$ into the standard Baumgartner type iteration to produce a model of $\SCFA$ while preserving that property (so $S$ remains Souslin). Moreover, $\SCFA$ will be forced in the final model, but $\mathbb P$ makes some contribution as well.  


Recall that a forcing notion $\mathbb P$ is $\sigma$-linked if it can be written as the countable union, $\mathbb P = \bigcup_{n < \omega} \mathbb P_n$ where for each $n < \omega$ $\mathbb P_n$ consists of pairwise compatible elements. Note that $\sigma$-linked forcing notions are ccc. The following is well known.

\begin{proposition}
If $S$ is a Souslin tree and $\mathbb P$ is $\sigma$-linked then forcing with $\mathbb P$ does not kill $S$.
\end{proposition}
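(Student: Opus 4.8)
The plan is to show that forcing with a $\sigma$-linked poset $\mathbb P$ preserves the Souslinness of a given Souslin tree $S$, i.e. that $\forces_{\mathbb P}$``$\check S$ is Souslin.'' Since $\mathbb P$ is ccc, it preserves $\omega_1$, so it suffices to show that $\mathbb P$ does not add an uncountable antichain to $S$. Suppose toward a contradiction that some condition $p\in\mathbb P$ forces that $\dot A$ is an uncountable antichain in $\check S$. Write $\mathbb P=\bigcup_{n<\omega}\mathbb P_n$ where each $\mathbb P_n$ consists of pairwise compatible conditions.

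First I would, in $\V$, use this decomposition to read off a large antichain of $S$ already in the ground model. For each $\alpha<\omega_1$, fix (in $\V[\dot G]$) the $\alpha$-th element $\dot a_\alpha$ of $\dot A$; back in $\V$, for each $\alpha$ choose a condition $q_\alpha\le p$ deciding the value of $\dot a_\alpha$, say $q_\alpha\forces \dot a_\alpha=\check s_\alpha$ for some $s_\alpha\in S$, and choose $n_\alpha<\omega$ with $q_\alpha\in\mathbb P_{n_\alpha}$. By the pigeonhole principle there is a fixed $n$ and an uncountable $X\subseteq\omega_1$ with $q_\alpha\in\mathbb P_n$ for all $\alpha\in X$. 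Now comes the key step: I claim $\{s_\alpha\mid\alpha\in X\}$ is an antichain in $S$ in $\V$. Indeed, for distinct $\alpha,\beta\in X$, the conditions $q_\alpha,q_\beta$ both lie in $\mathbb P_n$, hence are compatible, so there is $r\le q_\alpha,q_\beta$; then $r\forces\check s_\alpha,\check s_\beta\in\dot A$, and since $r\forces$``$\dot A$ is an antichain in $\check S$'', $r$ forces $s_\alpha$ and $s_\beta$ to be incomparable in $\check S$. As incomparability of two fixed nodes of a ground-model tree is absolute, $s_\alpha\perp_S s_\beta$ in $\V$ (note the $s_\alpha$ are also pairwise distinct, since comparable-or-equal nodes can't be $S$-incomparable). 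Hence $\{s_\alpha\mid\alpha\in X\}$ is an uncountable antichain of $S$ in $\V$, contradicting that $S$ is Souslin.

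The only mild subtlety — and the step I would be most careful about — is the bookkeeping that produces uncountably many \emph{distinct} nodes $s_\alpha$ with conditions landing in a single linked piece $\mathbb P_n$: one must make sure the $\alpha\mapsto(q_\alpha,s_\alpha,n_\alpha)$ assignment is genuinely carried out in $\V$ (using that $\dot A$ being forced uncountable means that for each $\alpha$ there is, densely below $p$, a condition deciding a node of $\dot A$ beyond level $\alpha$, say, and that there is a canonical enumeration of $\dot A$ in the extension which pulls back appropriately). There is no real obstacle here; it is the standard ``reading an antichain into the ground model via the linked decomposition'' argument, and everything else is absoluteness of incomparability in a fixed tree plus $\mathrm{ccc}$-ness to preserve $\omega_1$. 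So this step, while routine, is where all the content sits.
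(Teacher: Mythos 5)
Your proof is correct and is essentially the argument in the paper: both use the decomposition $\mathbb P=\bigcup_n\mathbb P_n$ to pull the forced antichain back into the ground model via pairwise compatibility and absoluteness of incomparability. The only cosmetic difference is that the paper sidesteps your enumeration-plus-pigeonhole bookkeeping by directly defining, for each $n$, the ground-model set $A_n=\{s\in S\mid\exists q\le p,\ q\in\mathbb P_n,\ q\forces\check s\in\dot A\}$, observing each $A_n$ is an antichain (hence countable) and that $p$ forces $\dot A\subseteq\bigcup_n A_n$.
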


\begin{proof}
Let $\mathbb P$ and $S$ be as in the statement and since $\mathbb P$ is $\sigma$-linked it can be written as $\bigcup_{n < \omega} \mathbb P_n$. Now suppose $\dot{A}$ names a maximal antichain in $S$ and suppose $p \in \mathbb P$ forces that $\dot{A}$ is uncountable. For each $n < \omega$ let $A_n = \{s \in S \; | \; \exists q \leq p \, q \in \mathbb P_n \, q\forces \check{s} \in \dot{A}\}$. Since each $\mathbb P_n$ consists of pairwise compatible elements, it follows that each $A_n$ is an antichain (in $\V$). Therefore it's countable. But that means that $\bigcup_{n < \omega} A_n$ is countable i.e. the set of all $s\in S$ so that there is some condition stronger than $p$ forcing $s$ to be in $\dot{A}$ is countable, which contradicts the fact that $p$ forced $\dot{A}$ to be uncountable.
\end{proof}

In the following, we will treat both \SCFA, the subcomplete forcing axiom, and its bounded version $\BSCFA$. \SCFA states that if $\P$ is a subcomplete forcing and $\seq{D_i}{i<\omega_1}$ is a sequence of dense subsets of $\P$, then there is a filter $F\sub\P$ such that for every $i<\omega_1$, $F\cap D_i\neq\emptyset$. The bounded version of the axiom, denoted \BSCFA, is the weaker form of the axiom stating that whenever $\P$ is a subcomplete forcing and $\seq{A_i}{i<\omega_1}$ is a sequence of maximal antichains in $\P$, each of which has size at most $\aleph_1$, then there is a filter $F\sub\P$ such that for every $i<\omega_1$, $F\cap A_i\neq\emptyset$. This axiom was originally introduced for proper forcing by Goldstern and Shelah \cite{GoldsternShelah:BPFA}, where the consistency strength of the bounded proper forcing axiom was shown to be exactly a reflecting cardinal. In Fuchs \cite{Fuchs:HierarchiesOfForcingAxioms}, the version for subcomplete forcing was analyzed and shown to have the same consistency strength.
In all the applications below $\SCFA$ and $\BSCFA$ could be replaced by their ``$\infty$'' versions however, since we do not know if these statements are equivalent or not we leave this out.

\begin{thm}
Assume that $\kappa$ is supercompact. Then there is a $\kappa$-length iteration $\mathbb P_\kappa$ of subproper forcing notions so that if $G\subseteq \mathbb P_\kappa$ is generic over $\V$ then in $\V[G]$ there are Souslin trees, $\mathfrak{c} = \aleph_2$ and $\SCFA$ holds. If $\kappa$ is only a reflecting cardinal then the same conclusion holds true with $\SCFA$ replaced by $\BSCFA$, the bounded subcomplete forcing axiom.
\end{thm}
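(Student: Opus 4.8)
The plan is to run the standard Baumgartner-style iteration for producing a model of a forcing axiom, but to insert one extra forcing notion designed to add a Souslin tree and to blow up the continuum, using the iteration theorems from the previous sections to ensure that the tree is not subsequently killed. Concretely, I would start in $\V$ with $\kappa$ supercompact, fix a Laver function $\ell:\kappa\to V_\kappa$ for $\kappa$, and define a $\kappa$-length (RCS, or nice) iteration $\vec{\B}=\seq{\B_i}{i<\kappa}$ of subproper (or $\infty$-subproper) forcing notions that are \emph{Souslin tree preserving}, so that by Theorem~\ref{thm:RCSIteratingSubproperSouslinTreePreservingForcing} (respectively Theorem~\ref{thm:NicelyIteratingSubproperT/SouslinnessPreserving}) the class is closed under the iteration and every quotient $\B_\kappa/G_i$ is subproper and Souslin tree preserving. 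At stage $0$ I would force with $\mathrm{Add}(\omega_1,1)$ (the forcing to add a Souslin tree $S$ by finite conditions, or the standard $\diamondsuit$-style forcing with countable conditions), which is $\sigma$-closed, hence subcomplete, hence subproper and Souslin tree preserving; this puts a Souslin tree $S$ into $\V[G_1]$. The point of using the class of \emph{all} Souslin tree preserving subproper forcings, rather than just preserving the fixed $S$, is that $S$ could be killed by later forcing only if some iterand fails to preserve some Souslin tree that appeared earlier — and we forbid exactly that.

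The bookkeeping then proceeds as in Baumgartner's proof of the consistency of $\PFA$: using the Laver function $\ell$, at each stage $i<\kappa$ one arranges that if $\ell(i)$ is (a name for) a subcomplete forcing notion $\dot{\Q}$ together with a sequence of $\aleph_1$ dense subsets, then $\B_{i+1}/G_i$ is chosen to be $\dot{\Q}$ (or a suitable quotient thereof), which is subcomplete, hence subproper, and — crucially — Souslin tree preserving, since by the observations recorded earlier in the paper every subcomplete forcing preserves Souslin trees. One must also verify the standard cardinal-arithmetic bookkeeping: the $\B_i$ have size ${<}\kappa$ (so $\delta(\B_i)<\kappa$), $\kappa$ remains inaccessible in the extension, $\B_\kappa$ is $\kappa$-c.c.\ by Fact~\ref{fact:lambdacc} (the direct-limit stages of uncountable cofinality are stationary, as in Case~2 of the proof of Theorem~\ref{thm:RCSIteratingSubproperSouslinTreePreservingForcing}), and the usual reflection argument using supercompactness shows that in $\V[G_\kappa]$ every subcomplete forcing with $\aleph_1$ dense sets has already been handled, giving $\SCFA$. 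Since Cohen forcing $\mathrm{Add}(\omega,1)$ is $\sigma$-linked, hence ccc and Souslin tree preserving (by the Proposition on $\sigma$-linked forcing immediately preceding the theorem), cofinally many stages add a Cohen real, forcing $2^{\aleph_0}\geq\aleph_2$; combined with the $\kappa$-c.c.\ and $\kappa=\omega_2^{\V[G_\kappa]}$ this yields $\mathfrak{c}=\aleph_2$. Thus in $\V[G_\kappa]$ we simultaneously have: $S$ is still Souslin (so Souslin's Hypothesis fails), $\mathfrak{c}=\aleph_2$, and $\SCFA$ holds.

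For the bounded version, I would run the same construction but with $\kappa$ merely a reflecting cardinal. Here there is no Laver function, so instead one iterates \emph{all} subcomplete (hence subproper, hence Souslin-tree-preserving) forcings of hereditary size ${<}\kappa$ with a fixed bookkeeping function enumerating names for such forcings together with sequences of $\aleph_1$-many maximal antichains of size $\leq\aleph_1$; the iteration again preserves $S$ and the $\kappa$-c.c.\ by the same facts. The reflecting cardinal $\kappa$ then suffices to reflect witnesses for instances of $\BSCFA$: any subcomplete forcing $\Q$ with an $\omega_1$-sequence of $\aleph_1$-sized maximal antichains can be replaced, for the purpose of finding a generic filter meeting those antichains, by a forcing of size $\leq\mathfrak{c}=\aleph_2<\kappa$ which appears at some stage of the iteration, and reflection gives the required filter in $\V[G_\kappa]$. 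This is the standard argument adapting Goldstern--Shelah's treatment of $\BPFA$ and Fuchs's of $\BSCFA$; the only novelty is checking that inserting the Souslin-tree-adding forcing $\mathrm{Add}(\omega_1,1)$ at the start and Cohen forcing cofinally does not disturb it, which follows because all these forcings lie in the Souslin-tree-preserving subproper class that the relevant iteration theorem keeps closed.

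The main obstacle I anticipate is not any single hard lemma — the iteration theorems do the heavy lifting — but rather the careful verification that the bookkeeping and cardinal arithmetic go through with the \emph{weakened} iterands, i.e.\ that demanding every iterand be Souslin tree preserving (not just $S$-preserving) does not obstruct catching every subcomplete forcing needed for $\SCFA$; this is fine precisely because subcomplete forcings are automatically Souslin tree preserving, but it must be stated. A secondary point requiring care is ensuring that the quotients $\B_\kappa/G_i$ used in the reflection argument are genuinely in the class — this is exactly what "standard RCS iterable" / "nicely iterable" delivers, so it reduces to citing Theorems~\ref{thm:RCSIteratingSubproperSouslinTreePreservingForcing}, \ref{thm:NicelyIteratingSubproperT/SouslinnessPreserving}, and the closure of ($\infty$-)subproper forcing under factors (Theorem~\ref{thm:FactorsOfInftySCForcingIsSC} and its subproper analog) in the nice-iteration case.
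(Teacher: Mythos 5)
Your proposal is correct and follows essentially the same route as the paper: a Baumgartner-style iteration guided by a Laver function (respectively, a reflecting-cardinal bookkeeping for \BSCFA), interleaving a $\sigma$-linked real-adding forcing to defeat \CH{} while the iteration theorems for Souslin-tree-preserving subproper forcing keep the tree alive; the paper merely arranges the Souslin tree in the ground model ``by forcing if necessary'' rather than at stage $0$. One small slip: $\mathrm{Add}(\omega_1,1)$ is the countable-condition ($\sigma$-closed) forcing, not a finite-condition one, but that is exactly the version you need and does not affect the argument.
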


As mentioned in the beginning of this section, the word ``iteration'' in the theorem can be interpreted either way discussed in this paper.

\begin{proof}
By forcing if necessary, assume first that in $\V$ that there is a Souslin tree $S$. Let $\kappa$ be supercompact. We will define a $\kappa$-length iteration, $\mathbb P_\kappa$ as follows: let $f: \kappa \to V_\kappa$ be a Laver function. At stage $\alpha$ if $f(\alpha) = (\dot{\mathbb P}, \mathcal D)$ is a pair of $\mathbb P_\alpha$ names such that $\dot{\mathbb P}$ is a subcomplete forcing notion and $\mathcal D$ is a $\gamma$-sequence of dense subsets of $\dot{\mathbb P}$ for some $\gamma < \kappa$ then let $\dot{\mathbb Q}_\alpha = \dot{\mathbb P}$ otherwise add a Cohen real. By the iteration theorems proved in the earlier sections, at limit stages either we can decide to take RCS limits, in which case we need to collapse each iterand to $\aleph_1$ as well, or else nice limits in the sense of the previous section. Either way, since every iterand is either subcomplete or proper, the entire iteration is subproper. Moreover, since subcomplete forcing doesn't kill Souslin trees, and neither does Cohen forcing, since it's $\sigma$-linked, the entire iteration doesn't kill $S$.

A standard $\Delta$-system argument shows that $\mathbb P_\kappa$ has the $\kappa$-c.c. but since $\mathbb P_\kappa$ collapses everything inbetween $\omega_1$ and $\kappa$, in the extension $\kappa = \omega_2$. Also Cohen reals are added unboundedly often there are $\kappa$ many new reals in the extension so $\kappa = 2^{\aleph_0}$. Finally the usual Baumgartner argument shows that $\SCFA$ must be forced as well. For a detailed proof of this in the subcomplete context see Jensen \cite[pp.65-66, Proof of Theorem 5]{Jensen2014:SubcompleteAndLForcingSingapore}. There Jensen checks the Baumgartner proof when no reals are added but it's easily seen to go through in this case as well.

For the case of $\BSCFA$ the proof is nearly identical, replacing the argument for $\SCFA$ from the Laver diamond by the one for $\BSCFA$ with a reflecting cardinal, see Fuchs \cite[Lemma 3.5]{Fuchs:HierarchiesOfForcingAxioms}.
\end{proof}

Following language used by Jensen in \cite{Jensen:FAandCH}, let us refer to any model obtained by performing the Baumgartner style iteration below a supercompact cardinal, by iterating forcings belonging to some forcing class $\Gamma$ as the \emph{natural model for the forcing axiom for $\Gamma$}. The previous theorem then shows that the natural model for the forcing axiom for the class of all forcings which are either subcomplete or Cohen forcing, satisfies $\SCFA + \mathfrak{c}=\omega_2 + \text{``there is a Souslin tree''}$, because Cohen forcing adds a Souslin tree, and that Souslin tree survives.

Observe that all that was used about Cohen forcing in the proof above is that it is $\sigma$-linked and adds a real. It follows that we could have ensured that we force with every $\sigma$-linked forcing the Laver diamond guessed as well. As a result essentially the same proof gives the following.

\begin{thm}
Assume that $\kappa$ is supercompact. Then there is a $\kappa$-length iteration $\mathbb P_\kappa$ of subproper forcing notions so that if $G\subseteq \mathbb P_\kappa$ is generic over $\V$ then in $\V[G]$ there are Souslin trees, $\mathfrak{c} = \aleph_2$, and both $\SCFA$ and $\MA_{\aleph_1} (\sigma {\rm -linked})$ hold. If $\kappa$ is only a reflecting cardinal then the same conclusion holds true with $\SCFA$ replaced by $\BSCFA$. As a result, $\SCFA + \MA_{\aleph_1} (\sigma{\rm - linked})$ does not imply $\MA$ (as $\MA$ implies Souslin's Hypothesis).
\end{thm}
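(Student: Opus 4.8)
The plan is to re-run the Baumgartner-style construction of the previous theorem, enlarging the class of forcings anticipated by the Laver function so as to meet the demands of $\MA_{\aleph_1}(\sigma\text{-linked})$ in addition to those of $\SCFA$. As before, one first forces (if necessary) to add a Souslin tree $S$ by a countably closed forcing, and fixes a Laver function $f\colon\kappa\to V_\kappa$. The iteration $\mathbb P_\kappa$ is defined so that at stage $\alpha$, if $f(\alpha)=(\dot{\mathbb Q},\mathcal D)$ where $\dot{\mathbb Q}$ is a $\mathbb P_\alpha$-name forced to be \emph{either} subcomplete \emph{or} $\sigma$-linked and $\mathcal D$ is a $\gamma$-sequence of dense subsets of $\dot{\mathbb Q}$ for some $\gamma<\kappa$, then $\dot{\mathbb Q}_\alpha=\dot{\mathbb Q}$; otherwise $\dot{\mathbb Q}_\alpha$ is Cohen forcing. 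Limits are taken as RCS limits (collapsing each iterand to $\aleph_1$, and interleaving collapses $\mathrm{Coll}(\omega_1,\lambda)$ as in the previous theorem) or as nice limits, as one prefers. In either case each iterand is subcomplete, $\sigma$-linked, or Cohen forcing, hence subproper (being subcomplete or proper) and preserving of the Souslinness of $S$ (subcomplete and countably closed forcing preserve Souslin trees, $\sigma$-linked forcing preserves Souslin trees as noted above, and Cohen forcing is $\sigma$-linked). So Theorem \ref{thm:RCSIteratingSubproperTPreservingForcing} (in the RCS case) or Theorem \ref{thm:NicelyIteratingSubproperT/SouslinnessPreserving}, respectively Lemma \ref{souslinpre} for the $\infty$-subproper version (in the nice-iteration case), applies, and $\mathbb P_\kappa$ is subproper and keeps $S$ Souslin; in particular $S$ is Souslin in $\V[G]$.

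The cardinal computations are verbatim those of the previous theorem: $\mathbb P_\kappa$ is $\kappa$-c.c.\ by a $\Delta$-system argument, it collapses every cardinal in the open interval $(\omega_1,\kappa)$, so $\kappa=\aleph_2$ in $\V[G]$, and it adds Cohen reals cofinally, so $\mathfrak c=\aleph_2$ there.

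The verification that $\SCFA$ holds in $\V[G]$ is the standard Baumgartner/Laver argument, exactly as in the previous theorem (see Jensen \cite[pp.~65--66]{Jensen2014:SubcompleteAndLForcingSingapore}); the additional $\sigma$-linked iterands do not disturb it. For $\MA_{\aleph_1}(\sigma\text{-linked})$ one argues: given a $\sigma$-linked $\mathbb Q\in\V[G]$ and dense sets $\langle D_i\st i<\omega_1\rangle\in\V[G]$, the countable chain condition lets one assume $|\mathbb Q|\le\aleph_1$, and then $\mathbb Q$, a partition of it witnessing $\sigma$-linkedness, and the sequence of $D_i$'s all belong to some $\V[G_\alpha]$ with $\alpha<\kappa$ (each has size $\le\aleph_1<\kappa$ and $\mathbb P_\kappa$ is $\kappa$-c.c.). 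Since the compatibility of two fixed conditions, and the density of a fixed subset, are upward absolute between the intermediate extensions, $\mathbb Q$ stays $\sigma$-linked and the $D_i$ stay dense in every $\V[G_\beta]$ with $\beta\ge\alpha$. The usual reflection argument with a supercompactness embedding $j$ such that $j(f)(\kappa)$ names $(\mathbb Q,\langle D_i\st i<\omega_1\rangle)$ then produces a stage $\beta\ge\alpha$ where $f(\beta)$ names that pair; since $\mathbb Q$ is still $\sigma$-linked there, stage $\beta$ forces with $\mathbb Q$, and the stage-$\beta$ generic is a filter on $\mathbb Q$ meeting each $D_i$. I expect the only real point of care to be that a single bookkeeping object handles both types of demand simultaneously; this is harmless, because ``subcomplete'' and ``$\sigma$-linked'' are both expressible in the relevant sense, and the $\sigma$-linked demands (posets of size $\le\aleph_1$) are strictly easier to catch than the subcomplete ones, so no genuinely new idea is needed beyond careful bookkeeping.

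Putting this together, $\V[G]$ satisfies: there is a Souslin tree, $\mathfrak c=\aleph_2$, $\SCFA$, and $\MA_{\aleph_1}(\sigma\text{-linked})$. Since $\MA$ implies Souslin's Hypothesis, $\MA$ fails in $\V[G]$, and hence $\SCFA+\MA_{\aleph_1}(\sigma\text{-linked})$ does not imply $\MA$. For the reflecting-cardinal statement, I would replace the supercompact/Laver argument for $\SCFA$ by the bounded argument for $\BSCFA$ from Fuchs \cite[Lemma 3.5]{Fuchs:HierarchiesOfForcingAxioms}, which needs only a reflecting cardinal; as a reflecting cardinal is inaccessible, the argument for $\MA_{\aleph_1}(\sigma\text{-linked})$ still goes through, now with a plain enumeration of the pairs $(\mathbb Q,\langle D_i\st i<\omega_1\rangle)$ of size $\le\aleph_1$ appearing at intermediate stages in place of the Laver function.
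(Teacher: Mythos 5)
Your proposal is correct and follows essentially the same route as the paper, which simply observes that the only properties of Cohen forcing used in the preceding theorem are $\sigma$-linkedness (hence subproperness and preservation of the Souslin tree $S$) and adding a real, so the Laver/Baumgartner bookkeeping can be enlarged to catch all $\sigma$-linked posets with their dense sets. Your write-up just fills in the details the paper leaves implicit (the capture of $(\mathbb Q,\langle D_i\rangle)$ at an intermediate stage via the $\kappa$-c.c.\ and the upward absoluteness of $\sigma$-linkedness and density), and these are all standard and correct.
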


Again, the model of the previous theorem can be taken to be the natural model for the forcing axiom for the class of all forcing notions that are subcomplete or $\sigma$-linked.

The $\oo$-bounding preservation theorem gives us another result along these lines. Recall that $\mathfrak{d}$, the dominating number, is the smallest cardinal $\kappa$ such that there is a collection $D$ of reals such that every real is dominated by some real in $D$.

\begin{thm}
Assume that $\kappa$ is supercompact. Then there is a $\kappa$-length iteration $\mathbb P_\kappa$ of subproper forcing notions so that if $G\subseteq \mathbb P_\kappa$ is geneic over $\V$ then in $\V[G]$ we have that $\mathfrak{d} = \aleph_1 < \mathfrak{c} = \aleph_2$ and $\SCFA$ holds. Moreover, it can be arranged that there are either Souslin trees or not. If $\kappa$ is only a reflecting cardinal then the same conclusion holds true with $\SCFA$ replaced by $\BSCFA$.
\end{thm}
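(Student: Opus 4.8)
The plan is to run the Baumgartner-style iteration below the supercompact $\kappa$ as in the proofs of the previous two theorems, but replacing the Cohen reals used there by $\oo$-bounding forcing, since Cohen forcing adds unbounded reals and would push $\mathfrak{d}$ up to $\mathfrak{c}$. First I would arrange, by a standard preliminary forcing preserving the supercompactness of $\kappa$, that $\mathsf{GCH}$ holds in $\V$; for the variant in which Souslin trees are to survive I would in addition fix a Souslin tree $T\in\V$ (e.g.\ because $\diamondsuit$ holds). Fix a Laver function $f\colon\kappa\to V_\kappa$ and define a $\kappa$-length iteration $\mathbb{P}_\kappa$ --- either as an RCS iteration in which one also collapses each $\B_\alpha$ to $\aleph_1$, or as a nice iteration into which one interleaves collapses $\mathrm{Coll}(\omega_1,\gamma)$ for $\gamma<\kappa$. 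At stage $\alpha$: if $f(\alpha)$ codes a pair $(\dot{\mathbb{Q}},\mathcal{D})$ with $\dot{\mathbb{Q}}$ subcomplete and $\mathcal{D}$ a sequence of fewer than $\kappa$ dense subsets of $\dot{\mathbb{Q}}$, force with $\dot{\mathbb{Q}}$; since subcomplete forcing adds no reals it is in particular $\oo$-bounding, and it preserves the Souslinness of $T$, so there is nothing to check. Otherwise, at a ``dummy'' stage, force with Sacks forcing --- which is proper, hence subproper, has the Sacks property, hence is $\oo$-bounding, adds a real, and preserves Souslin trees --- using a bookkeeping ensuring that Sacks stages recur cofinally in $\kappa$. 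For the variant in which $\mathsf{SH}$ is to hold I would devote further dummy stages to destroying Souslin trees: whenever $f(\alpha)$ codes a Souslin tree $\dot{S}$, force with $\dot{S}$, adding a cofinal branch; forcing with a Souslin tree is ccc (hence subproper) and adds no new reals --- each initial segment of the generic branch is the predecessor set of a ground-model node, so it is $\omega$-distributive, hence $\oo$-bounding --- and once $S$ acquires a cofinal branch it is permanently non-Souslin.

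By construction every iterand of $\mathbb{P}_\kappa$ is forced to be subproper and $\oo$-bounding, so $\mathbb{P}_\kappa$ is subproper and $\oo$-bounding by Theorem~\ref{thm:RCSIterationOfSubproperOO-boundingForcing} in the RCS case and Theorem~\ref{thm:NicelyIteratingSubproperOobounding} in the nice case; for the Souslin-tree variant every iterand in addition preserves $T$'s Souslinness, so combining the $\oo$-bounding template with that of Theorem~\ref{thm:RCSIteratingSubproperTPreservingForcing} (respectively Theorem~\ref{thm:NicelyIteratingSubproperT/SouslinnessPreserving}) shows $\mathbb{P}_\kappa$ also preserves $T$. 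Now let $G\sub\mathbb{P}_\kappa$ be generic and work in $\V[G]$. Then $\omega_1$ is preserved because $\mathbb{P}_\kappa$ is subproper; a standard $\Delta$-system argument together with Fact~\ref{fact:lambdacc} shows $\mathbb{P}_\kappa$ is $\kappa$-c.c., and since it collapses every cardinal in the interval $(\omega_1,\kappa)$ we get $\kappa=\aleph_2$. Since $\mathbb{P}_\kappa$ is $\oo$-bounding over $\V$, every real of $\V[G]$ appears at a stage below $\kappa$ and is dominated by a real of $\V$, and as $|\oo\cap\V|=\aleph_1$ (by $\mathsf{GCH}$ in $\V$ and preservation of $\omega_1$) this gives $\mathfrak{d}=\aleph_1$. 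Sacks reals are added cofinally, so $2^{\aleph_0}\ge\kappa$, while the $\kappa$-c.c.\ together with $\kappa^{\aleph_0}=\kappa$ gives $2^{\aleph_0}\le\kappa$; hence $\mathfrak{c}=\aleph_2$. The forcing axiom $\SCFA$ follows from the usual Baumgartner argument exactly as in the previous two theorems: every subcomplete forcing adds no reals, is subproper, and is caught by the Laver function after reflecting through the supercompactness of $\kappa$; see \cite[pp.~65--66]{Jensen2014:SubcompleteAndLForcingSingapore}, whose verification goes through here as it did in the Cohen-real setting of the earlier theorems. For the first variant $T$ is still Souslin in $\V[G]$; for the $\mathsf{SH}$ variant any Souslin tree of $\V[G]$ has size $\aleph_1<\kappa$, hence belongs to some $\V[G_\alpha]$ with $\alpha<\kappa$ by the $\kappa$-c.c., and so was destroyed at a later stage, whence $\mathsf{SH}$ holds. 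Finally, for the statement with $\BSCFA$ from a reflecting cardinal, one discards the Laver function and replaces the $\SCFA$ argument by the reflection argument for $\BSCFA$ as in \cite[Lemma~3.5]{Fuchs:HierarchiesOfForcingAxioms}, everything else being unchanged.

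The main obstacle is the choice of the dummy forcings. In the $\SCFA+\neg\mathsf{CH}$ constructions of the previous theorems the continuum is blown up with Cohen forcing, which adds unbounded reals, so it must be replaced throughout by $\oo$-bounding forcing; the natural candidate is Sacks forcing. I expect the technical points to be: (i) verifying that Sacks forcing preserves the fixed Souslin tree $T$ for the ``Souslin trees'' variant, and more generally that the $\oo$-bounding and Souslinness-of-$T$-preservation iteration templates can be amalgamated into a single iteration theorem for subproper, $\oo$-bounding, $T$-preserving Boolean algebras; (ii) checking that forcing with a Souslin tree is $\omega$-distributive and subproper, so that it may be used as an iterand for the $\mathsf{SH}$ variant without disturbing the $\oo$-bounding property; and (iii) arranging the bookkeeping so as to dovetail the Laver-function guessing for $\SCFA$ with the cofinally many Sacks stages, the cofinally many Souslin-tree-destroying stages (for the $\mathsf{SH}$ variant), and the collapses needed to make $\kappa=\aleph_2$.
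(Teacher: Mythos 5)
Your argument follows the paper's own proof almost exactly: run the Baumgartner-style iteration, replace the Cohen reals of the earlier theorems by an $\oo$-bounding real-adding forcing at the dummy stages, apply the $\oo$-bounding iteration theorem to get $\mathfrak{d}=\aleph_1$, and for the ``no Souslin trees'' variant additionally force with every Souslin tree guessed by the Laver function (which, being ccc and adding no reals, is proper and $\oo$-bounding). The one substantive divergence is your choice of Sacks forcing where the paper uses random forcing, and this is not an innocent substitution: for the ``Souslin trees survive'' variant you need the dummy iterand to preserve the Souslinness of the fixed tree $T$, and you leave exactly this unverified (your point (i)). The paper chooses random forcing precisely because it is $\sigma$-linked, and the proposition proved just before these applications shows $\sigma$-linked forcings preserve Souslin trees, so the obligation is discharged for free; Sacks forcing is not $\sigma$-linked (it is not even ccc), and while the fact that it preserves Souslin trees is true, it requires a separate fusion argument that you would have to supply. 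So for that one variant your proposal has a genuine gap, repaired either by proving the Sacks preservation fact or by switching to random forcing. Two smaller remarks: your worry in (i) about ``amalgamating'' the $\oo$-bounding and $T$-preserving iteration theorems is unnecessary --- an iteration whose iterands are subproper, $\oo$-bounding and $T$-preserving satisfies the hypotheses of both theorems simultaneously, so both conclusions hold without any merged theorem --- and the extra scaffolding you introduce (preliminary $\mathsf{GCH}$ forcing, explicit interleaved collapses, bookkeeping for cofinal Sacks stages) is harmless but not needed beyond what the earlier theorems in the section already set up.
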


\begin{proof}
The idea is the same as in the previous theorem, replacing Cohen forcing with some fixed $\omega^\omega$-bounding forcing which adds a real, for example random forcing. By the preservation theorem, the entire iteration will be $\omega^\omega$-bounding and so $\mathfrak{d} = \aleph_1$ as witnessed by the collection of the ground model reals, which will be dominating and have size $\aleph_1$ in the final model. For the ``moreover'' part, note that since random forcing is $\sigma$-linked, the resulting iteration will preserve any given Souslin tree, thus giving the consistency of the above with a Souslin tree. However, one could also choose to force with every Souslin tree as they are guessed by the Laver diamond. Since forcing with a Souslin tree is ccc and does not add reals, it's in particular proper and $\omega^\omega$-bounding. In the latter case there will be no Souslin trees in the final model.
\end{proof}

Finally let us note, one more application in this spirit, this one due to Jensen \cite[\S4]{Jensen:FAandCH}.

\begin{thm}[Jensen]
Assume that $\kappa$ is supercompact. Then there is a $\kappa$-length iteration $\mathbb P_\kappa$ of subproper forcing notions so that if $G\subseteq \mathbb P_\kappa$ is generic over $\V$ then in $\V[G]$ $\SCFA$ holds, $\CH$ holds but all Aronszajn trees are special. In particular $\diamondsuit$ fails. If $\kappa$ is only a reflecting cardinal then the same conclusion holds true with $\SCFA$ replaced by $\BSCFA$.
\end{thm}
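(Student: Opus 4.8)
The plan is to follow Jensen \cite[\S4]{Jensen:FAandCH}, slotting the construction into the Baumgartner-style template already used for the earlier theorems of this section. After a preliminary $\sigma$-closed (hence subcomplete) forcing if necessary, I would assume $\V\models\CH$ and fix a Laver function $f\colon\kappa\To V_\kappa$; choosing the supercompact $\kappa$ appropriately (or using a Laver preparation) this costs nothing. I would then define $\mathbb{P}_\kappa=\seq{\mathbb{P}_\alpha}{\alpha\le\kappa}$ so that at stage $\alpha$: if $f(\alpha)$ codes a pair $(\dot{\mathbb{Q}},\mathcal D)$ with $\dot{\mathbb{Q}}$ forced to be subcomplete and $\mathcal D$ a $\gamma$-sequence of dense subsets of $\dot{\mathbb{Q}}$ for some $\gamma<\kappa$, we force with $\dot{\mathbb{Q}}$; if $f(\alpha)$ codes a $\mathbb{P}_\alpha$-name $\dot T$ and $\dot T^{G_\alpha}$ is an Aronszajn tree in $\V[G_\alpha]$, we force with Jensen's specialization forcing $\mathbb{S}(\dot T^{G_\alpha})$; otherwise we force trivially. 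Limits are taken either as RCS limits (collapsing each iterand to $\aleph_1$) or as nice limits in the sense of Section \ref{sec:NiceIterations}, in which case ``subcomplete'' is to be read as ``$\infty$-subcomplete'' throughout. The one external ingredient I would take as a black box is Jensen's theorem that $\mathbb{S}(T)$ is subcomplete whenever $T$ is an Aronszajn tree; in particular it adds no reals, so it preserves $\CH$.

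The rest of the write-up consists of the now-routine verifications. First, each iterand is subcomplete: for $\mathbb{S}(\dot T^{G_\alpha})$ this uses, by induction, that all earlier iterands add no reals and preserve $\omega_1$, hence $\CH$ holds in $\V[G_\alpha]$ and $\dot T^{G_\alpha}$ is still an Aronszajn tree there. Thus the subcomplete iteration theorem — Jensen \cite[\S3, Theorem 2]{Jensen:IterationTheorems} for RCS, or Theorem \ref{thm:NiceIterationsOfSCforcingAreSC} for nice iterations — yields that every $\mathbb{P}_\alpha/G_h$ is subcomplete; in particular $\mathbb{P}_\kappa$ adds no reals, so $\V[G]\models\CH$, and $\omega_1$ is preserved. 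A standard $\Delta$-system argument shows $\mathbb{P}_\kappa$ is $\kappa$-c.c., and since everything below $\kappa$ is collapsed to $\aleph_1$ we get $\kappa=\aleph_2^{\V[G]}$. That $\SCFA$ holds in $\V[G]$ is the usual Baumgartner argument for the subcomplete class, carried out in detail in Jensen \cite[pp.\ 65--66]{Jensen2014:SubcompleteAndLForcingSingapore}; the interleaved specialization stages are harmless as they too are subcomplete. For the reflecting-cardinal version one replaces this step with the argument of Fuchs \cite[Lemma 3.5]{Fuchs:HierarchiesOfForcingAxioms} for $\BSCFA$.

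It then remains to see that every Aronszajn tree of $\V[G]$ is special. Given such a $T$, by the $\kappa$-c.c.\ we have $T\in\V[G_\alpha]$ for some $\alpha<\kappa$, and $T$ is still an Aronszajn tree in $\V[G_\alpha]$: since $\V[G_\alpha]\sub\V[G]$ has the same reals and the same $\omega_1$, no cofinal branch of $T$ and no uncountable level of $T$ can be gained by passing to $\V[G]$. By the bookkeeping there is a stage $\beta\in[\alpha,\kappa)$ at which we forced with $\mathbb{S}(T)$, and the specializing function added there still witnesses, in $\V[G]$, that $T$ is special. Finally $\diamondsuit$ fails, because $\diamondsuit$ implies the existence of a Souslin tree, which is a non-special Aronszajn tree. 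I expect the genuinely hard part to be entirely external, namely Jensen's construction of the subcomplete, $\CH$-preserving forcing $\mathbb{S}(T)$ specializing an arbitrary Aronszajn tree; once that is granted, everything above is an instance of the iteration-and-bookkeeping pattern used throughout this section.
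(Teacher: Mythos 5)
There is a genuine gap, and it sits exactly where you placed your ``black box'': the claim that Jensen's specialization forcing $\mathbb{S}(T)$ is subcomplete for every Aronszajn tree $T$ is false, and in fact contradicts facts used elsewhere in this paper. Subcomplete (indeed $\infty$-subcomplete) forcing preserves Souslin trees, while specializing a Souslin tree destroys its Souslinness; since Souslin trees are Aronszajn trees, no forcing that specializes an arbitrary Aronszajn tree can be subcomplete. Equivalently, if your black box were true then $\SCFA$ itself would imply that all Aronszajn trees are special, hence Souslin's Hypothesis, contradicting the consistency of $\SCFA$ with $\diamondsuit$, which is the paper's starting point. Consequently your appeal to the subcomplete iteration theorem (in either its RCS or nice form) to absorb the interleaved specialization stages does not go through: the iterands of your iteration are not all subcomplete, and the theorem you invoke does not apply to them.

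What Jensen actually proves, and what the paper's proof leans on, is that $\mathbb{S}(T)$ belongs to a strictly larger class, the ``Dee-subcomplete and $<\omega_1$-subproper'' forcing notions, which contains all subcomplete forcings, adds no reals, and comes with its own iteration theorem; the associated forcing axiom $\mathsf{DSCFA}$ is consistent relative to a supercompact. The paper's argument is essentially a citation: the natural model of $\mathsf{DSCFA}$ satisfies $\CH$ (no reals are added), satisfies $\SCFA$ because the subcomplete forcings lie in the class, and makes all Aronszajn trees special because the specialization forcings do too. Your bookkeeping scheme and the absoluteness observations (a specializing function survives to $\V[G]$; an Aronszajn tree of $\V[G]$ is Aronszajn at every earlier stage) are fine as far as they go, but to repair the proof you must replace ``subcomplete'' by ``Dee-subcomplete and $<\omega_1$-subproper'' for the iterands and invoke Jensen's iteration theorem for that class rather than the one for subcomplete forcing.
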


\begin{proof}
In \cite{Jensen:DSF} Jensen introduces the class of ``Dee-subcomplete and $<\omega_1$-subproper forcing notion'' and proves that the associated forcing axiom $\mathsf{DSCFA}$ is consistent relative to a supercompact. This class does not add reals hence, like with $\SCFA$, the natural model of this axiom satisfies $\CH$ as well. While we omit the definition of this class here, we note that Jensen shows that it contains all subcomplete forcing, plus a forcing notion for specializing Aronszajn trees. It follows that under $\mathsf{DSCFA}$ both $\SCFA$ and ``all Aronszajn trees are special" hold (so $\diamondsuit$ fails). Since this axiom is consistent with $\CH$, we're done.
\end{proof}

\section{Conclusion and Open Questions}
\label{sec:Questions}

The possible structure of the continuum under $\SCFA$ remains something of a mystery. In particular, the above methods show that when $\CH$ fails, \SCFA does not say much about the reals or combinatorics on $\omega_1$, and the \CH setting may in some sense be more attractive for \SCFA. One question we had was whether \SCFA is consistent with $\diamondsuit^+$ (and hence with the existence of a Kurepa tree). It is easy to see that in the natural model of \SCFA, there are no Kurepa trees. In fact, in that model $\SCFA^+$ holds, and already $\MA^+_{\omega_1}(\sigma{\rm -}closed)$ implies that there are no Kurepa trees. However, in recent work, Hirohsi Sakai obtained the very interesting result that \SCFA is consistent with $\diamondsuit^+$, thus answering this question.

We can ask similar questions about $\omega_2$, and at that level, much more remains open. For instance, we do not even know whether the continuum can be larger than $\aleph_2$ under $\SCFA$. \footnote{The proof of \cite[\S 3, Cor.~9.1]{Jensen:FAandCH}, which, if true, would imply that under \SCFA, $2^\omega\le\omega_2$, relies on \cite[Lemma 6.3]{Jensen2014:SubcompleteAndLForcingSingapore}. However, the latter lemma is missing the assumption that $\kappa>2^\omega$. The issue with its proof was discovered by Sean Cox, and Hiroshi Sakai observed that the additional assumption was necessary.}

\begin{question}
Is $2^{\aleph_0} > \aleph_2$ consistent with $\SCFA$?
\end{question}

Another line of questioning concerns the utility and uniqueness of $\infty$-subproper and $\infty$-subcomplete forcing notions.

\begin{question}
Is every $\infty$-subcomplete ($\infty$-subproper) forcing notion subcomplete (subproper)? What about just up to forcing equivalence? Are the forcing axioms for these classes equivalent?
\end{question}


On another note, we can ask about the relationship between RCS and nice iterations. Note that proper, semiproper, subproper and subcomplete forcing notions are all iterable by both types of limits. It's worth asking if this is simply a $\ZFC$-phenomenon.
\begin{question}
Suppose $\Gamma$ is a definable class of forcing notions. If $\Gamma$ is iterable with RCS, is it iterable by nice iterations? What about the converse?
\end{question}


\end{document}